\newtheorem{thm}{Theorem}[section]
\newtheorem{cor}[thm]{Corollary}
\newtheorem{lem}[thm]{Lemma}
\newtheorem{prop}[thm]{Proposition}
\newtheorem{thma}{Theorem}
\newtheorem{propa}[thma]{Proposition}
\newtheorem{lema}[thma]{Lemma}
\newtheorem*{claim*}{Claim}
\theoremstyle{definition}
\newtheorem{dfn}[thm]{Definition}
\newtheorem{rem}[thm]{Remark}
\renewcommand{\tilde}{\widetilde}
\newcommand{\RR}{\mathbb{R}}
\newcommand{\NN}{\mathbb{N}}
\newcommand{\ZZ}{\mathbb{Z}}
\newcommand{\E}{\operatorname{E}} 
\newcommand{\F}{\operatorname{F}} 
\newcommand{\Ftp}[1]{\mathrm{F}_{#1}}
\newcommand{\Flg}[1]{\mathrm{FP}_{#1}}
\newcommand{\Ctp}[1]{\mathrm{C}_{#1}}
\newcommand{\Clg}[1]{\mathrm{CP}_{#1}}
\newcommand{\TopS}{\Sigma_\mathrm{top}}
\newcommand{\Hom}{\operatorname{Hom}}
\newcommand{\Int}{\operatorname{Int}} 
\newcommand{\TopHom}{\operatorname{Hom}_\mathbf{TopGr}}
\newcommand{\id}{\mathrm{id}}
\newcommand{\VR}[2]{\operatorname{VR}_{#1}({#2})}
\newcommand{\diff}[1]{\partial_{#1}} 
\newcommand{\chains}[3]{\operatorname{C}_{#1}(\VR{#2}{#3})} 
\newcommand{\homology}[3]{\operatorname{H}_{#1}(\VR{#2}{#3})} 
\newcommand{\chain}{\operatorname{C}}
\newcommand{\hlgy}{\operatorname{H}}
\DeclareMathOperator{\Tor}{Tor}
\DeclareMathOperator{\im}{im}
\DeclareMathOperator{\coker}{coker}
\DeclareMathOperator{\supp}{supp}
\DeclareMathOperator{\capacity}{cap}
\DeclareMathOperator{\diam}{diam}
\DeclareMathOperator{\Ind}{Ind}
\title{Geometric invariants of locally compact groups:\\the homological perspective}
\author{Kai-Uwe Bux$^1$ \and Elisa Hartmann$^1$ \and José Pedro Quintanilha$^2$}
\date{$^1$Universität Bielefeld\\
	$^2$Ruprecht-Karls-Universität Heidelberg\\\bigskip
	\today}
\begin{document}

\maketitle

\begin{abstract}
In this paper, we develop the homological version of $\Sigma$-theory for locally compact Hausdorff groups, leaving the homotopical version for another paper. Both versions are connected by a Hurewicz-like theorem. They can be thought of as directional versions of type $\mathrm{CP}_m$ and type $\mathrm{C}_m$, respectively. The classical $\Sigma$-theory is recovered if we equip an abstract group with the discrete topology. This paper provides criteria for type $\mathrm{CP}_m$ and homological locally compact $\Sigma^m$. Given a short exact sequence with kernel of type $\mathrm{CP}_m$, we can derive $\Sigma^m$ of the extension on the sphere that vanishes on the kernel from the quotient, and likewise. Given a short exact sequence with abelian quotient, the $\Sigma$-theory of the extension can tell whether the kernel is of type $\mathrm{CP}_m$.
\end{abstract}

\tableofcontents

\begin{spacing}{1.1}
 \section{Introduction}

For locally compact Hausdorff groups, Abels--Tiemeyer \cite{Abels1997} introduced compactness properties such as being of type $\Ctp m$, the homotopical version, and being of type $\Clg m$, the homological version. Those properties can be thought of as more general versions of the finiteness properties type $\Ftp m$ and type $\Flg m$, which appear as type $\Ctp m$ or type $\Clg m$ if the group is endowed with the discrete topology.

This work and \cite{Hartmann2024c} present locally compact versions of higher geometric invariants $\Sigma^{m}(G)$ and $\Sigma^{m}(G;\ZZ)$. In accordance with Kochloukova \cite{Kochloukova2004}, we name them $\TopS^m(G)$ and $\TopS^m(G;\ZZ)$, the homotopical version and the homological version, respectively. These are collections of continuous homomorphisms $\chi:G\to \RR$, where $G$ is locally compact Hausdorff and is usually also of type $\Ctp m$ or of type $\Clg m$, respectively.

We divided the study on directional versions of the compactness properties $\TopS^m$ into two parts: The homotopical version is treated in \cite{Hartmann2024c} and the homological version is treated in this paper. An immediate consequence of the definition is the following.

\begin{propa}
\label{propa:veryeasy}
 If $G$ is a locally compact, Hausdorff group and $m\ge1$, then the following are equivalent:
\begin{enumerate}
 \item $0\in \TopS^m(G;\ZZ)$;
 \item $\TopS^m(G;\ZZ)\not=\emptyset$;
 \item $G$ is of type $\Clg m$.
\end{enumerate}
\end{propa}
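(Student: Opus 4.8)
The plan is to obtain all the equivalences by unwinding the definition of $\TopS^m(G;\ZZ)$, observing that only the implication (2)$\Rightarrow$(3) carries any content. Indeed, (1)$\Rightarrow$(2) is immediate. For (3)$\Rightarrow$(1), I would specialize the membership criterion for $\TopS^m(G;\ZZ)$ to the character $\chi=0$: whatever form the defining data take --- the closed submonoid $G_\chi=\{g\in G\mid\chi(g)\ge 0\}$, a suitably filtered resolution, or a height function on a $G$-space witnessing type $\Clg m$ --- at $\chi=0$ the directional constraint degenerates, since $G_0=G$ and any associated height function is constant, so it reduces to exactly the ambient homological compactness condition. Hence $0\in\TopS^m(G;\ZZ)$ precisely when $G$ is of type $\Clg m$; together with (1)$\Rightarrow$(2) this leaves only (2)$\Rightarrow$(3).

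For (2)$\Rightarrow$(3), assume $\chi\in\TopS^m(G;\ZZ)$, which we may take to be nonzero. The strategy is to globalize a $\chi$-directional witness to a witness for type $\Clg m$ by base change along $G_\chi\into G$. In the discrete prototype this is the standard fact that $\ZZ G$ is flat over $\ZZ G_\chi$ --- it is the filtered colimit of the free rank-one submodules $g^{-1}\ZZ G_\chi$, $g\in G_\chi$ (equivalently, a localization) --- while $\ZZ G\otimes_{\ZZ G_\chi}\ZZ=\ZZ$; so a resolution of $\ZZ$ by finitely generated free $\ZZ G_\chi$-modules through degree $m$ is carried by $\ZZ G\otimes_{\ZZ G_\chi}(-)$ to one over $\ZZ G$, finite generation surviving termwise and exactness following from flatness. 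This yields type $\Flg m$, closing the loop in the discrete case.

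The main --- and, I expect, the only --- obstacle is transporting this base change into the locally compact category underlying type $\Clg m$: there $G_\chi$ is merely a closed subsemigroup, the relevant ``modules'' are topological objects built from compact pieces rather than honest $\ZZ G$-modules, and one must check that the topological analogue of $\ZZ G\otimes_{\ZZ G_\chi}(-)$ both preserves the finiteness (cocompactness) condition demanded by type $\Clg m$ through degree $m$ and stays exact enough to transport acyclicity. The colimit description of $\ZZ G$ over $\ZZ G_\chi$ is insensitive to whether $\chi$ has discrete image, so no separate case analysis is needed there; once this topological flat base change is set up, the rest is bookkeeping.
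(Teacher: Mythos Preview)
Your handling of (1)$\Rightarrow$(2) and (3)$\Rightarrow$(1) matches the paper exactly: since $G_0=G$, the membership condition for the zero character collapses to the definition of type $\Clg m$.

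For (2)$\Rightarrow$(3), your flat-base-change strategy is pointed in the right direction but misreads the framework, and the obstacle you anticipate is not actually there. In this paper $\chi\in\TopS^m(G;\ZZ)$ is \emph{not} defined via a finitely generated projective resolution over $\ZZ G_\chi$; it is defined by essential triviality of the ind-system $\big(\hlgy_q(\ZZ[\Delta_k^*\cap G_\chi^{*+1}])\big)_{k\ge 0}$ for $0\le q\le m-1$, where $\Delta_k^q$ is the set of $q$-simplices of the Vietoris--Rips complex for a word metric coming from a compact generating set. This places the locally compact and discrete cases on exactly the same footing, so there is no ``topological flat base change'' to engineer. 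The paper's argument is the concrete one-line avatar of your flatness heuristic: pick $t\in G$ with $\chi(t)>0$; given a cycle $z\in\ZZ[\Delta_k^{q}]$, choose $n$ large enough that $t^n z$ is supported in $G_\chi$, fill it there by some chain $c$ using the hypothesis on $G_\chi$, and observe that $t^{-n}c$ fills $z$ in $G$. Left-invariance of the metric means translation preserves the Vietoris--Rips scale, so no index bookkeeping is lost. Your filtered-colimit description of $\ZZ G$ over $\ZZ G_\chi$ is precisely this translation repackaged; the abstract formulation, and the worry about topologizing it, obscure that the paper's definition has already reduced everything to an elementary chain-level manoeuvre that works uniformly in the locally compact setting.
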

A proof of Proposition~\ref{propa:veryeasy} can be found in Proposition~\ref{prop:veryeasy}. Another easy result is the following.
\begin{propa}
\label{propa:center}
 Let $m\ge 1$. If $G$ is a group of type $\Clg m$ and $\chi:G\to \RR$ a character that does not vanish on the center, then $\chi\in\TopS^m(G;\ZZ)$.
\end{propa}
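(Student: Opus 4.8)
The plan is to reduce the assertion, via a Bieri--Renz-style description of $\TopS^m(G;\ZZ)$, to the corresponding finiteness property of the quotient of $G$ by a central $\ZZ$. Pick $z$ in the center of $G$ with $\chi(z)\neq 0$, and replace $z$ by $z^{-1}$ if necessary so that $\chi(z)>0$. Then $\langle z\rangle$ is automatically discrete: the open identity neighbourhood $\chi^{-1}\bigl((-\chi(z),\chi(z))\bigr)$ meets it only in $e$, because $\chi(z^n)=n\chi(z)$. Hence $\langle z\rangle\cong\ZZ$ is a closed central subgroup, $\overline G:=G/\langle z\rangle$ is locally compact Hausdorff, and since $\ZZ$ is of type $\Clg{\infty}$ the paper's short-exact-sequence results give that $\overline G$ is of type $\Clg m$ as well.

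I would then invoke the characterization of membership in $\TopS^m(G;\ZZ)$ by a module-finiteness condition over the submonoid $G_\chi=\chi^{-1}([0,\infty))$: $\chi\in\TopS^m(G;\ZZ)$ iff the trivial module $\ZZ$ is of type $\Clg m$ over $R:=\ZZ G_\chi$ (in the appropriate module category underlying the definition of $\Clg m$). Two elementary ring-theoretic points drive the argument. First, $w:=z-1$ is a \emph{central non-zero-divisor} in $R$: if $(z-1)\alpha=0$ in $\ZZ G$, comparing coefficients along right $\langle z\rangle$-cosets and using that $z$ has infinite order forces $\alpha=0$, and the same holds in the subring $R$. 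Second, $R/wR$ is canonically $\ZZ\overline G$: killing $z-1$ imposes $z\sim e$ on $G_\chi$, and the resulting monoid congruence has exactly the fibres of the (surjective, since $z^Ng\in G_\chi$ for $N\gg0$) map $G_\chi\onto\overline G$ as its classes. Finally, $\ZZ$, on which $z$ acts trivially, is a module over $R/wR=\ZZ\overline G$, and it is of type $\Clg m$ there precisely because $\overline G$ is of type $\Clg m$.

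The conclusion then follows by transitivity of finiteness along $R\onto R/wR$. The first point gives a length-one free resolution $0\to R\xrightarrow{\,\cdot w\,} R\to R/wR\to0$, so $R/wR$ is of type $\Clg{\infty}$ over $R$, and hence so is every finitely generated projective $R/wR$-module. Choosing a partial resolution $P_m\to\dots\to P_0\to\ZZ\to0$ of $\ZZ$ by finitely generated projective $R/wR$-modules (which exists since $\overline G$ is of type $\Clg m$), and regarding it over $R$, it remains exact, the modules $P_0,\dots,P_{m-1}$ are of type $\Clg{\infty}$ over $R$, and the $m$-th syzygy $\ker(P_{m-1}\to P_{m-2})=\im(P_m\to P_{m-1})$ is finitely generated over $R$ because $R\onto R/wR$ is surjective. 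Therefore $\ZZ$ is of type $\Clg m$ over $R$, i.e.\ $\chi\in\TopS^m(G;\ZZ)$.

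The algebraic skeleton above is routine; the real difficulty is the locally compact bookkeeping. One must verify that the module category defining ``$\Clg m$ over $G_\chi$'', the identification $\ZZ G_\chi/(z-1)\cong\ZZ\overline G$, and the change-of-rings/transitivity lemmas all persist in the \emph{modulo compacts} framework of the paper --- in particular that the non-cocompact action of the submonoid $\{z^n\mid n\ge0\}$ on $G_\chi$ does not obstruct passing $\overline G=G/\langle z\rangle$ through that bookkeeping --- and that the Bieri--Renz reformulation of $\TopS^m$ one uses is actually available. The same mechanism also has a geometric face, which one could use instead: for a $\langle z\rangle$-invariant generating set $S$, the superlevel subcomplexes of $\VR{S}{G}$ defined by $\chi\ge t$ map onto a Vietoris--Rips complex of $\overline G$ with contractible preimages of simplices, hence are $(m-1)$-acyclic together with that complex, which gives essential $(m-1)$-acyclicity of the $\chi$-filtration directly.
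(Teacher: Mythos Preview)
Your argument is a genuinely different route from the paper's, and in the \emph{discrete} case it would go through: the change-of-rings along $\ZZ G_\chi\twoheadrightarrow \ZZ G_\chi/(z-1)\cong\ZZ\overline G$ is a clean reduction to the quotient. In the locally compact setting, however, there is a real gap, which you yourself flag but do not close. The characterization ``$\chi\in\TopS^m(G;\ZZ)$ iff $\ZZ$ is of type $\Clg m$ over $\ZZ G_\chi$'' is not established anywhere in the paper, and more importantly it cannot hold as you use it: the abstract ring $\ZZ G_\chi$ (or $\ZZ\overline G$) carries no trace of the topology, whereas type $\Clg m$ is defined here via essential triviality of the Vietoris--Rips filtration, not via resolutions by finitely generated projectives. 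Concretely, your step ``choosing a partial resolution $P_m\to\cdots\to P_0\to\ZZ\to0$ by finitely generated projective $\ZZ\overline G$-modules, which exists since $\overline G$ is of type $\Clg m$'' already fails for $\overline G=\RR$: the locally compact group $\RR$ is of type $\Clg\infty$, yet as an abstract group it is not even of type $\Flg 2$ (its abelianization is not finitely generated), so no such resolution exists over $\ZZ\RR$. The same obstruction blocks the ``appropriate module category'' escape route unless one first builds an entire locally compact analogue of Bieri--Renz module theory, which the paper does not do.

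The paper's proof avoids all of this by working directly with the Vietoris--Rips definition. Choosing a central $t$ with $\chi(t)<0$, the map $c\mapsto tc$ is a $G$-equivariant chain endomorphism (this is exactly where centrality enters), and an explicit prism formula shows it is chain-homotopic to the inclusion $\VR k{G_\chi}\hookrightarrow\VR{k+1}{G}$. Given a cycle $z$ in $\VR k{G_\chi}$, one uses type $\Clg m$ of $G$ to fill $z$ by some chain $c$ in the ambient group, and then translates $c$ together with the accumulated prism pieces by a high enough power of $t^{-1}$ to push the whole filling into $G_\chi$. This is the classical ``translate and fill'' argument, carried out at the chain level; it needs no module theory and nothing that must be transported across the discrete/locally compact divide. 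Your closing geometric sketch (contractible fibres over $\VR{}{\overline G}$) is closer in spirit to this, but as written it is only a heuristic.
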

A proof for Proposition~\ref{propa:center} can be found in Proposition~\ref{prop:center}.

One of our main results is that we can compare homotopical and homological $\TopS^m$.

\begin{thma}
\label{thma:hurewicz}
 If $G$ is a locally compact Hausdorff group, then
 \[
  \TopS^1(G)=\TopS^1(G;\ZZ)
 \]
and if $m\ge 2$ then
\[
 \TopS^m(G)=\TopS^2(G)\cap \TopS^m(G;\ZZ).
\]
\end{thma}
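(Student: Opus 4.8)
The plan is to translate both sides of each equation into statements about the \emph{essential connectivity} and \emph{essential acyclicity} of one geometric object and then to deduce the comparison from a filtered Hurewicz theorem, with the hypothesis $\chi\in\TopS^2(G)$ entering precisely to supply essential simple connectivity. Concretely, I would fix (as in \cite{Hartmann2024c}, building on the compactness-property machinery of \cite{Abels1997}) a contractible $G$-complex $X$ on which $G$ acts with cocompact skeleta and compact-open cell stabilisers, together with a continuous extension $\tilde\chi\colon X\to\RR$ of $\chi$; these determine the $\chi$-filtration $\mathcal X=\{X_t\}_t$ of $X$ by the level sets of $\tilde\chi$. Unwinding the definitions one obtains the dictionary
\begin{gather*}
 \chi\in\TopS^m(G)\iff\mathcal X\text{ is essentially }(m-1)\text{-connected},\\
 \chi\in\TopS^m(G;\ZZ)\iff\mathcal X\text{ is essentially }(m-1)\text{-acyclic},
\end{gather*}
where ``essentially $n$-connected'' means that for every level $t$ there is a deeper level $t'$ for which the inclusion $X_t\into X_{t'}$ induces the zero map on $\pi_k$ for all $0\le k\le n$, and ``essentially $n$-acyclic'' is the same with reduced $\hlgy_k(-;\ZZ)$ in place of $\pi_k$. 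Setting up this dictionary --- and checking its independence of the choices of $X$ and $\tilde\chi$ --- requires pushing cells into prescribed $\chi$-levels compatibly with the $G$-action and the compactness constraints; I expect most of it to be available from \cite{Hartmann2024c} and the earlier sections.

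Granting the dictionary, the case $m=1$ is immediate: path-components and reduced $0$-th homology carry the same information, so $\mathcal X$ is essentially $0$-connected if and only if it is essentially $0$-acyclic. For $m\ge2$ I would isolate the following \textbf{filtered Hurewicz lemma}: for every $k\ge1$, if $\mathcal X$ is essentially $(k-1)$-connected then $\mathcal X$ is essentially $k$-connected if and only if it is essentially $k$-acyclic. The theorem then follows by a short induction. If $\chi\in\TopS^m(G)$, then $\mathcal X$ is essentially $(m-1)$-connected, hence also essentially $1$-connected, so $\chi\in\TopS^2(G)$; and applying the lemma at $k=m-1$ (legitimate since $\mathcal X$ is essentially $(m-2)$-connected) yields essential $(m-1)$-acyclicity, so $\chi\in\TopS^m(G;\ZZ)$. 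Conversely, if $\chi\in\TopS^2(G)\cap\TopS^m(G;\ZZ)$ then $\mathcal X$ is essentially $1$-connected and essentially $j$-acyclic for all $j\le m-1$; applying the lemma successively at $k=2,3,\dots,m-1$ --- each application legitimate because the previous one has upgraded essential $(k-1)$-acyclicity to essential $(k-1)$-connectivity --- shows $\mathcal X$ is essentially $(m-1)$-connected, i.e.\ $\chi\in\TopS^m(G)$.

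The crux, and the step I expect to be hardest, is the filtered Hurewicz lemma. The inclusions $X_t\into X_{t'}$ need not be highly connected \emph{maps}, and the individual pieces $X_t$ need not be simply connected, so neither the absolute nor the relative Hurewicz theorem applies on the nose; the comparison has to be run along the filtration. The hypothesis of essential $(k-1)$-connectivity is used in two ways. First, for $k\ge2$ it makes the system of fundamental groups $\{\pi_1(X_t)\}_t$ essentially trivial, so that after passing to a cofinal subfiltration one may lift the bonding inclusions to the universal covers $\widetilde{X_t}$ and thereby replace $\mathcal X$ by a cofinal system of simply connected complexes; the ordinary Hurewicz theorem, applied along this system, then converts essential $k$-acyclicity into essential $k$-connectivity, and naturality of the Hurewicz homomorphism gives the converse. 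Second, essential simple connectivity also forces the terms of positive base degree in the Cartan--Leray spectral sequence relating $\hlgy_*(X_t;\ZZ)$ to the homology of $\pi_1(X_t)$ and of $\widetilde{X_t}$ to die out along the subfiltration, so that essential $\ZZ$-acyclicity of $\mathcal X$ becomes equivalent to essential $\ZZ$-acyclicity of the cover system $\{\widetilde{X_t}\}_t$ --- this is where the changing coefficient systems $\ZZ[\pi_1(X_t)]$ are brought under control. Carrying this out --- verifying that an essentially trivial system of fundamental groups has essentially trivial homology in every degree with the relevant coefficients, that the attendant spectral sequences are essentially collapsed, and that all subfiltration refinements and cover-lifts can be kept compatible with the $\chi$-filtration and the $G$-action --- is where essentially all of the effort lies; the inductive bookkeeping above is then purely formal.
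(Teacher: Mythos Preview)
Your overall architecture is correct and matches the paper: reduce both invariants to essential connectivity/acyclicity of a single filtered object, handle $m=1$ by the tautology $\pi_0\leftrightarrow\tilde H_0$, and for $m\ge2$ prove a filtered Hurewicz lemma and induct. Two points of divergence are worth noting.

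First, the paper does not pass through an auxiliary contractible $G$-complex with a $\chi$-height function; it works directly with the Vietoris--Rips filtration $(\VR{k}{G_\chi})_k$ of the metric space $(G_\chi,d)$, the dictionary being Corollary~\ref{cor:supermetric}. This saves you the task of establishing your dictionary and its independence of choices.

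Second, and more substantially, the paper's proof of the filtered Hurewicz lemma (Theorem~\ref{thm:hurewicz}) sidesteps universal covers and the Cartan--Leray spectral sequence entirely. The key device is \cite[Lemma~1.1.3]{Abels1997}: an ind-space with essentially trivial $\pi_q$ for $q\le n$ is isomorphic \emph{as an ind-object} to a system $(B_k)_k$ in which every $B_k$ is genuinely $n$-connected (one attaches cells to kill the relevant homotopy). Once each piece is actually simply connected and $(k-1)$-connected, the ordinary Hurewicz isomorphism $\pi_k(B_l)\cong H_k(B_l)$ applies levelwise, and essential $k$-acyclicity translates to essential $k$-connectivity by a one-line diagram chase. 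Your route through $\widetilde{X_t}$ and an ``essentially collapsed'' Cartan--Leray spectral sequence should work in principle, but it forces you to compare $H_*(X_t)$ with $H_*(\widetilde{X_t})$ along a system whose fundamental groups and covering maps are all varying, and to control coefficient changes along the filtration; Abels--Tiemeyer's replacement lemma makes all of that disappear.
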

See \cite[Satz~B]{Renz1988} for the classical result. A proof for Theorem~\ref{thma:hurewicz} can be found in Lemma~\ref{lem:sigma1}, Theorem~\ref{thm:hurewicz} and Corollary~\ref{cor:hurewicz}.

We, of course, also would like to compare the new $\Sigma$-invariants with the ones that have been defined by Bieri--Neumann--Strebel--Renz \cite{Bieri1988,Renz1988,Strebel2012}. They appear if an abstract group is equipped with the discrete topology. We call such groups \emph{discrete groups} and $\Sigma$-theory on discrete groups \emph{classical $\Sigma$-theory}.
\begin{thma}
\label{thma:classical}
 If a group $G$ of type $\Flg m$ is endowed with the discrete topology, then \[
 \TopS^m(G;\ZZ)=\Sigma^m(G;\ZZ).
 \]
\end{thma}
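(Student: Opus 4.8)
The plan is to compare, character by character, the definition of $\TopS^m(G;\ZZ)$ given in the body of the paper, specialized to a discrete group, with the Bieri--Neumann--Strebel--Renz definition of $\Sigma^m(G;\ZZ)$, and to observe that for discrete $G$ both are controlled-acyclicity conditions on the \emph{same} object: the family of Vietoris--Rips complexes $\VR{r}{G}$ of $G$ with respect to a word metric, filtered by $\chi$. (Alternatively one could compare the two $\ZZ G_\chi$-resolution formulations, but the $\VR$-picture is the one native to this paper.)

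First I would carry out the bookkeeping that reduces the topological data to combinatorial data. A discrete group is locally compact Hausdorff, a finite symmetric generating set is a compact generating set, and the word metric it induces is proper and left-invariant; the coarse structure it generates is the one underlying the Vietoris--Rips complexes in the definition of $\TopS^m(G;\ZZ)$, and it does not depend — up to coarse equivalence, hence up to inflating scales — on the chosen compact generating set. Since $G$ has type $\Flg m$, it has type $\Clg m$ (the discrete case of Abels--Tiemeyer \cite{Abels1997}), so by Proposition~\ref{propa:veryeasy} the set $\TopS^m(G;\ZZ)$ is nonempty; more to the point, for arbitrarily large scales $r$ the complex $\VR{r}{G}$ is $(m-1)$-acyclic, and for each $r$ one can find a larger such scale $r'$ whose inclusion kills reduced homology of $\VR{r}{G}$ up to degree $m-1$ — the standard Rips-complex reformulation of type $\Flg m$. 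Thus the ingredients entering the definition of $\TopS^m(G;\ZZ)$ for discrete $G$ are exactly: the scales $r$, the complexes $\VR{r}{G}$, and, for a character $\chi$, the super-level subcomplexes spanned by the vertices $g$ with $\chi(g)\ge t$.

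Second, I would invoke the geometric characterization of the classical invariant: for $G$ of type $\Flg m$, one has $\chi\in\Sigma^m(G;\ZZ)$ if and only if the filtration of a model of the universal cover by the super-level sets of $\chi$ is essentially $(m-1)$-acyclic, which in the Rips-complex model reads: for every scale $r$ there is a scale $r'\ge r$ (which we may take among the ``good'' scales above) such that for every threshold $t\in\RR$ the inclusion of the super-level subcomplex of $\VR{r}{G}$ at level $t$ into that of $\VR{r'}{G}$ at level $t$ is zero on reduced homology in all degrees $\le m-1$; see \cite{Bieri1988,Renz1988,Strebel2012} for the resolution form and its translation to Rips complexes. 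The definition of $\TopS^m(G;\ZZ)$ in the paper is, by construction, a controlled-acyclicity statement of precisely this shape for the coarse structure of $G$; once the combinatorial reduction of the first step is in place, the two statements have identical ingredients, so $\chi\in\TopS^m(G;\ZZ)\iff\chi\in\Sigma^m(G;\ZZ)$, and the equality of the invariants follows.

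The main obstacle is to verify that the two ``essential/controlled $(m-1)$-acyclicity'' conditions are literally the same and not merely analogous. The points that need care are: the direction of the filtration, and whether one compares two scales at a fixed threshold or two thresholds at a fixed scale — for discrete, finitely generated groups these are interchangeable by the usual Rips-complex argument, but this must be spelled out; the choice of reduced versus unreduced homology and the behaviour of the super-level subcomplexes as $t\to+\infty$ (where they become empty) and as $t\to-\infty$ (where they exhaust $\VR{r}{G}$, which is only $(m-1)$-acyclic, not contractible, so reduced homology is the correct bookkeeping); and, on the $\TopS^m$ side, the passage from an arbitrary compact generating set and its coarse structure to a finite one. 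None of these is deep, but they are exactly where the argument has to be written carefully; modulo them, comparing the two definitions yields $\TopS^m(G;\ZZ)=\Sigma^m(G;\ZZ)$.
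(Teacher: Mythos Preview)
Your plan is right in outline, but it defers the actual content of the theorem to the literature in a way that does not quite work. On the $\TopS^m$ side the reduction is immediate: for a discrete group the compact subsets are finite, so the filtration $(G\cdot \E C)_{C}$ is cofinal with the Vietoris--Rips filtration for any finite generating set, and $\chi\in\TopS^m(G;\ZZ)$ becomes literally the statement that $(G_\chi,d|_{G_\chi})$ has type $\Flg m$ in the sense of Definition~\ref{dfn:finiteprops-metric2}. The problem is on the classical side: you assert that the references \cite{Bieri1988,Renz1988,Strebel2012} give ``the resolution form and its translation to Rips complexes'', but what they actually give is the valuation criterion on a free $\ZZ G$-resolution that is finitely generated in each degree up to $m$. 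For a fixed scale $r$, the chain complex $\chain_*(\VR r G)$ is finitely generated but is not a resolution (it is not acyclic), so the Bieri--Renz criterion does not apply to it directly; and the full bar resolution $\ZZ[G^{*+1}]$ is acyclic but not finitely generated. Bridging this gap---showing that essential acyclicity of the Vietoris--Rips filtration of $G_\chi$ is equivalent to $\chi\in\Sigma^m(G;\ZZ)$---is precisely the substance of Theorem~\ref{thm:mainresult}, and it is not a bookkeeping matter.

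The paper's route is genuinely different from the one you sketch: rather than matching two controlled-acyclicity statements, it passes through the Bieri--Eckmann criterion $\Tor_q^{\ZZ G_\chi}(\prod \ZZ G_\chi,\ZZ)=0$ for $1\le q<m$, computes this $\Tor$ using the resolution $\ZZ[G^{*+1}]$ (which is flat, though not free, over $\ZZ G_\chi$), filters by the shifted Rips pieces $t^k\ZZ[\Delta_k^*\cap G_\chi^{*+1}]$, and then uses that $\otimes$ and homology commute with direct limits while $\prod$ commutes with homology and with tensoring against finitely generated free modules. A final elementary argument removes the shift by $t^k$. If you want to salvage your approach, you would need either to supply an independent proof that the classical $\Sigma^m$ admits the Rips-complex characterisation you invoke, or to match the valuation criterion directly against Theorem~\ref{thm:sigmacrit_lc_homol} (noting that for discrete groups ``finitely modeled'' is automatic); either way, this is where the real work lies.
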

Theorem~\ref{thma:classical} is a direct consequence of Theorem~\ref{thm:mainresult}.

A locally compact group $G$ is assigned a sequence of $G$-invariant simplicial sets $(\VR k G)_{k\in\ZZ_{\ge0}}$, the precise definition of which can be found in Section~\ref{sec:def+basics}. Then simplicial $q$-chains $\chain_q(\VR k G)$ of $\VR k G$ are considered as $G$-modules. The chain complexes $(\chain_q(\VR k G),\partial_q)_k$ form a filtration of the standard resolution $(\ZZ[G^{q+1}],\partial_q)$ of the constant $G$-module $\ZZ$. We find it convenient to think of $\chain_*(\VR k G)$ as the graded $\ZZ[G]$-module or $\ZZ$-module $\bigoplus_{q\ge0}\chain_q(\VR k G)$ and we write $\chain_*(\VR k G)^{(m)}$ for the \emph{$m$-skeleton} 
\[
\chain_*(\VR k G)^{(m)}:=\bigoplus_{q=0}^m\chain_q(\VR k G).
\]
It is often more useful to define properties on morphisms than on objects, so we reduced the properties type $\Clg m$ and $\TopS^m(\cdot,\ZZ)$ to properties of chain endomorphisms.
\begin{thma}
\label{thma:mu}
A locally compact Hausdorff group $G$ is of type $\Clg m$ if and only if there exists $K_0\ge 0$ and for every $k\ge K_0$ a finitely modeled $\ZZ G$-chain endomorphism 
\[
\mu_*:{\chains * k G}^{(m)}\to {\chains * {K_0} G}^{(m)}
\]
extending the identity on $\ZZ$.
\end{thma}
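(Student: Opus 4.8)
\emph{Proof proposal.} The plan is to mirror the classical characterization of type $\mathrm{FP}_m$ in chain-complex terms, with finitely modeled free $\ZZ G$-modules in place of finitely generated ones. Two structural facts will be used throughout. First, for each $k$ the $\ZZ G$-module $\chain_q(\VR k G)$ is free on the set of $G$-orbits of $q$-simplices, because $G$ acts diagonally, hence freely, on tuples of group elements; so homological lifting problems against these modules can be solved degreewise. Second, $\varinjlim_k \chain_*(\VR k G) = \ZZ[G^{*+1}]$, the standard resolution of the trivial module, so this colimit, once augmented, is acyclic. Recall that $G$ is of type $\Clg m$ if and only if the directed system $\bigl(\hlgy_q(\VR k G)\bigr)_k$ is essentially trivial, in a finitely modeled sense, for $q < m$ (reduced homology in degree $0$); equivalently, $\ZZ$ admits a finitely modeled partial free $\ZZ G$-resolution in degrees $\le m$.

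For the implication ``$\Clg m \Rightarrow \mu_*$ exist'', I would construct $\mu_*$ by hand, degreewise, starting from some scale $K_0$. In degrees $0$ and $1$ one takes $\mu_0$ to be the inclusion on vertices and $\mu_1$ to send a $1$-simplex of $\VR k G$ to a path in $\VR{K_0}G$ between its endpoints; this requires only that $\VR{K_0}G$ be connected. Having defined $\mu_0,\dots,\mu_q$ with values in $\chain_*(\VR{K_{q-1}}G)$, the obstruction to extending by $\mu_{q+1}$ on a $(q{+}1)$-simplex of $\VR k G$ is a $q$-cycle in $\chain_q(\VR{K_{q-1}}G)$ whose class in $\hlgy_q(\VR{K_{q-1}}G)$ must die; essential triviality of $\bigl(\hlgy_q(\VR k G)\bigr)_k$ produces a single scale $K_q \ge K_{q-1}$ in which all such classes die, allowing $\mu_{q+1}$ to be defined into $\chain_*(\VR{K_q}G)$. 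Iterating up to degree $m$ uses exactly essential triviality for $q \le m-1$, i.e.\ type $\Clg m$; after relabelling $K_0 := K_{m-1}$ one obtains, for every $k \ge K_0$, the desired $\mu_*: \chain_*(\VR k G)^{(m)} \to \chain_*(\VR{K_0}G)^{(m)}$ over $\id_\ZZ$, and it is finitely modeled because the filling chains (paths, discs, and so on) can be chosen continuously over the compact quotients $G\backslash\VR k G^{(q)}$.

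For the converse, I would recover essential triviality of $\bigl(\hlgy_q(\VR k G)\bigr)_k$ for $q < m$ from the maps $\mu_*^{(k)}$. Fix $k \ge K_0$ and $q \le m-1$. The self-map $\chain_*(\VR k G)^{(m)} \to \chain_*(\VR{K_0}G)^{(m)} \into \chain_*(\VR k G)^{(m)}$ obtained from $\mu_*^{(k)}$ and the inclusion, together with the identity of $\chain_*(\VR k G)^{(m)}$, become chain maps over $\id_\ZZ$ into the acyclic complex $\ZZ[G^{*+1}]^{(m)}$; since the source is a complex of projective $\ZZ G$-modules, they are chain homotopic, and---because $\mu_*^{(k)}$ is finitely modeled---the homotopy can be taken finitely modeled, hence supported at some finite scale $k' \ge k$. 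Reading this homotopy off on homology in degree $q$ shows that the inclusion-induced map $\hlgy_q(\VR k G) \to \hlgy_q(\VR{k'}G)$ factors through $\hlgy_q(\VR{K_0}G)$ via $(\mu_*^{(k)})_*$; feeding this relation back into itself (at scale $K_0$, and combining with the analogous relations coming from $\mu_*^{(k')}$ for $k' \gg K_0$) collapses the image to zero, which is essential triviality. This yields type $\Clg m$.

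The step I expect to be the main obstacle is precisely this converse bookkeeping: passing from the existence of the individual retractions $\mu_*^{(k)}$ and the formal comparison homotopies to the \emph{uniform} vanishing demanded by type $\Clg m$. Acyclicity of $\varinjlim_k \chain_*(\VR k G)$ only guarantees that each cycle is eventually a boundary; it is the hypothesis that every $\mu_*^{(k)}$ is finitely modeled that upgrades this to a statement at a single finite scale, and making that upgrade precise---keeping track of modelling data over the compact quotients $G\backslash\VR k G$ and of how the scales $K_0 \le k \le k'$ interact under composition---is where the real work lies. Correspondingly, in the forward direction the only non-formal point is to carry out the degreewise choices continuously, which is again a matter of the compactness of those quotients.
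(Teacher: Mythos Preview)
Your overall outline matches the paper's: an inductive degreewise construction in one direction and a chain-homotopy argument in the other. But both directions have gaps worth naming.

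In the forward direction, you claim the construction is finitely modeled because the fillings ``can be chosen continuously over the compact quotients $G\backslash\VR k G^{(q)}$''. Continuity is not the right mechanism here, and it is unclear how a continuous family of chains would produce only finitely many shapes. The paper's device is instead an open-cover argument on the space of cycles: the set $\mathcal{M}$ of chains $\mu_{q-1}\circ\partial_q(\sigma)$, as $\sigma$ ranges over based $q$-simplices of $\VR k G$, has compact closure in a product of copies of $G$ (this uses that $\mu_{q-1}$ is already finitely modeled, hence has uniformly bounded vertices on based simplices). For each such cycle there is an open set $U_{S,c}$ of nearby cycles that can all be filled using a single fixed shape $S$ and fixed interior vertex data $c$; compactness of $\overline{\mathcal{M}}$ then gives a finite subcover, hence finitely many shapes. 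This is precisely where the local compactness of $G$ enters, and it is not captured by ``continuous choice''.

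In the converse direction there is a genuine gap. From the chain homotopy $\eta$ you correctly deduce that the inclusion-induced map $\hlgy_q(\VR k G)\to\hlgy_q(\VR{k'}G)$ equals $\iota_*\circ(\mu^{(k)})_*$, hence factors through $\hlgy_q(\VR{K_0}G)$. But factoring through $\hlgy_q(\VR{K_0}G)$ does not force the map to be zero: that group may well be nonzero, and $\mu^{(K_0)}$ is itself homotopic to the identity after inclusion, so ``feeding the relation back into itself'' reproduces the same factorization rather than collapsing anything. The paper's argument is different and uses the acyclicity of the ambient resolution $\ZZ[G^{*+1}]$ directly: given a cycle $z$ at scale $k$, first find a filling $c\in\ZZ[G^{m+1}]$ at some uncontrolled scale, and then observe that $\mu_m(c)+\eta_{m-1}(z)$ is a filling of $z$ at the controlled scale $\max(K_0,l)$, where $l$ is the scale at which $\eta$ lands (this $l$ exists precisely because $\mu$ is finitely modeled). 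The map $\mu_m$ compresses the uncontrolled filling down to scale $K_0$, and $\eta_{m-1}(z)$ repairs the boundary discrepancy $z-\mu_{m-1}(z)$ at scale $l$. You correctly identify that the finitely-modeled hypothesis is what bounds the scale of $\eta$, but your purely homological factorization argument never invokes the acyclic colimit and cannot reach essential triviality without it.
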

In this way, a criterion for type $\Clg m$ is a chain endomorphism sending everything to one index. Theorem~\ref{thma:mu} can be compared with \cite[Theorem~A]{Hartmann2024a}. However, not every chain endomorphism $\mu$ that maps everything to one index is a witness for type $\Clg m$. We need $\mu$ to be finitely modeled; the precise definition can be found in Section~\ref{sec:criteria}. A proof of Theorem~\ref{thma:mu} can be found in Theorem~\ref{thm:mu}.

A continuous homomorphism $\chi:G\to \RR$ is also called \emph{a character on $G$}. Note that this is not a redefinition of the usual notion of a character as a group homomorphism $\chi:G\to \RR$ between abstract groups. Indeed, the category of groups can be fully faithfully embedded in the category of topological groups (whose morphisms are continuous homomorphisms) by endowing an abstract group with the discrete topology, and a homomorphism between abstract groups is sent to the same mapping that is still a homomorphism and continuous since the domain has the discrete topology. Also, the same argument works if the codomain (for example $\RR$) is not discrete.

If $\chi:G\to \RR$ is a character, then every free $G$-module, in particular $\chain_q(\VR k G)$, is equipped with a canonical valuation extending $\chi$.
\begin{thma}
\label{thma:varphi}
 If $G$ is a locally compact Hausdorff group of type $\Clg m$ and $\chi:G\to \RR$ a nonzero character then $\chi\in \TopS^m(G;\ZZ)$ if and only if for every $k\gg 0$ there is $K\ge 0$ and a finitely modeled chain endomorphism of $\ZZ G$-complexes 
 \[
 \varphi_*:{\chains * k G}^{(m)}\to {\chains * k G}^{(m)}
 \]
 extending the identity on $\ZZ$, such that $v(\varphi_q(c))-v(c)\ge K$ for every $c\in \chains q k G$, $q=0,\ldots,m$.
\end{thma}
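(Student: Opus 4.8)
The plan is to reduce the two-index criterion of Theorem~\ref{thma:varphi} to the ``one-index'' descriptions of the two properties involved. Type $\Clg m$ is captured by Theorem~\ref{thma:mu}, and I would first establish the parallel one-index description of membership in $\TopS^m(G;\ZZ)$ — namely that, for $k\gg0$, $\chi\in\TopS^m(G;\ZZ)$ if and only if there is $K_0\ge0$ and a finitely modeled $\ZZ G$-chain endomorphism $\chains*kG^{(m)}\to\chains*{K_0}G^{(m)}$ extending the identity on $\ZZ$ which, in addition to collapsing indices as in Theorem~\ref{thma:mu}, \emph{raises} the valuation $v$ induced by $\chi$ by a (then necessarily arbitrarily large) amount. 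I expect this to follow from the definition of $\TopS^m(G;\ZZ)$ by the same argument that proves Theorem~\ref{thma:mu}; if $\TopS^m(G;\ZZ)$ is instead set up as a cocompactness condition on the $\chi$-sublevel sets — equivalently a type-$\Clg m$ condition for $\ZZ$ over the closed submonoid $G_\chi=\chi^{-1}([0,\infty))$ — this one-index statement is the translation of Theorem~\ref{thma:mu} for $G_\chi$ under the dictionary between $\ZZ G_\chi$-modules and the valuation filtration of $\chains*kG$.

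Two bookkeeping facts about finitely modeled maps would be recorded at the outset. First, finitely modeled $\ZZ G$-chain maps are closed under composition. Second, a finitely modeled $\ZZ G$-chain map $\psi_*$ satisfies $v(\psi_q(c))-v(c)\ge -C$ for a constant $C\ge0$ independent of $c$ and of $q\le m$, because $\psi_*$ is determined by its values on a compact set of module generators together with $G$-equivariance, and left translation by $g$ shifts $v$ by $\chi(g)$. I would also note that for $K_0\le k$ the inclusion $\chains*{K_0}G^{(m)}\into\chains*kG^{(m)}$ is a finitely modeled, valuation-preserving $\ZZ G$-chain map extending the identity on $\ZZ$ (it is the identity on a compact set of generators), as are, more generally, the comparison maps between indices for a group of type $\Clg m$.

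Granting the one-index description, the two implications are short. For the forward direction: given $\mu_*\colon\chains*kG^{(m)}\to\chains*{K_0}G^{(m)}$ as above (valid for $k\gg0$, so in particular $k\ge K_0$), the composite $\varphi_*:=\iota_*\circ\mu_*$ with the inclusion $\iota_*\colon\chains*{K_0}G^{(m)}\into\chains*kG^{(m)}$ is a finitely modeled $\ZZ G$-chain endomorphism of $\chains*kG^{(m)}$ extending the identity on $\ZZ$ with $v(\varphi_q(c))-v(c)\ge K$, where $K\ge0$ is the amount by which $\mu_*$ raises $v$. For the converse: fix $k$ large enough that Theorem~\ref{thma:mu} applies and that $\varphi_*$ as in the statement exists, let $\mu_*\colon\chains*kG^{(m)}\to\chains*{K_0}G^{(m)}$ be the endomorphism from Theorem~\ref{thma:mu}, and let $C\ge0$ be its valuation-loss bound. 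Each iterate $\varphi_*^{n}$ again extends the identity on $\ZZ$, is finitely modeled by the first bookkeeping fact, and satisfies $v(\varphi^n_q(c))-v(c)\ge nK$; choosing $n$ with $nK>C$, the composite $\mu_*\circ\varphi_*^{n}\colon\chains*kG^{(m)}\to\chains*{K_0}G^{(m)}$ is a finitely modeled $\ZZ G$-chain endomorphism extending the identity on $\ZZ$ that strictly raises $v$, hence witnesses $\chi\in\TopS^m(G;\ZZ)$ by the one-index description.

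The main obstacle is not this composition-and-iteration skeleton but its two inputs: making the ``finitely modeled'' formalism precise enough to see that it is closed under composition and under post-composition with the standard index inclusions and carries a uniform valuation-loss bound; and, above all, establishing the one-index reformulation of $\TopS^m(G;\ZZ)$ that mirrors Theorem~\ref{thma:mu}. A related point requiring care is reconciling the bare ``$K\ge0$'' of the statement with the strictly positive shift used in the converse: one must argue that a finitely modeled, valuation-nondecreasing chain endomorphism extending the identity can in fact be taken to raise $v$ by an arbitrarily large amount — for instance because this is built into the one-index description, or by a short separate iteration argument at the level of $\chi$-sublevel sets. Once these are in hand, the valuation bookkeeping in both implications is routine.
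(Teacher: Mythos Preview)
Your skeleton of ``iterate and compose with the $\mu_*$ of Theorem~\ref{thma:mu}'' is sound, and in fact the paper uses exactly this device---but in the proof of Theorem~\ref{thma:crit2}, where it serves to pin down a single index $k$, not in the proof of Theorem~\ref{thma:varphi} itself. The difficulty is that your reduction lands on a lemma (the ``one-index description'') that is, up to the final inclusion $\chains*{K_0}G\hookrightarrow\chains*kG$, the theorem you are trying to prove. Saying it follows ``by the same argument that proves Theorem~\ref{thma:mu}'' underestimates the new content: the point of Theorem~\ref{thma:varphi} over Theorem~\ref{thma:mu} is precisely the valuation-raising, and this has to be built into the inductive construction of $\varphi_q$. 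In the paper's proof (Theorem~\ref{thm:sigmacrit_lc_homol}) this is done directly: one starts with $\varphi_0(1_G)=t$ for a fixed $t$ with $\chi(t)\ge (m{+}1)K$, and at the inductive step the cover $\{U_{S,c}\}$ of $\bar{\mathcal M}$ is refined by an extra closed condition $C_2$ forcing each inner vertex of the filler to have $\chi$-value at least that of some boundary vertex; the filler is chosen using a connecting vector for $G_\chi$, not for $G$. This is exactly the work your proposal defers. The converse direction in the paper is also not the abstract one you outline: rather than invoking a one-index criterion, one takes a cycle $z\in\chains{m-1}k{G_\chi}$, finds $c\in\chains m{l_1}G$ with $\partial c=z$ using $\Clg m$ for $G$, and then corrects $c$ into $G_\chi$ by summing iterates $\varphi^{\circ i}_m$ applied to the chain homotopy $\eta_{m-1}(z)$ from Lemma~\ref{lem:homotopy_homology_locallycompact} together with $\varphi^{\circ n+1}_m(c)$ for $n$ large.

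Two smaller points. First, your alternative---viewing the one-index statement as ``Theorem~\ref{thma:mu} for the monoid $G_\chi$''---does not work out of the box: $G_\chi$ is not a group, so neither the compact-neighborhood/word-metric setup nor the $\ZZ G$-basis description of $\chains qkG$ transfers without rebuilding the whole machinery over $\ZZ G_\chi$; the paper avoids this by staying with $\ZZ G$-modules and encoding $G_\chi$ through $v$. Second, you are right to flag $K\ge0$ versus $K>0$: the paper's working statement (Theorem~\ref{thm:sigmacrit_lc_homol}) has $K>0$, and the $3\Rightarrow1$ argument genuinely needs a strictly positive gain to terminate. Your bookkeeping claims are correct and are used in the paper (the uniform valuation-loss bound for $\mu_*$ is exactly the constant $L$ appearing in Theorem~\ref{thm:crit2}), but closure of ``finitely modeled'' under composition is not entirely formal given the connected/centric-shape conditions in the definition and deserves a line of argument if you rely on it.
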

In this way, a criterion for $\Sigma^m(\cdot,\ZZ)$ is a chain endomorphism that stays in the same index and raises $\chi$-value. Again we need this chain endomorphism to be finitely modeled, which is a property we don't need to impose in \cite[Theorem~B]{Hartmann2024a}. A proof of Theorem~\ref{thma:varphi} can be found in Theorem~\ref{thm:sigmacrit_lc_homol}.

A direct consequence of Theorem~\ref{thma:mu} and Theorem~\ref{thma:varphi} is that computing type $\Clg m$ for the ambient group $G$ is already the bulk of the work when one wants to compute $\TopS^m$.
\begin{thma}
 \label{thma:crit2}
 Let $G$ be a locally compact Hausdorff group of type $\Clg m$. Then there exists $k\ge0$ such that for every nonzero character $\chi:G\to \RR$ the following are equivalent:
 \begin{enumerate}
  \item $\chi\in \TopS^m(G;\ZZ)$;
  \item there exist $K>0$ and a finitely modeled chain endomorphism of $\ZZ G$-complexes 
  \[
  \varphi_*:{\chains * k G}^{(m)}\to {\chains * k G}^{(m)}
  \]
  extending the identity on $\ZZ$ such that for every $q=0,\ldots,m$ and $c\in \chains q k G$ we have
  \[
   v(\varphi_q(c))-v(c)\ge K.
  \]
 \end{enumerate}
\end{thma}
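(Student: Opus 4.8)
The plan is to take the integer $k$ in the statement to be the constant $K_0$ furnished by Theorem~\ref{thma:mu} for $G$, and to deduce both implications from Theorem~\ref{thma:varphi} by transporting finitely modeled chain endomorphisms between level $K_0$ and higher levels $\ell \ge K_0$ along the comparison maps of Theorem~\ref{thma:mu}. I will use two facts that should be immediate from the definition of \emph{finitely modeled} in Section~\ref{sec:criteria}: (a) every finitely modeled $\ZZ G$-chain map $f$ between $m$-skeleta has bounded $\chi$-distortion, i.e.\ for each nonzero character $\chi$ there is a constant $C = C(\chi,f) \ge 0$ with $v(f_q(c)) - v(c) \ge -C$ for all $q \le m$ and all $c$; and (b) the class of finitely modeled $\ZZ G$-chain maps extending the identity on $\ZZ$ is closed under composition, while for $\ell \ge K_0$ the inclusions $\iota_\ell \colon {\chains * {K_0} G}^{(m)} \hookrightarrow {\chains * \ell G}^{(m)}$ are themselves finitely modeled, valuation-preserving, and extend the identity.

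For the implication $(2) \Rightarrow (1)$, I would start from the given $\varphi_*$ and $K > 0$ at level $K_0$ and, for each $\ell \ge K_0$, invoke Theorem~\ref{thma:mu} to obtain a finitely modeled chain map $\mu^{(\ell)}_* \colon {\chains * \ell G}^{(m)} \to {\chains * {K_0} G}^{(m)}$ extending the identity, whose $\chi$-distortion is bounded below by some $-C_\ell$ via (a). Choosing $n$ with $nK > C_\ell$, the composite $\psi^{(\ell)}_* := \iota_\ell \circ (\varphi_*)^n \circ \mu^{(\ell)}_*$ is a finitely modeled chain endomorphism of ${\chains * \ell G}^{(m)}$ extending the identity that raises the valuation by at least $nK - C_\ell > 0$ (the power $(\varphi_*)^n$ contributes $\ge nK$, the inclusion $0$, and $\mu^{(\ell)}_*$ at worst $-C_\ell$). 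Since such an endomorphism exists for every $\ell \ge K_0$, Theorem~\ref{thma:varphi} then yields $\chi \in \TopS^m(G;\ZZ)$.

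For the implication $(1) \Rightarrow (2)$, I would use Theorem~\ref{thma:varphi} to produce, at some single level $k' \ge K_0$, a finitely modeled chain endomorphism $\varphi'_*$ of ${\chains * {k'} G}^{(m)}$ extending the identity with $v(\varphi'_q(c)) - v(c) \ge K' > 0$, and then transport it down to level $K_0$: with $\mu_* \colon {\chains * {k'} G}^{(m)} \to {\chains * {K_0} G}^{(m)}$ from Theorem~\ref{thma:mu} (of bounded distortion, say $\ge -C$, via (a)) and $n$ chosen so that $nK' > C$, the composite $\varphi_* := \mu_* \circ (\varphi'_*)^n \circ \iota_{k'}$ is a finitely modeled chain endomorphism of ${\chains * {K_0} G}^{(m)}$ extending the identity with $v(\varphi_q(c)) - v(c) \ge nK' - C > 0$ for all $q \le m$ and $c$. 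This is exactly condition $(2)$ with $k = K_0$ and $K = nK' - C$; crucially $k = K_0$ depends only on $G$ (and $m$), whereas $n$ and $K$ are allowed to depend on $\chi$, which is all that the existential quantifier in $(2)$ requires.

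The main obstacle — really the only non-formal point — is fact (a) applied to the comparison maps $\mu$ of Theorem~\ref{thma:mu}: these are only required to be finitely modeled, not $\chi$-increasing, so a priori they could destroy the $\chi$-value accumulated by $\varphi$. What rescues the argument is precisely that finite modeling pins the distortion down to a finite constant (continuity of $\chi$ on a compact amount of group data), which a sufficiently high power of the genuinely $\chi$-increasing endomorphism can absorb; this is also why the criterion of Theorem~\ref{thma:varphi} is insensitive to the exact value of $K$. Once (a) and (b) are in place the theorem is a direct formal consequence of Theorems~\ref{thma:mu} and~\ref{thma:varphi}, as anticipated in the discussion preceding the statement.
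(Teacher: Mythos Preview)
Your proof is correct and follows essentially the same strategy as the paper's: both set $k$ equal to the constant $K_0$ of Theorem~\ref{thma:mu} (the paper writes $k=k_m+1$), transport finitely modeled endomorphisms between levels via the maps $\mu$ of Theorem~\ref{thma:mu} and the inclusion, and use an iterated power of the $\chi$-raising endomorphism to absorb the bounded $\chi$-distortion of $\mu$. Your auxiliary facts (a) and (b) are precisely what the paper relies on as well---fact (a) is established there via Lemma~\ref{lem:homotopy_homology_locallycompact}, while fact (b) is invoked without further comment in the paper just as in your argument.
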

A proof of Theorem~\ref{thma:crit2} can be found in Theorem~\ref{thm:crit2}.

In the classical setting, homological $\Sigma$-invariants form an open subset in the sphere of characters modulo positive multiples. One could also say that they form a cone over an open subset in $\Hom(G,\RR)$. The same holds also true for locally compact $\Sigma$-invariants.
\begin{thma}
\label{thma:open}
 The subset $\TopS^m(G,\ZZ)$ is a cone over an open subset in $\TopHom(G,\RR)$ provided $\Sigma^m\not=\emptyset$.
\end{thma}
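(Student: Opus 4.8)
The plan is to derive both assertions — invariance of $\TopS^m(G;\ZZ)$ under multiplication by positive reals, and openness of the set of directions it represents — from the uniform criterion in Theorem~\ref{thma:crit2}. Since $\TopS^m(G;\ZZ)\neq\emptyset$, Proposition~\ref{propa:veryeasy} tells us $G$ is of type $\Clg m$, so Theorem~\ref{thma:crit2} applies; fix once and for all the $k\ge 0$ it provides, depending only on $G$, so that a nonzero character $\psi\colon G\to\RR$ lies in $\TopS^m(G;\ZZ)$ exactly when there exist $K>0$ and a finitely modeled $\ZZ G$-chain endomorphism $\varphi_*$ of ${\chains * k G}^{(m)}$ extending $\id_\ZZ$ with $v_\psi(\varphi_q(c))-v_\psi(c)\ge K$ for all $q\le m$ and all $c\in\chains q k G$. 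Here $v_\psi$ is the canonical valuation extending $\psi$ on the free $\ZZ G$-modules $\chains q k G$; I will use only that $v_\psi(g\cdot x)=\psi(g)+v_\psi(x)$ for $g\in G$, that the valuation of a chain is the minimum of the valuations over its support, and that the valuation of a chain supported on simplices whose vertices all lie in a compact set $S\subseteq G$ is a minimum of values of $\psi$ at finitely many points of $S$. The cone property is then immediate: for $\lambda>0$ one has $v_{\lambda\psi}=\lambda\,v_\psi$ on every free $\ZZ G$-module, so a witness $(\varphi_*,K)$ for $\chi$ is simultaneously a witness $(\varphi_*,\lambda K)$ for $\lambda\chi$ with $\lambda K>0$; Theorem~\ref{thma:crit2} applied to $\lambda$ and to $\lambda^{-1}$ gives $\RR_{>0}$-invariance. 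The content is openness.

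For openness I would fix a nonzero $\chi\in\TopS^m(G;\ZZ)$ together with a witness $(\varphi_*,K)$, and show that the \emph{same} $\varphi_*$ remains a witness, with a slightly smaller constant, for every character in a suitable neighborhood of $\chi$. Two compact subsets of $G$ enter. First, by construction of $\VR k G$ there is a compact symmetric subset $B\subseteq G$ containing $e$ with $x_i^{-1}x_j\in B$ for any two vertices of a simplex of $\VR k G$; so if, for each $G$-orbit of $q$-simplices, we choose a representative $\sigma_e$ with a vertex at $e$, all vertices of $\sigma_e$ lie in $B$. Second — and this is the crucial use of the hypothesis that $\varphi_*$ is \emph{finitely modeled} — there is a compact $L\subseteq G$ such that, for every $q\le m$ and every chosen representative $\sigma_e$, the chain $\varphi_q(\sigma_e)$ is supported on simplices whose vertices all lie in $L$. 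Put $C:=B\cup L$.

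Now let $\sigma=g\,\sigma_e$ be an arbitrary $q$-simplex of $\VR k G$. Using $\ZZ G$-linearity of $\varphi_*$ and $v_\psi(g\cdot x)=\psi(g)+v_\psi(x)$, the valuation shift at $\sigma$ is independent of $g$: $v_\psi(\varphi_q(\sigma))-v_\psi(\sigma)=v_\psi(\varphi_q(\sigma_e))-v_\psi(\sigma_e)$ for every character $\psi$. Both $v_\psi(\sigma_e)$ and $v_\psi(\varphi_q(\sigma_e))$ are minima of values of $\psi$ at fixed finite subsets of $C$, so each changes by at most $\varepsilon_C(\psi):=\sup_{x\in C}|(\psi-\chi)(x)|$ when $\psi$ is replaced by $\chi$; hence
\[
 \bigl|\bigl(v_\psi(\varphi_q(\sigma))-v_\psi(\sigma)\bigr)-\bigl(v_\chi(\varphi_q(\sigma))-v_\chi(\sigma)\bigr)\bigr|\ \le\ 2\,\varepsilon_C(\psi),
\]
uniformly over all $q\le m$ and all $q$-simplices $\sigma$ (the bound does not depend on which orbit $\sigma$ lies in). From the witness condition, $v_\chi(\varphi_q(\sigma))-v_\chi(\sigma)\ge K$ for every simplex $\sigma$, hence $v_\psi(\varphi_q(\sigma))-v_\psi(\sigma)\ge K-2\varepsilon_C(\psi)$; and since the valuation is computed support-wise and $\varphi_*$ is additive and $\ZZ G$-linear, this propagates to $v_\psi(\varphi_q(c))-v_\psi(c)\ge K-2\varepsilon_C(\psi)$ for all $c\in\chains q k G$ and $q\le m$. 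Finally, $U_\chi:=\{\psi\in\TopHom(G,\RR):\varepsilon_C(\psi)<K/4\}\setminus\{0\}$ is an open neighborhood of $\chi$ in $\TopHom(G,\RR)$ — the first set is open in the compact-open topology since $C$ is compact, and deleting $0$ keeps it open since the topology is Hausdorff — and for every $\psi\in U_\chi$ the pair $(\varphi_*,K/2)$ is a witness, so $\psi\in\TopS^m(G;\ZZ)$ by Theorem~\ref{thma:crit2}. Thus $\TopS^m(G;\ZZ)\setminus\{0\}$ is open in $\TopHom(G,\RR)$; together with $\RR_{>0}$-invariance and $0\in\TopS^m(G;\ZZ)$, this exhibits $\TopS^m(G;\ZZ)$ as the cone over the open subset $\TopS^m(G;\ZZ)\setminus\{0\}$ of $\TopHom(G,\RR)$.

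I expect the main obstacle to be isolating the compact set $L$ from the definition of a finitely modeled chain endomorphism — equivalently, establishing that the valuation shift of $\varphi_*$ at a simplex depends only on the restriction of the character to a single compact subset of $G$, with a Lipschitz bound uniform over the (a priori infinitely many) orbits of simplices. This is precisely where the hypothesis of finite modeledness cannot be dropped: for a chain endomorphism that merely raises $\chi$-value, as in \cite[Theorem~B]{Hartmann2024a}, no such compact $L$ need exist, the quantity $\varepsilon_C(\psi)$ can be infinite, and the valuation shift need not vary continuously with the character. Everything else is routine manipulation of valuations on free $\ZZ G$-modules together with the description of basic open sets in the compact-open topology.
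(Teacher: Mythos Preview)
Your proposal is correct and follows essentially the same strategy as the paper: fix a witness $\varphi_*$ for $\chi$ via Theorem~\ref{thma:crit2} and show that the same $\varphi_*$ witnesses membership for all nearby characters. The compact set $L$ you need is exactly the content of the first claim of Lemma~\ref{lem:homotopy_homology_locallycompact}, and your continuity argument via the seminorm $\varepsilon_C(\psi)=\sup_{x\in C}|\psi(x)-\chi(x)|$ is a cleaner repackaging of the paper's Lemma~\ref{lem:uvarphi_continuous}, which introduces the function $u_\varphi(\psi)=\inf_{\sigma}\bigl(v_\psi(\varphi(\sigma))-v_\psi(\sigma)\bigr)$ and proves its continuity by an explicit cover-and-estimate argument before defining $U(\varphi)=u_\varphi^{-1}(0,\infty)$.
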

A proof of Theorem~\ref{thma:open} can be found in Theorem~\ref{thm:open}.

Suppose that we are faced with a short sequence of locally compact groups $1\to N\to G\to Q\to 1$ that is exact as abstract groups and where $N$ is a closed subspace of $G$ and $Q$ is endowed with the quotient topology from $G$. If $\chi:G\to \RR$ is a character that vanishes on $N$ then it descends to a character $\bar\chi:Q\to \RR$. In case the sequence is split exact, the following result is immediate.

\begin{lema}
\label{lema:semidirect}
  If $G=N\rtimes Q$ is a semidirect product with $N$ closed in $G$ and $\chi$ is a character on $G$ with $\chi\in\TopS^m(G,\ZZ)$ then $\bar\chi\in \TopS^m(Q,\ZZ)$.
\end{lema}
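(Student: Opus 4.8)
The plan is to transport the chain endomorphism provided by Theorem~\ref{thma:varphi} for $(G,\chi)$ forward along the retraction $G\to Q$. Write $\pi\colon G\to Q$ for the projection and $s\colon Q\to G$ for the inclusion of the complementary factor; these are continuous homomorphisms with $\pi\circ s=\id_Q$ and $\ker\pi=N$ (closedness of $N$ is what makes $Q$, with the quotient topology, Hausdorff). Since $\chi$ vanishes on $N$, we have $\bar\chi\circ\pi=\chi$ and $\bar\chi=\chi\circ s$. The first issue is index bookkeeping, and I would resolve it by choosing the compact exhaustion defining $\VR{*}{G}$ compatibly with the one for $\VR{*}{Q}$: fixing symmetric compact exhaustions $(A_k)_k$ of $N$ and $(V_k)_k$ of $Q$ containing the identity, take $U_k:=A_k\,s(V_k)\cup s(V_k)\,A_k$ as the exhaustion of $G=N\cdot s(Q)$ (this is harmless, as type $\Clg m$ and $\TopS^m(\cdot;\ZZ)$ do not depend on the chosen exhaustion). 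Then $\pi(U_k)=V_k$ and $s(V_k)\subseteq U_k$, so for every $k$ both $\pi$ and $s$ induce simplicial maps $\pi_*\colon\VR kG\to\VR kQ$ and $s_*\colon\VR kQ\to\VR kG$ for the \emph{same} index $k$, with $\pi_*\circ s_*=\id$.

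Next I would record the elementary properties of the induced chain maps. Both $\pi_*$ and $s_*$ send vertices to vertices, hence commute with the augmentations to $\ZZ$. Regarding $\chains{*}{k}{G}$ as a $\ZZ Q$-module via $s$, the map $s_*$ is $\ZZ Q$-linear and $\pi_*$ is $\ZZ Q$-linear as well, so all composites with the $\ZZ G$-linear endomorphisms below are $\ZZ Q$-linear. Finally, on valuations: writing the canonical valuation of a simplex $(g_0,\dots,g_q)$ as $\min_i\chi(g_i)$ (should the precise convention differ, it does so by a constant bounded in terms of $k$, which is absorbed after replacing the endomorphism below by a power of itself), the identities $\bar\chi\circ\pi=\chi$ and $\chi\circ s=\bar\chi$ yield $v\bigl(\pi_*(x)\bigr)\ge v(x)$ for $x\in\chains{*}{k}{G}$ and $v\bigl(s_*(c)\bigr)= v(c)$ for $c\in\chains{*}{k}{Q}$.

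With these tools in hand, I first note that $Q$ is of type $\Clg m$: as $\TopS^m(G;\ZZ)\neq\emptyset$, $G$ is of type $\Clg m$, and if $\mu_*\colon{\chains{*}{k}{G}}^{(m)}\to{\chains{*}{K_0}{G}}^{(m)}$ is a finitely modeled $\ZZ G$-chain endomorphism extending $\id_\ZZ$ as in Theorem~\ref{thma:mu}, then $\pi_*\circ\mu_*\circ s_*\colon{\chains{*}{k}{Q}}^{(m)}\to{\chains{*}{K_0}{Q}}^{(m)}$ is such an endomorphism for $Q$, so Theorem~\ref{thma:mu} applies to $Q$. If $\bar\chi=0$ then $\chi=0$ and $\bar\chi\in\TopS^m(Q;\ZZ)$ by Proposition~\ref{propa:veryeasy}. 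If $\bar\chi\neq0$, fix $k\gg0$ and let $\varphi_*\colon{\chains{*}{k}{G}}^{(m)}\to{\chains{*}{k}{G}}^{(m)}$ be the finitely modeled $\ZZ G$-chain endomorphism from Theorem~\ref{thma:varphi}, extending $\id_\ZZ$ with $v(\varphi_q(c))-v(c)\ge K>0$ for all $c$ (replacing $\varphi_*$ by an iterate, we may take $K$ arbitrarily large). Then $\psi_*:=\pi_*\circ\varphi_*\circ s_*\colon{\chains{*}{k}{Q}}^{(m)}\to{\chains{*}{k}{Q}}^{(m)}$ is a $\ZZ Q$-chain endomorphism extending $\id_\ZZ$ with
\[
v\bigl(\psi_q(c)\bigr)=v\bigl(\pi_*\varphi_q(s_*c)\bigr)\ge v\bigl(\varphi_q(s_*c)\bigr)\ge v(s_*c)+K= v(c)+K ,
\]
so Theorem~\ref{thma:varphi} applied to $(Q,\bar\chi)$ gives $\bar\chi\in\TopS^m(Q;\ZZ)$. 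The one nonformal step — which I expect to be the main obstacle — is verifying that $\psi_*$ (and $\pi_*\mu_*s_*$) is again \emph{finitely modeled} in the sense of Section~\ref{sec:criteria}; this should follow from closure of finitely modeled endomorphisms under composition together with the fact that the maps $\pi_*$ and $s_*$, being induced by continuous homomorphisms, are themselves finitely modeled.
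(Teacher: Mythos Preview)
Your route via Theorem~\ref{thma:varphi} is different from the paper's and considerably heavier. The paper works directly with Definition~\ref{dfn:main} (equivalently, Corollary~\ref{cor:supermetric}): it observes that the continuous homomorphisms $\pi$ and $s$ restrict to coarsely Lipschitz maps $\pi'\colon G_{\chi}\to Q_{\bar\chi}$ and $t'\colon Q_{\bar\chi}\to G_{\chi}$ with $\pi'\circ t'=\id$, so $\id_{Q_{\bar\chi}}$ factors through $G_{\chi}$ at the level of ind-objects $(\VR{k}{\cdot})_k$. Essential triviality of the ind-homology of $(\VR{k}{G_{\chi}})_k$ then immediately forces essential triviality for $(\VR{k}{Q_{\bar\chi}})_k$, and hence $\bar\chi\in\TopS^m(Q;\ZZ)$. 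No chain endomorphisms, shapes, or valuation bookkeeping are needed.

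The gap you flag is real and is precisely why the paper avoids your path. The property ``finitely modeled'' in Section~\ref{sec:criteria} is not a purely formal notion that is obviously stable under pre- and post-composition with simplicial maps of the kind $\pi_*$, $s_*$: it demands that each $\varepsilon_q(\sigma)$ decompose into pieces whose shapes lie in a fixed finite list and are \emph{connected, nondegenerate, and centric}, with the decomposition matching that of $\varepsilon_{q-1}\circ\partial_q(\sigma)$ into connected components, and with $G$-finiteness of interior vertices. Applying $\pi_*$ can collapse vertices, so shapes may become degenerate, connected components may merge, and the centricity condition may fail; re-establishing the required structure (and checking $Q$-finiteness of interior vertices) is not a formality. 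Your proposed justification --- ``closure under composition together with the fact that $\pi_*$ and $s_*$ are finitely modeled'' --- is not supplied by the paper and would need genuine work. The definition-level retraction argument bypasses all of this in two lines.
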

A proof of Lemma~\ref{lema:semidirect} can be found in Lemma~\ref{lem:semidirect}.

Ultimately, in the case of an exact sequence of topological groups that does not necessarily split, one can also use chain endomorphisms as a tool, which in this case are not $G$-equivariant.
\begin{thma}
\label{thma:ge1}
 If $1\to N\to G\to Q\to 1$ is a short exact sequence of locally compact groups then: 
 \begin{enumerate}
  \item if $N$ is of type $\Clg m$ and $\bar\chi\in \TopS^m(Q;\ZZ)$ then $\chi\in 
\TopS^m(G;\ZZ)$;
  \item if $N$ is of type $\Clg {m-1}$ and $\chi\in \TopS^m(G;\ZZ)$ then $\bar\chi\in \TopS^m(Q;\ZZ)$.
 \end{enumerate}
 In particular:
 \begin{enumerate}
  \item if $N$ is of type $\Clg m$ and $Q$ is of type $\Clg m$ then $G$ is of type $\Clg m$;
  \item if $N$ is of type $\Clg {m-1}$ and $G$ is of type $\Clg m$ then $Q$ is of type $\Clg m$.
 \end{enumerate}
\end{thma}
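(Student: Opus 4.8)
The plan is to transcribe the classical proofs of the $\mathrm{FP}_m$ and $\Sigma^m$ extension theorems, replacing free resolutions and monoid rings by the complexes $\chain_*(\VR k G)$ and the $\mathrm{FP}_m$/$\Sigma^m$ conditions by the finitely modeled chain-endomorphism criteria of Theorems~\ref{thma:mu}, \ref{thma:varphi} and \ref{thma:crit2}. First I would note that the four ``in particular'' assertions are precisely the cases $\chi=0$ of (1) and (2): indeed $\bar 0=0$, and by Proposition~\ref{propa:veryeasy} the condition ``$0\in\TopS^m(-;\ZZ)$'' means ``type $\Clg m$'' for each of $N$, $G$, $Q$. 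These $\chi=0$ statements must in fact be proved first, because invoking the criterion of Theorem~\ref{thma:crit2} for $G$ (respectively $Q$) in the general case already presupposes $G$ (respectively $Q$) of type $\Clg m$. So the argument has two stages: establish the two type-$\Clg m$ implications, then upgrade to arbitrary $\chi$ by tracking valuations.

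The geometric ingredient is the projection $p\colon G\onto Q$. As $p$ is a quotient map of locally compact groups, compact subsets of $Q$ lift to compact subsets of $G$; taking the structure data of $\VR k Q$ to be the $p$-image of that of $\VR k G$, one gets, for $k'$ large enough (depending only on $k$ and $m$), a surjective $\ZZ G$-chain map $p_*\colon\chain_*(\VR{k'}G)^{(m)}\onto\chain_*(\VR k Q)^{(m)}$ over $\id_\ZZ$ with $v(p_*(c))\ge v(c)$ for the valuations attached to $\chi$ and $\bar\chi$, since $\bar\chi\circ p=\chi$ and $N\le\ker\chi$ make the $\chi$-value of a simplex equal to the $\bar\chi$-value of its image. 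Filtering $\chain_*(\VR{k'}G)$ by preimages of skeleta of $\VR k Q$ displays $p_*$ as a ``finitary fibration'' whose fibres over vertices are translates of the complexes $\chain_*(\VR{\cdot}N)$; so $\ker p_*$ is finitarily $j$-acyclic whenever $N$ is of type $\Clg{j+1}$, and all the relevant maps can be assembled finitely modeled out of the endomorphism $\mu^N_*$ of Theorem~\ref{thma:mu}. In particular, if $N$ is of type $\Clg m$, lifting the identity degree by degree yields a finitely modeled $\ZZ G$-chain map $s_*\colon\chain_*(\VR k Q)^{(m)}\to\chain_*(\VR{k''}G)^{(m)}$ over $\id_\ZZ$ with $p_*s_*=\id$: lift a simplex, then cancel the boundary discrepancy inside the finitarily acyclic $\ker p_*$, the degree-$m$ step being exactly where type $\Clg m$ (and not merely $\Clg{m-1}$) of $N$ is consumed.

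Granting $p_*$ and $s_*$, part~(1)---and with $\chi=0$ the claim that $N,Q$ of type $\Clg m$ force $G$ of type $\Clg m$---follows by working downstairs and pulling back. Set $\varphi^G_*:=s_*\circ\varphi^Q_*\circ p_*$, with $\varphi^Q_*$ furnished by $\bar\chi\in\TopS^m(Q;\ZZ)$ via Theorem~\ref{thma:varphi} (respectively $\mu^G_*:=s_*\circ\mu^Q_*\circ p_*$, with $\mu^Q_*$ furnished by $Q$ of type $\Clg m$ via Theorem~\ref{thma:mu}). The composite is a finitely modeled $\ZZ G$-chain map over $\id_\ZZ$ of the kind appearing in Theorems~\ref{thma:mu}--\ref{thma:crit2}, landing in a single controlled index, and when $\chi\ne0$ it raises $\chi$-value by at least the amount $\varphi^Q_*$ raises $\bar\chi$-value minus the bounded loss from $s_*$, which is made positive by first iterating $\varphi^Q_*$; Theorems~\ref{thma:mu} and \ref{thma:crit2} then give $G$ of type $\Clg m$, respectively $\chi\in\TopS^m(G;\ZZ)$. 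For part~(2) the transfer runs the other way, via the $\ZZ Q$-complex $C_*:=\ZZ\otimes_{\ZZ N}\chain_*(\VR k G)$ with its residual $Q$-action: applying $\ZZ\otimes_{\ZZ N}(-)$ to $\mu^G_*$ shows $C_*$ is finitely modeled up to degree $m$, its homology there is finitarily that of $\chain_*(\VR{\cdot}N)$, and the augmentation $C_*\onto\chain_*(\VR k Q)$ is an isomorphism in degree $0$. Running the classical surgery on chain complexes---attaching free pieces in successive degrees so as to kill the homology of $C_*$ in degrees $\le m-1$, which is controllable, finitely modeled, precisely because $N$ is of type $\Clg{m-1}$---turns $C_*$ into a finitely modeled chain endomorphism of $\chain_*(\VR k Q)^{(m)}$ over $\id_\ZZ$ landing in one index, i.e.\ $Q$ of type $\Clg m$; pushing $\varphi^G_*$ through the same procedure, its value-raising surviving because $\chi$ vanishes on $N$ and is thus invisible to the $C_*$-direction, produces the required $\varphi^Q_*$, hence $\bar\chi\in\TopS^m(Q;\ZZ)$ by Theorems~\ref{thma:mu} and \ref{thma:crit2}.

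The hard part will be this surgery step in (2): it is the analogue of the classical fact that the quotient of an $\mathrm{FP}_m$ group by an $\mathrm{FP}_{m-1}$ normal subgroup is again $\mathrm{FP}_m$, it is where the hypothesis on $N$ is used with the sharp index, and carrying it out in the finitely modeled category---where one cannot freely pass to subcomplexes or pick finite generating sets---is delicate. Setting up the ``finitary fibration'' with honest bounds on $k'$, $k''$ and on finite modeledness (including the reindexing needed to bring the composites into the normalized shape of Theorem~\ref{thma:crit2}) is the other technical burden; by contrast the reduction of the ``in particular'' statements to $\chi=0$ and the valuation bookkeeping via $\chi=\bar\chi\circ p$ and $N\le\ker\chi$ should be routine.
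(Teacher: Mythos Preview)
Your plan hinges on producing a $\ZZ G$-equivariant section $s_*\colon\chain_*(\VR k Q)^{(m)}\to\chain_*(\VR{k''}G)^{(m)}$, but no such map exists when $N\neq 1$. The $G$-action on $\chain_*(\VR k Q)$ factors through $Q$, so every $n\in N$ acts trivially on the source; equivariance would force $n\cdot s_*(c)=s_*(c)$ for all $n\in N$, yet $G$ acts freely on the basis of $\chain_*(\VR{k''}G)$, so the only $N$-fixed chain is $0$. Hence $s_*\equiv 0$, which is not a section. The same obstruction kills the composite $\varphi^G_*:=s_*\circ\varphi^Q_*\circ p_*$: it cannot be $\ZZ G$-equivariant, and the criteria of Theorems~\ref{thma:mu}, \ref{thma:varphi}, \ref{thma:crit2} are stated for $\ZZ G$-chain maps (finite modeledness is defined via values on a $\ZZ G$-basis). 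Your part~(2) has the analogous problem: pushing $\varphi^G_*$ through $\ZZ\otimes_{\ZZ N}(-)$ and the surgery does not produce a $\ZZ Q$-equivariant endomorphism of $\chain_*(\VR k Q)$ in any evident way, so the criteria cannot be invoked for $Q$ either.

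The paper does not use the finitely modeled criteria for this theorem at all; it works directly with the definition of $\TopS^m$ as essential triviality of the ind-homology $(\hlgy_q(\VR k{G_\chi}))_k$. Choosing a set-theoretic transversal $t\colon Q\to G$, it builds a \emph{merely $\ZZ$-linear} chain map $\varphi_*\colon\chain_*(\VR k{Q_{\bar\chi}})^{(m)}\to\chain_*(\VR{n}{G_\chi})^{(m)}$ by induction on degree, using that $N$ is of type $\Clg m$ to fill boundaries inside the coset neighbourhoods $\Delta_{qk}[N]$; it then constructs, again only $\ZZ$-linearly, a chain homotopy joining $\varphi\circ\pi$ (for part~1) or $\pi\circ\varphi$ (for part~2) to the identity. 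With these in hand one shows directly that a cycle $z$ in $\chain_{m-1}(\VR k{G_\chi})$ bounds at a controlled later stage of the filtration. The introduction flags exactly this point: the chain maps used here ``are not $G$-equivariant.'' Your high-level picture---project, solve downstairs, lift---is right, but the implementation must abandon equivariance and the criteria and instead verify essential triviality by hand.
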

A proof of Theorem~\ref{thma:ge1} can be found in Theorem~\ref{thm:ge1}.

We close this study with an application. Ultimately, the $\Sigma$-theory for locally compact groups helps to establish compactness properties of closed normal subgroups provided that the factor group is abelian.
\begin{thma}
\label{thma:ge2}
  Let $N\trianglelefteq G$ be a closed normal subgroup with abelian factor group $Q=G/N$. Then $N$ is of type $\Clg m$ if $S(G,N)\subseteq \TopS^m(G;\ZZ)$.
\end{thma}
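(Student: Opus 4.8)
The plan is to upgrade the hypothesis into finiteness of $N$ by a finite \emph{directional} covering of the character sphere $S(G,N)$, in the spirit of the Bieri--Renz $\mathrm{FP}_m$-criterion, but carried out with the finitely modeled chain endomorphisms produced by Theorem~\ref{thma:crit2}. As everywhere that $\TopS^m(G;\ZZ)$ appears, we take $G$ to be of type $\Clg m$ (this is automatic once $S(G,N)\neq\emptyset$ by Proposition~\ref{propa:veryeasy}, and when $S(G,N)=\emptyset$ the hypothesis is vacuous). It is convenient to argue by induction on $m$, using the monotonicity $\TopS^m(G;\ZZ)\subseteq\TopS^{m-1}(G;\ZZ)$ (immediate from the definitions): for $m\le1$ the claim is either trivial (type $\Clg 0$ always holds) or is the locally compact Bieri--Neumann--Strebel situation, and for the inductive step we may assume $N$ is already of type $\Clg{m-1}$, so that only the top degree $m$ will genuinely require the covering argument.

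First I would record the geometry. Every $\chi$ with $[\chi]\in S(G,N)$ vanishes on $N$ and hence factors through the abelian quotient $Q$; since $Q$ is locally compact abelian, $\TopHom(Q,\RR)$ is a finite-dimensional real vector space, so $S(G,N)$ is a compact sphere inside $\TopHom(G,\RR)$ that is stable under $[\chi]\mapsto[-\chi]$. Fix the index $k$ supplied by Theorem~\ref{thma:crit2}. For each $[\chi]\in S(G,N)$ the hypothesis yields a finitely modeled $\ZZ G$-chain endomorphism $\varphi^{\chi}_*$ of ${\chains * k G}^{(m)}$ over the identity on $\ZZ$ with $v(\varphi^{\chi}_q(c))-v(c)\ge K_\chi>0$ for all chains $c$. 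Finite modeledness means $\varphi^{\chi}$ is determined by the images of finitely many $\ZZ G$-basis elements, each a finite $\ZZ G$-combination; the inequality on this finite data is an open condition on the character, so it persists, with constant $\ge K_\chi/2$, for every $[\psi]$ in an open neighbourhood $U_\chi$ of $[\chi]$ in $\TopHom(G,\RR)/\RR_{>0}$ (compare Theorem~\ref{thma:open}). By compactness finitely many $U_{\chi_1},\dots,U_{\chi_r}$ cover $S(G,N)$; set $\varphi^i:=\varphi^{\chi_i}$ and $K:=\tfrac12\min_iK_{\chi_i}>0$.

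The core step is to assemble $\varphi^1,\dots,\varphi^r$ into a type-$\Clg m$ witness for $N$ in the sense of Theorem~\ref{thma:mu}. Here one uses that ${\chains q k G}$, being free of finite rank over $\ZZ G$, is free over $\ZZ N$ on a basis indexed by $Q$ (a set of coset representatives) times a finite set, and that on such a basis element the valuation $v$ equals, up to bounded error, the value of $\chi$ on the corresponding element of $Q$. Because the directions $[\chi_i]$ together with their antipodes cover the sphere of $\TopHom(Q,\RR)$, iterating the value-raising endomorphisms $\varphi^i$ lets one homotope any cycle in ${\chains * k G}^{(m)}$ — and, by the inductive hypothesis, the homotopies already witnessing essential acyclicity in degrees $<m$ — into a part of ${\chains * k G}^{(m)}$ lying over a subset of $Q$ bounded in every direction, that is, a relatively compact subset of $Q$. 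Its preimage in $G$ carries a finitely modeled structure because $G$ is of type $\Clg m$; transporting this confinement along the inclusion ${\chains * k N}^{(m)}\hookrightarrow{\chains * k G}^{(m)}$ (legitimate because $N$ is closed in $G$, so the Vietoris--Rips complexes of $N$ embed compatibly with the filtrations) produces the finitely modeled $\ZZ N$-chain endomorphism ${\chains * k N}^{(m)}\to{\chains * {k_0} N}^{(m)}$ over $\id_\ZZ$ required by Theorem~\ref{thma:mu}, and hence $N$ is of type $\Clg m$.

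The principal obstacle is exactly this last conversion — a filtered, locally compact incarnation of the Bieri--Renz criterion — and I expect three points to demand genuine work. (i) Patching the directional endomorphisms $\varphi^i$ across the overlaps $U_{\chi_i}\cap U_{\chi_j}$: one must keep finite modeledness and retain quantitative control of the accumulated change of valuation, so that the confinement to a relatively compact region of $Q$ actually terminates after finitely many iterations. (ii) The structural comparison between $\VR k N$ and $\VR k G$, together with the vanishing of $\homology q k G$ in the range $1\le q\le m$ for $k\gg0$, which is what allows a $\ZZ N$-finitely generated, highly acyclic subcomplex to detect type $\Clg m$ of $N$ rather than merely of $G$. (iii) The topology of $Q$: since $Q$ may be non-discrete (it can contain $\RR$-factors) or have a compact part, the classical reduction to $Q=\ZZ^n$ must be replaced by arguments in which ``bounded region of $Q$'' means ``relatively compact'' and such regions are shown to interact well with the finitely modeled structures at hand. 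Once (i)--(iii) are settled, Theorem~\ref{thma:mu} closes the proof.
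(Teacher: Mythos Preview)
Your setup coincides with the paper's: compactness of the sphere $S(G,N)\setminus\{0\}/{\sim}$, a finite cover by the open sets $U(\varphi_i)$ from Section~\ref{sec:open}, and induction on $m$. From that point the executions diverge.

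The paper does \emph{not} attempt to assemble the $\varphi^i$ into a single $\ZZ N$-chain endomorphism witnessing Theorem~\ref{thma:mu}. Instead it argues directly on fillings: given a cycle $z$ in a thickening $N^+=\{g:\Vert\pi(g)\Vert\le A\}$, it takes any filling $c\in\chains m l G$ (using that $G$ is of type $\Clg m$) and then repeatedly reduces $\Vert c\Vert$. At each step one picks a vertex $g\in c^{(0)}$ realising the maximum norm $a=\Vert c\Vert$, chooses the directional character $\chi_g(h)=-\langle\pi(h),\pi(g)\rangle/\Vert\pi(g)\Vert$, selects the $\varphi_i$ whose open set contains $[\chi_g]$, and replaces $c$ by $c+\partial_{m+1}\eta_i(c')$ where $c'$ collects the simplices of $c$ touching $gN$. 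The crucial quantitative input, absent from your outline, is a Pythagorean estimate: the new vertices $h$ satisfy $\Vert h\Vert\le\sqrt{(a-r)^2+K^2(s+l)^2-r^2}<\sqrt{a^2-1}$ as soon as $a>A:=\tfrac{K^2(s+l)^2+1}{2r}$, where $r=\min_i\inf_{\chi}u_{\varphi_i}(\chi)>0$ over the sphere and $s$ bounds the displacement of the homotopies $\eta_i$. Finitely many iterations land the filling in $N^+$, and Lemma~\ref{lem:compare} transfers this to $N$.

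Your obstacle (i) is exactly where your route would get stuck: composing the $\varphi^i$ globally is problematic because applying $\varphi^i$ can \emph{lower} the valuation for $\chi_j$, $j\neq i$, so naive iteration need not confine anything. The paper sidesteps this entirely by choosing the direction \emph{per vertex} rather than patching endomorphisms, and by measuring progress with the single scalar $\Vert\cdot\Vert$ instead of all valuations simultaneously. Your obstacle (iii) is handled up front by the structure theorem $Q\cong\RR^l\times\ZZ^n\times Q_t$ and absorbing the compact factor $Q_t$ into $N$; your obstacle (ii) is handled by the elementary Lemma~\ref{lem:compare} comparing $N$, $N^+$ and $\Delta_k[N]$ coarsely. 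So the missing idea in your proposal is not the covering but the per-vertex pushing together with the norm-decrease estimate that makes it terminate.
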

A proof of Theorem~\ref{thma:ge2} can be found in Theorem~\ref{thm:ge2}.

\subsubsection*{Acknowledgements}
Many thanks for helpful conversations go to Ilaria Castellano, Dorian Chanfi, Tobias Hartnick, Stefan Witzel, and Xiaolei Wu. We also thank the anonymous referee for various helpful comments.

This research was funded in part by the DFG grant BU 1224/4-1
 within SPP 2026 Geometry at Infinity.

We also acknowledge the support of the Deutsche Forschungsgemeinschaft (DFG, German Research Foundation) –
Project-ID 491392403 – TRR 358.

 \section{Recap of the homotopical version}
This section gives an overview of the homotopical part of our research on locally compact $\Sigma$-invariants. We start with a few basic definitions from \cite{Hartmann2024c}.

If $T$ is a set, then $\E T$ denotes the free simplicial set on $T$. That is, its $q$-simplices are $\E T_q:=T^{q+1}$ ($q=0,1,\ldots$) where
\begin{align*}
 d_i:\E T_q&\to \E T_{q-1}\\
 (t_0,\ldots,t_q)&\mapsto (t_0,\ldots,\hat t_i,\ldots,t_q)
\end{align*}
stands for the $i$th face map ($i=0,\ldots,q$) and
\begin{align*}
 s_i:\E T_q&\to \E T_{q+1}\\
 (t_0,\ldots,t_q)&\mapsto (t_0,\ldots,t_i,t_i,\ldots,t_q)
\end{align*}
stands for the $i$th degeneracy map ($i=0,\ldots,q$). If $G$ is a locally compact group, then $G$ acts on $\E G$ by left translation on the vertices. Denote
by $\mathcal C$ the collection of compact subsets of $G$. Then $(G\cdot \E C)_{C\in 
\mathcal C}$ forms a filtration of $\E G$. Also, if $\chi:G\to \RR$ is a character, then we define a subset
\[
 G_\chi:=\{g\in G\mid \chi(g)\ge 0\}
\]
of $G$. Then $((G\cdot \E C)\cap \E G_\chi)_{C\in \mathcal C}$ forms a filtration of $\E G_\chi$.

A filtered system $(A_i)_{i\in I}$ of groups is said to be \emph{essentially trivial} if for each $i\in I$ there exists $j\in I$ such that $A_i\to A_j$ is trivial.

\begin{dfn}\cites[Definition 1.3.4]{Abels1997}{Hartmann2024c}
\label{dfn:AT}
 If $m\in \NN$ then
 \begin{itemize}
  \item $G$ is said to be \emph{of type $\Ctp m$} if $\pi_q(G\cdot \E C)_{C\in 
\mathcal C}$ is essentially trivial for $q=0,\ldots,m-1$;
  \item $\TopS^m(G)$ is the collection of characters $\chi:G\to\RR$ 
such that   $\pi_q((G\cdot \E C)\cap \E G_\chi)_{C\in\mathcal C}$ is essentially 
trivial for $q=0,\ldots,m-1$;
 \item $G$ is said to be \emph{of type $\Clg m$} if $\tilde \hlgy_q(G\cdot \E C)_{C\in \mathcal C}$ is essentially trivial for $q=0,\ldots,m-1$.
 \end{itemize}
\end{dfn}
The paper \cite{Hartmann2024c} contains homotopical versions of Theorem~\ref{thma:classical}, Theorem~\ref{thma:mu}, Theorem~\ref{thma:varphi}, Theorem~\ref{thma:ge1} and Theorem~\ref{thma:ge2}. It also contains separate discussions of $\TopS^1$ (for $m=1$ the homotopical $\TopS$ coincides with the homological $\TopS$) and of $\TopS^2$ (one can think of this as compactly presented in a direction). 

 \section{Ind-objects and the Vietoris-Rips Complex}
\label{sec:def+basics}
This section discusses the basic notions used in this paper.

If $T$ is a metric space and $k\ge 0$, then a certain collection of $q$-simplices in $\E T$ is given by
\[
 \Delta^q_k:=\{(t_0,\ldots,t_q)\mid d(t_i,t_j)\le k;i,j=0,\ldots,q\}.
\]
The subcomplex $\VR k T$ of $\E T$ with $q$-simplices ${\VR k T}_q:=\Delta^q_k$ is called the \emph{Vietoris--Rips complex for the value $k$}. If we let $k$ vary to $\infty$, then $(\VR k T)_{k\ge 0}$ forms a filtration of $\E T$. We consider the filtered system ${\VR k T}_{k\ge 0}$ as an ind-object in the category of simplicial sets. For the convenience of the reader we recall ind-objects:

If $\mathcal C$ is a category, then $\Ind(\mathcal C)$ describes the category of ind-objects in $\mathcal C$. Objects of this category are functors $X:\mathcal I\to \mathcal C$ where $\mathcal I$ is a small filtered category. We also write $(X_i)_i$ with $X_i:=X(i)$ and $\mathcal I$ is usually a poset. If $(X_i)_i$ and $(Y_j)_j$ are two ind-objects, the set of morphisms between $(X_i)_i$ and $(Y_j)_j$ is given by
\[
 \Hom_{\Ind(\mathcal C)}((X_i)_i,(Y_j)_j)=\varprojlim_i\varinjlim_j \Hom_{\mathcal C}(X_i,Y_j)
\]
\cite[Chapter~8]{Grothendieck1963}. Explicitly, a morphism between ind-objects $X:\mathcal I\to \mathcal C$ and $Y:\mathcal J\to \mathcal C$ is given by a map $\varepsilon:\mathcal I\to \mathcal J$ and a $\mathcal C$-morphism $\varphi_i:X_i\to Y_{\varepsilon(i)}$ for each $i\in \mathcal I$ such that for each $i\to i'\in \mathcal I$ there exists $j\in\mathcal J$ with $\varepsilon(i')\to j\in \mathcal J$ such that the diagram
\[
\xymatrix{
 X_i\ar[d]\ar[r]^{\varphi(i)}
 & Y_{\varepsilon(i)}\ar[dr]\\
 X_{i'}\ar[r]_{\varphi(i')}
 &Y_{\varepsilon(i')}\ar[r]
 &Y_j
 }
\]
commutes. Two such data $(\varepsilon,(\varphi_i)_i),(\delta,(\psi_i)_i)$ define the same morphism if for each $i\in \mathcal I$ there is some $j\in \mathcal J$ with $\varepsilon(i)\to j,\delta(i)\to j\in \mathcal J$ and
\[
\xymatrix{
&Y_{\varepsilon(i)}\ar[rd]&\\
 X_i\ar[ru]^{\varphi_i}\ar[r]_{\psi_i}
 &Y_{\delta(i)}\ar[r]
 &Y_j
 }
\]
commutes \cite{Abels1997}. 

Two filtrations $\cdots \subseteq X_i\subseteq X_{i+1}\subseteq \cdots T$ and $\cdots \subseteq Y_j\subseteq Y_{j+1}\subseteq \cdots T$ of the same object are called \emph{cofinal} if for every $i$ there exists some $j$ with $X_i\subseteq Y_j$ and for every $j$ there exists some $i$ with $Y_j\subseteq X_i$. It is easy to see that two cofinal filtrations are isomorphic as ind-objects. In addition, the property essentially trivial on an ind-object $(A_i)_i$ in the category $\mathrm{Ab}$ of abelian groups is equivalent to being isomorphic as an ind-object to the direct system $(N_i)_{i\in I}$ with $I=\{1\}$ and $N_1=0$. 

\section{Topological groups as coarse objects}

This section starts with a few basic observations that lead to the definition of the homological version of locally compact $\Sigma$-invariants.

If a topological group $G$ is a countable union of compact subsets then it is called \emph{$\sigma$-compact}. In particular, if $G$ is compactly generated, say by $C$, then $G=\bigcup_{n\ge 0}(C\cup C^{-1})^n$ is a countable union of compacts and therefore $\sigma$-compact. Suppose $G$ is compactly generated by $C$ with $C=C^{-1}$ and $1_G\in C$. We define a metric on $G$ in the following way. Say $C^0=\{1_G\}$, then
\[
 d(g,h)=n \mbox{ if }g^{-1}h\in C^n\setminus C^{n-1}.
\]
Note that, in general, this metric does not induce the topology of $G$.

If $C\subseteq G$ is a compact subset of a topological group, define
\[
 \Delta^q_C:=\{(g_0,\ldots,g_q)\in G^{q+1}\mid g_i^{-1}g_j\in C;i,j=0,\ldots,q\}.
\]
As usual $\mathcal C$ denotes the set of compact subsets in $G$.

\begin{lem}
\label{lem:cofinal}
 Let $q\ge 0$. 
 \begin{itemize}
  \item If $G$ is a topological group then the filtrations $(\Delta_C^q)_{C\in\mathcal C}$ and $G(\E C)_{C\in \mathcal C}$ of $G^{q+1}$  are cofinal;
  \item if $G$ is locally compact Hausdorff and compactly generated, say by $\mathcal X$, then the filtrations $(\Delta_C^q)_{C\in \mathcal C}$ and $(\Delta_k^q)_{k\ge 0}$ (where the distance is induced from $\mathcal X$) of $G^{q+1}$ are cofinal.
  \end{itemize}
\end{lem}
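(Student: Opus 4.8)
\section*{Proof proposal}

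The plan is to treat the two bullets separately, since the first is purely formal while the second needs a genuine compactness (Baire-category) input.

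For the first bullet, I would just unwind the definitions. A tuple $(g_0,\dots,g_q)$ lies in $(G\cdot\E C)_q$ exactly when there is $g\in G$ with $g^{-1}g_i\in C$ for all $i$, while it lies in $\Delta^q_C$ exactly when $g_i^{-1}g_j\in C$ for all $i,j$. Two inclusions then fall out immediately. Given $C\in\mathcal C$: if $g^{-1}g_i\in C$ for all $i$, then $g_i^{-1}g_j=(g^{-1}g_i)^{-1}(g^{-1}g_j)\in C^{-1}C$, so $G\cdot\E C\subseteq\Delta^q_{C^{-1}C}$, and $C^{-1}C$ is compact, being the image of the compact set $C^{-1}\times C$ under multiplication. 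Conversely, if $g_i^{-1}g_j\in C$ for all $i,j$, then taking $g=g_0$ gives $g^{-1}g_i\in C\cup\{1_G\}$ for all $i$, so $\Delta^q_C\subseteq G\cdot\E(C\cup\{1_G\})$, and $C\cup\{1_G\}$ is compact. Since both index sets are $\mathcal C$ and $C\mapsto C^{-1}C$, $C\mapsto C\cup\{1_G\}$ send compact sets to compact sets, this is exactly cofinality.

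For the second bullet, I would first record the bookkeeping: as in the setup preceding the lemma we may assume $\mathcal X=\mathcal X^{-1}$ and $1_G\in\mathcal X$, so each power $\mathcal X^n$ is compact (a continuous image of a finite product of copies of $\mathcal X$) and $\mathcal X^n\subseteq\mathcal X^{n+1}$. By the very definition of the word metric from $\mathcal X$ we have $\Delta^q_k=\Delta^q_{\mathcal X^k}$, so the family $(\Delta^q_k)_{k\ge 0}$ is literally a subfamily of $(\Delta^q_C)_{C\in\mathcal C}$; this gives one direction of cofinality for free. The other direction reduces to the claim that every compact $C\subseteq G$ is contained in some $\mathcal X^k$, for then $\Delta^q_C\subseteq\Delta^q_{\mathcal X^k}=\Delta^q_k$.

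To prove that claim — which I expect to be the main obstacle — I would use that $G$, being locally compact Hausdorff, is a Baire space, whereas $G=\bigcup_{k\ge 0}\mathcal X^k$ is a countable union of closed sets. Hence some $\mathcal X^{k_0}$ has nonempty interior, so there is a nonempty open $O\subseteq\mathcal X^{k_0}$; then $V:=O^{-1}O=\bigcup_{h\in O}h^{-1}O$ is an open neighborhood of $1_G$ with $V\subseteq\mathcal X^{k_0}\mathcal X^{k_0}=\mathcal X^{2k_0}$. Now the sets $\mathcal X^nV$ are open, increasing in $n$ (since $1_G\in\mathcal X$), satisfy $\mathcal X^nV\subseteq\mathcal X^{n+2k_0}$, and cover $G$ (since $1_G\in V$). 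A compact $C$ therefore lies in $\mathcal X^NV\subseteq\mathcal X^{N+2k_0}$ for some $N$. The essential content is precisely this step: upgrading the algebraic exhaustion $G=\bigcup_k\mathcal X^k$, valid for any generating set, to the statement that a single $\mathcal X^k$ swallows any prescribed compact set — which is where local compactness is actually used. Everything else is unwinding the definitions of the filtrations.
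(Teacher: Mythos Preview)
Your proof is correct and follows the same approach as the paper's. The only differences are cosmetic: the paper gets the slightly sharper inclusion $\Delta^q_C \subseteq G(\E C)_q$ directly (since the condition $g_i^{-1}g_j \in C$ with $i=j$ already forces $1_G \in C$ whenever $\Delta^q_C\neq\emptyset$), and for the second bullet the paper defers the Baire argument to a cited companion lemma whereas you spell it out in full.
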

\begin{proof}
  Let $C\subseteq G$ be a compact subset, and let $(g_0,\ldots,g_q)\in G(\E C)_q$ 
be a simplex. Then $g_i=gc_i$ for some $g\in G, c_i\in C,i=0,\ldots,q$. Then
$g_i^{-1}g_j=(gc_i)^{-1}gc_j=c_i^{-1}c_j\in C^{-1}C$. Thus, we have shown $G(\E 
C)_q\subseteq \Delta^q_{C^{-1}C}$. Now, let $(g_0,\ldots,g_q)\in \Delta^q_C$ be a
simplex. Then 
 \[
 (g_0,\ldots,g_q)=g_0(1,g_0^{-1}g_1,\ldots,g_0^{-1}g_q)\in G(\E C)_q.
 \]
 Thus $\Delta_C^q\subseteq G(\E C)_q$ This proves the first statement.
 
 Since $G$ is locally compact Hausdorff it is in particular a Baire space. Suppose $\mathcal X=\mathcal X^{-1}$ and $1_G\in\mathcal X$. Then $({\mathcal X}^k)_k$ and $\mathcal C$ are cofinal by a similar reasoning as in \cite[Lemma~3.11]{Hartmann2024c}. This implies that $(\Delta^q_k)_k=(\Delta_{{\mathcal X}^k})_k$ and $(\Delta^q_C)_{C\in \mathcal C}$ are cofinal.
\end{proof}

If $X$ is a set, then a collection $\mathcal E$ of subsets of $X\times X$ forms a \emph{coarse structure} if
\begin{enumerate}
 \item $\Delta_0:=\{(x,x)\mid x\in X\}\in\mathcal E$;
 \item $\mathcal E$ is closed under taking subsets;
 \item $\mathcal E$ is closed under taking finite unions;
 \item for every $x,y\in X$ we have $\{(x,y)\}\in \mathcal E$;
 \item if $E,F\in\mathcal E$ then $E\circ F:=\{(a,c)\mid \exists b\in X: (a,b)\in E, (b,c)\in F\}\in\mathcal E$.
\end{enumerate}
A subset $\mathcal F\subseteq \mathcal E$ of a coarse structure is called a \emph{base of (a coarse structure) $\mathcal E$} if $\mathcal F$ and $\mathcal E$ are cofinal filtrations of $X\times X$. A particular example is the following. If $X$ is a metric space, then $(\Delta_k^1)_k$ forms a base for a coarse structure. We call a set equipped with a coarse structure \emph{coarse space}.

\begin{lem}
 If $G$ is a locally compact Hausdorff group, the filtered system $(\Delta^1_C)_{C\in \mathcal C}$ forms a base for a coarse structure. If in addition $G$ is $\sigma$-compact then there exists a metric on $G$ that induces the coarse structure.
\end{lem}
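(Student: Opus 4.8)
The plan is to handle the two assertions in turn. For the first one, I would exhibit the coarse structure explicitly by setting
\[
\mathcal E:=\{E\subseteq G\times G\mid E\subseteq \Delta^1_C\text{ for some }C\in\mathcal C\},
\]
and then check axioms (1)--(5); this makes $(\Delta^1_C)_{C\in\mathcal C}$ cofinal in $\mathcal E$ by construction, hence a base. Axioms (1), (2), (4) are formal: $\Delta_0\subseteq\Delta^1_{\{1_G\}}$; closure under subsets is immediate from the definition of $\mathcal E$; and $\{(x,y)\}\subseteq\Delta^1_{\{1_G,\,x^{-1}y,\,y^{-1}x\}}$, where the parameter set is finite, hence compact. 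The same witness shows $\bigcup_C\Delta^1_C=G\times G$, so $(\Delta^1_C)_C$ and $\mathcal E$ really are cofinal filtrations of $G\times G$. Closure under finite unions (3) follows from $\Delta^1_{C_1}\cup\Delta^1_{C_2}\subseteq\Delta^1_{C_1\cup C_2}$ and compactness of $C_1\cup C_2$. The only step with real content is closure under composition (5): if $(a,b)\in\Delta^1_{C_1}$ and $(b,c)\in\Delta^1_{C_2}$, then $a^{-1}c=(a^{-1}b)(b^{-1}c)\in C_1C_2$ and symmetrically $c^{-1}a\in C_2C_1$, so $\Delta^1_{C_1}\circ\Delta^1_{C_2}\subseteq\Delta^1_{C_1C_2\cup C_2C_1}$, and $C_1C_2\cup C_2C_1$ is compact because multiplication and inversion are continuous. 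I would also record that each $\Delta^1_C$ is symmetric (equal to its transpose), as this is used below.

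For the second assertion I would first reduce to a countable cofinal chain in $\mathcal C$. Write $G=\bigcup_nL_n$ with $L_n$ compact and, after enlarging, increasing and containing $1_G$; by local compactness pick an open neighbourhood $W$ of $1_G$ with $\overline W$ compact. Then $C_n:=L_n\overline W$ is compact, the sequence is increasing, and $\Int(C_n)\supseteq L_nW\supseteq L_n$, so $(\Int C_n)_n$ is an increasing open cover of $G$; consequently every compact subset of $G$ lies in some $C_n$, that is, $(C_n)_n$ is cofinal in $\mathcal C$. Next I would pass to a chain adapted to composition: set $D_0:=\{1_G\}$ and recursively $D_{n+1}:=D_nD_n\cup C_{n+1}\cup C_{n+1}^{-1}$. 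An easy induction shows each $D_n$ is compact, symmetric, contains $1_G$, satisfies $D_nD_n\subseteq D_{n+1}$, and that $(D_n)_n$ is still cofinal in $\mathcal C$.

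Finally I would put $|g|:=\min\{n\ge 0\mid g\in D_n\}$ and $d(g,h):=|g^{-1}h|$, and verify that $d$ is a metric: finiteness follows from $\bigcup_nD_n=G$; $d(g,h)=0\iff g=h$ from $D_0=\{1_G\}$; symmetry from $D_n^{-1}=D_n$; and the triangle inequality from $D_iD_j\subseteq D_{\max(i,j)+1}$, which yields $|ab|\le|a|+|b|$ (trivial if one length is $0$, and using $\max(i,j)+1\le i+j$ when both are positive). Since the $D_n$ are symmetric, the metric entourage $\{(g,h)\mid d(g,h)\le n\}$ equals $\{(g,h)\mid g^{-1}h\in D_n\}=\Delta^1_{D_n}$; these are cofinal in $\mathcal E$ by the previous paragraph, so $d$ induces the coarse structure $\mathcal E$. (As already noted, $d$ need not induce the topology of $G$.)

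The step I expect to be the actual obstacle is the reduction to a countable cofinal chain in $\mathcal C$: $\sigma$-compactness by itself does not provide one, since the $L_n$ need not be open, so local compactness is genuinely used, via fattening the $L_n$ by $\overline W$ so that they acquire nonempty interior and still cover $G$. Once that is in place, the recursive choice of the $D_n$ and the check that $d$ is a metric inducing $\mathcal E$ are routine.
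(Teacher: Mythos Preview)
Your proof is correct. For the first assertion you follow the same scheme as the paper---check the five axioms directly---and you are in fact a bit more careful than the paper about the symmetry conditions built into the definition of $\Delta^1_C$ (your parameter sets $\{1_G,x^{-1}y,y^{-1}x\}$ and $C_1C_2\cup C_2C_1$ are the right ones, whereas the paper's $\{g^{-1}h\}$ and $CD$ work only after tacitly symmetrising).

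For the second assertion you take a genuinely different route. The paper argues that $\sigma$-compactness together with the Baire property of $G$ forces the countable family $(C_i)_i$ to be cofinal in $\mathcal C$, and then invokes \cite[Theorem~2.55]{Roe2003} to conclude that a coarse structure with a countable base is metrisable. You instead use local compactness directly to fatten the $L_n$ to compacts with nonempty interior (so that cofinality is immediate from the open-cover property of compactness), and then build the metric by hand via the chain $(D_n)_n$ with $D_nD_n\subseteq D_{n+1}$. Your approach is more elementary and entirely self-contained; the paper's is shorter but relies on the Baire argument (which is stated rather tersely) and an external reference for the metrisation step. Both are valid, and your explicit construction has the advantage of exhibiting a metric whose entourages are precisely the $\Delta^1_{D_n}$.
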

\begin{proof}
 We check the axioms of a coarse structure. Since points in a Hausdorff space are compact, the set $\{1_G\}$ is compact and $\Delta_0=\Delta^1_{\{1_G\}}$ is an element of the coarse structure. That is $1$ of coarse. If $E\subseteq \Delta^1_C$ and $F\subseteq E$, then of course $F\subseteq \Delta^1_C$. That is 2 of coarse. If $E\subseteq \Delta^1_C$ and $F\subseteq \Delta^1_D$, then $E\cup F\subseteq \Delta^1_{C\cup D}$. That is 3 of coarse. If $g,h\in G$, then $(g,h)\in \Delta^1_{\{g^{-1}h\}}$. That is 4 of coarse. If $E\subseteq \Delta^1_C$ and $F\subseteq \Delta^1_D$, then $E\circ F\subseteq \Delta^1_{CD}$. That is 5 of coarse.
 
 Now suppose $G$ is $\sigma$-compact. Let $(C_i)_{i\in \NN}$ be compact sets in
$G$ with $\bigcup_i C_i=G$. If $D\in\mathcal C$ is another compact set then
since $G$ is a Baire space there exists some $i$ with $D\subseteq C_i$. So 
$(C_i)_{i\in\NN}$ and $\mathcal C$ are cofinal. This implies that
$(\Delta^1_{C_i})_{i\in \NN}$ is also a base for the coarse structure induced by
$(\Delta^1_C)_{C\in\mathcal C}$. This base contains countably many elements. By \cite[Theorem~2.55]{Roe2003}, this implies that there exists a metric on $G$ that induces this coarse structure.
\end{proof}

A (not necessarily continuous) mapping $\varphi:X\to Y$ between metric spaces is called
\begin{itemize}
 \item \emph{coarsely Lipschitz} if for every $R\ge 0$ there exists $S\ge 0$ with $\varphi^{\times 2}(\Delta^1_R)\subseteq \Delta^1_S$;
 \item \emph{close to} another mapping $\psi:X\to Y$ if there exists $H\ge0$ with $(\varphi\times \psi)(\Delta_0^1)\subseteq \Delta^1_H$.
\end{itemize}
Coarse spaces and coarsely Lipschitz mappings modulo close form a category called the \emph{coarse category}.

\begin{lem}
 If $\alpha:G\to H$ is a continuous group homomorphism, then it is coarsely Lipschitz. In particular, if $\alpha$ is a group isomorphism and a homeomorphism, then $\alpha$ is an isomorphism in the coarse category.
\end{lem}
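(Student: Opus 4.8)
The plan is to unwind the definitions and observe that the only nontrivial input is the elementary topological fact that a continuous map carries compact sets to compact sets. Recall that, by the preceding lemmas, both $G$ and $H$ carry the coarse structure with base $(\Delta^1_C)_{C\in\mathcal C}$; so a subset $E\subseteq G\times G$ is controlled exactly when there is a compact $C\subseteq G$ with $g^{-1}g'\in C$ for all $(g,g')\in E$, and to check that $\alpha$ is coarsely Lipschitz it suffices to see that $\alpha^{\times 2}$ sends this base into the coarse structure of $H$.

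So I would fix a compact set $C\subseteq G$ and put $S:=\alpha(C)\subseteq H$; since $\alpha$ is continuous and $C$ is compact, $S$ is compact. For $(g,g')\in\Delta^1_C$ we have $g^{-1}g'\in C$, hence
\[
 \alpha(g)^{-1}\alpha(g')=\alpha(g^{-1}g')\in\alpha(C)=S,
\]
so $(\alpha(g),\alpha(g'))\in\Delta^1_S$. Thus $\alpha^{\times 2}(\Delta^1_C)\subseteq\Delta^1_S$, which is the coarsely Lipschitz condition; in the $\sigma$-compact case, where the coarse structures are induced by metrics by the previous lemma, this is literally the metric formulation of the definition. For the ``in particular'', assume in addition that $\alpha$ is bijective with continuous inverse $\alpha^{-1}:H\to G$, which is again a continuous group homomorphism and hence coarsely Lipschitz by the first part. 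The composites $\alpha^{-1}\circ\alpha=\id_G$ and $\alpha\circ\alpha^{-1}=\id_H$ are the identity maps, hence trivially close to the identity (for instance $(\id\times\id)(\Delta_0)=\Delta_0$ is controlled, i.e. the defining inequality holds with constant $0$). Therefore $[\alpha]$ and $[\alpha^{-1}]$ are mutually inverse morphisms in the coarse category, so $\alpha$ is an isomorphism there.

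I do not expect any real obstacle here: the whole content is that $\alpha(C)$ is compact whenever $C$ is. The only point needing a line of care is the bookkeeping between the general coarse-space meaning of ``coarsely Lipschitz'' (sends controlled sets to controlled sets) and the metric-space definition stated above; this is harmless because the relevant coarse structures admit the bases $(\Delta^1_C)_{C\in\mathcal C}$ and, when $G$ is $\sigma$-compact, are induced by a metric, so the two formulations agree on these bases.
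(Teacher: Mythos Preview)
Your proof is correct and follows essentially the same approach as the paper: show $\alpha^{\times 2}(\Delta^1_C)\subseteq\Delta^1_{\alpha(C)}$ via the identity $\alpha(g)^{-1}\alpha(g')=\alpha(g^{-1}g')$, then apply the first part to $\alpha^{-1}$ and note the composites are literally the identity. You are slightly more explicit about why $\alpha(C)$ is compact and about the bookkeeping between the coarse-space and metric formulations, but the argument is the same.
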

\begin{proof}
  Let $C\subseteq G$ be compact. We show $\alpha^{\times2}(\Delta^1_C)\subseteq \Delta^1_{\alpha(C)}$. If $(g,h)\in \Delta^1_C$, then $g^{-1}h\in C$, which implies $\alpha(g)^{-1}\alpha(h)=\alpha(g^{-1}h)\in \alpha(C)$. Thus, $\alpha$ is coarsely Lipschitz. This proves the first claim. If $\alpha$ is additionally a group isomorphism and a homeomorphism, then the set-theoretic inverse $\alpha^{-1}$ is also continuous and a group homomorphism which implies $\alpha^{-1}$ is coarsely Lipschitz. Also, the compositions $\alpha\circ\alpha^{-1}$ and $\alpha^{-1}\circ \alpha$ are the identities, in particular close to the identities, which proves the second claim.
\end{proof}
 It is well-known that countable groups have bounded geometry. We now discuss the locally compact setting. For that, we recall the definition of bounded geometry \cite{Roe2003}: If $E$ is an entourage (that is, an element of the coarse structure) in a metric space $X$ and $S\subseteq X$ a subset, then the $E$-capacity $\capacity_E(S)$ of $S$ is the largest number $m$ for which there exist points $y_1,\ldots,y_m\in S$ such that no pair $(y_i,y_j)$ of distinct points belongs to $E$. Then $X$ is said to have \emph{bounded geometry} if there exists an entourage $E$ (which is called the gauge) such that for every entourage $F$ the supremum
 \[
  \capacity_E(F):=\sup_{x\in X}\max(\capacity_E(F[E[x]]),\capacity_E(F^{-1}[E[x]]))
 \]
is finite. We explain the notation $E[C]$: If $E\subseteq X\times X$ and $C\subseteq X$ are subsets, where $X$ is a metric space (usually $E$ is an entourage and $C$ is a bounded set), then 
\[
E[C]:=\{x\in X\mid \exists y\in C\mbox{ s.t. }(x,y)\in E\}.
\]

\begin{prop}
 If $G$ is a locally compact group, then $G$ has bounded geometry.
\end{prop}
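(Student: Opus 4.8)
The plan is to exhibit an explicit gauge and reduce bounded geometry, via left-invariance, to a single compactness statement. The relevant coarse structure on $G$ is the one with base $(\Delta^1_C)_{C\in\mathcal{C}}$ furnished by the previous lemma; when $G$ is $\sigma$-compact this is the coarse structure of a metric, but the notion of bounded geometry applies verbatim to an arbitrary coarse space, so I will work with $(\Delta^1_C)_{C\in\mathcal{C}}$ directly. Since $G$ is locally compact Hausdorff, fix a compact \emph{neighbourhood} $D_0$ of $1_G$ with $D_0=D_0^{-1}$, and take $E:=\Delta^1_{D_0}$ as the gauge. Note that $\Delta^1_C$ is a symmetric relation for every $C$, so if $F\subseteq\Delta^1_C$ is any entourage, then both $F$ and $F^{-1}$ are contained in $\Delta^1_C$; together with monotonicity of capacity ($S\subseteq S'\Rightarrow\capacity_E(S)\le\capacity_E(S')$) this reduces the proposition to showing that $\sup_{x\in G}\capacity_E\bigl(\Delta^1_C[E[x]]\bigr)<\infty$ for each fixed $C\in\mathcal{C}$.

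Next I would unravel the two operations. Because $D_0$ is symmetric, $E[x]=\{g\mid g^{-1}x\in D_0\}=xD_0$, and hence $\Delta^1_C[xD_0]=\{g\mid \exists\, h\in xD_0,\ g^{-1}h\in C\ (\text{and }h^{-1}g\in C)\}\subseteq xD_0C^{-1}$. Left translation $L_x\colon g\mapsto xg$ satisfies $(L_xg)^{-1}(L_xh)=g^{-1}h$, so it preserves $E$ and carries $E$-separated sets bijectively to $E$-separated sets; therefore $\capacity_E(xD_0C^{-1})=\capacity_E(D_0C^{-1})$ is independent of $x$. Writing $K:=D_0C^{-1}$, which is compact (product of two compact sets under multiplication), the proposition is reduced to the single assertion $\capacity_E(K)<\infty$.

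This last step is a standard covering argument. Using continuity of multiplication at $(1_G,1_G)$ and the fact that $D_0$ is a neighbourhood of $1_G$, choose an open symmetric neighbourhood $U$ of $1_G$ with $UU\subseteq D_0$. By compactness, $K\subseteq g_1U\cup\cdots\cup g_nU$ for some $g_1,\dots,g_n\in K$. If $y\ne y'$ were two points of an $E$-separated subset of $K$ lying in a common $g_jU$, say $y=g_ju$ and $y'=g_ju'$, then $y^{-1}y'=u^{-1}u'\in U^{-1}U=UU\subseteq D_0$, and likewise $y'^{-1}y\in D_0$, so $(y,y')\in\Delta^1_{D_0}=E$, contradicting $E$-separation. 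Hence each $g_jU$ contains at most one point of any $E$-separated subset of $K$, so $\capacity_E(K)\le n<\infty$ (with $n$ depending on $C$, which is harmless).

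The only point requiring genuine care is the choice of gauge: it must be built from a compact \emph{neighbourhood} of the identity, not an arbitrary compact set, because $1_G\in\Int(D_0)$ is exactly what makes the shrinking $U$ with $UU\subseteq D_0$ available — this is where local compactness is used. The rest is bookkeeping with left-invariant conventions, in particular double-checking whether $E[x]$ unravels to $xD_0$ or $D_0x$ and propagating the correct side through the reduction to $\capacity_E(D_0C^{-1})$. As an alternative to the covering argument one may invoke a left Haar measure $\lambda$: for an $E$-separated set in $K$ the translates $yU$ are pairwise disjoint and contained in the compact set $K\overline{U}$, so their number is at most $\lambda(K\overline{U})/\lambda(U)$; the covering argument, however, avoids Haar's theorem.
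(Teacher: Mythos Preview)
Your proof is correct and follows essentially the same approach as the paper's: take a symmetric compact neighbourhood of the identity as gauge, reduce via left-invariance to bounding the $E$-capacity of a single compact set, and finish with a covering argument. The only cosmetic difference is in the final step---the paper uses the interior of the gauge set directly (a maximal $\Delta_{\mathring C}$-separated subset of the compact set self-covers by translates $y_i\mathring C$), while you first shrink to an open $U$ with $UU\subseteq D_0$ and pigeonhole into an external finite cover.
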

Compare with \cite[Lemma~5.3]{Rosendal2022}.
\begin{proof}
 Let $C=C^{-1}$ be a compact neighborhood of $1_G$. We show $G$ has bounded geometry with gauge $\Delta_C$. If $D\subseteq G$ is compact then
 \[
  \Delta_D[\Delta_C[x]]=\Delta_D[xC]=xCD.
 \]
Then 
\[
 \capacity_{\Delta_C}(\Delta_D)=\sup_{x\in G}\capacity_{\Delta_C}(\Delta_D[\Delta_C[x]])=\sup_{x\in G}\capacity_{\Delta_C}(xCD)=\capacity_{\Delta_C}(CD).
\]
If $(y_i)_i$ are points in $CD$ with $(y_i,y_j)\not\in \Delta_{\mathring C}$ for $i\not=j$ maximal then $y_i^{-1}y_j\not \in \mathring C$. Thus $y_j\not\in y_i \mathring C$. By maximality $\bigcup_i y_i \mathring C$ form an open cover of $CD$. Since $CD$ is compact there exists a finite subcover. Since the $y_i$ are not covered by other open subsets $y_z\mathring C$, the index set $i$ needs to be finite. Thus
\[
 \capacity_{\Delta_C}(CD)\le \capacity_{\Delta_{\mathring 
C}}(CD)<\infty.\qedhere
\]
\end{proof}

A metric space $X$ is said to be \emph{uniformly locally finite} if for every entourage $E\subseteq X\times X$ there exists $N\ge 0$ such that every $E$-ball contains at most $N$ points. That is, 
\[
 |E[x]|\le N.
\]

If $G$ has bounded geometry with gauge $\Delta_C$ then $X\subseteq G$ is defined to be a maximal $\Delta_C$-separated subset. Then $\bigcup_{x\in X}xC=G$ by maximality. Thus, the inclusion $X\to G$ is coarsely surjective. In addition, $X$ is uniformly locally finite, since for every compact $D\subseteq G$ the number of points in $\Delta_D[x]\cap X$ is bounded by $\capacity_{\Delta_C}(\Delta_D)$. In this way, we found a metric space $X$ that is uniformly locally finite and has the same coarse type as $G$.

\section{Definition and basic properties}

This section contains the definition of $\TopS^m(\cdot,\ZZ)$ and a proof of Proposition~\ref{propa:veryeasy} and Lemma~\ref{lema:semidirect}.

Simplicial homology will play an important role in this paper. If $k\ge 0$ and $G$ is a group, then $(\chain_q(\VR k G),\partial_q)$ denotes the simplicial chain complex assigned to the simplicial set $\VR k G$. That is,
\[
 \chains q k G:=\ZZ[\Delta_k^q]
\]
and
\begin{align*}
 \diff q:\chains q k G&\to \chains {q-1} k G\\
 (g_0,\ldots,g_q)&\mapsto \sum_{i=0}^q(-1)^i(g_0,\ldots,\hat g_i,\ldots,g_q).
\end{align*}
The group $G$ acts on $\ZZ[\Delta_k^q]$ by left translation on the vertices, and $\partial_q$ is $G$-equivariant. In this way, $(\chains q k G,\partial_q)$ is also a chain complex of free $G$-modules. If $k$ varies to $\infty$ they form a filtration of the free resolution $(\ZZ[G^{q+1}],\partial_q)$ of the constant $G$-module $\ZZ$.

\begin{dfn}
\label{dfn:main}
 Let $G$ be a locally compact, Hausdorff, $\sigma$-compact group, and $R$ a commutative ring. Then $\chi\in\TopS^1(G;R)$ if $\hlgy_0((R[\Delta^q_c\cap (G_\chi)^{q+1}],\partial_q))=R$ for some $c\ge 0$. If $m\ge 2$ then $\chi\in\TopS^m(G;R)$ if $\chi\in \TopS^1(G;R)$ and $\hlgy_q((R[\Delta^q_k\cap (G_\chi)^{q+1}],\partial_q))_{k\ge 0}$ is essentially trivial for $q=1,\ldots,m-1$.
\end{dfn}

If $X$ is a metric space and $c\ge 0$ then a finite sequence $a_0,\ldots, a_n\in X$ of points defines a \emph{$c$-path} if $d(a_i,a_{i+1})\le c$ for every $i=0,\ldots,n-1$. The space $X$ is \emph{$c$-coarsely connected} if every two points can be joined by a $c$-path. It is \emph{coarsely connected} if it is $c$-coarsely connected for some $c\ge 0$\cite{Cornulier2016}.

\begin{dfn}
\label{dfn:finiteprops-metric2}
A metric space $X$ is said to be of type $\Flg 1$ if it is coarsely connected. It is said to be of type $\Flg m$, $m\ge 2$ if it is of type $\Flg 1$ and for every $1\le q \le m-1$ the directed set of groups $(\homology q r X)_r$ is essentially trivial. Equivalently, a metric space $X$ is of type $\Flg m$ if the directed set of reduced simplicial homology groups $\tilde \hlgy_q(\VR rX)_r$ is essentially trivial for $0\le q\le m-1$.

The space $X$ is said to be of type $F_m$ if $\pi_q(\VR r X)_r$ is essentially trivial for $q=0,\ldots,m-1$.
\end{dfn}

\begin{prop}
\label{prop:homotopic_metric}
If $G$ is a locally compact, Hausdorff, $\sigma$-compact group, then $\chi\in \TopS^m(G)$ if and only if there is a centered compact $C\subseteq G$ containing $1_G$ such that $\pi_q(\Delta^*_{C^k}\cap G_\chi^{*+1})_k$ is essentially trivial for $q=0,\ldots,m-1$.
\end{prop}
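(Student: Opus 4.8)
The plan is to translate the homotopy pro-system of Definition~\ref{dfn:AT} into the one in the statement by a cofinality argument, after first observing that either side of the claimed equivalence already forces $G$ to be compactly generated. By the first bullet of Lemma~\ref{lem:cofinal}, for each $q$ the filtrations $(G\cdot\E C)_{C\in\mathcal C}$ and $(\Delta^q_C)_{C\in\mathcal C}$ of $G^{q+1}$ are cofinal, and by the second bullet, once $G$ is compactly generated by a centered compact $\mathcal X\ni 1_G$ the filtration $(\Delta^q_C)_{C\in\mathcal C}$ is moreover cofinal with $(\Delta^q_{\mathcal X^k})_{k\ge0}$. Intersecting each of these filtrations of $G^{q+1}$ with $G_\chi^{q+1}$ preserves cofinality levelwise, so, \emph{assuming $G=\langle\mathcal X\rangle$}, the ind-simplicial sets $\bigl((G\cdot\E C)\cap\E G_\chi\bigr)_{C}$, $(\Delta^*_C\cap G_\chi^{*+1})_{C}$ and $(\Delta^*_{\mathcal X^k}\cap G_\chi^{*+1})_{k}$ are pairwise isomorphic. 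Applying $\pi_q$ based at the common vertex $1_G$ and using that essential triviality is an isomorphism invariant of ind-objects, I get, whenever $G=\langle\mathcal X\rangle$,
\[
 \chi\in\TopS^m(G)\iff \pi_q\bigl(\Delta^*_{\mathcal X^k}\cap G_\chi^{*+1}\bigr)_k\text{ is essentially trivial for }q=0,\dots,m-1.
\]

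Next I would show that each of the two conditions implies $G$ is compactly generated; we may assume $m\ge1$, the case $m=0$ being vacuous. If $\chi\in\TopS^m(G)$, its $q=0$ part says $\pi_0(\Delta^*_C\cap G_\chi^{*+1})_C$ is essentially trivial. Testing this against the compact set $\{1_G\}$, whose associated simplicial set is discrete with $\pi_0=G_\chi$, produces a compact $D$ — which we may replace by $D\cup D^{-1}\cup\{1_G\}$, so centered with $1_G\in D$ — such that every point of $G_\chi$ is joined to $1_G$ by a $D$-path inside $G_\chi$; hence $G_\chi\subseteq\langle D\rangle$. If $\chi=0$ this already gives $G=\langle D\rangle$; if $\chi\ne 0$, fix $t$ with $\chi(t)>0$, so $t\in G_\chi\subseteq\langle D\rangle$ and $gt^n\in G_\chi\subseteq\langle D\rangle$ for $n\gg 0$ for every $g\in G$, whence $g=(gt^n)t^{-n}\in\langle D\rangle$ and again $G=\langle D\rangle$. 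The identical reasoning applied to the condition in the statement, testing against an early stage of $(\Delta^*_{C^k}\cap G_\chi^{*+1})_k$ and using that $C$ is centered, yields $G_\chi\subseteq\langle C\rangle$ and then $G=\langle C\rangle$; in particular $C$ is itself a centered compact generating set containing $1_G$.

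Assembling the two directions is then routine. If $\chi\in\TopS^m(G)$, then $G$ is compactly generated; enlarging a compact generating set to a centered $\mathcal X\ni 1_G$ and invoking the displayed equivalence shows $\pi_q(\Delta^*_{\mathcal X^k}\cap G_\chi^{*+1})_k$ is essentially trivial for $q<m$, so $C:=\mathcal X$ is a witness. Conversely, if some centered compact $C\ni 1_G$ satisfies the condition, then $G=\langle C\rangle$ by the previous paragraph, so the displayed equivalence applies with $\mathcal X:=C$ and yields $\chi\in\TopS^m(G)$.

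The main obstacle is precisely the compact-generation step, since the whole equivalence rests on it: $(C^k)_k$ is cofinal with $\mathcal C$ if and only if $\langle C\rangle=G$, so without knowing compact generation one cannot compare the single-compact filtration $(\Delta^*_{C^k})_k$ with the full filtration $(\Delta^*_C)_{C\in\mathcal C}$. The two delicate points inside that step are choosing the right compact set against which to test essential triviality — $\{1_G\}$, resp. the bottom stage of the filtration — so that ``lies in the same $\pi_0$-component'' becomes a genuine bounded-path condition, and the $\chi=0$ versus $\chi\ne 0$ bookkeeping needed to upgrade $G_\chi\subseteq\langle D\rangle$ to $G=\langle D\rangle$.
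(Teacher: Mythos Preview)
Your proof is correct and takes a somewhat different route from the paper's. The paper invokes an external result, Lemma~3.7 of \cite{Hartmann2024c}, to pass directly from the filtration indexed by all compacts to one indexed by powers of a single centered compact (that lemma presumably packages the compact-generation step), and then spends the remainder of the argument verifying that $((G_\chi\cdot\E C^m)\cap\E G_\chi)_m$ is cofinal with $(\Delta^*_{C^m}\cap G_\chi^{*+1})_m$, which requires the non-obvious inclusion $(G\cdot\E C^m)\cap\E G_\chi\subseteq (G_\chi\cdot\E C^{2m})\cap\E G_\chi$ via a translation trick. You instead extract compact generation of $G$ directly from the $q=0$ essential-triviality hypothesis on each side, and then use only the internal Lemma~\ref{lem:cofinal} together with the observation that intersecting cofinal filtrations with a fixed subset preserves cofinality. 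Your argument is thus self-contained within the present paper and sidesteps the $G_\chi\cdot\E C$ versus $(G\cdot\E C)\cap\E G_\chi$ comparison entirely; the paper's version is shorter once the cited lemma is granted, but at the cost of that external dependence.
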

\begin{proof}
Recall that $\chi\in \TopS^m(G)$ if $\pi_q(G_\chi \E C)_C$ is essentially trivial for $q=0,\ldots,m-1$. By \cite[Lemma~3.7]{Hartmann2024c} this is equivalent to saying that there is a centered compact $C$ containing $1_G$ with $\pi_q((G_\chi \cdot \E C^m )\cap \E G_\chi)_m$ essentially trivial for $q=0,\ldots,m-1$.
  
It remains to show that $((G_\chi \cdot \E C^m )\cap \E G_\chi)_m$ is cofinal to $((\Delta^q_{C^m}\cap G_\chi^{q+1})_q)_m$. By Lemma~\ref{lem:cofinal} the filtration $((\Delta^q_{C^m}\cap G_\chi^{q+1})_q)_m$ is cofinal to $(G \E C^m\cap \E G_\chi)_m$. We are left to show that $(G \E C^m\cap \E G_\chi)_m$ is cofinal to $((G_\chi \cdot \E C^m )\cap \E G_\chi)_m$.
  
Clearly 
\[
(G_\chi \E C^m)\cap \E G_\chi\subseteq G \E C^m\cap \E G_\chi.
\]
For the other direction, suppose $(g_0,\ldots,g_q)\in G EC^m\cap EG_\chi$. Then $g_i=gc_i$ with $g\in G,c_i\in C$ and $\chi(gc_i)\ge 0$. Then $gc_0\cdot (c_0^{-1}c_0,\ldots,c_0^{-1}c_q)\in G_\chi EC^{2m}\cap EG_\chi$. This proves $(G\E C^m)\cap \E G_\chi\subseteq (G_\chi \E C^{2m})\cap \E G_\chi$ and, therefore, the last claim. 
\end{proof}

\begin{cor}
\label{cor:supermetric}
Let $G$ be a locally compact Hausdorff $\sigma$-compact group and $m\ge 1$. Then
\begin{enumerate}
\item $G$ is of type $\Clg m$ if and only if $(G,d)$ is of type $\Flg m$;
\item $G$ is of type $\Ctp m$ if and only if $(G,d)$ is of type $\Ftp m$;
\item if $G$ is of type $\Ctp m$ then: $\chi\in \TopS^m(G)$ if and only if $(G_\chi,d|_{G_\chi})$ is of type $\Ftp m$;
\item if $G$ is of type $\Clg m$ then: $\chi\in \TopS^m(G;\ZZ)$ if and only if $(G_\chi,d|_{G_\chi})$ is of type $\Flg m$.
\end{enumerate}
\end{cor}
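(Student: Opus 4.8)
The plan is to derive all four equivalences from a single cofinality fact: for every $q\ge 0$ and every $Y\in\{G,G_\chi\}$, the filtered system of simplicial sets $((G\cdot\E C)\cap\E Y)_{C\in\mathcal C}$ (with $\E G$ understood when $Y=G$) is cofinal with, hence isomorphic as an ind-object to, $(\VR k Y)_{k\ge 0}$, the Vietoris--Rips system of $(Y,d|_Y)$. Granting this, part (1) follows by applying $\tilde{\hlgy}_q$ for $q=0,\dots,m-1$ with $Y=G$ and comparing Definition~\ref{dfn:AT} with the reduced-homology reformulation of type $\Flg m$ in Definition~\ref{dfn:finiteprops-metric2}; part (2) follows the same way with $\pi_q$; and part (3) follows by applying $\pi_q$ with $Y=G_\chi$, comparing Definition~\ref{dfn:AT} with the last line of Definition~\ref{dfn:finiteprops-metric2} (alternatively, part (3) can be read off from Proposition~\ref{prop:homotopic_metric}). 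Part (4) is more immediate still and needs no cofinality step: since Definition~\ref{dfn:main} is already phrased in terms of the metric $d$, and the simplicial chain complex of $\VR k{G_\chi}$ is literally $(\ZZ[\Delta^q_k\cap G_\chi^{q+1}],\partial_q)_q$, the condition $\chi\in\TopS^m(G;\ZZ)$ unwinds clause by clause to: $(G_\chi,d|_{G_\chi})$ is coarsely connected (the $\hlgy_0=\ZZ$ clause, i.e.\ type $\Flg 1$) and $\hlgy_q(\VR k{G_\chi})_k$ is essentially trivial for $q=1,\dots,m-1$; matching this with Definition~\ref{dfn:finiteprops-metric2} (recalling $\hlgy_q=\tilde{\hlgy}_q$ for $q\ge 1$) is exactly type $\Flg m$.

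To prove the cofinality fact I would assemble three ingredients. First, from the proof of Lemma~\ref{lem:cofinal} one has, for each $q$, the inclusions $(G\cdot\E C)_q\subseteq\Delta^q_{C^{-1}C}$ and $\Delta^q_C\subseteq(G\cdot\E C)_q$; taking $C\ni 1_G$ so that $\Delta^\bullet_C$ is a simplicial subset of $\E G$, these inclusions are natural in $q$ and commute with the face and degeneracy maps, hence exhibit $(G\cdot\E C)_{C\in\mathcal C}$ and $(\Delta^\bullet_C)_{C\in\mathcal C}$ as cofinal filtrations of the simplicial set $\E G$. Second, because the metric $d$ is chosen to induce the coarse structure of $G$, the systems $(\Delta^1_C)_{C\in\mathcal C}$ and $(\Delta^1_k)_{k\ge 0}$ of $G\times G$ are cofinal, and since the defining condition of $\Delta^q_{(-)}$ is a conjunction of conditions on pairs, cofinality holds in every degree, so $(\Delta^\bullet_C)_C$ and $(\Delta^\bullet_k)_k$ are cofinal filtrations of $\E G$. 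Third, intersecting every term with $(\E Y)_q=Y^{q+1}$ preserves all the inclusions above, and $(\Delta^\bullet_k\cap Y^{\bullet+1})_k$ is by definition $(\VR k Y)_k$. Chaining these cofinalities, and recalling that essential triviality (of homology groups or of homotopy groups) depends only on the underlying ind-object, yields the claim.

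I do not expect a genuine obstacle: the statement is a translation between the filtered-system definitions of Section~\ref{sec:def+basics} and the coarse-geometric definitions, routed through Lemma~\ref{lem:cofinal}, and the hypotheses "$G$ of type $\Ctp m$" in (3) and "$G$ of type $\Clg m$" in (4) merely record the ambient setting in which the invariants are of interest and are not used in the argument above. The one point that deserves care is that the identifications of filtered systems must be carried out at the level of ind-objects of simplicial sets --- compatibly across all simplicial degrees at once --- before one passes to $\pi_q$ or $\tilde{\hlgy}_q$; this is exactly what the naturality in $q$ of the inclusions in Lemma~\ref{lem:cofinal} guarantees. One should also note in passing that "$(G,d)$ of type $\Flg m$" and "$(G_\chi,d|_{G_\chi})$ of type $\Flg m$" are independent of the particular metric $d$ inducing the coarse structure, since type $\Flg m$ (and likewise type $\Ftp m$) is a coarse invariant.
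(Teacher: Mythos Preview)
Your proposal is correct and follows essentially the same route as the paper: translate between the filtered-system definitions (Definition~\ref{dfn:AT}, Definition~\ref{dfn:main}) and the Vietoris--Rips definitions (Definition~\ref{dfn:finiteprops-metric2}) via the cofinality established in Lemma~\ref{lem:cofinal}, then invoke Proposition~\ref{prop:homotopic_metric} for part~(3) and read part~(4) directly off Definition~\ref{dfn:main}. The only visible difference is presentational: the paper opens with an explicit case distinction for non-compactly-generated $G$ (where both sides of each equivalence fail for the same reason), whereas you absorb this case into a single cofinality statement by taking $d$ to be any metric inducing the coarse structure rather than specifically a word-length metric --- a harmless and arguably tidier packaging of the same content.
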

\begin{proof}
If $G$ is not compactly generated, then $(G,d)$ is not coarsely connected, so $(G,d)$ is neither of type $\Flg m$ nor of type $\Ftp m$. At the same time every group of type $\Clg m/\Ctp m$ is compactly generated. This proves the claims in the case that $G$ is not compactly generated. Now suppose $G$ is compactly generated, say by $\mathcal X$, and $d$ is the word-length metric according to $\mathcal X$. 

We begin with the first claim. By Definiton~\ref{dfn:AT} the group $G$ is of type $\Clg m$ if $\tilde \hlgy_q(G\cdot \E C)_{C\in \mathcal C}$ is essentially trivial for $q=0,\ldots,m-1$. By Lemma~\ref{lem:cofinal} this is equivalent to $\tilde \hlgy_q(\Delta_k^q)_{k\ge 0}$ is essentially trivial for $q=0,\ldots,m-1$. And that is type $\Flg m$ of Definition~\ref{dfn:finiteprops-metric2}. That is the first claim.

Now we prove the second claim. By Definition~\ref{dfn:AT} the group $G$ is of type $\Ctp m$ if $\pi_q(G\cdot \E C)_{C\in \mathcal C}$ is essentially trivial for $q=0,\ldots,m-1$. By Lemma~\ref{lem:cofinal} this is equivalent to $\pi_q(\Delta_k^q)_{k\ge 0}$ is essentially trivial for $q=0,\ldots,m-1$. And that is type $\Ftp m$ of Definition~\ref{dfn:finiteprops-metric2}. That is the second claim.

The third claim is Proposition~\ref{prop:homotopic_metric} and the fourth claim is Definition~\ref{dfn:main}.
\end{proof}

\begin{prop}
\label{prop:veryeasy}
If $G$ is a locally compact, Hausdorff group and $m\ge1$ then the following are equivalent:
\begin{enumerate}
 \item $0\in \TopS^m(G;\ZZ)$;
 \item $\TopS^m(G;\ZZ)\not=\emptyset$;
 \item $G$ is of type $\Clg m$.
\end{enumerate}
\end{prop}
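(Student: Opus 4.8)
The plan is to prove the cycle $(1)\Rightarrow(2)\Rightarrow(3)\Rightarrow(1)$. First I would dispose of the case where $G$ is not $\sigma$-compact: then $G$ is not compactly generated, hence not of type $\Clg 1$ (every group of type $\Clg 1$ is compactly generated), and a fortiori not of type $\Clg m$, since for $m\ge1$ type $\Clg m$ subsumes type $\Clg 1$ by Definition~\ref{dfn:AT}; at the same time $\TopS^m(G;\ZZ)$ is empty, Definition~\ref{dfn:main} applying only to $\sigma$-compact groups. So all three statements fail and I may assume henceforth that $G$ is $\sigma$-compact, which makes the word metric $d$ and Corollary~\ref{cor:supermetric} available. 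The implication $(1)\Rightarrow(2)$ is immediate. For $(3)\Rightarrow(1)$: if $G$ is of type $\Clg m$ then $(G,d)$ is of type $\Flg m$ by Corollary~\ref{cor:supermetric}(1); since $G_0=G$ for the zero character, Definition~\ref{dfn:finiteprops-metric2} identifies this with the assertion that $(G,d)$ is coarsely connected (equivalently $\homology 0 c G=\ZZ$ for some $c$) and $(\homology q k G)_k$ is essentially trivial for $1\le q\le m-1$, which is precisely $0\in\TopS^m(G;\ZZ)$ as spelled out in Definition~\ref{dfn:main}.

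The substance is $(2)\Rightarrow(3)$. Fix $\chi\in\TopS^m(G;\ZZ)$. If $\chi=0$ then, as above, $G_0=G$ makes the defining conditions coincide with $(G,d)$ being of type $\Flg m$, so $G$ is of type $\Clg m$ by Corollary~\ref{cor:supermetric}(1); I may therefore assume $\chi\ne0$. The key observation I would record is that
\[
\VR k G=\bigcup_{g\in G}g\cdot\VR k{G_\chi}
\]
is a directed union of simplicial subsets of $\VR k G$. Indeed, since $d$ is left-invariant, $g\cdot\VR k{G_\chi}=\VR k{gG_\chi}$ with $gG_\chi=\{h\in G:\chi(h)\ge\chi(g)\}$; these subcomplexes are directed by reverse order of $\chi(g)$; and a simplex $(g_0,\ldots,g_q)\in\VR kG$ lies in $g\cdot\VR k{G_\chi}$ as soon as $\chi(g)\le\min_i\chi(g_i)$, which is possible because $\chi\ne0$. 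Moreover, for each $g$, left translation is both a simplicial isomorphism $\VR r{G_\chi}\xrightarrow{\cong}g\cdot\VR r{G_\chi}$, compatible with the inclusions as $r$ grows, and an isometry $(G_\chi,d|_{G_\chi})\to(gG_\chi,d|_{gG_\chi})$.

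Granting this, I would conclude as follows. For coarse connectedness of $(G,d)$: from $\chi\in\TopS^1(G;\ZZ)$ there is $c\ge0$ making $(G_\chi,d|_{G_\chi})$, hence every translate $gG_\chi$, $c$-coarsely connected; any two points of $G$ lie in a common $gG_\chi$ (choose $\chi(g)$ small enough), so $(G,d)$ is $c$-coarsely connected. For essential triviality of $(\homology q k G)_k$ with $1\le q\le m-1$: given $k$, essential triviality of $(\homology q r{G_\chi})_r$ yields $k'\ge k$ for which $\homology q k{G_\chi}\to\homology q{k'}{G_\chi}$ vanishes, and transporting along left translation (which commutes with the inclusions) the map $\hlgy_q(g\cdot\VR k{G_\chi})\to\hlgy_q(g\cdot\VR{k'}{G_\chi})$ vanishes for every $g$; now any class in $\homology q k G$ is represented by a finite cycle, which by directedness lies in a single $g\cdot\VR k{G_\chi}$ and becomes a boundary there already in $g\cdot\VR{k'}{G_\chi}\subseteq\VR{k'}G$, so the class dies in $\homology q{k'}G$. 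Hence $(G,d)$ is of type $\Flg m$ and $G$ is of type $\Clg m$ by Corollary~\ref{cor:supermetric}(1). The main obstacle is this decomposition of $\VR kG$ into translates of $\VR k{G_\chi}$ together with the transfer of essential triviality through it; everything else is bookkeeping with the results already available in the excerpt.
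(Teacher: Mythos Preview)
Your proof is correct and uses essentially the same idea as the paper for $(2)\Rightarrow(3)$: translate a cycle in $G$ by a group element so that it lands in (a copy of) $G_\chi$, bound it there using the hypothesis, and translate back---the paper does this explicitly with $t^n z$ for some $t$ with $\chi(t)>0$, while you package the same move as a directed union $\VR kG=\bigcup_g g\cdot\VR k{G_\chi}$. Your direct coarse-connectedness argument for $m=1$ is a minor improvement over the paper's detour through Lemma~\ref{lem:sigma1} and $\TopS^1(G)$, but the substance is the same.
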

\begin{proof}
It is obvious that condition 1 implies condition 2.

Now we show that condition 2 implies condition 3. Suppose condition 2, that $\TopS^m(G;\ZZ)\not=\emptyset$.  First, we discuss the $m=1$ case. Suppose $\chi\in \TopS^1(G,\ZZ)$. Then $\chi\in \TopS^1(G)$ by Lemma~\ref{lem:sigma1}. Thus, $G$ has type $\Ctp 1$ by \cite[Proposition~3.5]{Hartmann2024c}. This is equivalent to having type $\Clg 1$ \cite{Abels1997}. Now we prove the case $m\ge 2$. Suppose $\chi\in \TopS^m(G,\ZZ)$ and we have already shown that $G$ is of type $\mathrm{CP}_{m-1}$. Then $(G_\chi,d)$ has type $\Flg m$. Suppose $\hlgy_{m-1}(\ZZ[\Delta_C^{m-1}\cap G_\chi^m])$ vanishes in $\hlgy_{m-1}(\ZZ[\Delta_D^{m-1}\cap G_\chi^m])$. Let $z\in \ZZ[\Delta^{m-1}_C]$ be a cycle. If $\chi=0$, then $G_\chi=G$ and we are done. Otherwise, there exists some $t\in G$ with $\chi(t)>0$. Then $t^nz\in\ZZ[\Delta^{m-1}_C\cap (G_\chi)^{m}]$ for some $n\ge 0$. Then there exists some $c\in \ZZ[\Delta^{m}_D\cap (G_\chi)^{m+1}]$ with $\partial_{m}c=t^nz$. Then $t^{-n}c\in \ZZ[\Delta^{m}_D]$ with $\partial_{m}t^{-n}c=z$. Thus, $\hlgy_{m-1}(\ZZ[\Delta_C^{m-1}])$ vanishes in $\hlgy_{m-1}(\ZZ[\Delta_D^{m-1}])$. This implies that the group $G$ is of type $\Clg m$. This is condition 3.

Now we show that condition 3 implies condition 1. Suppose $G$ is of type $\Clg m$ and let $\chi=0$ be the zero character on $G$. Then $(G_\chi,d)=(G,d)$ is of type $\Flg m$ which proves $0\in \TopS^m(G;\ZZ)$, condition 1.
 \end{proof}

A \emph{uniform lattice} in a locally compact Hausdorff $
\sigma$-compact group $G$ is a cocompact discrete subgroup $\Gamma\le G$ \cite{Cornulier2016}.

\begin{lem}
 If $\Gamma\le G$ is a uniform lattice then 
 \begin{itemize}
  \item $\chi\in \TopS^k(G,\ZZ)$ if and only if $\chi|_\Gamma\in \Sigma^k(\Gamma,\ZZ)$;
  \item $\chi\in\TopS^k(G)$ if and only if $\chi|_\Gamma\in\Sigma^k(\Gamma)$. 
 \end{itemize}
\end{lem}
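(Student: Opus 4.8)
The plan is to exploit the coarse-geometric reformulations of types $\Flg m$, $\Ftp m$ and of $\TopS^m$ established in Corollary~\ref{cor:supermetric}, reducing both statements to the assertion that the inclusion $\Gamma \into G$ of a uniform lattice is a coarse equivalence, and that it restricts to a coarse equivalence $\Gamma_{\chi|_\Gamma} \to G_\chi$ (or at least to a map whose image is coarsely dense in $G_\chi$ and which is a quasi-isometric embedding onto its image).

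First I would recall that $\Gamma \le G$ cocompact and discrete means there is a compact $K \subseteq G$ with $G = \Gamma K$; enlarging $K$ we may assume $K = K^{-1}$ and $1_G \in K$, and also that $K$ contains a compact generating set, so that $G$ is compactly generated precisely when $\Gamma$ is finitely generated. Endow $G$ with the word metric $d$ from a compact generating set and $\Gamma$ with a word metric from a finite generating set. The inclusion $\iota\colon\Gamma\to G$ is then a quasi-isometry: it is coarsely Lipschitz (it is a continuous homomorphism, hence coarsely Lipschitz by the lemma above), coarsely surjective because $G=\Gamma K$ with $K$ compact hence $d$-bounded, and a quasi-isometric embedding because the $G$-word metric restricted to $\Gamma$ is quasi-isometric to the $\Gamma$-word metric (a standard Milnor--Švarc-type argument: a compact generating set of $G$ meets only finitely many $\Gamma$-cosets, and conversely). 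In particular $(\Gamma, d_\Gamma)$ and $(G,d)$ are isomorphic in the coarse category, so $(G,d)$ is of type $\Flg k$ iff $\Gamma$ is of type $\Flg k$, and of type $\Ftp k$ iff $\Gamma$ is of type $\Ftp k$; combined with the classical facts that $\Gamma$ has type $\Flg k$ iff $\Gamma$ is of type $FP_k$ and has type $\Ftp k$ iff $\Gamma$ is of type $F_k$, and with Corollary~\ref{cor:supermetric}, this already handles the case $\chi = 0$, i.e.\ the statements that $G$ is of type $\Clg k$ iff $\Gamma$ is of type $FP_k$, and likewise for $\Ctp k$ / $F_k$.

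For a nonzero character $\chi$, set $\chi_\Gamma := \chi|_\Gamma$; note $\chi_\Gamma$ is again nonzero unless $\chi$ vanishes on $\Gamma$, and if $\chi_\Gamma = 0$ then, since $\Gamma$ is coarsely dense, $\chi$ is bounded on $G$, hence (being a homomorphism to $\RR$) identically zero, so we may assume $\chi_\Gamma \ne 0$. The key geometric point is that $\iota$ maps $\Gamma_{\chi_\Gamma} = \{\gamma \in \Gamma \mid \chi(\gamma) \ge 0\}$ into $G_\chi$, is a quasi-isometric embedding there, and has coarsely dense image: given $g = \gamma k \in G_\chi$ with $k$ in the compact set $K$, we have $\chi(\gamma) = \chi(g) - \chi(k) \ge -\sup_K|\chi|$, and since $\chi_\Gamma$ is a nonzero homomorphism to $\RR$ there is $t \in \Gamma$ with $\chi(t) > 0$, so $\gamma t^n \in \Gamma_{\chi_\Gamma}$ for $n$ bounded independently of $g$, and $d(g, \gamma t^n)$ is bounded. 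Thus $(G_\chi, d|_{G_\chi})$ and $(\Gamma_{\chi_\Gamma}, d_\Gamma|)$ are coarsely equivalent, so by Corollary~\ref{cor:supermetric}(3)--(4) and the corresponding classical characterizations of $\Sigma^k(\Gamma)$ and $\Sigma^k(\Gamma;\ZZ)$ via coarse finiteness properties of $\Gamma_{\chi_\Gamma}$ (e.g.\ \cite{Bieri1988,Renz1988}), we get $\chi \in \TopS^k(G;\ZZ)$ iff $(G_\chi,d|)$ is of type $\Flg k$ iff $(\Gamma_{\chi_\Gamma}, d_\Gamma|)$ is of type $\Flg k$ iff $\chi_\Gamma \in \Sigma^k(\Gamma;\ZZ)$, and identically with $\Ftp k$ / $\Sigma^k(\Gamma)$ in place of $\Flg k$ / $\Sigma^k(\Gamma;\ZZ)$, using (3) of the corollary and the hypothesis that $G$ is of type $\Ctp k$ (which by the $\chi=0$ case holds iff $\Gamma$ is of type $F_k$).

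The main obstacle I anticipate is the bookkeeping that makes the Milnor--Švarc comparison of word metrics precise in the locally compact (non-proper-metric) setting: one must be careful that the metric $d$ on $G$ coming from a compact generating set need not induce the topology, and that "cocompact discrete" is the right hypothesis to ensure $K$ is $d$-bounded and that only finitely many $\Gamma$-translates of a compact set are needed. A clean way around this is to not compare word metrics directly but to work entirely at the level of coarse structures: show that the coarse structure on $G$ generated by $(\Delta^1_C)_{C \in \mathcal C}$ pulls back along $\iota$ to the (unique) coarse structure on the discrete group $\Gamma$, using that for compact $C$ the set $\Gamma \cap \Delta^1_C[\gamma]$ is finite and uniformly bounded in size (bounded geometry, already proved), and that $C \subseteq \Gamma F$ for some finite $F$. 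Everything else is then a matter of quoting Corollary~\ref{cor:supermetric}, the classical coarse-geometric descriptions of $F_k$, $FP_k$, $\Sigma^k(\Gamma)$ and $\Sigma^k(\Gamma;\ZZ)$, and the invariance of coarse finiteness properties under coarse equivalence.
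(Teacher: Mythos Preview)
Your approach is essentially the same as the paper's: both reduce to showing that $G_\chi$ and $\Gamma_{\chi|_\Gamma}$ are isomorphic in the coarse category, then invoke the coarse-geometric characterizations of the $\Sigma$-invariants. The paper, however, sidesteps the Milnor--\v{S}varc bookkeeping you flag as the ``main obstacle'': rather than comparing word metrics, it directly constructs a coarse inverse $\alpha\colon G_\chi\to\Gamma_{\chi|_\Gamma}$, $g\mapsto\varphi(g)t$, where $\varphi(g)\in\Gamma$ satisfies $\varphi(g)^{-1}g\in K$ and $t\in\Gamma$ is fixed with $\chi(t)\ge\max_K\chi$, and checks by triangle-inequality estimates that $\alpha$ is coarsely Lipschitz and that $\alpha\circ\iota$, $\iota\circ\alpha$ are close to the identity. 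This is exactly the ``clean way around'' you describe in your final paragraph, so your proposal would converge to the paper's argument once you carry it out.
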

\begin{proof}
 We show that $G_\chi$ and $\Gamma_{\chi|_\Gamma}$ are isomorphic in the coarse category, which proves the claim.
 
 There is an inclusion $\iota:\Gamma_{\chi|_{\Gamma}}\subseteq G_\chi$ since $g\in \Gamma_{\chi|_\Gamma}$ implies $0\le\chi|_\Gamma(g)=\chi(g)$. Suppose $K$ is compact with $K=K^{-1}$ that contains $1_G$ and $\Gamma K=G$. Since $\chi$ is continuous and $K$ compact, the image $\chi(K)$ is bounded. If $\chi$ vanishes on $\Gamma$, then it also vanishes on $G$. So, suppose $\chi|_\Gamma$ does not vanish. Then there exists some $t\in \Gamma$ with $\chi(t)\ge \max_{g\in K} \chi(g)$. And $K\le C^n$ for some $n$ where $C\subseteq G$ is the compact generating set. Now for every $g\in G$ there exists $\varphi(g)\in \Gamma$ with $\varphi(g)^{-1}g\in K$. Without loss of generality $\varphi(g)=g$ if $g\in \Gamma$. Then define a map
 \begin{align*}
  \alpha:G_\chi&\to \Gamma_{\chi|_\Gamma}\\
  g&\mapsto \varphi(g)t.
 \end{align*}
 This map is coarsely Lipschitz since
 \begin{align*}
  d(\varphi(g)t,\varphi(h)t)
  &\le d(\varphi(g)t,\varphi(g))+d(\varphi(g),g)+d(g,h)+d(h,\varphi(h))+d(\varphi(h),\varphi(h)t)\\
  &\le \ell(t)+n+d(g,h)+n+\ell(t).
 \end{align*}
 Moreover $\alpha$ is well-defined since
 \[
  \chi|_\Gamma(\varphi(g)t)=\chi(\varphi(g)t)=-\chi(\varphi(g)^{-1}g)+\chi(g)+\chi(t)\ge 0.
 \]
 Now 
 \[
  d(\alpha\circ\iota(g),g)=d(g,gt)\le \ell(t).
 \]
 Thus $\alpha\circ\iota$ is close to the identity. Likewise
 \[
  d(\iota\circ\alpha(g),g)=d(\varphi(g)t,g)\le d(\varphi(g)t,\varphi(g))+d(\varphi(g),g)\le \ell(t)+n.
 \]
This implies $\iota\circ \alpha $ is close to the identity. Thus we show that $\alpha,\iota$ are coarse inverses.
\end{proof}

As usual, if $T\subseteq G$ is a subset, then
\[
 S(G,T):=\{\chi:G\to \RR\mid \chi|_T=0\}
\]
denotes the set of characters on $G$ that vanish on $T$.
 \begin{prop}
 \label{prop:center}
  If $m\ge 1$ and $G$ is a group of type $\Clg m$ with center $Z$ then \[
   S(G,Z)^c\subseteq \TopS^m(G;\ZZ).
  \]
 \end{prop}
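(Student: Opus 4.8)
The plan is to use the metric reformulation already available in this section rather than the chain-endomorphism criteria proved later. If $\chi$ does not vanish on $Z$, pick $t\in Z$ with $\chi(t)>0$ (replace $t$ by $t^{-1}$ if necessary); note $\chi\neq 0$. Since $G$ is of type $\Clg m$ it is compactly generated, so I may fix a compact symmetric generating set $C$ with $1_G\in C$ and the associated word metric $d$, with $\ell(g):=d(1_G,g)$. By Corollary~\ref{cor:supermetric}(1), $(G,d)$ is of type $\Flg m$, and by Corollary~\ref{cor:supermetric}(4) it suffices to prove that $(G_\chi,d|_{G_\chi})$ is of type $\Flg m$.

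The idea is to realize $G_\chi$ as a \emph{coarse retract} of $G$ using the central element $t$. For $g\in G$ let $n(g)\geq 0$ be the least integer with $\chi\!\left(t^{n(g)}g\right)\geq 0$ (it exists because $\chi(t)>0$), and set $\psi(g):=t^{n(g)}g\in G_\chi$. On $G_\chi$ one has $n(g)=0$, so $\psi$ restricts to the identity there; in particular $\psi\circ\iota=\id_{G_\chi}$ for the inclusion $\iota\colon G_\chi\hookrightarrow G$, and hence $\psi_*\circ\iota_*$ is the identity already on simplicial chains (it induces the identity simplicial map $\VR{r}{G_\chi}\to\VR{r}{G_\chi}$). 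The key claim is that $\psi$ is coarsely Lipschitz: if $d(g,g')\leq R$ then $g^{-1}g'\in C^{R}$, so $|\chi(g)-\chi(g')|=|\chi(g^{-1}g')|\leq M_R:=\max_{x\in C^R}|\chi(x)|<\infty$ by continuity of $\chi$ and compactness of $C^R$; consequently $|n(g)-n(g')|$ is bounded by a constant $k_R$, and then
\[
d(\psi(g),\psi(g'))\;\leq\; d\!\left(t^{n(g)}g,\,t^{n(g)}g'\right)+d\!\left(t^{n(g)}g',\,t^{n(g')}g'\right)\;=\;d(g,g')+\ell\!\left(t^{\,n(g')-n(g)}\right)\;\leq\;R+k_R\,\ell(t),
\]
where the middle equality uses that $t$ is central, so that $\ell\!\bigl(g'^{-1}t^{\,j}g'\bigr)=\ell(t^{\,j})$. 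This is the one place centrality is genuinely needed.

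Given this, type $\Flg m$ transfers from $G$ to $G_\chi$ by push-forward. Fix $q$ with $0\leq q\leq m-1$. Since $(G,d)$ is of type $\Flg m$, there is $r'\geq r$, depending only on $r$, with $\tilde\hlgy_q(\VR{r}{G})\to\tilde\hlgy_q(\VR{r'}{G})$ zero. Let $z\in\chain_q(\VR{r}{G_\chi})$ be a cycle; then $\iota_*z$ is a cycle in $\VR{r}{G}$, so $\iota_*z=\partial c$ for some $c\in\chain_{q+1}(\VR{r'}{G})$. Because $\psi$ is coarsely Lipschitz there is $S\geq r'$, depending only on $r'$ and hence only on $r$, such that $\psi$ induces a simplicial map $\VR{r'}{G}\to\VR{S}{G_\chi}$; then $\psi_*c\in\chain_{q+1}(\VR{S}{G_\chi})$ satisfies $\partial(\psi_*c)=\psi_*\iota_*z=z$. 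Thus $\tilde\hlgy_q(\VR{r}{G_\chi})\to\tilde\hlgy_q(\VR{S}{G_\chi})$ is zero with $S$ independent of $z$, so $(\tilde\hlgy_q(\VR{r}{G_\chi}))_{r}$ is essentially trivial for $0\leq q\leq m-1$; hence $(G_\chi,d|_{G_\chi})$ is of type $\Flg m$, and $\chi\in\TopS^m(G;\ZZ)$ by Corollary~\ref{cor:supermetric}(4).

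The main obstacle is the coarse-Lipschitz estimate for $\psi$: one must bound the jump $|n(g)-n(g')|$ in terms of $d(g,g')$ — which needs only coarse control of $\chi$, coming from continuity together with compactness of $C^R$ — and then bound $d(\psi(g),\psi(g'))$, where centrality of $t$ is essential. (A naive argument that merely translates a given cycle into $G_\chi$ by a power of $t$ depending on that cycle does not suffice, since it yields a radius $S$ depending on the cycle and not on $r$ alone; encoding the translation into the globally coarsely Lipschitz map $\psi$ is precisely what makes the bound uniform.)
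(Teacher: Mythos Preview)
Your proof is correct and takes a genuinely different route from the paper's. The paper picks $t\in Z$ with $\chi(t)<0$, builds the explicit prism homotopy $h_q\colon (g_0,\ldots,g_q)\mapsto\sum(-1)^i(g_0,\ldots,g_i,tg_i,\ldots,tg_q)$ between the inclusion and left-multiplication by $t$, fills a given cycle $z$ by some $c$ in $G$ using type~$\Clg m$, and then corrects with a telescoping sum $\tilde c=t^{-(n+1)}\bigl(c+\sum_{i=0}^n t^i h(z)\bigr)$ to land in $G_\chi$; the Vietoris--Rips index of $\tilde c$ is independent of $z$ because left-translation by any power of $t$ is an isometry, even though the exponent $n$ depends on $z$. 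Your argument instead encodes all the translation into a single coarse retraction $\psi\colon G\to G_\chi$, $g\mapsto t^{n(g)}g$, and simply pushes the filling forward along $\psi_*$; since $\psi|_{G_\chi}=\id$, no telescoping correction is needed. Centrality is used at the same point in both proofs---to bound $\ell(g'^{-1}t^jg')$ by $|j|\,\ell(t)$---but your packaging is cleaner and makes visible the general principle that a coarse retract of a space of type~$\Flg m$ is again of type~$\Flg m$. The paper's version, by contrast, stays at the level of explicit chains and is closer in spirit to the chain-endomorphism criteria developed later. Your closing caveat slightly undersells the paper's approach: translating the filling by a \emph{single} cycle-dependent power of $t$ does work, precisely because of the isometry observation---what fails is doing so without the telescoping correction to recover the original boundary.
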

 \begin{rem}
     In the discrete case, this has been done in \cite[Lemma~2.1]{Meier2001}.
 \end{rem}
\begin{proof}
 Let $\chi$ be a character that does not vanish on the center. Then there exists some $t\in Z$ with $\chi(t)<0$. Without loss of generality, $t$ is an element of the generating compact which induces the metric on $G$. The mapping
 \begin{align*}
  h_q:\ZZ[\Delta^q_k\cap G_\chi^{q+1}]&\to \ZZ[\Delta^{q+1}_{k+1}]\\
  (g_0,\ldots,g_q)&\mapsto \sum_{i=0}^q(-1)^i(g_0,\ldots,g_i,tg_i,\ldots,tg_q)
 \end{align*}
is a chain homotopy between the inclusion $\chains q k {G_\chi} \subseteq \chains q {k+1} G$ and the chain homomorphism $c\mapsto tc$ (this map is $G$-equivariant, since $t$ is in the center). Here we use the fact that $t$ is in the center.

Let $z\in \chains q k {G_\chi}$ be a cycle. Then $tz-z$ is a boundary in $\chains q {k+1}G$. Say $c_0\in \chains {q+1}{k+1}G$ with $\diff {q+1}c_0=tz-z$. For each $n\ge 0$ define
\[
 z_n=t^nz, \quad c_n=t^nc_0.
\]
Then
\[
 \diff {q+1} c_n=t^n\diff {q+1}c_0=t^n(tz-z)=z_{n+1}-z_n.
\]
Since $G$ is of type $\Clg {q+1}$ there exists $l$ such that $\homology q k G$ vanishes in $\homology q l G$. So, there exists some $c\in \chains {q+1} l G$ with $\diff {q+1} c=z$. Then $-(n+1)\chi(t)\ge -\min\{\chi(g)\mid g\in\supp c\}=:-v(c)$ for some $n\in \NN$. Define 
\[
 \tilde c=t^{-(n+1)}(c+\sum_{i=0}^nc_i).
\]
Then
\begin{align*}
 \diff{q+1}\tilde c=t^{-(n+1)}(z+\sum_{i=0}^nz_{i+1}-z_i)=t^{-(n+1)}z_{n+1}=z
\end{align*}
and
\begin{align*}
 v(\tilde c)=-(n+1)\chi(t)+\min_i(v(c),v(c_i))=\min(-(n+1)\chi(t)+v(c),-\chi(t)+v(c_0))\ge 0.
\end{align*}
Also, $c\in \ZZ[\Delta^{q+1}_l]$ and $c_i=t^ic_0\in \ZZ[\Delta^{q+1}_{k+1}]$. Thus, $\tilde c\in \ZZ[\Delta^{q+1}_{\max(l,k+1)}]$. In this way, we have shown that $\homology q k {G_\chi}$ vanishes in $\homology q{\max(l,k+1)}{G_\chi}$. Hence $(G_\chi,d)$ has type $\Flg m$ and so by Corollary~\ref{cor:supermetric} we conclude $\chi\in\TopS^m(G;\ZZ)$.
\end{proof}

\begin{lem}
 If $N\le G$ is a normal subgroup, then it is closed if and only if $Q:=G/N$ is Hausdorff. In the case where $G$ is locally compact Hausdorff, both the subspace topology on $N$ and the quotient topology on $Q$ are locally compact Hausdorff.
\end{lem}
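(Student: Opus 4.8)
The plan is to rely on two standard facts about the quotient map $\pi\colon G\to Q$ for a normal subgroup $N$: that it is a continuous open surjection, and that $Q$ with the quotient topology is again a topological group. Openness is immediate --- if $U\subseteq G$ is open then $\pi^{-1}(\pi(U))=UN=\bigcup_{n\in N}Un$ is a union of translates of $U$, hence open, so $\pi(U)$ is open by definition of the quotient topology. I would record this, together with the defining property that $C\subseteq Q$ is closed precisely when $\pi^{-1}(C)$ is closed, before starting.

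For the equivalence, first suppose $Q$ is Hausdorff. Then $\{N\}=\{\pi(1_G)\}$ is closed in $Q$, so $N=\pi^{-1}(\{N\})$ is closed in $G$. Conversely, if $N$ is closed, then $\{N\}$ is closed in $Q$, so $Q$ is a $T_1$ topological group; since every $T_1$ topological group is Hausdorff, $Q$ is Hausdorff. If one prefers a direct argument: the diagonal of $Q\times Q$ is the image, under the open surjection $G\times G\to Q\times Q$, of the set $\{(g,h)\mid g^{-1}h\in N\}$, which is closed as the preimage of $N$ under the continuous map $(g,h)\mapsto g^{-1}h$; and the image of such a set under an open surjection is again closed.

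Now assume in addition that $G$ is locally compact Hausdorff and $N$ is closed (equivalently, by the first part, that $Q$ is Hausdorff). The subspace $N$ is Hausdorff automatically, and it is locally compact because for $x\in N$ any compact neighborhood $K$ of $x$ in $G$ yields the compact set $K\cap N$, which is a neighborhood of $x$ in $N$. For $Q$: Hausdorffness is the first part, and local compactness follows because $\pi$ is a continuous open surjection out of a locally compact space --- given $gN\in Q$, choose a compact neighborhood $K$ of $g$ in $G$; then $\pi(K)$ is compact and, containing the open set $\pi(\Int K)\ni gN$, it is a neighborhood of $gN$. I do not anticipate a real obstacle here: the argument is entirely point-set topology of topological groups, and the only steps demanding a little care are the precise use of the quotient topology (closed sets correspond under $\pi^{-1}$) and the openness of $\pi$.
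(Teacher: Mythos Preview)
Your proof is correct. The overall structure matches the paper's, but for the implication ``$N$ closed $\Rightarrow$ $Q$ Hausdorff'' you take a more conceptual route: you observe that $\{N\}$ is closed in $Q$ (since $\pi$ is a quotient map and $\pi^{-1}(\{N\})=N$), so $Q$ is $T_1$, and then invoke the standard fact that a $T_1$ topological group is Hausdorff. The paper instead unwinds this fact by hand: given $g_1N\neq g_2N$, it uses continuity of multiplication to find a symmetric neighborhood $V$ of $1_G$ with $VV\subseteq (g_1Ng_2^{-1})^c$, and checks that $Vg_1N$ and $Vg_2N$ are disjoint. Your version is cleaner; the paper's is self-contained. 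One small remark on your alternative diagonal argument: the assertion ``the image of such a set under an open surjection is again closed'' is not true for arbitrary closed sets---what makes it work here is that $\{(g,h)\mid g^{-1}h\in N\}$ is \emph{saturated} for $\pi\times\pi$ (it equals $(\pi\times\pi)^{-1}(\Delta_Q)$), and an open surjection is a quotient map. For the locally compact Hausdorff part, you and the paper argue identically; you make explicit the openness of $\pi$, which the paper uses tacitly when claiming $\pi(K)$ is a neighborhood.
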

\begin{proof}
 Suppose $N$ is closed in $G$. If $g_1N,g_2N\in Q$ are two points with $g_1N\not=g_2N$, then $g_1Ng_2^{-1}$ does not contain $1_G$.So $(g_1Ng_2^{-1})^c$ is an open subset containing $1_G$. Since $G$ is a topological group, there exists a symmetric open $V$ that contains $1_G$ with $VV\subseteq (g_1Ng_2^{-1})^c$. Then $Vg_1N,Vg_2N$ do not intersect: For if $g=v_1g_1n_1=v_2g_2n_2$ with $v_1,v_2\in V,n_1,n_2\in N$ then
 \[
  g_1Ng_2^{-1}\ni g_1n_1n_2^{-1}g_2^{-1}=v_1^{-1}v_2\in VV,
 \]
a contradiction. Thus, $Vg_1N$ and $Vg_2N$ are neighborhoods of $g_1N,g_2N$ in $Q$ that do not intersect. This implies $Q$ is Hausdorff.

If conversely $Q$ is Hausdorff, then the points in $Q$ are closed. This in particular implies that $1_Q$ is closed. Since $Q$ inherits the quotient topology from $G$, the quotient map $\pi:G\to Q$ is continuous. Then $N=\pi^{-1}(1_Q)$ as an inverse image of a closed set under a continuous map is closed.

If additionally $G$ is locally compact Hausdorff, then there exists a compact neighborhood $K$ of $1_G$. Then $\pi(K)$ is a compact neighborhood of $1_Q$. This shows that $Q$ is locally compact. In addition, $K\cap N$ is a compact neighborhood of $1_N$ since $N$ is closed. This shows that $N$ is locally compact. Also, if $x_1,x_2\in N$ are distinct then they are separated in $G$ by neighborhoods $U_1$ and $U_2$. Then $U_1\cap N$ and $U_2\cap N$ are neighborhoods in $N$ that separate $x_1,x_2$ in $N$. Thus, $N$ is also Hausdorff.
\end{proof}

\begin{lem}
\label{lem:semidirect}
 If $G=N\rtimes Q$ is a semidirect product with continuous split, $N$ closed in $G$, $\pi:G\to Q$ the projection and $\chi$ is a character on $Q$ with $\chi\circ \pi\in\TopS^m(G,\ZZ)$ then $\chi\in \TopS^m(Q,\ZZ)$.
\end{lem}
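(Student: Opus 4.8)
The plan is to realise $Q$ as a coarse retract of $G$ — and, simultaneously, $Q_\chi$ as a coarse retract of $G_\psi$, where $\psi:=\chi\circ\pi$ — and then transport type $\Flg m$ along the retraction. First I would record the standing data. Since $\psi\in\TopS^m(G;\ZZ)$, the group $G$ is locally compact, Hausdorff and $\sigma$-compact, and by Proposition~\ref{prop:veryeasy} it is of type $\Clg m$; in particular $G$ is compactly generated, so $Q=G/N$ is compactly generated, locally compact and Hausdorff (by the quotient lemma preceding this one, using that $N$ is closed) and $\sigma$-compact (as a continuous image of $G$) — hence every metric notion below applies to $Q$ as well. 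Let $\sigma\colon Q\to G$ be the continuous splitting, so $\pi\circ\sigma=\id_Q$; both $\pi$ and $\sigma$ are continuous homomorphisms and therefore coarsely Lipschitz for the word metrics. The one point that needs genuine care is that the retraction respects the half-spaces: from $\psi=\chi\circ\pi$ one reads off $g\in G_\psi\iff\pi(g)\in Q_\chi$, so $\pi$ restricts to a coarsely Lipschitz map $G_\psi\to Q_\chi$, while $\chi(\pi(\sigma(q)))=\chi(q)\ge 0$ for $q\in Q_\chi$ shows that $\sigma$ restricts to $Q_\chi\to G_\psi$; these restrictions still satisfy $\pi\circ\sigma=\id_{Q_\chi}$.

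Next I would isolate the following routine principle and apply it twice: \emph{if $r\colon X\to Y$ and $s\colon Y\to X$ are coarsely Lipschitz maps of metric spaces with $r\circ s=\id_Y$ and $X$ is of type $\Flg m$, then $Y$ is of type $\Flg m$.} I would prove it using the reduced-homology description of type $\Flg m$ from Definition~\ref{dfn:finiteprops-metric2}. Fix $0\le q\le m-1$ and a scale $b$; choose $b'$ with $s$ mapping $\VR b Y$ into $\VR{b'}X$, then (by essential triviality of $(\tilde\hlgy_q(\VR r X))_r$) a scale $c\ge b'$ with $\tilde\hlgy_q(\VR{b'}X)\to\tilde\hlgy_q(\VR c X)$ the zero map, then a scale $b''\ge b$ with $r$ mapping $\VR c X$ into $\VR{b''}Y$. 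The composite simplicial map $\VR b Y\to\VR{b''}Y$ obtained by concatenating $s$, the two inclusions and $r$ induces the zero map on $\tilde\hlgy_q$ (it factors through the chosen zero map); but on the level of sets this composite is $r\circ s=\id$, up to enlarging the scale, hence it is precisely the inclusion $\VR b Y\hookrightarrow\VR{b''}Y$. So the inclusion-induced map is zero and $(\tilde\hlgy_q(\VR r Y))_r$ is essentially trivial, as required.

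Finally I would assemble the pieces through Corollary~\ref{cor:supermetric}. By part~(1), $(G,d)$ is of type $\Flg m$; applying the principle with $r=\pi$ and $s=\sigma$ shows $(Q,d)$ is of type $\Flg m$, hence $Q$ is of type $\Clg m$. By part~(4), $\psi\in\TopS^m(G;\ZZ)$ means $(G_\psi,d|_{G_\psi})$ is of type $\Flg m$; applying the principle to the restricted maps $\pi\colon G_\psi\to Q_\chi$ and $\sigma\colon Q_\chi\to G_\psi$ shows $(Q_\chi,d|_{Q_\chi})$ is of type $\Flg m$. Now part~(4), used for the group $Q$ (which is of type $\Clg m$ by the previous sentence), yields $\chi\in\TopS^m(Q;\ZZ)$. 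I do not expect a genuine obstacle here — this is the ``immediate'' split case mentioned in the introduction — and the only step that must not be glossed over is exactly the half-space compatibility recorded in the first paragraph: that $\pi$ and $\sigma$ restrict to coarse maps between $G_\psi$ and $Q_\chi$ which are still mutually inverse up to scale, which is what renders the $N$-direction, collapsed by $\pi$, harmless.
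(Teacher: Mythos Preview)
Your proposal is correct and follows essentially the same approach as the paper: both arguments use that the splitting $\sigma$ and projection $\pi$ restrict to coarsely Lipschitz maps between $Q_\chi$ and $G_\psi$ with $\pi\circ\sigma=\id_{Q_\chi}$, and then transport type $\Flg m$ along this coarse retraction to conclude via Corollary~\ref{cor:supermetric}. Your version is more explicit than the paper's in spelling out the retraction principle and in first verifying that $Q$ is compactly generated and of type~$\Clg m$, but the core idea is identical.
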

Compare with \cites[Theorem~8]{Alonso1994}[Proposition~2.8]{Almeida2018}[Corollary~2.8]{Meinert1997}[Corollary~3.12]{Meinert1996}.
\begin{proof}
 Suppose $\chi\circ\pi\in\TopS^m(G,\ZZ)$. Since $g\in G_{\chi\circ\pi}$ is equivalent to $\chi\circ\pi(g)\ge 0$ which is equivalent to $\pi(g)\in Q_\chi$, we obtain $\pi(G_{\chi\circ\pi})\subseteq Q_\chi$. Also, if $g,g'\in G$ with $\pi(g)=\pi(g')$, then $\chi\circ\pi(g)=\chi\circ\pi(g')$. So, any two elements in the same fiber of $\pi$ have the same $\chi\circ\pi$-value. So $\pi':=\pi|_{G_{\chi\circ\pi}}:G_{\chi\circ\pi}\to Q_\chi$ is well defined.
 
 Let $t:Q\to G$ be a section. This means $\pi\circ t=\id_Q$. If $q\in Q_\chi$, then $\pi\circ t(q)=q\in Q_\chi$, which implies $t(q)\in \pi^{-1}(Q_\chi)=G_{\chi\circ \pi}$. So, the composition of $\pi'$ with $t':=t|_{Q_\chi}$ is well-defined and $\pi'\circ t'=\id_{Q_\chi}$. By assumption $(G_{\chi\circ\pi},d)$ is of type $\Flg m$. Then the homology groups of $(\VR k{G_{\chi\circ\pi}})_k$ as an ind-object vanish in dimensions $q=0,\ldots,m-1$. Now $\id_{Q_\chi}$ factors over $G_{\chi\circ\pi}$ via $\pi',t'$, both of which are coarsely Lipschitz and therefore induce morphisms of ind-objects. This implies that the homology groups of $(\VR k{Q_\chi})_k$ also vanish in dimensions $q=0,\ldots,m-1$. This means $(Q_\chi,d)$ is of type $\Flg m$ and therefore $\chi\in \TopS^m(Q,\ZZ)$.
\end{proof}

\section{Hurewicz-like Theorem}
This section proves Theorem~\ref{thma:hurewicz}.

\begin{lem}
\label{lem:sigma1}
 If $G$ is a locally compact Hausdorff group of type $\Clg 1$, then
$\TopS^1(G;\ZZ)=\TopS^1(G)$.
\end{lem}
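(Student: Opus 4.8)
The plan is to reduce both $\TopS^1(G)$ and $\TopS^1(G;\ZZ)$ to one and the same condition on the coarse geometry of the half-space $G_\chi$, and then to observe that in the bottom degree the homological and homotopical finiteness conditions for a metric space coincide.

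First I would check that the hypothesis puts us in the situation of Corollary~\ref{cor:supermetric}. A group of type $\Clg 1$ is compactly generated, hence $\sigma$-compact, so $G$ is a locally compact Hausdorff $\sigma$-compact group and Definition~\ref{dfn:main} and Corollary~\ref{cor:supermetric} apply; moreover type $\Clg 1$ is equivalent to type $\Ctp 1$ (both say exactly that $G$ is compactly generated; see \cite{Abels1997} and the discussion in the proof of Proposition~\ref{prop:veryeasy}), so $G$ is also of type $\Ctp 1$. Fix a compact symmetric generating set containing $1_G$ and let $d$ be the associated word metric.

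Now let $\chi\colon G\to\RR$ be a character. By Corollary~\ref{cor:supermetric}(4), $\chi\in\TopS^1(G;\ZZ)$ if and only if $(G_\chi,d|_{G_\chi})$ is of type $\Flg 1$; and by Corollary~\ref{cor:supermetric}(3), which applies since $G$ is of type $\Ctp 1$, $\chi\in\TopS^1(G)$ if and only if $(G_\chi,d|_{G_\chi})$ is of type $\Ftp 1$. Thus it remains to see that, for the metric space $X=G_\chi$, type $\Flg 1$ and type $\Ftp 1$ are the same condition. By Definition~\ref{dfn:finiteprops-metric2}, type $\Flg 1$ means $X$ is coarsely connected, while type $\Ftp 1$ means that $(\pi_0(\VR r X))_r$ is essentially trivial. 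Since $X=G_\chi$ is nonempty (it contains $1_G$) and $\pi_0(\VR r X)$ is precisely the set of $r$-coarse path components of $X$, essential triviality of this system says exactly that for every $r$ there is $s\ge r$ with $\VR s X$ connected, i.e.\ that $X$ is $s$-coarsely connected for some $s$ — which is coarse connectedness. Hence both conditions hold precisely when $G_\chi$ is coarsely connected, and therefore $\TopS^1(G;\ZZ)=\TopS^1(G)$.

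There is no genuinely hard step here: granted Corollary~\ref{cor:supermetric}, the statement is immediate. The only point that needs a moment's care is the (standard) identification of ``$(\pi_0(\VR r X))_r$ essentially trivial'' with ``$X$ coarsely connected'' for a nonempty metric space, which amounts to reading ``essentially trivial'' for a filtered system of pointed sets as ``eventually constant at the basepoint''; this is also where nonemptiness of $G_\chi$ is used.
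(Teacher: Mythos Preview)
Your proof is correct, and it is not circular: Corollary~\ref{cor:supermetric} is established without using Lemma~\ref{lem:sigma1}, and the fact that $\Clg 1$ coincides with $\Ctp 1$ is taken from \cite{Abels1997}, not from Proposition~\ref{prop:veryeasy} itself. The identification of essential triviality of $(\pi_0(\VR r{G_\chi}))_r$ with coarse connectedness is justified as you indicate, using that the maps $\pi_0(\VR r{G_\chi})\to\pi_0(\VR s{G_\chi})$ are surjective and that $G_\chi\ni 1_G$ is nonempty.

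The paper's argument proceeds differently. Rather than passing through Corollary~\ref{cor:supermetric}, it invokes directly (from \cite{Hartmann2024c}) that $\chi\in\TopS^1(G)$ is equivalent to $G_\chi$ being $1$-coarsely connected for some compact generating set $D$, and then proves by hand the equivalence of this with $\hlgy_0(\ZZ[\Delta^*_D\cap G_\chi^{*+1}])=\ZZ$: a $1$-path $a_0,\ldots,a_n$ from $x_0$ to $x_1$ gives $\partial_1\sum_i(a_i,a_{i+1})=x_1-x_0$, and conversely a $1$-chain with boundary $x_1-x_0$ is unpacked into a path by a counting argument on vertex multiplicities. Your route is more structural and reuses the machinery already set up; the paper's route is self-contained and makes the elementary content of the $m=1$ case explicit, at the cost of redoing a special case of what Corollary~\ref{cor:supermetric} already encodes.
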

\begin{proof}
 Let $\chi$ be a character on $G$. Then $\chi\in \TopS^1(G)$ is
equivalent to $G_\chi$ being $1$-coarsely connected for some compact generating
set $D\subseteq G$ by \cite[Corollary~5.4]{Hartmann2024c}. Suppose that this is the
case. If $x_0,x_1$ are two points in $G_\chi$, then there exists a $1$-path 
$a_0,\ldots,a_n$ in $G_\chi$ that joins $x_0$ to $x_1$. Then
$\partial_1(\sum_{i=0}^{n-1}(a_i,a_{i+1}))=x_1-x_0$. This proves
$\hlgy_0((\ZZ[\Delta^q_D\cap (G_\chi)^{q+1}],\partial_q))=\ZZ$. If, on the other hand, $\hlgy_0((\ZZ[\Delta^q_D\cap (G_\chi)^{q+1}],\partial_q))=\ZZ$ then for every $x_0,x_1\in G_\chi$ the chain $x_1-x_0$ is an element of the image of
$\partial_1$. Say $\partial_1(\sum_{i=0}^mn_i(a_i,b_i))=x_1-x_0$. If $x\in 
G_\chi$ with $x\not=x_0,x_1$ then $\sum_{a_i=x}n_i=\sum_{b_i=x}n_i$. And 
$\sum_{a_i=x_0}n_i=\sum_{b_i=x_0}n_i+1$ and 
$\sum_{a_i=x_1}n_i+1=\sum_{b_i=x_1}n_i$. This means $(a_i)_i,x_1$ aligned in the 
right order describes a $1$-path (with respect to the metric induced by $D$) 
that joins $x_0$ to $x_1$. This implies that $G_\chi$ is $1$-coarsely connected 
with respect to the metric induced by $D$.
\end{proof}

\begin{thm}
  \label{thm:hurewicz}
  If $m\ge 2$ and $X$ is a metric space then it is of type $\Ftp m$ exactly when it is of type $\Ftp 2$ and of type $\Flg m$.
\end{thm}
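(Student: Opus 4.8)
The plan is to establish both implications at once by induction on $m$. For $m=2$ the implication ``type $\Ftp 2$ and type $\Flg 2$ $\Rightarrow$ type $\Ftp 2$'' is vacuous, and ``type $\Ftp 2$ $\Rightarrow$ type $\Flg 2$'' holds because an essentially trivial directed system of $\pi_0$'s (resp.\ of $\pi_1$'s) forces the associated system of $\tilde\hlgy_0$'s (resp.\ of $\tilde\hlgy_1$'s, since $\hlgy_1$ is the abelianization of $\pi_1$ on each path component) to be essentially trivial. Now let $m\ge3$ and assume the theorem for $m-1$. Type $\Ftp m$ means $\pi_q(\VR kX)_k$ is essentially trivial for all $q\le m-1$, and type $\Flg m$ means the same for $\tilde\hlgy_q(\VR kX)_k$; splitting off the top degree and using the inductive hypothesis to identify type $\Ftp{m-1}$ with ``type $\Ftp 2$ and type $\Flg{m-1}$'', the theorem for $m$ reduces to the following statement: \emph{if $\pi_q(\VR kX)_k$ is essentially trivial for all $q\le m-2$, then $\pi_{m-1}(\VR kX)_k$ is essentially trivial if and only if $\tilde\hlgy_{m-1}(\VR kX)_k$ is essentially trivial.} This is a Hurewicz theorem for the ind-object $(\VR kX)_k$ under the hypothesis that it is ``essentially $(m-2)$-connected''.

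For the nontrivial (``if'') half, fix $k$ and, using essential triviality, choose $k\le k_1\le k_2\le k_3$ so that each structure map $\VR{k_{i-1}}X\to\VR{k_i}X$ is zero on $\pi_q$ for every $q\le m-2$, and so that $\VR kX\to\VR{k_1}X$ is moreover zero on $\tilde\hlgy_{m-1}$. Each structure map kills $\pi_0$ and $\pi_1$, hence maps the ambient complex into a single path component $C_i$ of the next and is $\pi_1$-trivial there, so it lifts to the universal cover $\widetilde C_i$ (which is simply connected); crucially, the lift $\widetilde C_i\to\widetilde C_{i+1}$ factors as $\widetilde C_i\to C_i\hookrightarrow\VR{k_i}X\to C_{i+1}\to\widetilde C_{i+1}$. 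Given $\alpha\in\pi_{m-1}(\VR kX)$, composing the lifts produces a spheroid $\sigma\colon S^{m-1}\to\widetilde C_2$ whose class corresponds, under the covering isomorphism $\pi_{m-1}(\widetilde C_2)\cong\pi_{m-1}(\VR{k_2}X)$, to the image of $\alpha$; and because the Hurewicz image of $\alpha$ already dies in $\tilde\hlgy_{m-1}(\VR{k_1}X)$, hence in its direct summand $\tilde\hlgy_{m-1}(C_1)$, the factorization above makes $\sigma$ \emph{nullhomologous} in $\widetilde C_2$. Since $\widetilde C_2$ is simply connected, the standard description of the kernel of the Hurewicz homomorphism realizes $[\sigma]$ through Whitehead products and compositions of classes lying in $\pi_q(\widetilde C_2)\cong\pi_q(\VR{k_2}X)$ with $2\le q\le m-2$; as the lift $\widetilde C_2\to\widetilde C_3$ is zero on all those $\pi_q$, naturality of Whitehead products and compositions sends $[\sigma]$ to $0$ in $\pi_{m-1}(\widetilde C_3)\cong\pi_{m-1}(\VR{k_3}X)$. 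Tracing through the covering isomorphisms, $\alpha$ maps to $0$ under $\pi_{m-1}(\VR kX)\to\pi_{m-1}(\VR{k_3}X)$, and since $\alpha$ was arbitrary this proves essential triviality of $\pi_{m-1}(\VR kX)_k$. The (``only if'') direction is handled in the same spirit: kill all of $\pi_0,\dots,\pi_{m-1}$ and pass to universal covers twice to reduce to a map of simply connected complexes that is trivial on $\pi_q$ for $q\le m-1$; J.H.C.~Whitehead's exact sequence then exhibits $\tilde\hlgy_{m-1}$ of such a complex as an extension of a subgroup of a functor of its $(m-3)$-type by a quotient of $\pi_{m-1}$, and one further forward step --- which annihilates the low homotopy --- kills both. (Equivalently: an essentially $(m-1)$-connected ind-object is ind-isomorphic to one whose terms are $(m-1)$-connected, obtained by coning off the low homotopy, and these terms have vanishing reduced homology below degree $m$.)

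The main obstacle is the mismatch of the two hypotheses: type $\Flg m$ only controls the homology of the complexes $\VR kX$ themselves, whereas the Hurewicz arguments need to live in their universal covers, whose homology it does \emph{not} govern. The factorization of $\widetilde C_i\to\VR{k_{i+1}}X$ through $C_i$ --- available precisely because the structure maps kill $\pi_1$ --- is what transports vanishing of homology from $\VR kX$ into a later universal cover, and it is the coarse counterpart of the place in the classical proof of ``$F_2$ and $FP_m$ imply $F_m$'' where one invokes that $\pi_{m-1}$ of the universal cover is finitely generated as a $\ZZ G$-module; essential triviality of the lower homotopy groups plays the part of that finiteness. Making the last step (``a nullhomologous spheroid in a simply connected complex with essentially trivial low homotopy is eventually nullhomotopic'') fully rigorous --- controlling the Whitehead-product/obstruction-theoretic decompositions and the attendant bookkeeping with basepoints and path components --- is where the real work lies.
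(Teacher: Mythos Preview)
Your argument has a genuine gap at precisely the point you flag: the claim that a nullhomologous class $[\sigma]\in\pi_{m-1}(\widetilde C_2)$ can be ``realized through Whitehead products and compositions of classes in $\pi_q(\widetilde C_2)$ with $2\le q\le m-2$'' and hence dies under a map killing those $\pi_q$. For a merely simply connected space the kernel of the Hurewicz map is governed by Whitehead's $\Gamma$-functors, and it is not a standard fact that a map which is zero on $\pi_2,\dots,\pi_{m-2}$ automatically annihilates this kernel; making this precise via obstruction theory would at the very least require more than one further step and a careful cell-by-cell argument, and you concede the difficulty yourself.

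The paper bypasses all of this with a single black box you already know, since you quote it parenthetically for the ``only if'' direction: Abels--Tiemeyer's Lemma~1.1.3, which says that an ind-space with essentially trivial $\pi_0,\dots,\pi_n$ is isomorphic \emph{as an ind-object} to one whose terms are genuinely $n$-connected. Applied inductively in the ``if'' direction, it replaces $(\VR kX)_k$ by a system $(B^{q-1}_k)_k$ of $(q-1)$-connected spaces; on each term the classical Hurewicz map $\pi_q(B^{q-1}_k)\to H_q(B^{q-1}_k)$ is then an honest isomorphism, so essential triviality of $H_q$ (transported along the ind-isomorphism) immediately gives essential triviality of $\pi_q$, and one iterates. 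No universal covers, no Whitehead products, no tracking of basepoints or path components --- the lemma absorbs all of that bookkeeping into a single ind-isomorphism, and it handles both directions symmetrically.
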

 \begin{proof}
 We first prove that $\Ftp m$ implies $\Flg m$. If $X$ is of type $\Ftp m$, then
$\pi_q(\VR k X)_k$ is essentially trivial for $q=0,\ldots,m-1$. We use
\cite[Lemma~1.1.3]{Abels1997} which states that there exists a sequence $(B_k)_k$ of
$(m-1)$-connected spaces such that $(\VR k X)_k$ and $(B_k)_k$ are isomorphic as ind-spaces. Then $\tilde \hlgy_q(B_k)=0$ for every $k,q=0,\ldots,m-1$ by the 
Hurewicz theorem. Thus, $\tilde \hlgy_q(\VR k X)$ is essentially trivial. This shows that $X$ is of type $\Flg m$.
 
Now we prove that $\Ftp 2$ and $\Flg m$ implies $\Ftp m$. For that it is sufficient
to prove that for any ind-space $(A_k)_k$ the conditions $\tilde \hlgy_q(A_k)_k$ 
essentially trivial for $q=0,\ldots,m-1$ and $\pi_0(A_k)_k,\pi_1(A_k)_k$ 
essentially trivial imply $\pi_q(A_k)_k$ essentially trivial for
$q=0,\ldots,m-1$. We do that by induction on $q$ starting with $q=1$. Since
$\pi_0(A_k)_k,\pi_1(A_k)_k$ are essentially trivial,
\cite[Lemma~1.1.3]{Abels1997} provides us with a sequence $(B^1_k)_k$ of
$1$-connected spaces that is isomorphic to $(A_k)_k$ as ind-spaces. Now we
consider $m-1\ge q\ge 2$. We can assume there exists a sequence $(B^{q-1}_k)_k$ 
of $(q-1)$-connected spaces which is isomorphic to $(A_k)_k$ as ind-spaces. 
Then by the Hurewicz theorem, the Hurewicz homomorphism $h_q:\pi_q(B_k^{q-1})\to 
\hlgy_q(B_k^{q-1})$ is an isomorphism for every $k$. Since
$\hlgy_q(B^{q-1}_k)_k$ is essentially trivial, there is for every $k$ some $l$ 
such that the left vertical map in
\[
\xymatrix{
 \hlgy_q(B^{q-1}_k;\ZZ)\ar[r]^{h_q^{-1}}\ar[d]_0
 & \pi_q(B^{q-1}_k)\ar[d]\\
 \hlgy_q(B^{q-1}_l;\ZZ)\ar[r]^{h_q^{-1}}
 & \pi_q(B^{q-1}_l)
 }
\]
is zero. Then, since $h_q^{-1}$ is surjective, the right vertical map must be $0$. Thus, $(B^{q-1}_k)_k$ is essentially $q$-connected. By \cite[Lemma~1.1.3]{Abels1997} there is a sequence $(B^q_k)_k$ of $q$-connected spaces that is isomorphic to $(B^{q-1}_k)_k$ as ind-spaces. Since $(B^{q-1}_k)_k$ is isomorphic to $(A_k)_k$, this proves the induction hypothesis for the next step.
 \end{proof}
 
\begin{cor}
\label{cor:hurewicz}
 If $G$ is a locally compact Hausdorff compactly generated group and $m\ge 2$, then
 \[
  \TopS^m(G)=\TopS^2(G)\cap \TopS^m(G;\ZZ).
 \]
\end{cor}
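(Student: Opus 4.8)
The plan is to deduce Corollary~\ref{cor:hurewicz} from Theorem~\ref{thm:hurewicz} by translating all three invariants into finiteness properties of a single metric space. Fix a symmetric compact generating set $\mathcal X\ni 1_G$ of $G$ (it exists since $G$ is compactly generated, hence $\sigma$-compact) and let $d$ be the induced word metric. For a character $\chi$ write $G_\chi=\{g\in G\mid\chi(g)\ge 0\}$, equipped with $d|_{G_\chi}$. The point is that $\VR r {G_\chi}_q=\Delta^q_r\cap G_\chi^{q+1}$ and $\Delta^q_r=\Delta^q_{\mathcal X^r}$, so that the three ind-simplicial sets controlling $\TopS^m(G)$, $\TopS^2(G)$ and $\TopS^m(G;\ZZ)$ all become, after passing to cofinal subsystems via Lemma~\ref{lem:cofinal}, isomorphic to one and the same ind-object $(\VR r {G_\chi})_r$.

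\textbf{The dictionary.} Concretely, I would record the following three equivalences, valid for every character $\chi$ on the compactly generated group $G$. First, $\chi\in\TopS^m(G)$ if and only if $(G_\chi,d|_{G_\chi})$ is of type $\Ftp m$: this is Proposition~\ref{prop:homotopic_metric} once one identifies the filtration $(\Delta^*_{C^k}\cap G_\chi^{*+1})_k$ appearing there with $(\VR r {G_\chi})_r$ (enlarge the compact set $C$ of that proposition so that $C\supseteq\mathcal X$; then $(\mathcal X^r)_r$ and $(C^k)_k$ are cofinal filtrations of $G$, so the two filtrations of $G_\chi^{*+1}$ are cofinal). Taking $m=2$ gives the same statement for $\TopS^2(G)$. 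Second, $\chi\in\TopS^m(G;\ZZ)$ if and only if $(G_\chi,d|_{G_\chi})$ is of type $\Flg m$: this is Definition~\ref{dfn:main} read against Definition~\ref{dfn:finiteprops-metric2} --- the condition $\hlgy_0(\ZZ[\Delta^q_c\cap G_\chi^{q+1}])=\ZZ$ for some $c$ says exactly that $G_\chi$ is coarsely connected, i.e.\ of type $\Flg 1$, and essential triviality of $\hlgy_q(\ZZ[\Delta^q_k\cap G_\chi^{q+1}])_k$ for $1\le q\le m-1$ is the rest of type $\Flg m$. (Equivalently, these are parts (3) and (4) of Corollary~\ref{cor:supermetric}; note that in the compactly generated case the proof of that corollary only invokes Proposition~\ref{prop:homotopic_metric} and Definition~\ref{dfn:main}, so the hypotheses ``of type $\Ctp m$'' resp.\ ``of type $\Clg m$'' play no role there.)

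\textbf{Conclusion.} With the dictionary in place the corollary is immediate: for every character $\chi$ we have $\chi\in\TopS^m(G)$ iff $(G_\chi,d|_{G_\chi})$ is of type $\Ftp m$, iff --- by Theorem~\ref{thm:hurewicz}, which applies since $m\ge 2$ and $(G_\chi,d|_{G_\chi})$ is a metric space --- $(G_\chi,d|_{G_\chi})$ is of type $\Ftp 2$ and of type $\Flg m$, iff $\chi\in\TopS^2(G)$ and $\chi\in\TopS^m(G;\ZZ)$, iff $\chi\in\TopS^2(G)\cap\TopS^m(G;\ZZ)$. No case distinction on whether $G$ is of type $\Ctp m$ or $\Clg m$ is needed, because the three dictionary equivalences hold for all compactly generated $G$.

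\textbf{Where the work is.} The genuine content here is Theorem~\ref{thm:hurewicz}, which is already proved; everything else is formal. The only point that needs care is the bookkeeping in the dictionary step --- verifying that, after restricting to cofinal subsystems, the Vietoris--Rips filtrations appearing in Definition~\ref{dfn:AT}/Definition~\ref{dfn:main} and those used to define types $\Ftp m$ and $\Flg m$ for the metric space $(G_\chi,d|_{G_\chi})$ literally coincide as ind-objects. This is supplied by Lemma~\ref{lem:cofinal} together with the cofinality of $(\mathcal X^r)_r$ with the compact subsets $\mathcal C$ of $G$; I do not expect any real obstacle beyond organizing this carefully.
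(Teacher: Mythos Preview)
Your proposal is correct and follows essentially the same route as the paper: invoke Corollary~\ref{cor:supermetric} (or, as you do, its ingredients Proposition~\ref{prop:homotopic_metric} and Definition~\ref{dfn:main}) to translate the three $\Sigma$-conditions into types $\Ftp m$, $\Ftp 2$, $\Flg m$ for the metric space $(G_\chi,d|_{G_\chi})$, and then apply Theorem~\ref{thm:hurewicz}. Your remark that the type-$\Ctp m$/$\Clg m$ hypotheses in parts (3) and (4) of Corollary~\ref{cor:supermetric} are superfluous in the compactly generated case is accurate and makes the bookkeeping slightly cleaner than the paper's one-line citation.
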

\begin{proof}
 Suppose $G$ is endowed with the word-length metric $d$ of a compact generating set. Then $G_\chi$ inherits this metric from $G$. The claim is then the result of Corollary~\ref{cor:supermetric} and Theorem~\ref{thm:hurewicz}.
\end{proof}

 \section{Relation to classical Sigma-invariants}
This section proves Theorem~\ref{thma:classical}.

An abstract group $G$ is said to be \emph{of type} $\Flg m$ if there exists a projective resolution
 \[
  \to P_m\to P_{m-1}\to \cdots P_0\to \mathbb Z\to 0 
 \]
 of the constant $G$-module $\mathbb Z$ with $P_m,\ldots,P_0$ finitely generated.
 
 If $\chi:G\to \RR$ is a character, then $G_\chi$ is a monoid. Then $\ZZ[G_\chi]$ is also a ring and we can talk about $G_\chi$-modules: $\chi$ is said to \emph{belong to $\Sigma^m(G,\ZZ)$} if there exists a projective resolution
 \[
  \to P_m\to P_{m-1}\to \cdots P_0\to \mathbb Z\to 0 
 \]
 of the constant $G_\chi$-module $\mathbb Z$ with $P_m,\ldots,P_0$ finitely generated.

\begin{thm}(\cite[Lemma~7, Theorem~(Brown)]{Alonso1994})
\label{thm:alonso}
If $m\ge 1$ and $G$ is a countable group then $G$ is of type $\Flg m$ if and only if for every $0\le q\le m-1$ the directed set of reduced simplicial homology groups $(\tilde \hlgy_q(\VR r X))_r$ is essentially trivial.
\end{thm}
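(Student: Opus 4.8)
The plan is to recognise the statement as an instance of Brown's filtered criterion for the finiteness property $\Flg m$, once one has identified $(\VR r X)_r$ with a filtration of a free resolution of $\ZZ$ by $G$-subcomplexes of finite type. First I would fix a proper, left-invariant metric $d$ on $G$; such a metric exists because $G$ is countable (hence locally compact, Hausdorff and $\sigma$-compact), and it is unique up to coarse equivalence, so no choices are lost, see \cite{Cornulier2016}. Writing $d(g,h)=\|g^{-1}h\|$ and $B_r=\{g\in G\mid\|g\|\le r\}$, which is finite by properness, one has $X=(G,d)$ and $\VR r X=\VR r G$ with $q$-simplices $\Delta^q_r=\{(g_0,\dots,g_q)\mid g_i^{-1}g_j\in B_r\}$, on which $G$ acts freely with finitely many orbits (an orbit being represented by a tuple $(1,h_1,\dots,h_q)$ with all $h_i$ and $h_i^{-1}h_j$ in $B_r$). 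Hence each $\chains q r G=\ZZ[\Delta^q_r]$ is a finitely generated free $\ZZ G$-module and, as recalled in Section~\ref{sec:def+basics}, $(\chain_\ast(\VR r G),\partial)_r$ is an exhausting, ascending filtration of the standard free resolution $(\ZZ[G^{\ast+1}],\partial)\twoheadrightarrow\ZZ$ by $G$-subcomplexes of finite type. A different proper left-invariant metric produces a cofinal, hence ind-isomorphic, filtration (compare Lemma~\ref{lem:cofinal}), so the condition in the theorem depends only on $G$.

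With this in hand, the theorem is precisely Brown's criterion: for a free resolution $C_\ast\twoheadrightarrow\ZZ$ of $\ZZ G$-modules equipped with an exhausting ascending filtration $(C^{(r)}_\ast)_r$ by $G$-subcomplexes of finite type, $G$ is of type $\Flg m$ if and only if the direct systems $(\tilde\hlgy_q(C^{(r)}_\ast))_r$ are essentially trivial for $0\le q\le m-1$, the degree-$0$ homology being reduced with respect to the augmentation. For the implication $\Flg m\Rightarrow$ essential triviality I would take a partial free resolution $F_\ast$ of $\ZZ$ with $F_0,\dots,F_m$ finitely generated, a chain map $f\colon F_\ast\to C_\ast$ over $\id_\ZZ$, a chain-homotopy inverse $g$, and a homotopy $h$ with $fg-\id=\partial h+h\partial$; given a cycle $z\in C^{(r)}_q$ with $q\le m-1$, exactness of $F_\ast$ and finite generation of $F_{q+1}$ give $g(z)=\partial w$, so $z=\partial\bigl(f(w)-h(z)\bigr)$, and since $f(F_{q+1})$ and $h(C^{(r)}_q)$ are finitely generated they lie in some $C^{(r')}_{q+1}$ with $r'$ depending only on $r$ and $q$; thus $\hlgy_q(C^{(r)}_\ast)\to\hlgy_q(C^{(r')}_\ast)$ is zero.

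For the converse I would build, by induction on $q$, finitely generated free $\ZZ G$-modules $F_0,\dots,F_m$ forming an exact sequence $F_m\to\cdots\to F_0\to\ZZ\to0$, together with a compatible family of chain maps $F_{\le q}\to C^{(r_q)}_\ast$ lifting $\id_\ZZ$. Essential triviality of $(\tilde\hlgy_0(C^{(r)}))_r$ forces $\VR r G$ to be connected for $r$ large, the connecting maps being surjective, so $\ker(\varepsilon\colon\ZZ[\Delta^0_r]\to\ZZ)$ equals $\im(\diff1)$ and is finitely generated; this is type $\Flg1$ and starts the induction. In the step from $q$ to $q+1$ (for $q\le m-1$) the point is to show that $\ker(F_q\to F_{q-1})$ is finitely generated: one transports it along the chain map to a finite collection of $q$-cycles in $C^{(r_q)}_q$, uses essential triviality of $(\hlgy_q(C^{(r)}))_r$ to write these as boundaries of chains in the finitely generated module $C^{(r')}_{q+1}$ for a fixed $r'$, and then pulls a finite generating set of $C^{(r')}_{q+1}$ back through a partial chain homotopy to obtain a finite generating set of $\ker(F_q\to F_{q-1})$; one sets $F_{q+1}$ to be a finitely generated free module surjecting onto this kernel and extends the chain map. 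I expect the main obstacle to be exactly the coherent bookkeeping of these partial chain maps and homotopies between $F_\ast$ and the filtration $(C^{(r)}_\ast)_r$ carried out simultaneously in all degrees, which is where the hypothesis ``essentially trivial'' — as opposed to a mere vanishing of a colimit — is genuinely needed. Since both directions are classical, the most economical write-up records the identification of the first paragraph and then cites \cite[Lemma~7, Theorem~(Brown)]{Alonso1994}.
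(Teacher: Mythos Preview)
Your proposal is correct and matches the paper's treatment: the paper does not give its own proof but simply states the result with the citation to \cite{Alonso1994} and adds the one-line remark ``Note that the proof is a particular case of Brown's criterion \cite{Brown1987}.'' Your write-up supplies the identification of $(\chain_\ast(\VR r G))_r$ with a finite-type filtration of the standard resolution and then sketches both directions of Brown's criterion, which is more than the paper records; your closing suggestion to simply cite \cite[Lemma~7, Theorem~(Brown)]{Alonso1994} after the identification is exactly what the paper does.
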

Note that the proof is a particular case of Brown's criterion \cite{Brown1987}.

The right side of Theorem~\ref{thm:alonso} is a quasi-isometric invariant \cite[Corollary~9]{Alonso1994}, and indeed even a coarse invariant. This motivated Definition~\ref{dfn:finiteprops-metric2}.

If a group $G$ is finitely generated by $X$ with $X=X^{-1}$ and $1_G\in X$ then the left-invariant metric on $G$ can be described as
\[
 d(g,h)=n \mbox{ if }g^{-1}h\in X^n\setminus X^{n-1}
\]
where we say $X^0=\{1_G\}$. Note that the metric $d$ on $G$ depends on the choice of generating set $X$, but its geometry at infinity does not. Every subset of $G$ is equipped with the subspace metric from $G$.

\begin{lem}
\label{lem:fgG-module}
 The $G$-module $\ZZ[G^{q+1}]$ is free with basis $(1_G,g_1,\ldots,g_q)$ for $g_1,\ldots, g_q\in G$. The complex $(\ZZ[G^{q+1}],\diff q)$ is a free resolution of the trivial $G$-module $\ZZ$. For each $k\ge0$ the submodule $\ZZ[\Delta_k^q]\le \ZZ[G^{q+1}]$ is also freely generated by the finite set $(1_G,g_1,\ldots,g_q)$ with $d(1_G,g_i)\le k, d(g_i,g_j)\le k$ for every $i,j=1,\ldots,q$. The $(\ZZ[\Delta^q_k])_{k\in \NN}$ form a filtration of $\ZZ[G^{q+1}]$ and $(\ZZ[\Delta^q_k],\diff q)$ form a chain complex. 
\end{lem}
Note that $\ZZ[\Delta^q_k]$ depends on the metric that depends on the choice of generating set~$X$.
\begin{proof}
Since $(1_G,g_1,\ldots,g_q)$ are representatives of the free action of $G$ on $G^{q+1}$ and ultimatively of $\ZZ[G]$ on $\ZZ[G^{q+1}]$ they form a basis. It is standard to check $\partial^2=0$. And $h_q:(g_0,\ldots,g_q)\mapsto (1_G,g_0,\ldots,g_q)$ is a null-homotopy of the identity, so the sequence is exact.

If $(g_0,\ldots,g_q)\in\Delta^q_k$ and $g\in G$, then $d(gg_i,gg_j)=d(g_i,g_j)\le k$ for each $i,j=0,\ldots,q$, which implies $g(g_0,\ldots,g_q)=(gg_0,\ldots,gg_q)\in \Delta^q_k$. So $\ZZ[\Delta^q_k]$ is indeed a $\ZZ[G]$-submodule of $\ZZ[G^{q+1}]$. It is free since $(1_G,g_1,\ldots,g_q)$ with $d(1_G,g_i)\le k$ and $d(g_i,g_j)\le k$ is a subset of the free basis of $\ZZ[G^{q+1}]$ and generates $\ZZ[\Delta^q_k]$. Indeed, since $G$ is finitely generated, the number of $g_i\in G$ with $d(g_i,1_G)\le k$ is finite and so the number of basis elements of $\ZZ[\Delta^q_k]$ is finite. If $\sum a_i\sigma_i=:c\in \ZZ[\Delta^q_k]$, then $\partial c=\sum a_i\partial \sigma_i$. And if $\sigma_i=(g_0,\ldots,g_q)$, then $\partial \sigma_i=\sum_{j=0}^q(-1)^j(g_0,\ldots,\hat g_j,\ldots,g_q)$. This implies $\partial c\in \ZZ[\Delta^q_k]$. So $\partial:\ZZ[\Delta^q_k]\to \ZZ[\Delta^{q-1}_k]$ is well defined. Since $\partial^2=0$ in $\ZZ[G^{q+1}]$ already (an exact resolution is also a chain complex), the map $\partial$ is a boundary map.
\end{proof}

\begin{lem}
 A directed set of abelian groups $(A_i)_i$ is essentially trivial if and only if 
 \[
 \varinjlim_i \prod_J A_i=0
 \]
 for every index set $J$. 
\end{lem}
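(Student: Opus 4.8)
The plan is to argue both implications straight from the description of a filtered colimit of abelian groups: writing $f_{i,j}\colon A_i\to A_j$ for the structure maps of the directed system, an element $x\in A_i$ represents the zero class in $\varinjlim_i A_i$ precisely when $f_{i,j}(x)=0$ for some $j\ge i$. The same holds after applying $\prod_J(-)$, since the structure maps of $(\prod_J A_i)_i$ are just $f_{i,j}$ applied coordinatewise. For the ``only if'' direction I would take $(A_i)_i$ essentially trivial and fix an index set $J$; given a class in $\varinjlim_i\prod_J A_i$, represent it by some $x=(x_\alpha)_{\alpha\in J}\in\prod_J A_i$, use essential triviality to choose $j\ge i$ with $f_{i,j}=0$, and observe that the coordinatewise map $\prod_J A_i\to\prod_J A_j$ then kills $x$. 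Hence every class is $0$ and $\varinjlim_i\prod_J A_i=0$.

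For the ``if'' direction I would fix an index $i_0$ and aim to produce $j\ge i_0$ with $f_{i_0,j}=0$. The idea is to apply the hypothesis to the specific index set $J:=A_{i_0}$, the underlying set of the group, together with the distinguished element $u\in\prod_{a\in A_{i_0}}A_{i_0}$ whose $a$-th coordinate is $a$ itself. Since $\varinjlim_i\prod_J A_i=0$ by assumption, the class of $u$ vanishes, so there is $j\ge i_0$ with the coordinatewise map $\prod_J A_{i_0}\to\prod_J A_j$ sending $u$ to $0$; comparing $a$-th coordinates for each $a\in A_{i_0}$ yields $f_{i_0,j}(a)=0$ for all $a$, that is, $f_{i_0,j}=0$. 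As $i_0$ was arbitrary, $(A_i)_i$ is essentially trivial.

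The only step that is not pure bookkeeping — and the point I would flag as the crux — is the choice of the ``universal'' element $u$ with $u_a=a$: it bundles every element of $A_{i_0}$ into one element of a single product, so that the vanishing of one colimit class is forced to trivialize $f_{i_0,j}$ simultaneously on all of $A_{i_0}$. I would also remark that there is no set-theoretic obstruction here, since $A_{i_0}$ is a legitimate set and the hypothesis is quantified over all index sets $J$, so we are free to let $J$ depend on $i_0$. Everything else reduces to the standard fact that forming classes in a filtered colimit commutes with the coordinatewise action of the structure maps on $\prod_J(-)$.
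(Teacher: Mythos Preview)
Your proof is correct and follows essentially the same route as the paper: both directions are argued exactly as you describe, and in particular the paper also takes $J:=A_{i_0}$ and uses the element $(a)_{a\in A_{i_0}}$ whose $a$-th coordinate is $a$ to force a single structure map $f_{i_0,j}$ to vanish on all of $A_{i_0}$. Your identification of this ``universal element'' as the crux matches the paper's argument precisely.
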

\begin{proof}
 If $i\le k$, suppose that the connecting map $A_i\to A_k$ is denoted by $\iota_{ik}$. In addition, $\iota_{ik}$ induces a map $\prod_JA_i\to \prod_J A_k$, which is also denoted by $\iota_{ik}$.
 
 For the direction from left to right, suppose that $(A_i)_i$ is essentially trivial. Let $J$ be an index set, and $(a_j)_{j\in J}\in \prod_J A_i$. Then there exists some $k\ge i$ such that $A_i$ vanishes in $A_k$. This means $\iota_{ik}(a_j)_{j\in J}=(\iota_{ik}(a_j))_{j\in J}=(0)_{j\in J}=0$. So $\varinjlim_i \prod_J A_i=0$.
 
 For the direction from right to left, suppose that $\varinjlim_i\prod_J A_i=0$ for every index set $J$. If $i\in I$, choose $J:=A_i$. Then $(a)_{a\in A_i}\in \prod_J A_i$. Since $\varinjlim_i \prod_J A_i=0$ there is $k\in I$ with $(\iota_{ik}(a))_{a\in A_i}=\iota_{ik}((a)_{a\in A_i})$. Then this means that $A_i$ vanishes in $A_k$. So $(A_i)_i$ is essentially trivial.
\end{proof}

If $G$ is a finitely generated group and $\chi$ a character on $G$ then the map $v:\ZZ[\Delta^q_k]\to \RR$ is defined on simplices $\sigma=(g_0,\ldots,g_q)\in \Delta^q_k$ by $v(\sigma)=\min_{0\le i\le q}\chi(g_i)$. Then $v$ maps a chain $\sum_{\sigma\in\Delta^q_k}n_\sigma \sigma$ to $\min_{\sigma:\sum n_\sigma\not=0}v(\sigma)$.

\begin{lem}
 
The map $v$ is a valuation on $(\ZZ[\Delta^q_k],\diff q)$ extending $\chi$ with $(\ZZ[\Delta^q_k],\diff q)_v=(\ZZ[\Delta^q_k\cap (G_\chi)^{q+1}],\diff q)$. The $G_\chi$-module $\ZZ[\Delta^q_k\cap (G_\chi)^{q+1}]$ is finitely generated free. If $\chi(t)<0$ then $(t^k\ZZ[\Delta^q_k\cap (G_\chi)^{q+1}],\diff q)_k$ form a filtration of $(\ZZ[G^{q+1}],\diff q)$ as chain complexes of $G_\chi$-modules.
\end{lem}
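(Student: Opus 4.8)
The plan is to verify the three assertions in turn; of these only the second needs a genuine idea. For the first, $v$ is finite on nonzero chains and $v(0)=\infty$ directly from the definition as a minimum over a finite support, and $\supp(c+c')\subseteq\supp(c)\cup\supp(c')$ gives $v(c+c')\ge\min\bigl(v(c),v(c')\bigr)$. Left translation by $g\in G$ is a bijection of $\Delta^q_k$ carrying $(g_0,\ldots,g_q)$ to $(gg_0,\ldots,gg_q)$, so $v(g\sigma)=\min_i\chi(gg_i)=\chi(g)+v(\sigma)$, which passes to chains; in degree $0$ this recovers the standard valuation $g\mapsto\chi(g)$ on $\ZZ[G]$, so $v$ extends $\chi$. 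For compatibility with $\partial_q$, each face $(g_0,\ldots,\hat g_i,\ldots,g_q)$ of $\sigma$ has $v$-value $\min_{j\ne i}\chi(g_j)\ge v(\sigma)$, hence $v(\partial_q\sigma)\ge v(\sigma)$, and this too passes to chains. Finally $v(c)\ge 0$ holds exactly when every simplex of $\supp(c)$ has all its vertices in $G_\chi$, that is, when $c\in\ZZ[\Delta^q_k\cap(G_\chi)^{q+1}]$; and this submodule is $\partial_q$-stable since a face of a simplex whose vertices all lie in $G_\chi$ again has this property. This identifies $(\ZZ[\Delta^q_k],\partial_q)_v$ with $(\ZZ[\Delta^q_k\cap(G_\chi)^{q+1}],\partial_q)$.

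For the second assertion, note first that $G_\chi$ acts on $\Delta^q_k\cap(G_\chi)^{q+1}$ by left translation --- translation by $h\in G_\chi$ is an isometry and does not decrease vertex $\chi$-values --- and the action is free, so $\ZZ[\Delta^q_k\cap(G_\chi)^{q+1}]$ is a $\ZZ G_\chi$-module. By Lemma~\ref{lem:fgG-module}, $\Delta^q_k$ has finitely many $G$-orbits, represented by the normalized simplices $\beta=(1_G,g_1,\ldots,g_q)$ with $d(1_G,g_i)\le k$ and $d(g_i,g_j)\le k$. For such $\beta$ set $m_\beta:=-v(\beta)$; since $\chi(1_G)=0$ is among the vertex values we have $v(\beta)\le 0$, so $m_\beta\ge 0$, and since every vertex value lies in the subgroup $\chi(G)\le\RR$ so does $m_\beta$; hence we may choose $t_\beta\in G$ with $\chi(t_\beta)=m_\beta$, and then $t_\beta\in G_\chi$. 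Put $b_\beta:=t_\beta\beta$, so $v(b_\beta)=0$ and $b_\beta\in\Delta^q_k\cap(G_\chi)^{q+1}$. Then $\{b_\beta\}_\beta$ is a $\ZZ G_\chi$-basis: freeness of the action gives $\ZZ G_\chi\cdot b_\beta\cong\ZZ G_\chi$; distinct $\beta$ lie in distinct $G$-orbits, so the sets $G_\chi\cdot b_\beta$ are pairwise disjoint; and if $\sigma\in\Delta^q_k\cap(G_\chi)^{q+1}$ then $\sigma=g\beta$ for a unique $\beta$ and a unique $g\in G$, and the requirement that every vertex of $g\beta$ lie in $G_\chi$ forces $\chi(g)\ge -v(\beta)=m_\beta=\chi(t_\beta)$, so $gt_\beta^{-1}\in G_\chi$ and $\sigma=(gt_\beta^{-1})b_\beta\in G_\chi\cdot b_\beta$. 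Thus the sets $G_\chi\cdot b_\beta$ partition $\Delta^q_k\cap(G_\chi)^{q+1}$ and $\ZZ[\Delta^q_k\cap(G_\chi)^{q+1}]=\bigoplus_\beta\ZZ G_\chi\cdot b_\beta$ is free of finite rank.

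For the third assertion fix $t\in G$ with $\chi(t)<0$ and set $M_k:=t^k\ZZ[\Delta^q_k\cap(G_\chi)^{q+1}]\subseteq\ZZ[G^{q+1}]$. Each $M_k$ is a $\ZZ G_\chi$-submodule, because for $h\in G_\chi$ one has $h\,t^k=t^k(t^{-k}ht^k)$, the conjugate $t^{-k}ht^k$ again has $\chi$-value $\chi(h)\ge 0$, and $\ZZ[\Delta^q_k\cap(G_\chi)^{q+1}]$ is $G_\chi$-stable. The inclusion $M_k\subseteq M_{k+1}$ holds since $t^{-1}$ is an isometry of $G$ raising every $\chi$-value by $-\chi(t)>0$, so $t^{-1}\ZZ[\Delta^q_k\cap(G_\chi)^{q+1}]\subseteq\ZZ[\Delta^q_k\cap(G_\chi)^{q+1}]$ and hence $M_k=t^{k+1}\bigl(t^{-1}\ZZ[\Delta^q_k\cap(G_\chi)^{q+1}]\bigr)\subseteq t^{k+1}\ZZ[\Delta^q_{k+1}\cap(G_\chi)^{q+1}]=M_{k+1}$. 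The $M_k$ exhaust $\ZZ[G^{q+1}]$: for a simplex $\tau=(h_0,\ldots,h_q)$, once $k$ is large enough both $d(h_i,h_j)\le k$ and $\chi(h_i)-k\chi(t)\ge 0$ hold for all $i,j$, so $t^{-k}\tau\in\Delta^q_k\cap(G_\chi)^{q+1}$ and $\tau\in M_k$. Finally $\partial_q$ is $G$-equivariant and restricts to $\ZZ[\Delta^q_k\cap(G_\chi)^{q+1}]$ by the first part, so $\partial_q(M_k)=t^k\partial_q\bigl(\ZZ[\Delta^q_k\cap(G_\chi)^{q+1}]\bigr)\subseteq t^k\ZZ[\Delta^{q-1}_k\cap(G_\chi)^q]$; thus the $M_k$ form a filtration of $(\ZZ[G^{q+1}],\partial_q)$ by subcomplexes of $\ZZ G_\chi$-modules.

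I expect the second assertion to be the only real obstacle. Since $G_\chi$ is a monoid rather than a group, picking one simplex per $G$-orbit need not land it in $(G_\chi)^{q+1}$, so the naive orbit-representative argument fails; the key point is that translating the normalized representative $\beta$ by an element $t_\beta$ whose $\chi$-value is exactly $m_\beta=-v(\beta)$ both puts it inside $(G_\chi)^{q+1}$ and makes the $G_\chi$-orbit of $t_\beta\beta$ fill out all of $G\beta\cap(G_\chi)^{q+1}$, and this uses crucially that $m_\beta$ lies in $\chi(G)$, i.e.\ that the vertices are elements of $G$.
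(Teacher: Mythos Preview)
Your proof is correct and, for the first and third assertions, follows essentially the same verification as the paper (direct check of the valuation axioms, then exhaustion of $\ZZ[G^{q+1}]$ by the filtration; you are in fact more thorough on the third point, explicitly checking $G_\chi$-stability, monotonicity $M_k\subseteq M_{k+1}$, and $\partial_q$-stability, where the paper only writes out the exhaustion step).

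The one genuine difference is in the second assertion. The paper simply invokes \cite[Lemma~3.1]{Bieri1988} together with Lemma~\ref{lem:fgG-module} to conclude that $\ZZ[\Delta^q_k\cap(G_\chi)^{q+1}]$ is finitely generated free over $\ZZ G_\chi$. You instead construct an explicit finite basis: for each normalized $G$-orbit representative $\beta=(1_G,g_1,\ldots,g_q)$ you translate by an element $t_\beta$ with $\chi(t_\beta)=-v(\beta)$, and show that the resulting $b_\beta=t_\beta\beta$ have $G_\chi$-orbits partitioning $\Delta^q_k\cap(G_\chi)^{q+1}$. This is exactly the content of the cited Bieri--Renz lemma specialized to the present situation, so the two arguments are morally the same; your version has the advantage of being self-contained and of making transparent the one subtle point (which you highlight yourself): that $-v(\beta)$ lies in $\chi(G)$, so the required translate exists. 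The paper's citation is terser but relies on the reader tracking down the external reference.
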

\begin{proof}
 We check \cite[Axiom~2.2]{Bieri1988} on $v$:
 \begin{align*}
  v(g\sum n(g_0,\ldots,g_q))
  &=v(\sum n(gg_0,\ldots,gg_q))\\
  &=\min_{n\not=0,i} \chi(gg_i)\\
  &=\chi(g)+\min_{n\not=0,i} \chi(g_i)\\
  &=\chi(g)+v(\sum n(g_0,\ldots,g_q)).
 \end{align*}
Now we check \cite[Axiom~2.6]{Bieri1988}:
\begin{align*}
 v(\diff q(\sum n(g_0,\ldots,g_q)))
 &=v(\sum n\sum_i (-1)^i(g_0,\ldots,\hat g_i,\ldots,g_q))\\
 &\ge \min_{n\not=0,i} \chi(g_i)\\
 &=v(\sum n(g_0,\ldots,g_q)).
\end{align*}
Here we use the fact that the vertices that are part of the boundary are part of the vertices of the chain. The other axioms are also easy to check. Then $v(c)\ge 0$ is equivalent to saying that the chain lives in $(G_\chi)^{q+1}$. Then \cite[Lemma~3.1]{Bieri1988} and Lemma~\ref{lem:fgG-module} implies that $\ZZ[\Delta^q_k\cap (G_\chi)^{q+1}]$ is finitely generated free as a $G_\chi$-module. If $(g_0,\ldots,g_q)\in G^{q+1}$, then define $k_1:=\min(\chi(g_i))/\chi(t)$, $k_2:=\max(d(g_i,g_j))$ and $k:=\lceil \max(k_1,k_2)\rceil$. Then $(g_0,\ldots,g_q)\in t^k\ZZ[\Delta^q_k\cap (G_\chi)^{q+1}]$. So, these chain complexes filter $(\ZZ[G^{q+1}],\partial_q)$.
\end{proof}

\begin{lem}
 If $F$ is a free $G$-module, then it is a flat $G_\chi$-module.
\end{lem}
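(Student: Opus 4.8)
The plan is to reduce everything to the single claim that $\ZZ[G]$ itself is a flat left $\ZZ[G_\chi]$-module: once this is established, an arbitrary free $G$-module is a direct sum of copies of $\ZZ[G]$, hence a direct sum of flat $\ZZ[G_\chi]$-modules, hence flat. If $\chi=0$ then $G_\chi=G$ and $\ZZ[G]$ is free over itself, so there is nothing to do; thus I may assume $\chi\neq 0$ and fix $t\in G$ with $\chi(t)>0$, which exists because the image of $\chi$ is a nonzero subgroup of $\RR$.

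First I would exhibit $\ZZ[G]$ as an increasing union of left $\ZZ[G_\chi]$-submodules. For $n\ge 0$ put $M_n:=t^{-n}\ZZ[G_\chi]\subseteq\ZZ[G]$, the $\ZZ$-span of $t^{-n}G_\chi$. Since $\chi(t^{n}gt^{-n})=\chi(g)\ge 0$ for $g\in G_\chi$, conjugation by $t^{n}$ restricts to a monoid automorphism $c_n$ of $G_\chi$, so for $g,s\in G_\chi$ one has $g\cdot(t^{-n}s)=t^{-n}(t^{n}gt^{-n})s=t^{-n}c_n(g)s\in M_n$; hence $M_n$ is a left $\ZZ[G_\chi]$-submodule. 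The inclusion $M_n\subseteq M_{n+1}$ holds because $t^{-n}s=t^{-(n+1)}(ts)$ with $ts\in G_\chi$. Finally $\bigcup_n M_n=\ZZ[G]$, since for any $g\in G$ we have $\chi(t^{n}g)=n\chi(t)+\chi(g)\ge 0$ for $n$ large, so $g=t^{-n}(t^{n}g)\in M_n$.

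Next I would identify each $M_n$ with a free $\ZZ[G_\chi]$-module of rank one. Let $\tilde c_n$ be the ring automorphism of $\ZZ[G_\chi]$ extending $c_n$ linearly. The $\ZZ$-linear bijection $\psi\colon\ZZ[G_\chi]\to M_n$, $s\mapsto t^{-n}s$, satisfies $\psi(\tilde c_n(a)\,x)=a\cdot\psi(x)$ (check it on group-like $a$ as above, then extend linearly), so $\psi\circ\tilde c_n\colon\ZZ[G_\chi]\to M_n$ is a left $\ZZ[G_\chi]$-module isomorphism; in particular each $M_n$ is flat. Since $\ZZ[G]=\varinjlim_n M_n$ is a filtered colimit (along the inclusions $M_n\subseteq M_{n+1}$) of flat modules, it is flat, and the proof is complete.

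The only delicate point — and the step I expect to need the most care in a clean write-up — is the bookkeeping around the twist $c_n$: because $G_\chi$ is merely a submonoid of $G$ and $t$ need not be central, the obvious map $\ZZ[G_\chi]\to M_n$ fails to be $\ZZ[G_\chi]$-linear until it is precomposed with $\tilde c_n$. (Equivalently, one could organize the argument by verifying that $\{t^{n}\}_{n\ge 0}$ is a left denominator set in $\ZZ[G_\chi]$ with $\ZZ[G]=\ZZ[G_\chi][t^{-1}]$, so that $\ZZ[G]$ is a localization and hence flat; but checking the Ore condition comes down to the same inequality $\chi(t^{m}gt^{-n})=(m-n)\chi(t)+\chi(g)\ge 0$ for $m\gg n$.)
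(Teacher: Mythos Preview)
Your argument is correct and coincides with the paper's second (``alternative'') proof: write $\ZZ[G]$ as the filtered union of the rank-one free $\ZZ[G_\chi]$-submodules $t^{-n}\ZZ[G_\chi]$ and use that filtered colimits of flat modules are flat. You are in fact more careful than the paper at the one genuinely nontrivial point, namely that the identification $M_n\cong\ZZ[G_\chi]$ requires the conjugation twist $c_n$ because $t$ need not be central; the paper simply asserts the isomorphism. The paper's first proof is different: it verifies flatness of $\ZZ[G]$ directly via the equational (Lazard-type) criterion, rewriting a relation $\sum r_ix_i=0$ by shifting all the $x_i$ by a suitable power of $t$ into $\ZZ[G_\chi]$.
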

\begin{proof}
 We use the characterization of flat $R$-modules which says that an $R$-module $M$ is flat if the $R$-linear relations in $M$ stem from linear relations in $R$. That is, if there is a linear relation $\sum r_ix_i=0$ with $r_i\in R,x_i\in M$, then there exist $y_j\in M,a_{i,j}\in R$ such that $\sum_i r_i a_{i,j}=0$ for every $j$ and $x_i=\sum_j a_{i,j}y_j$ for every $i$.
 
 Suppose $x_i=\sum_{gx}n_{i,gx}gx$ with $n_{i,gx}\in \ZZ,g\in G$ and $x$ elements of the basis of $F$. And $r_i=\sum_hm_{i,h}h$ with $m_{i,h}\in \ZZ,h\in G_\chi$ such that $0=\sum_ir_ix_i=\sum_i\sum_hm_{i,h}h\sum_{gx}n_{i,gx}gx$. This in particular implies $\sum_{i,h,g}m_{i,h}hn_{i,gx}g=0$ for every $x$. If $\chi=0$, then $G=G_\chi$ and we are done since free modules are flat. Otherwise, there exists some $t\in G$ with $\chi(t)<0$. Then $k\chi(t)\le \min_{n_{i,gx}\not=0}\chi(g)$ for some $k\in \NN$. Define $a_{i,x}:=\sum_gn_{i,gx}gt^{-k}$, which is in $\ZZ G_\chi$, and $y_x=t^kx\in F$. Then for every $x$ we have
 \[
  \sum_i r_i a_{i,x}=\sum_i\sum_hm_{i,h}h\sum_gn_{i,gx}gt^{-k}=\left(\sum_{i,h,g}m_{i,h}h n_{i,gx}g\right)t^{-k}=0
 \]
and 
\[
 \sum_x a_{i,x}y_x=\sum_x\sum_gn_{i,gx}gt^{-k}t^kx=\sum_{gx}n_{i,gx}gx=x_i
\]
for every $i$.

We provide an alternative proof: It suffices to show that the $G_\chi$-module $\ZZ G$ is flat. If $\chi=0$, then $G_\chi=G$ and we are done since free modules are flat. Otherwise, there exists some $t\in G$ with $\chi(t)<0$. Then $(t^n\ZZ[G_\chi])_n$ form a filtration of $\ZZ[G]$, that is, 
\[
\lim_{n\to \infty}(t^n\ZZ[G_\chi])=\ZZ[G].
\]
The $t^n\ZZ[G_\chi]$ are isomorphic to $\ZZ[G_\chi]$ which is a free $G_\chi$-module and therefore flat. Then $\ZZ[G]$, as a direct limit of flat modules, is flat itself.
\end{proof}

 \begin{thm}
 \label{thm:mainresult}
  If $\chi$ is a character on a group $G$ of type $\Flg m$, then $\chi\in\Sigma^m(G;\ZZ)$ if and only if $G_\chi$ is of type $\Flg m$.
 \end{thm}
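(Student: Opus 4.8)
The plan is to recognize the statement as an instance of Brown's finiteness criterion \cite{Brown1987}, applied over the monoid ring $R:=\ZZ[G_\chi]$ in place of $\ZZ[G]$ — exactly as Theorem~\ref{thm:alonso}, which is the case $\chi=0$ of the present statement, is deduced from Brown's criterion over $\ZZ[G]$.

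First I would set up the ambient resolution. Since $G$ is of type $\Flg m$ it is finitely generated, so the word metric $d$, and with it the complexes $\VR k{G_\chi}$, are defined, and $\chain_q(\VR k{G_\chi})=\ZZ[\Delta^q_k\cap(G_\chi)^{q+1}]$. Put $D^{(k)}_*:=\bigl(\chain_q(\VR k{G_\chi}),\partial_q\bigr)_q$. Recall from the discussion of the valuation $v$ that each $D^{(k)}_q$ is a finitely generated free $R$-module; moreover, since $d$ takes only finite values, the $D^{(k)}_*$ form a filtration by subcomplexes of the complex $P_*:=\bigl(\ZZ[(G_\chi)^{q+1}],\partial_q\bigr)_q$ with $\bigcup_k D^{(k)}_*=P_*$. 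The complex $P_*$ is a free $R$-resolution of the trivial module $\ZZ$: the augmentation and the contracting homotopy $(g_0,\dots,g_q)\mapsto(1_G,g_0,\dots,g_q)$ of Lemma~\ref{lem:fgG-module} restrict since $1_G\in G_\chi$, and freeness of each $P_q$ over $R$ follows from the same rescaling of a free $\ZZ[G]$-basis used for $\ZZ[\Delta^q_k\cap(G_\chi)^{q+1}]$, the finite free bases of the $D^{(k)}_q$ nesting to a free basis of $P_q$. With these translations in hand, ``$\chi\in\Sigma^m(G;\ZZ)$'' says exactly that $\ZZ$ admits a projective $R$-resolution whose first $m+1$ terms are finitely generated, i.e.\ that $\ZZ$ is of type $\mathrm{FP}_m$ over $R$; and, by Definition~\ref{dfn:finiteprops-metric2}, ``$G_\chi$ is of type $\Flg m$'' says exactly that $(\tilde\hlgy_q(D^{(k)}_*))_k$ is essentially trivial for $0\le q\le m-1$.

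It then remains to prove: $\ZZ$ is of type $\mathrm{FP}_m$ over $R$ if and only if $(\tilde\hlgy_q(D^{(k)}_*))_k$ is essentially trivial for $q\le m-1$. For the ``if'' direction I would build a finitely generated free partial resolution $\bar F_m\to\cdots\to\bar F_0\to\ZZ$ of $\ZZ$ over $R$ by induction on degree: exactness of $P_*$ guarantees that every cycle one meets bounds in $P_*$, and essential triviality of the homology systems lets one bound it using only finitely many $R$-generators, i.e.\ inside some $D^{(k)}_*$. For the ``only if'' direction I would pick a finitely generated free partial resolution $F_*$ of $\ZZ$ over $R$ (terms in degrees $\le m$), together with $R$-chain maps $\alpha\colon F_*\to P_*$ and $\beta\colon P_*\to F_*$ lifting $\id_\ZZ$ and an $R$-linear chain homotopy $s$ with $\alpha\beta-\id_{P_*}=\partial s+s\partial$ in the relevant degrees; since each $F_q$ is finitely generated, $\alpha(F_q)$ lies in a fixed $D^{(k_0)}$, and since $D^{(k)}_q$ is finitely generated, $s(D^{(k)}_q)$ lies in some $D^{(l(k))}_{q+1}$. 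For a cycle $z\in D^{(k)}_q$ with $q\le m-1$ one has $z=\alpha\beta(z)-\partial s(z)$; but $\beta(z)$ is a cycle in $F_q$ and hence bounds in $F_{q+1}$ (as $F_*$ is exact in these degrees), so $\alpha\beta(z)$ bounds in $D^{(\max(k_0,l(k)))}_{q+1}$, whence $z$ maps to $0$ in $\tilde\hlgy_q(D^{(\max(k_0,l(k)))}_*)$. As $\max(k_0,l(k))$ depends only on $k$, this is essential triviality. (The lemma that a directed system $(A_i)_i$ is essentially trivial iff $\varinjlim_i\prod_J A_i=0$ for every $J$ repackages this same argument via the product characterisation of type $\mathrm{FP}_m$.)

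The main obstacle is the ``only if'' direction, $\chi\in\Sigma^m(G;\ZZ)\Rightarrow G_\chi$ of type $\Flg m$: because $R=\ZZ[G_\chi]$ need not be Noetherian, the groups $\tilde\hlgy_q(D^{(k)}_*)$ may be infinitely generated, so it does \emph{not} suffice to know that each individual cycle of $D^{(k)}_*$ eventually bounds in $P_*$; one needs a single index $l(k)$ bounding \emph{all} of $\tilde\hlgy_q(D^{(k)}_*)$ at once, and supplying this uniformity is exactly what the finiteness of $F_*$ together with the $R$-equivariant homotopy $s$ achieves. The hypothesis that $G$ itself is of type $\Flg m$ is used only to make $G$ finitely generated, so that the metric and all the complexes above are defined; it is in any case automatic once $\Sigma^m(G;\ZZ)\neq\emptyset$, since $\ZZ[G]$ is flat over $\ZZ[G_\chi]$ and base change turns a finitely generated projective $\ZZ[G_\chi]$-resolution of $\ZZ$ into one over $\ZZ[G]$.
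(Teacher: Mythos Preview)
Your proposal is correct and follows the same template as the paper --- deduce the statement from Brown's criterion over $R=\ZZ[G_\chi]$ --- but your implementation differs in one respect and is cleaner for it. The paper computes $\Tor_q^{R}(\prod R,\ZZ)$ using the \emph{$G$-bar resolution} $\ZZ[G^{*+1}]$, which is only flat (not free) over $R$; this forces it first to prove the flatness lemma, then to work with the shifted filtration $\bigl(t^k\ZZ[\Delta^*_k\cap(G_\chi)^{*+1}]\bigr)_k$ of $\ZZ[G^{*+1}]$, and finally to argue separately that essential triviality of $\bigl(\hlgy_q(t^kD^{(k)}_*)\bigr)_k$ is equivalent to that of $\bigl(\hlgy_q(D^{(k)}_*)\bigr)_k$. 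You instead use the \emph{$G_\chi$-bar resolution} $P_*=\ZZ[(G_\chi)^{*+1}]$, which is already free over $R$ and is filtered directly by the $D^{(k)}_*$; this eliminates the flatness lemma and the $t$-shift entirely. Your direct chain-map/chain-homotopy argument for the ``only if'' direction is a valid alternative to the paper's passage through the Bieri--Eckmann $\Tor$ criterion; as you observe in your parenthetical, the two are equivalent via the lemma characterising essential triviality by vanishing of $\varinjlim_i\prod_J A_i$. The paper also handles $m=1$ separately (connectedness of $G_\chi$ in the Cayley graph), which your formulation absorbs into the general case.
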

We should again emphasize that the metric on $G_\chi$ is the induced metric from $G$. In other words, distances are measured in the Cayley graph of $G$ (with respect to a finite generating set) and not in the subgraph spanned by the vertices in $G_\chi$.
 \begin{proof}
  First, we look at the case $m=1$. In this case $\chi\in \Sigma^1(G)$ if and only if $G_\chi$ is connected as a subgraph of the Cayley graph of $G$. This happens precisely if $G_\chi$ is $1$-coarsely connected. Then $G_\chi$ is of type $\Flg 1$.  If, on the other hand, $G_\chi$ is $n$-coarsely connected for some $n\in \NN$ then if $X$ was the choice of generating set for $G$ choose $X^{\le n}$, words in $X$ of length at most $n$, as a new generating set for $G$. In the Cayley graph with this generating set, $G_\chi$ is connected. Thus, $\chi\in \Sigma^1(G)$. This is the claim for $m=1$.
 
 Now suppose $m\ge 2$ and that $G_\chi$ is of type $\Flg 1$. If $\chi=0$, then $G=G_\chi$ and we are finished. Otherwise, there exists some $t\in G$ with $\chi(t)<0$. The following is similar to \cite[Appendix~after~Chapter~3]{Bieri1988}:
\begin{align*}
 \Tor_q^{\ZZ G_\chi}(\prod \ZZ G_\chi,\ZZ)
 &= \hlgy_q((\prod \ZZ G_\chi)\otimes_{G_\chi} (\ZZ[G^{q+1}],\diff q))\\
  &=\varinjlim_k \hlgy_q((\prod\ZZ G_\chi)\otimes_{G_{\chi}}(t^k\ZZ[\Delta_k^q\cap (G_\chi)^{q+1}],\diff q))\\
  &=\varinjlim_k \hlgy_q(\prod(t^k\ZZ[\Delta_k^q\cap (G_\chi)^{q+1}],\diff q))\\
  &=\varinjlim_k \prod \hlgy_q((t^k\ZZ[\Delta_k^q\cap (G_\chi)^{q+1}],\diff q))
 \end{align*}
where we use that $(\ZZ[G^{q+1}],\diff q)$ is a free resolution of $\ZZ$ over $G$, that $\otimes$ and $\hlgy_q$ commute with $\varinjlim_k$, that $\prod$ commutes with $\otimes$ if the other factor is a finitely generated free module, and that $\prod$ commutes with $\hlgy_q$.

 Now $\Tor_q^{\ZZ G_\chi}(\prod \ZZ G_\chi,\ZZ)=0$ for $1\le j<m$ is the Bieri-Eckmann criterion for $\Flg m$ (if $\Flg 1$ is given) \cite{Bieri1974,Bieri1976}. We just need to show that $\hlgy_q((t^k\ZZ[\Delta_k^q\cap (G_\chi)^{q+1}],\diff q))_k$ is essentially trivial if and only if $\hlgy_q((\ZZ[\Delta_k^q\cap (G_\chi)^{q+1}],\diff q))_k$ is essentially trivial. 
 
 Without loss of generality, $t$ is a generator. Suppose $\hlgy_q((t^k\ZZ[\Delta_k^q\cap (G_\chi)^{q+1}],\diff q))_k$ is essentially trivial and $\hlgy_q((t^k\ZZ[\Delta_k^q\cap (G_\chi)^{q+1}],\diff q))$ vanishes in $\hlgy_q((t^l\ZZ[\Delta_l^q\cap (G_\chi)^{q+1}],\diff q))$. Let $z$ be a cycle in $\ZZ[\Delta^q_k\cap (G_\chi)^{q+1}]$. Then it also lives in $ t^k\ZZ[\Delta^q_k\cap (G_\chi)^{q+1}]$. Thus, there exists some $c\in  t^l\ZZ[\Delta^{q+1}_l\cap (G_\chi)^{q+2}]$ with $\diff {q+1} c=z$. We obtain a new sum $\tilde c$ by replacing every vertex $g_i$ in $c$ that is not part of the boundary (meaning a vertex of $\diff{q+1}c=z$) with $g_it^{-l}$. Since $\chi(g_it^{-l})\ge 0$ and
\[
 d(g_j,g_it^{-l})\le d(g_j,g_i)+d(g_i,g_it^{-l})\le 2 l
\]
and 
\[
 d(g_jt^{-l},g_it^{-l})\le d(g_jt^{-l},g_j)+ d(g_j,g_i)+d(g_i,g_it^{-l})\le 3 l,
\]
 if $(g_0,\ldots,g_{q+1})$ appears in $c$, then the sum $\tilde c$ lives in $\ZZ[\Delta^{q+1}_{3l}\cap (G_\chi)^{q+2}]$ and $\diff {q+1}\tilde c=z$. Thus, $\hlgy_q(\ZZ[\Delta^q_k\cap (G_\chi)^{q+1}])$ vanishes in $\hlgy_q(\ZZ[\Delta^q_{3l}\cap (G_\chi)^{q+1}])$. 
 
 Now suppose $\hlgy_q(\ZZ[\Delta^q_k\cap (G_\chi)^{q+1}])_k$ is essentially trivial and $\hlgy_q(\ZZ[\Delta^q_k\cap (G_\chi)^{q+1}])$ vanishes in $\hlgy_q(\ZZ[\Delta^q_l\cap (G_\chi)^{q+1}])$. Let $z$ be a cycle in $ t^k\ZZ[\Delta^q_k\cap (G_\chi)^{q+1}]$. Then $t^{-k}z$ lives in $\ZZ[\Delta^q_k\cap (G_\chi)^{q+1}]$. Thus, there exists a chain $c\in \ZZ[\Delta^{q+1}_l\cap (G_\chi)^{q+2}]$ with $\diff {q+1}c=t^{-k}z$. Then $t^kc$ lives in $t^l\ZZ[\Delta^{q+1}_l\cap(G_\chi)^{q+2}]$ with $\diff {q+1} t^kc=z$. Thus, $\hlgy_q(t^k\ZZ[\Delta^q_k\cap (G_\chi)^{q+1}])$ vanishes in $\hlgy_q(t^l\ZZ[\Delta^q_l\cap (G_\chi)^{q+1}])$.
  \end{proof}
  
  If $R$ is a commutative ring, then $\ZZ[G^{q+1}]\otimes_\ZZ R=R[G^{q+1}]$ is a free $R[G]$-resolution of the trivial module $R$. The proof of Theorem~\ref{thm:mainresult} can be adapted for this more general case. A metric space is said to be of type $\Flg m(R)$ if $\tilde \hlgy_q(\VR k X,R)_k$ is essentially trivial for $q=0,\ldots, m-1$. 
  
  \begin{cor}
   If $R$ is a commutative ring and $\chi$ is a character on a group $G$ of type $\Flg m(R)$, then $\chi\in\Sigma^m(G;R)$ if and only if $G_\chi$ is of type $\Flg m(R)$.
  \end{cor}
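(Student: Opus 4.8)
The plan is to repeat the proof of Theorem~\ref{thm:mainresult} with $R$ in place of $\ZZ$ throughout, checking that each ingredient survives the change of coefficient ring; apart from the appeal to Bieri's finiteness criterion over a general ground ring, nothing new is needed. First I would dispose of the extreme cases: if $\chi=0$ then $G_\chi=G$ and both sides of the claimed equivalence hold, and if $R=0$ every module vanishes and both sides hold vacuously, so I may assume $R\ne 0$. For $m=1$ I would use that $\tilde\hlgy_0(\VR k X;R)_k$ is essentially trivial precisely when $X$ is coarsely connected, for any nonzero $R$, so that type $\Flg 1(R)$ coincides with type $\Flg 1$; likewise $\Sigma^1(G;R)=\Sigma^1(G)$, since membership in $\Sigma^1$ is the coefficient-free condition that $G_\chi$ span a connected subgraph of some Cayley graph of $G$. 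Hence the $m=1$ assertion is exactly the $m=1$ case of Theorem~\ref{thm:mainresult}.

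For $m\ge 2$ I would fix $t\in G$ with $\chi(t)<0$ and apply $-\otimes_\ZZ R$ to Lemma~\ref{lem:fgG-module} and the lemmas that follow it. This yields: $R[G^{q+1}]$ is a free $RG$-resolution of the trivial module $R$; each $R[\Delta^q_k\cap(G_\chi)^{q+1}]$ is a finitely generated free $RG_\chi$-module; the complexes $(t^kR[\Delta^q_k\cap(G_\chi)^{q+1}],\partial_q)_k$ filter $(R[G^{q+1}],\partial_q)$ by $RG_\chi$-subcomplexes; and every free $RG$-module is flat over $RG_\chi$, being a filtered colimit of the free modules $t^nRG_\chi$. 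The lemma characterising essential triviality of a directed system by the vanishing of $\varinjlim_i\prod_J A_i$ carries over verbatim to systems of $R$-modules. Feeding these facts into the computation displayed in the proof of Theorem~\ref{thm:mainresult} then gives
\[
\Tor_q^{RG_\chi}(\prod RG_\chi,R)=\varinjlim_k \prod \hlgy_q((t^kR[\Delta^q_k\cap(G_\chi)^{q+1}],\partial_q)),
\]
using that $\otimes$ and $\hlgy_q$ commute with $\varinjlim_k$, that $\prod$ commutes with $\otimes$ against a finitely generated free module, and that $\prod$ is exact and so commutes with $\hlgy_q$.

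To conclude, I would invoke the Bieri--Eckmann criterion over $RG_\chi$: granting that $G_\chi$ is of type $\Flg 1(R)$, it is of type $\Flg m(R)$ if and only if $\Tor_q^{RG_\chi}(\prod RG_\chi,R)=0$ for $1\le q<m$. Combining this with the displayed identity and the essential-triviality lemma reduces the statement to the equivalence of the essential triviality of $\hlgy_q(t^kR[\Delta^q_k\cap(G_\chi)^{q+1}])_k$ and that of $\hlgy_q(R[\Delta^q_k\cap(G_\chi)^{q+1}])_k$, for $q=0,\dots,m-1$; both directions I would prove by the same vertex-shifting argument as in Theorem~\ref{thm:mainresult}, replacing each vertex $g_i$ of a filling chain not lying on its boundary by $g_it^{-l}$, the resulting diameter bound $3l$ being unaffected by the coefficients. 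The latter condition is by definition type $\Flg m(R)$ of $G_\chi$, which finishes the proof. The one point that genuinely requires attention — and the closest thing to an obstacle — is verifying that Bieri's criterion, together with the homological algebra (flatness, $\Tor$ computed via flat resolutions, the exactness of products) it rests on, remains valid over an arbitrary commutative ground ring rather than only over $\ZZ$; granted this, the argument is formally identical to the case $R=\ZZ$.
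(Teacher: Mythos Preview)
Your proposal is correct and follows exactly the approach indicated by the paper, which simply remarks that $R[G^{q+1}]$ is a free $R[G]$-resolution of $R$ and that the proof of Theorem~\ref{thm:mainresult} can be adapted for a general commutative ring. You have in fact supplied considerably more detail than the paper does, including the edge cases $\chi=0$ and $R=0$, and you correctly identify the one point requiring genuine care, namely that the Bieri--Eckmann criterion holds over $RG_\chi$ for arbitrary commutative $R$.
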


  Now we justify why we restrict our attention to groups $G$ of type $\Flg m$ when talking about $\Sigma^m(G;\ZZ)$.
\begin{lem}
 If $G_\chi$ is of type $\Flg m$ for some character $\chi$ then $G$ has type $\Flg m$.
\end{lem}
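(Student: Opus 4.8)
The plan is to reduce to a nonzero character and then exploit that $G$ is a nested union of isometric translates of $G_\chi$, so that any cycle in a Vietoris--Rips complex of $G$ can be pushed into such a translate, filled there using that $G_\chi$ is of type $\Flg m$, and pulled back. If $\chi=0$ then $G_\chi=G$ and there is nothing to prove, so assume $\chi\neq0$ and fix $t\in G$ with $\chi(t)>0$. Since the word metric $d$ is left-invariant, left multiplication by any $g\in G$ is a simplicial automorphism of $\VR{r}{G}$ for every $r$ which preserves augmentation. Set $G_{\chi,n}:=\{g\in G\mid \chi(g)\ge -n\chi(t)\}$; these subsets form an increasing exhaustion of $G$, and left multiplication by $t^n$ maps $G_{\chi,n}$ isometrically onto $G_\chi$.

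First I would fix $r\ge0$ and a degree $0\le q\le m-1$, and let $z\in\chains{q}{r}{G}$ be a cycle (with the convention that for $q=0$ this means an augmentation-zero chain). Its support is finite, so all vertices occurring in $z$ lie in $G_{\chi,n}$ for $n$ large enough, whence $z\in\chains{q}{r}{G_{\chi,n}}$: the metric on $G_{\chi,n}$ is the restriction of $d$, so a simplex of $\VR{r}{G}$ all of whose vertices lie in $G_{\chi,n}$ is literally a simplex of $\VR{r}{G_{\chi,n}}$. Translating by $t^n$ turns $z$ into a cycle $t^n z\in\chains{q}{r}{G_\chi}$.

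Next, since $G_\chi$ is of type $\Flg m$, the system $\tilde \hlgy_q(\VR{\bullet}{G_\chi})$ is essentially trivial, so there is $s=s(r)\ge r$, depending only on $r$, with $\tilde \hlgy_q(\VR{r}{G_\chi})\to\tilde \hlgy_q(\VR{s}{G_\chi})$ the zero map; pick $c\in\chains{q+1}{s}{G_\chi}$ with $\partial c=t^n z$. Then $t^{-n}c\in\chains{q+1}{s}{G}$ (its vertices lie in $G_{\chi,n}\subseteq G$) and $\partial(t^{-n}c)=t^{-n}t^n z=z$. Hence $\tilde \hlgy_q(\VR{r}{G})\to\tilde \hlgy_q(\VR{s}{G})$ is zero, and since $s$ does not depend on $z$ or on $n$, the system $\tilde \hlgy_q(\VR{\bullet}{G})$ is essentially trivial for every $0\le q\le m-1$; that is, $G$ is of type $\Flg m$.

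The only point needing care is the uniformity of the filling scale $s$: it is supplied by the essential triviality of $\tilde \hlgy_q(\VR{\bullet}{G_\chi})$ at scale $r$, and because translation by $t^{\pm n}$ is an isometry of $(G,d)$ it transfers to $G$ with no loss, independently of how large $n$ had to be chosen to engulf the (variable) support of $z$. Alternatively, reading ``type $\Flg m$'' module-theoretically, one may tensor a resolution $P_\bullet\to\ZZ$ of $\ZZ G_\chi$-modules having $P_0,\dots,P_m$ finitely generated projective up along $\ZZ G_\chi\hookrightarrow\ZZ G$: exactness is preserved because $\ZZ G$ is a flat $\ZZ G_\chi$-module by the preceding lemma and $\ZZ G\otimes_{\ZZ G_\chi}\ZZ\cong\ZZ$, while finitely generated projectives are carried to finitely generated projectives.
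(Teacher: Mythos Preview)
Your argument is correct and is essentially the paper's proof: pick $t$ with $\chi(t)>0$, translate a cycle by a suitable power of $t$ into $G_\chi$, fill there using essential triviality at a scale independent of the cycle, and translate back. The only cosmetic difference is that the paper treats $m=1$ separately via paths and coarse connectedness, whereas you fold this into the reduced $q=0$ case; your closing module-theoretic remark (tensoring a finitely generated projective $\ZZ G_\chi$-resolution up to $\ZZ G$ via flatness) is an extra, not present in the paper.
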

\begin{proof}
 First, we discuss the $m=1$ case. Suppose $G_\chi$ is of type $\Flg 1$. Then $G_\chi$ is $1$-coarsely connected for some generating set. Let $x\in G$ be an element. If $\chi=0$, then $G_\chi=G$ we are done. Otherwise, there exists some $t\in G$ with $\chi(t)>0$. Then $t^kx\in G_\chi$ for some $k$. Then there exists a path $w$ from $1$ to $t^kx$ in $G_\chi$. Then $t^{-k}w$ is a path from $1$ to $x$ in $G$. Thus, $G$ has type $\Flg 1$.
 
  Now we prove the case $m\ge 2$. Suppose $G_\chi$ has type $\Flg m$. Suppose $\homology {m-1} k {G_\chi}$ vanishes in $\homology {m-1} l {G_\chi}$ for some $l\ge k$. Let $z\in \chains{m-1} k G$ be a cycle. If $\chi=0$, then $G_\chi=G$ and we are done. Otherwise, there exists some $t\in G$ with $\chi(t)>0$. Then $t^nz\in\chains{m-1} k {G_\chi}$ for some $n\ge 0$. Then there exists some $c\in \chains m l {G_\chi}$ with $\diff m c=t^nz$. Then $t^{-n}c\in \chains m l G$ with $\diff m t^{-n}c=z$. Thus, $\homology {m-1} k G$ vanishes in $\homology {m-1} l G$.
 \end{proof}

\begin{proof}[Proof of Theorem~\ref{thma:classical}] 
If an abstract group $G$ is endowed with the discrete topology, then the compact 
sets are exactly the finite subsets of $G$. So, if $d$ is the word length metric
according to a compact generating set $X$, then $\chi\in \TopS^m(G;\ZZ)$ if and
only if $(G_\chi,d)$ is of type $\Flg m$. Now $X$ is also a finite generating
set for $G$ so $(G_\chi,d)$ is of type $\Flg m$ if and only if $\chi\in 
\Sigma^m(G;\ZZ)$ by Theorem~\ref{thm:mainresult}.
\end{proof}
 \section{Criteria for homological compactness properties}
\label{sec:criteria}
In this section we show that a witness for homological compactness properties is a chain endomorphism. Namely, we prove Theorem~\ref{thma:mu}, Theorem~\ref{thma:varphi} and Theorem~\ref{thma:crit2}.

If $G$ is a compactly generated, locally compact, Hausdorff group and $\chi$ a character on $G$ then the map $v:\ZZ[\Delta^q_k]\to \RR$ is defined on simplices $\sigma=(g_0,\ldots,g_q)\in \Delta^q_k$ by $v(\sigma)=\min_{0\le i\le q}\chi(g_i)$. For a chain $c:=\sum_{\sigma\in\Delta^q_k}n_\sigma \sigma$, we define $v(c)=\min_{n_\sigma\not=0}v(\sigma)$.

 If $X$ is an infinite set (we consider $X$ to be a placeholder for values in $G$) then any element $S\in \ZZ[X^{q+1}]$ (here the notation $
\ZZ[X^{q+1}]$ stands for the free abelian group on $X\times\cdots\times X$) is called a \emph{$q$-shape}. We define
\[
 S^{(0)}:=\{x\in X\mid \exists \sigma\in \supp S\mbox{ with $x$ a vertex in $\sigma$}\}
\]
and
\[
 S^{(1)}:=\{(x,y)\in X\times X\mid \exists \sigma\in \supp S\mbox{ with $x,y$ vertices in $\sigma$}\}.
\]
We say that a shape $S$ is \emph{connected} if $(S^{(0)},S^{(1)})$ forms a connected graph. We say $S$ is \emph{centric} if $e\in S^{(1)}\cap ((\partial S)^{(0)}\times (\partial S)^{(0)})$ implies $e\in (\partial S)^{(1)}$. We say $S$ is \emph{nondegenerate} if every simplex in $\supp(S)$ is nondegenerate.
\begin{lem}
\label{lem:connectedshape}
\begin{enumerate}
 \item Suppose $S,T$ are shapes with $\partial_q S=T$, $T$ is connected and $S=S_1+\cdots+S_n$ is the sum of its connected components. Then $\partial_q S_i=T$ for some $i$.
 \item If $S$ is a $1$-shape with $\diff 1 S=x_1-x_0$, then there exists a connected component $S_i$ of $S$ with $\diff 1 S_i=x_1-x_0$.
 \item If $S$ is a nondegenerate connected shape, then there exists a nondegenerate connected centric shape $S'$ with $\partial S=\partial S'$.
 \end{enumerate}
\end{lem}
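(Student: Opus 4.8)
The plan is to treat the three parts separately; the first two are short combinatorial observations and the third carries the real content.

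For (1), the key observation is that the vertices of any $\sigma\in\supp S$ span a clique in the graph $(S^{(0)},S^{(1)})$, so $\sigma$ lies entirely inside one connected component; hence the connected components $S_1,\dots,S_n$ of $S$ have pairwise disjoint vertex sets. Since $(\partial_q S_i)^{(0)}\subseteq S_i^{(0)}$, the chains $\partial_q S_1,\dots,\partial_q S_n$ have pairwise disjoint supports, no cancellation occurs across components, and the graph of $T=\partial_q S$ is the disjoint union of the graphs of the $\partial_q S_i$ — on disjoint vertex sets, with no edges between them. Connectedness of $T$ then forces all but one of these to be empty, so all but one $\partial_q S_i$ vanishes and the remaining one equals $T$. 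For (2) the same decomposition applies, but now the graph of $T=x_1-x_0$ is disconnected (two isolated loops), so (1) does not apply verbatim; the extra input I would use is that $\partial_1$ preserves augmentation, whence each $\partial_1 S_i$ has coefficient sum $0$ and, if nonzero, is supported on at least two distinct vertices. Since the $\partial_1 S_i$ have pairwise disjoint supports and their sum $x_1-x_0$ is supported on the two-element set $\{x_0,x_1\}$, at most one of them is nonzero; at least one is, so exactly one is nonzero and equals $x_1-x_0$.

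For (3), I would first dispose of two trivial cases: if $S$ is already centric, or if $\partial S=0$ (so that $(\partial S)^{(0)}=\emptyset$ and centricity holds vacuously), take $S'=S$. Next I would treat the case where every simplex in $\supp(\partial S)$ is nondegenerate. Using that $X$ is infinite and $S^{(0)}$ finite, pick a fresh vertex $v\notin S^{(0)}$ and set $S':=v\ast\partial S$ (prepend $v$ to every simplex of $\partial S$). Then $\partial S'=\partial S-v\ast\partial(\partial S)=\partial S$ (for the degree-$1$ corner one uses instead that $\partial_1 S$ has augmentation $0$); $S'$ is nondegenerate because $v$ is fresh and the simplices of $\partial S$ are nondegenerate; $S'$ is connected because its graph is a star centred at $v$; and $S'$ is centric because every pair $(p,q)\in(S')^{(1)}$ with $p,q\neq v$ is a pair of vertices of some simplex $(v,\tau)$ of $S'$, hence of the simplex $\tau\in\supp(\partial S)=\supp(\partial S')$, so $(p,q)\in(\partial S')^{(1)}$, while pairs through $v$ are irrelevant since $v\notin(\partial S')^{(0)}$.

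The hard part is the remaining case, where $\supp(\partial S)$ genuinely contains degenerate simplices — this really happens: $S=(a,b,a)$ with $a\neq b$ has $(a,a)\in\supp(\partial S)$ — so that $v\ast\partial S$ inherits degeneracies and is not admissible. Here I would argue by induction on the \emph{defect} $\delta(S):=\#\{(p,q)\in S^{(1)}\cap((\partial S)^{(0)})^2 : (p,q)\notin(\partial S)^{(1)}\}$, which vanishes exactly when $S$ is centric. Given a bad pair $(x,y)$, the move I would try is to replace the sum $S_{xy}$ of those simplices of $S$ that contain both $x$ and $y$ by a fresh-vertex cone over $\partial S_{xy}$: this leaves $\partial S$ unchanged, keeps the shape connected, removes the pair $(x,y)$, and the new simplices all pass through a vertex lying outside $(\partial S)^{(0)}$. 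I expect the genuine difficulty to sit entirely here and to be twofold: first, $\partial S_{xy}$ need not be a sub-chain of $\partial S$ (there may be cancellation against $\partial(S-S_{xy})$), so one must argue it introduces no new bad pair, perhaps by enlarging $S_{xy}$ to a sufficiently large sub-shape of $S$; second, this coned-off boundary may again contain degenerate simplices, which must be repaired by a further local adjustment before the induction can proceed. Making both points precise so that $\delta$ strictly decreases at each step — so that the process terminates — while preserving nondegeneracy and connectedness is the crux of the argument.
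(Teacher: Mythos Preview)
Your treatments of (1) and (2) are correct and match the paper's reasoning. For (3), your cone construction $S' = v \ast \partial S$ in the case where $\supp(\partial S)$ contains only nondegenerate simplices is correct and rather more direct than anything the paper does, but as you yourself note it does not cover the general hypothesis, and your handling of the residual Case~3 is only a sketch with acknowledged, unresolved difficulties (degeneracy repair, control of new bad pairs). So there is a genuine gap.

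The paper runs the same induction on the defect $\delta(S)$ that you propose, but its reduction step is different and sidesteps the degeneracy issue entirely. Given a bad pair $(x_0,x_1)$, choose fresh vertices $y_0,y_1\notin S^{(0)}$ and let $S'_0$ be $S$ with every occurrence of $x_1$ replaced by $y_1$, let $S'_1$ be $S$ with $x_0$ replaced by $y_0$, and let $S'_2$ be $S$ with both replacements made; then set $S' := S'_0 + S'_1 - S'_2$. An inclusion--exclusion computation, using that no simplex of $\partial S$ contains both $x_0$ and $x_1$ (this is exactly what it means for the pair to be bad), gives $\partial S' = \partial S$. Since each $S'_i$ is obtained from the nondegenerate $S$ by an injective relabelling, every simplex occurring in $S'$ is automatically nondegenerate---no repair step is needed. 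In $S'$ the vertices $x_0$ and $x_1$ are no longer joined by an edge, while the new vertices $y_0,y_1$ do not lie in $(\partial S')^{(0)}$, so they create no new bad pairs; hence $\delta$ strictly drops. This relabel-and-combine trick is the missing idea in your Case~3.
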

\begin{proof}
 First, we prove claim 1. In fact, we will show that $\partial S_i=0$ for all except at most one $i\in\{1,\ldots,n\}$. Assume for contradiction that $i,j\in\{1,\ldots,n\}$ are distinct with $\partial_q S_i\not=0$ and $\partial_q S_j\not = 0$. So, there exist $x_i\in \supp\partial_q S_i$ and $x_j\in\supp \partial_q S_j$. Thus, $x_i,x_j\in T^{(0)}$. Now $x_i,x_j$ are not connected in $S$, so they are not connected in $\partial_q S=T$. This is in contradiction to $T$ being connected.
 
 Now we prove claim 2. Suppose $S=S_1+\cdots+S_n$ is the sum of its connected components and $x_0\in (\diff 1 S_i)^{(0)}$. Since $\im\diff 1=\ker \diff 0$, where $\diff 0:\ZZ[X]\to \ZZ$ is the augmentation map, there exists $y\in (\diff 1 S_i)^{(0)}$ with $y\not=x_0$. Now, the $(\diff 1 S_j)^{(0)}$ are all pairwise disjoint, so $y=x_1$ and $\diff 1 S_j=0$ for $j\not=i$. This implies $\diff 1 S_i=x_1-x_0$.
 
 We now prove claim 3 by induction on the number of pairs of bad vertices $x_0,x_1$ in $(\partial S)^{(0)}$ that are connected by an edge in $S^{(1)}$ but not in $(\partial S)^{(1)}$. Suppose $x_0,x_1\in (\partial S)^{(0)}$ and $(x_0, x_1)\not\in (\partial S)^{(1)}$. If we view $S,\partial S$ as functions $\Delta^q\to \ZZ$ and $\Delta^{q-1}\to \ZZ$ then this implies
 \[
  \sum_{x_0,x_1\in \sigma}(\partial S)(\sigma)\sigma=0.
 \]
 Choose distinct vertices $y_0,y_1\in X$ not in $S^{(0)}$. Then the shape $S_1'$ is obtained from $S$ by replacing $x_0$ with $y_0$, the shape $S_0'$ is obtained from $S$ by replacing $x_1$ with $y_1$ and the shape $S'_2$ is obtained from $S$ by replacing $x_0$ with $y_0$ and $x_1$ with $y_1$. Then we define   
 \[
 S':=S_0'+S_1'-S_2'.
 \]
 It remains to show that $\partial S=\partial S'$. For that, we split $\partial S'$ into sums.
 \begin{align*}
  \partial S'&=
  \partial (S_0'+S_1'-S_2')\\
  &=\left(\sum_{x_0\in \sigma,y_1\not\in\sigma}(\partial S_0')(\sigma)\sigma+\sum_{x_0\not\in \sigma,y_1\in\sigma}(\partial S_0')(\sigma)\sigma
  +\sum_{x_0,y_1\not\in \sigma}(\partial S_0')(\sigma)\right)\\ 
  &+\left(\sum_{x_1\in \sigma,y_0\not\in\sigma}(\partial S_1')(\sigma)\sigma+\sum_{x_1\not\in \sigma,y_0\in\sigma}(\partial S_1')(\sigma)\sigma
  +\sum_{x_1,y_0\not\in \sigma}(\partial S_1')(\sigma)\right)\\
  &-\left(\sum_{y_0\in \sigma,y_1\not\in\sigma}(\partial S_2')(\sigma)\sigma+\sum_{y_0\not\in \sigma,y_1\in\sigma}(\partial S_2')(\sigma)\sigma
  +\sum_{y_0,y_1\not\in \sigma}(\partial S_2')(\sigma)\right)\\
  &=\sum_{x_0\in \sigma,x_1\not\in\sigma}(\partial S)(\sigma)\sigma+\sum_{x_0\not\in \sigma,x_1\in\sigma}(\partial S)(\sigma)\sigma+\sum_{x_0, x_1\not\in\sigma}(\partial S)(\sigma)\sigma\\
  &=\partial S.
 \end{align*}
In $S'$ the vertices $x_0$ and $x_1$ are not joined by an edge. And $y_0,y_1$ do not appear in $\partial S'$. So we reduced the number of bad vertices by one.
\end{proof}

If $S=\sum n(x_0,\ldots,x_q)$ is a shape then $c\in \ZZ[G^{q+1}]$ is said to \emph{have shape $S$} if there exists a mapping $\varphi:S^{(0)}\to G$ with
\[
 c=\sum n(\varphi(x_0),\ldots,\varphi(x_q)).
\]
We say $c=(S,\varphi)$ has shape $S$ with vertices $\varphi$. Note that $(S,\varphi)$ uniquely determines $c$. On the other hand, there are many shapes that $c$ can have and $\varphi$ is also not uniquely determined by $S,c$. Conversely, given a chain $c\in \ZZ[G^{q+1}]$ it has some shape, say $S$, with injective vertex map $c^{(0)}:S^{(0)}\to G$. With this extra condition, we say \emph{$S$ is the shape of $c$}. This shape is unique up to relabeling. Note that the shape $S$ being connected is intrinsically a property of the chain $c$. Also note that a chain homomorphism does not necessarily map connected shapes to connected shapes (contrary to the intuition from continuous maps).
 
 If $k\ge 0,$ $m\in \NN$ and $\varepsilon_*:\chain_*(\VR k G)^{(m)}\to \ZZ[G^{*+1}]$ is a $\ZZ G$-chain map extending the identity on $\ZZ$, then $\varepsilon_*$ is said to be \emph{finitely modeled} if 
 \begin{itemize}
  \item if $q=0$ then $\varepsilon_0(1_G)=t$ for some $t\in G$;
  \item if $q=1$ then there are connected, non-degenerate $1$-shapes $S^1_1,\ldots,S^1_n$ such that for every $\sigma\in\Delta^1_k\cap(\{1_G\}\times G)$ there exists some $i\in\{1,\ldots,n\}$ such that $\varepsilon_1(\sigma)$ has shape $S_i^1$;
  \item if $2\le q\le m$ then there are connected, nondegenerate, centric shapes $S^q_1,\ldots,S^q_n$ such that for every $\sigma\in \Delta^q_k\cap (\{1_G\}\times G^q)$ we can write
  \[
   \varepsilon_{q-1}\circ \partial_q(\sigma)=c_1+\cdots+c_l
  \]
as sum of connected components, and 
\[
 \varepsilon_q(\sigma)=d_1+\cdots+d_l
\]
such that for each $j=1,\ldots,l$ we have 
\begin{itemize}
\item $\partial_q d_j=c_j$;
\item there is some $i\in \{1,\ldots,n\}$ such that $d_j$ has shape $S_i^q$.
\end{itemize}
\item  $\varepsilon(\tau)^{(0)}\setminus (\varepsilon\circ\partial(\tau))^{(0)}$, over all at most $m$-dimensional simplices $\tau$, is $G$-finite.
 \end{itemize}
\begin{rem}
    We now explain the intuition of the property finitely modeled. The definition is motivated by the discrete case in which a chain endomorphism $\varepsilon:\chain_q(\VR k G)^{(m)}\to \ZZ[G^{q+1}]$ can be specified on a finite $\ZZ G$-basis. The homotopy that joins $\varepsilon$ with the identity in $\ZZ[G^{*+1}]$ therefore has image in $\chain_q(\VR l G)^{(m)}$ for some $l\ge k$. In the locally compact case, we have to impose this property by construction (a version of $G$-finiteness of the image). To this end, we introduced the fairly geometric notion of a shape on chains. In the homotopical setting this is associated with a (barycentric) subdivision of a simplex. Also in the homotopical setting the image of a connected simplicial set under a simplicial map is connected again. This is not the case for shapes and chain endomorphisms, so we have to impose some version of connectedness as well.
\end{rem}
 \begin{lem}
 \label{lem:homotopy_homology_locallycompact}
  If $k\le l$ and $\varepsilon_*:{\chains * k G}^{(m)}\to {\chains * l G}^{(m)}$ is a finitely modeled $\ZZ G$-chain endomorphism extending the identity on $\ZZ$ then 
  \begin{itemize}
   \item there exists $K\ge 0$ such that for every $0\le q \le m$, $\sigma\in (\{1_G\}\times G^q)\cap \Delta^q_k$ and $h\in \varepsilon_q(\sigma)^{(0)}$ we have $\ell(h)\le K$;
   \item there exists a chain homotopy $\eta_*:{\chains * k G}^{(m)}\to \ZZ[G^{*+2}]^{(m+1)}$ joining $\varepsilon_*$ to the inclusion $\id_*:{\chains * k G}^{(m)}\to {\chains * l G}^{(m)}$ such that
    \begin{enumerate}
    \item for every $\tau\in \Delta^m_k$ we have
    \[
    \eta_m(\tau)^{(0)}\subseteq \bigcup_{q=0}^m\bigcup_{\substack{\sigma\le \tau,\\ [\tau:\sigma]=m-q}} (\varepsilon_q(\sigma))^{(0)}\cup \tau^{(0)}.
   \]
   \item $\im \eta_*\subseteq {\chains {*+1} L G}^{(m+1)}$ for some $L\ge 0$.
  \end{enumerate}
  \end{itemize}
 \end{lem}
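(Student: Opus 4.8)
I would prove the uniform length bound first, since the chain homotopy will rely on it.

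\emph{The bound $K$.} Argue by induction on $q\le m$, taking $K$ to be the largest of the constants $K_0,\ldots,K_m$ produced along the way. For $q=0$ the finitely-modeled hypothesis gives $\varepsilon_0(1_G)=t$ for a single $t\in G$, so $K_0:=\ell(t)$ suffices. For the step, fix $\sigma=(1_G,g_1,\ldots,g_q)\in\Delta^q_k$. Its faces $d_i\sigma$ with $i\ge1$ again lie in $\Delta^{q-1}_k\cap(\{1_G\}\times G^{q-1})$, while $d_0\sigma=g_1\cdot(1_G,g_1^{-1}g_2,\ldots,g_1^{-1}g_q)$ with $\ell(g_1)=d(1_G,g_1)\le k$; so by equivariance of $\varepsilon_{q-1}$ and the inductive bound $K_{q-1}$, every vertex of $\varepsilon_{q-1}(\partial\sigma)$ has length at most $k+K_{q-1}$. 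By the finitely-modeled hypothesis we may write $\varepsilon_q(\sigma)=d_1+\cdots+d_l$, where each $d_j$ realizes one of finitely many fixed \emph{connected} model shapes (hence has at most some fixed number $N$ of vertices) and $\partial d_j$ is a connected component of $\varepsilon_{q-1}(\partial\sigma)$; for $q=1$ one simply takes $l=1$, $d_1=\varepsilon_1(\sigma)$, whose model shape is connected and whose boundary is $\partial\varepsilon_1(\sigma)=\varepsilon_0(g_1)-\varepsilon_0(1_G)=g_1 t-t$. Since $\im\varepsilon_q\subseteq\chains q l G$, any two vertices lying together in a simplex of $\supp\varepsilon_q(\sigma)$ are at distance $\le l$, so connectedness of the shape of $d_j$ forces any two of its vertices to be within $Nl$. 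Choosing a vertex $h_j$ of the nonzero chain $\partial d_j$ — it lies in $(\varepsilon_{q-1}(\partial\sigma))^{(0)}$, hence $\ell(h_j)\le k+K_{q-1}$ — and observing that $h_j$ is also a vertex of $d_j$, we conclude that every vertex of $d_j$, and thus of $\varepsilon_q(\sigma)$, has length $\le k+K_{q-1}+Nl=:K_q$. (Degenerate simplices such as $(1_G,1_G)$, where $\partial\varepsilon_q(\sigma)$ can vanish and this anchoring breaks down, are dealt with separately, e.g.\ by normalizing $\varepsilon$; this is the role of the $G$-finiteness clause of the definition.)

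\emph{The homotopy.} Write $\partial$ also for the boundary of the bar resolution $\ZZ[G^{*+1}]$, and for $g\in G$ recall the $\ZZ$-linear maps $h^g(g_0,\ldots,g_q)=(g,g_0,\ldots,g_q)$ on $\ZZ[G^{*+1}]$, which satisfy $\partial h^g+h^g\partial=\id$ in positive degrees and $\partial h^g=\id$ on augmentation-zero $0$-chains. I would define $\eta_*$ on ${\chains * k G}^{(m)}$ by $\ZZ$-linear extension of
\[
\eta_q(\sigma):=h^{g_0}_q\bigl(\varepsilon_q(\sigma)-\sigma-\eta_{q-1}(\partial\sigma)\bigr),\qquad \sigma=(g_0,\ldots,g_q)\in\Delta^q_k,\quad \eta_{-1}:=0.
\]
A routine induction shows that the chain $\varepsilon_q(\sigma)-\sigma-\eta_{q-1}(\partial\sigma)$ is a cycle (of augmentation zero when $q=0$) — using the relation already proved for $\eta_{q-1}$, the identity $\partial^2=0$, and, in degree $0$, that $\varepsilon$ extends $\id$ on $\ZZ$ so that $\varepsilon_0(\sigma)-\sigma$ has augmentation zero — whence $\partial\eta_q(\sigma)=\varepsilon_q(\sigma)-\sigma-\eta_{q-1}(\partial\sigma)$, i.e.\ $\partial\eta+\eta\partial=\varepsilon_*-\id_*$. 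Thus $\eta_*$ is a chain homotopy joining $\varepsilon_*$ to the inclusion, and because the prepended vertex $g_0$ is the first vertex of $\sigma$, a direct check gives $\eta_q(g\sigma)=g\,\eta_q(\sigma)$, so $\eta_*$ is $\ZZ G$-equivariant.

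\emph{The support conditions.} For (1): since $h^{g_0}_q$ only prepends the vertex $g_0\in\sigma^{(0)}$, and $(d_i\sigma)^{(0)}\subseteq\sigma^{(0)}$ while faces of faces of $\sigma$ are faces of $\sigma$, expanding the recursion and inducting on $q$ gives $\eta_q(\sigma)^{(0)}\subseteq\sigma^{(0)}\cup\bigcup_{\rho\le\sigma}(\varepsilon_{\dim\rho}(\rho))^{(0)}$; taking $q=m$, $\sigma=\tau$ and grouping the faces $\rho$ by dimension yields exactly (1). For (2) I would invoke the bound $K$: for a $1_G$-based $\sigma\in\Delta^q_k$ with $q\le m$ we have $\sigma^{(0)}\subseteq\{g\mid\ell(g)\le k\}$, and for any face $\rho\le\sigma$, writing $\rho=h_\rho\cdot\rho^\circ$ with $\rho^\circ$ $1_G$-based and $h_\rho\in\sigma^{(0)}$ its first vertex, equivariance of $\varepsilon$ and the bound $K$ give $(\varepsilon_{\dim\rho}(\rho))^{(0)}=h_\rho\,(\varepsilon_{\dim\rho}(\rho^\circ))^{(0)}\subseteq\{g\mid\ell(g)\le k+K\}$. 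Hence $\eta_q(\sigma)^{(0)}\subseteq\{g\mid\ell(g)\le k+K\}$, so every simplex occurring in $\eta_q(\sigma)$ has diameter at most $2(k+K)$; by equivariance the same holds for arbitrary $\sigma\in\Delta^q_k$, and therefore $\im\eta_*\subseteq{\chains{*+1}{L}{G}}^{(m+1)}$ with $L:=2(k+K)$.

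\emph{Main obstacle.} I expect the uniform bound $K$ to be the crux: it is the one place where the full strength of \emph{finitely modeled} — the finite list of connected (and, for $q\ge2$, centric) model shapes, together with the control on the ``new'' vertices — must be unpacked, and the decomposition of $\varepsilon_q(\sigma)$ into shaped pieces, plus the bookkeeping for degenerate simplices, needs care. Once $K$ is in hand the homotopy is produced by the standard acyclic-models recursion in the bar resolution; the only point requiring thought there is to prepend the \emph{first vertex} of $\sigma$ rather than $1_G$, which is exactly what makes $\eta_*$ equivariant and yields the clean vertex bound (1), from which (2) is immediate.
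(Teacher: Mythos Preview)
Your proof is correct and follows essentially the same approach as the paper: an induction on $q$ for the length bound $K$ using connectedness of the model shapes to anchor at a boundary vertex, and then the standard acyclic-models homotopy obtained by coning off $\varepsilon_q(\sigma)-\sigma-\eta_{q-1}(\partial\sigma)$. The only cosmetic difference is that you define $\eta_q$ uniformly by prepending the first vertex $g_0$ of each simplex, whereas the paper prepends $1_G$ on the $1_G$-based basis simplices and extends $G$-equivariantly; as you note, these yield literally the same map, and your vertex bound $L=2(k+K)$ matches the paper's.
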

\begin{proof}
We first show the first claim. We proceed by induction on $q=0,\ldots, m$ starting with $q=0$. Since $\varepsilon$ is finitely modeled, $h\in \varepsilon_0(1_G)^{(0)}$ implies $h=t$ so $\ell(h)\le \ell(t)$. If $q=1$ then since $\varepsilon$ is finitely modeled there are only finitely many connected shapes $S$, each with $\diam(S^{(0)},S^{(1)})$ less than say $k_1$, which $\varepsilon_1(\sigma)$ can have. If $g_1=1_G$, then $\varepsilon_0\circ\partial_1(\sigma)=0$. So $\varepsilon_1(\sigma)=0$ as well. If $g_1\not=1_G$ then $t\in \varepsilon_0\circ\partial_1(1_G,g_1)$. So $\ell(h)\le \ell(t)+lk_1$. If $q\ge 2$ we show inductively that $\ell(h)\le (q-1)k+\ell(t)+\ell(k_1+k_2+\cdots+k_q)$, where $k_q$ is the maximum of $\diam(S^{(0)},S^{(1)})$ over all shapes $S$ that $d_i$ can have (over all $\sigma\in \Delta^q_k$) where
\[
 \varepsilon_q(\sigma)=d_1+\ldots+d_n
\]
and 
\[
 \varepsilon_{q-1}\circ\partial_q(\sigma)=c_1+\cdots+c_n
\]
sum of its connected components with $\partial_qd_i=c_i$ for all $i=1,\ldots,n$. Then there exists some $i\in \{1,\ldots,n\}$ with $h\in d_i^{(0)}\supseteq c_i^{(0)}\not=\emptyset$. Suppose $g\in c_i^{(0)}$. Then $d(g,h)\le lk_q$ and 
\[
g\in \varepsilon_{q-1}\circ\partial_q(\sigma)\subseteq\varepsilon_{q-1}(\sigma_0)^{(0)}\cup \varepsilon_{q-1}(\sigma_1)^{(0)}\cup \cdots \cup \varepsilon_{q-1}(\sigma_q)^{(0)}
\]
If $g\in \varepsilon_{q-1}(\sigma_0)^{(0)}$ then $d(g,f)\le k$ where $f\in 
\varepsilon_{q-1}(g_1^{-1}\sigma_0)$ and if $g\in 
\varepsilon_{q-1}(\sigma_1)^{(0)}\cup\cdots\cup 
\varepsilon_{q-1}(\sigma_q)^{(0)}$ we can also use the induction hypothesis (set 
$f=g$) which yields
\begin{align*}
 \ell(h)
 &\le d(g,h)+d(g,f)+\ell(f)\\
 &\le lk_q+k+\ell(f)\\
 &\le lk_q+k+(q-2)k+\ell(t)+\ell(k_1+k_2+\cdots+k_{q-1})\\
 &=(q-1)k+\ell(t)+\ell(k_1+k_2+\cdots+k_q)
\end{align*}
If we set $K:=k(m-1)+\ell(t)+\ell(k_1+\cdots+k_m)$, we obtain the result.

 Now, we present a proof for the second claim. We construct the chain homotopy 
$\eta_q$ inductively starting with $q=-2$. For each $q\ge -1$ set
$\lambda_q:=\varepsilon_q-\id_q$. Since $\lambda_{-1}=0$, we are forced to set 
$\eta_{-2}:=0,\eta_{-1}:=0$. Then
 \[
  \lambda_{-1}=0=\eta_{-2}\circ \partial_{-1}+\partial_0\circ \eta_{-1}.
 \]
Now suppose $m\ge q\ge 0$ and $\eta_{-2},\ldots,\eta_{q-1}$ have been constructed. We prove $\mbox{im}(\lambda_q-\eta_{q-1}\circ\diff q)\subseteq \ker \partial_q$:
\begin{align*}
 \partial_q\circ(\lambda_q-\eta_{q-1}\circ \partial_q)
 &=\partial_q\circ\lambda_q-\partial_q\circ \eta_{q-1}\circ \partial_q\\
 &=\partial_q\circ\lambda_q-(\lambda_{q-1}-\eta_{q-2}\circ \partial_{q-1})\circ\partial_q\\
 &=\partial_q\circ\lambda_q-\lambda_{q-1}\circ\partial_q\\
 &=0
\end{align*}
If $\sigma:=(1_G,g_1,\ldots,g_q)\in \Delta^q_k$, then $\sum n (x_0,\ldots,x_q):=(\lambda_q-\eta_{q-1}\circ \partial_q)(\sigma)\in \ker\partial_q$. Then $\eta_q(\sigma):=\sum n(1_G,x_0,\ldots,x_q)$ has the property
\begin{align*}
 \partial_{q+1}\circ \eta_q(\sigma)
 &=\partial_{q+1}\sum n(1_G,x_0,\ldots,x_q)\\
 &=\sum n(x_0,\ldots,x_q)-\sum n\sum_{i=0}^q(-1)^i(1_G,x_0,\ldots,\hat 
x_i,\ldots,x_q)\\
&=\sum n(x_0,\ldots,x_q)-1_G\times \partial_q\left(\sum 
n(x_0,\ldots,x_q)\right)\\
 &=\sum n(x_0,\ldots,x_q)\\
&=\lambda_q-\eta_{q-1}\circ\partial_q(\sigma).
\end{align*}
If $\tau:=(g_0,\ldots,g_q)\in \Delta^q_k$, then $\sigma:=(1_G,g_0^{-1}g_1,\ldots,g_0^{-1}g_q)$ is a simplex with $\tau=g_0\sigma$. We set $\eta_q(\tau):=g_0\eta(\sigma)$. Then
\begin{align*}
 (\varepsilon_q-\id_q)(\tau)
 &=g_0(\varepsilon_q-\id_q)(\sigma)\\
 &=g_0\lambda_q(\sigma)\\
 &=g_0(\partial_{q+1}\circ\eta_q(\sigma)+\eta_{q-1}\circ\partial_q(\sigma))\\
 &=\partial_{q+1}\circ\eta_q(\tau)+\eta_{q-1}\circ\partial_q(\tau).
\end{align*}
So $\eta_q$ joins $\varepsilon_q$ to $\id_q$. It remains to show the bound on the image of $\eta_q$. We show 1. of the last statement. If $(g_0,\ldots,g_m)=\tau\in\Delta^m_q$, then $\tau':=g_0^{-1}\tau$ is an element of the basis. Since $\eta_{-1}=0$, the claim is obviously true for $m=-1$. If $m>-1$ and we know that the claim is true for $m-1$, then
\begin{align*}
 \eta_m(\tau)^{(0)}
 &=g_0(\eta_m(\tau')^{(0)})\\
 &\subseteq g_0(\{1_G\}\cup \tau'^{(0)}\cup \varepsilon_m(\tau')^{(0)}\cup \eta_{m-1}\circ\partial_m(\tau')^{(0)})\\
 &=\{g_0\}\cup \tau^{(0)}\cup \varepsilon_m(\tau)^{(0)}\cup \eta_{m-1}\circ\partial_m(\tau)^{(0)}\\
 &\subseteq \tau^{(0)}\cup \varepsilon_m(\tau)^{(0)}\cup 
\bigcup_{\substack{\sigma\le \tau,\\ 
[\tau:\sigma]=1}}\varepsilon_{m-1}(\sigma)\\
 &\subseteq \tau^{(0)}\cup \bigcup_{q=0}^m\bigcup_{\substack{\sigma\le 
\tau,\\ [\tau:\sigma]=q
}} (\varepsilon_q(\sigma))^{(0)}.
\end{align*}
Now we show $2$ of the last claim. By the first claim, there exists 
some $K\ge 0$ such that $\ell(x)\le K$ for every $x\in \varepsilon_q(\sigma)^{(0)}, 
\sigma\in (\{1_G\}\times G^q)\cap \Delta^q_k$ and $q=0,\ldots,m$. So, if 
$\tau=(1_G,g_1,\ldots,g_m)$ and $y\in \eta_m(\tau)^{(0)}$, then $\ell(y)\le 
\max(k,k+K)=k+K$. This implies $\eta_m(\tau)\in \chain_{m+1}(\VR{2(K+k)}G)$ and
ultimately $\im\eta_m\subseteq \chains{m+1} {2(K+k)} G$.
\end{proof}

 The set of chains with shape $S$ is equipped with a topology in the following way. If $\varphi$ is an assignment of vertices, then its chain represents a point $(\varphi(s))_{s\in S^{(0)}}\in \prod_{S^{(0)}}G$. And $\prod_{s\in S^{(0)}}G$ is equipped with the product topology of copies of $G$ that are equipped with the locally compact topology assigned to them.

If $G$ is a locally compact group, then $(k_0,\ldots,k_m)\in\NN^{m+1}$ is said to
be a homological connecting vector for $G$ if $\tilde \hlgy_q(\VR{k_q+1}G)$ 
vanishes in $\tilde \hlgy_q(\VR{k_{q+1}}G)$ for every $q=0,\ldots,m-1$.

\begin{thm}
\label{thm:mu}
If $m\ge 1$ and $G$ is a locally compact group then the following are equivalent:
\begin{enumerate}
 \item $G$ is of type $\Clg m$;
 \item there exists a homological connecting vector $(k_0,\ldots,k_m)$ for $G$;
 \item there exists $K_0\ge 0$ such that for every $k\ge K_0$ there exists a finitely modeled $\ZZ G$-chain endomorphism 
 \[
 \mu_*:{\chains * k G}^{(m)}\to {\chains * {K_0} G}^{(m)}
 \]
 extending the identity on $\ZZ$.
\end{enumerate}
\end{thm}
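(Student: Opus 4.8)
The plan is to establish the cycle of implications $(1)\Rightarrow(2)\Rightarrow(3)\Rightarrow(1)$. For $(1)\Rightarrow(2)$, I would first use Definition~\ref{dfn:AT} together with Lemma~\ref{lem:cofinal} to rephrase type $\Clg m$ as: the ind-system $\bigl(\tilde\hlgy_q(\VR jG)\bigr)_j$ is essentially trivial for $q=0,\dots,m-1$. Since only finitely many degrees are involved, the vector is then built recursively: set $k_0:=0$ and, given $k_q$, use essential triviality in degree $q$ to pick $k_{q+1}\ge k_q+1$ for which $\tilde\hlgy_q(\VR{k_q+1}G)$ vanishes in $\tilde\hlgy_q(\VR{k_{q+1}}G)$; choosing the gaps large enough to absorb the bounds appearing below yields a homological connecting vector.

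The substantial implication is $(2)\Rightarrow(3)$. Fixing a symmetric compact generating set $C\ni 1_G$ inducing the metric and a connecting vector $(k_0,\dots,k_m)$ with large gaps, one lets $K_0$ dominate all the $k_q$ and, for each $k\ge K_0$ separately, constructs a finitely modeled $\mu_*=\mu^{(k)}_*$ by induction on the degree $q=0,\dots,m$, starting from $\mu_0=\id$. Assuming $\mu_0,\dots,\mu_{q-1}$ built, for an anchored basis simplex $\sigma\in(\{1_G\}\times G^q)\cap\Delta^q_k$ the chain $\mu_{q-1}(\diff q\sigma)$ is a cycle whose connected components $c_1+\dots+c_l$ (parts (1)--(2) of Lemma~\ref{lem:connectedshape} ensure these are cycles of the right reduced type) each lie in a Vietoris--Rips complex of index at most $k_{q-1}+1$ and hence, by the connecting vector, bound inside $\chains q {K_0}G$; one must choose such fillings $d_j$ with $\diff q d_j=c_j$ using only finitely many shapes, then apply part (3) of Lemma~\ref{lem:connectedshape} to make these shapes connected, nondegenerate and centric, and finally verify the $G$-finiteness clause on the newly created vertices. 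Finiteness of the collection of filling shapes is where local compactness enters: with $k$ fixed, the anchored simplices form a compact subset of a finite power of the compact ball $C^k$, and for each of the finitely many combinatorial shapes a component $c_j$ can carry, the space of such component-cycles anchored at $1_G$ with bounded vertices is again compact, so a finite cover on which one transports a single shape-level filling $\diff q S=T$ by the topology and the $G$-action suffices.

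For $(3)\Rightarrow(1)$, I would compose $\mu^{(k)}_*$ with the inclusion $\chains * {K_0}G^{(m)}\hookrightarrow\chains * kG^{(m)}$ to obtain a finitely modeled chain endomorphism of $\chains * kG^{(m)}$ over $\id_\ZZ$, and apply Lemma~\ref{lem:homotopy_homology_locallycompact} to produce a chain homotopy to the identity with image confined to $\chains{*+1}LG^{(m+1)}$ for a bound $L=L(k)$ independent of the individual chain. On reduced homology in degrees $q\le m-1$ this exhibits the connecting map $\tilde\hlgy_q(\VR kG)\to\tilde\hlgy_q(\VR LG)$ as factoring through the \emph{fixed} group $\tilde\hlgy_q(\VR{K_0}G)$, for every $k\ge K_0$; combining this with exactness of $(\ZZ[G^{*+1}],\diff *)$ (Lemma~\ref{lem:fgG-module}), which forces $\varinjlim_j\tilde\hlgy_q(\VR jG)=0$, and with the $G$-cocompactness of the $\VR jG$ — which via local compactness bounds the image of $\mu^{(k)}_*$ by a compact amount of data — one deduces that each connecting map vanishes at a sufficiently late stage, i.e.\ $\bigl(\tilde\hlgy_q(\VR jG)\bigr)_j$ is essentially trivial, so $G$ is of type $\Clg m$.

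The main obstacle I anticipate is the construction in $(2)\Rightarrow(3)$, specifically arranging that the choices of fillings make $\mu_*$ \emph{finitely modeled} — using only finitely many shapes while simultaneously respecting connectedness, the centric condition, and the $G$-finiteness of the set of new vertices. Once these uniform choices are set up, the remaining verifications are routine manipulations of Lemmas~\ref{lem:connectedshape} and~\ref{lem:homotopy_homology_locallycompact} and of the compactness features of locally compact groups.
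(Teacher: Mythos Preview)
Your plan for $(1)\Rightarrow(2)$ and $(2)\Rightarrow(3)$ is essentially the paper's proof: the recursive choice of a connecting vector is immediate, and the inductive construction of $\mu_q$ via a compactness/finite-cover argument on the space of anchored boundary-cycles is exactly how the paper proceeds (the open cover is by sets $U_{S,c}$ indexed by a shape $S$ filling the given boundary-shape $T$ together with a fixed placement $c$ of its inner vertices). You have correctly located the main difficulty.

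Your argument for $(3)\Rightarrow(1)$, however, has a genuine gap. From the chain homotopy you only get that the structure map $\tilde\hlgy_q(\VR kG)\to\tilde\hlgy_q(\VR{L(k)}G)$ factors through $\tilde\hlgy_q(\VR{K_0}G)$, and that the colimit vanishes. These two facts do \emph{not} imply essential triviality: you would need that $\tilde\hlgy_q(\VR{K_0}G)$ itself dies at one later stage, and your appeal to ``$G$-cocompactness'' does not give this, since in the locally compact (non-discrete) case $\chain_q(\VR{K_0}G)$ is free on an \emph{infinite} $\ZZ G$-basis and $\tilde\hlgy_q(\VR{K_0}G)$ is typically not finitely generated.

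The paper's route is more direct and avoids this trap. Given a cycle $z\in\chain_{m-1}(\VR kG)$, use acyclicity of the bar resolution $(\ZZ[G^{*+1}],\partial_*)$ to find $c$ with $\partial_m c=z$; then
\[
\partial_m\bigl(\mu_m(c)+\eta_{m-1}(z)\bigr)=\mu_{m-1}(z)+\bigl(z-\mu_{m-1}(z)\bigr)=z,
\]
and $\mu_m(c)\in\chain_m(\VR{K_0}G)$ while $\eta_{m-1}(z)\in\chain_m(\VR{l}G)$, so $z$ bounds in $\chain_m(\VR{\max(l,K_0)}G)$. For this computation to give a bound independent of $z$ one should read the family $(\mu^{(k)})_{k\ge K_0}$ from $(2)\Rightarrow(3)$ as \emph{compatible} (each $\mu^{(k')}$ restricting to $\mu^{(k)}$ on $\VR kG$), which the open-cover construction allows by successively enlarging covers; then the homotopies $\eta^{(k)}$ of Lemma~\ref{lem:homotopy_homology_locallycompact} are compatible as well, and $\eta_{m-1}(z)$ lands in $\chain_m(\VR{l(k)}G)$ with $l(k)$ depending only on $k$. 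You should replace your colimit argument by this explicit filler.
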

\begin{rem}
\label{rem:cg}
 Note that the existence of a metric on $G$ inducing the coarse structure $(\Delta_C)_{C\in \mathcal C(G)}$ is implicit in Theorem~\ref{thm:mu}. For if $(\Delta_C)_{C\in \mathcal C(G)}$ does not induce a metric, then in particular there is no compact generating set for $G$. This implies the negation of all 3 conditions. Then $G$ is not of type $\Clg 1$ so in particular not of type $\Clg m$ for $m\ge 1$. That is, the negation of condition 1. That $G$ is not compactly generated also implies that $\tilde \hlgy_0(\ZZ[\Delta^1_C])$ does not vanish for every $C\in \mathcal C(G)$. That is, the negation of condition 2. Assume for contradiction condition 3. There exists some compact $\tilde K_0$ and a chain endomorphism $\mu$ with $\mu_0(1_G)=:t$ and $\im \mu_1\subseteq \ZZ[\Delta_{\tilde K_0}]$. Then for every $x,y\in G$ there exists some $k\ge 0$ with $(x_t^{-1},y_t^{-1})\in \Delta^1_k$. Then
 \[
  \partial_1 \mu_1(xt^{-1},yt^{-1})=\mu_0\circ \partial_1(xt^{-1},yt^{-1})=y-x.
 \]
Thus, the data of $\mu_1(xt^{-1},yt^{-1})$ describes a $\tilde K_0$-path that joins $x$ to $y$. In this way, $G$ is compactly generated by $\tilde K_0$. That is a contradiction, which implies the negation of condition 3.
\end{rem}

 \begin{proof}
As discussed in Remark~\ref{rem:cg} we can assume $G$ is compactly generated and therefore metrizable.
 
 Suppose condition 1, that $G$ is of type $\Clg m$. Then it is easy to see that
there exists a homological connecting vector $(k_0,\ldots,k_m)$ for $G$. So 
condition 1 implies condition 2. 
 
 Now we show that condition 2 implies condition 3. Suppose condition 2. That is, 
there exists a homological connecting vector $(k_0,\ldots,k_m)$ for $G$. We
show condition 3: Suppose $\mathcal X$ is both a compact neighborhood of $1_G$ 
and a compact generating set for $G$ and the metric on $G$ is the word length 
metric induced by $\mathcal X$. Choose an open set $U$ with $1_G\in U\subseteq \mathcal X$. We construct $\mu_q$ inductively with $\im \mu_q\subseteq \chains q {k_q+1} G$ for every $q=0,\ldots,m$. If $q=0$, then 
$\mu_0:\ZZ[G]\to \ZZ[G]$ is defined to be the identity. If 
$\mu_0,\ldots,\mu_{q-1}$ have been constructed, then by the assumption on 
$\mu_{q-1}$ there exist shapes $T_1^q,\ldots,T_n^q$ such that for every $\tau\in\Delta^q_k$ there exists some $i\in \{1,\ldots,n\}$ so that $\mu_{q-1}\circ\partial_q(\tau)$ has shape $T_i^q$. Fix one such shape $T:=T_i^q$.

Suppose that we are given a pair $(S,c)$ 
\begin{itemize}
\item where $S$ is a shape with $\partial_q S=T$;
\item denote the inner vertices of $S$ by $R^{(0)}:=S^{(0)}\setminus T^{(0)}$, then $c$ is a mapping $R^{(0)}\to G$ with 
\[
d(c(r_1),c(r_2))\le k_q
\]
for every $(r_1,r_2)\in (R^{(0)}\times R^{(0)})\cap S^{(1)}$.
\item if $q=1$ then $S$ is non-degenerate, centric and connected;
\item if $q\ge 2$ and $T=T_1+\cdots+T_n$ the sum of its connected components then there exist nondegenerate, centric and connected shapes $S_1,\ldots,S_n$ such that $S=S_1+\cdots+S_n$ and $\partial_q S_i=T_i$ for every $i=1,\ldots,n$.
\end{itemize}

We denote by $C_{S,c}$ the set of chains $(T,\varphi)$ of shape $T$ 
with vertices $\varphi:T^{(0)}\to G$ such that for every $(r,t)\in S^{(1)}\cap(R^{(0)}\times T^{(0)})$:
\[
 d(c(r),\varphi(t))\le k_q.
\]
Then 
\[
C_{S,c}=\prod_{t\in T^{(0)}}\bigcap_{\substack{r\in R^{(0)},\\ (r,t)\in S^{(1)}}}c(r)\mathcal X^{k_q}.
 \]
 Then we define the open set
 \begin{align*}
 U_{S,c}
 &:=\{(g_th_t)_t\in \prod_{t\in T^{(0)}}G\mid (g_t)_t\in C_{S,c}\mbox{ and }(h_t)_t\in \prod_{t\in T^{(0)}}U\}\\
 &\subseteq \prod_{t\in T^{(0)}}
 \bigcap_{\substack{r\in R^{(0)},\\ (r,t)\in S^{(1)}}}c(r)\mathcal X^{k_q+1}.
\end{align*}

Define
\[
 \mathcal M:=\{c\in\ZZ[G^q]\mid\exists \sigma\in (\{1_G\}\times G^q)\cap\Delta^q_k: c=\mu_{q-1}\circ\partial_q(\sigma)\mbox{ and $c$ has shape }T\}.
\]
We show that $\mathcal M$ has compact closure in the set of chains of shape $T$ topologized as $\prod_{t\in T^{(0)}}G$. Since $\mu_0,\ldots,\mu_{q-1}$ defines a finitely modeled chain endomorphism on ${\chains * k G}^{(q-1)}$, we can apply Lemma~\ref{lem:homotopy_homology_locallycompact}. 
Thus, there exists $K\ge 0$ such that for every $\tau\in (\{1_G\}\times G^{q-1})\cap \Delta^{q-1}_k$ and $x\in 
\mu_{q-1}(\tau)^{(0)}$ we have $\ell(x)\le K$. 
Then for every $\sigma\in (\{1_G\}\times 
G^q)\cap\Delta^q_k$ and $y\in \mu_{q-1}\circ\partial_q(\sigma)^{(0)}$ we have $\ell(y)\le K+k$. So $\mathcal M\subseteq \prod \mathcal X^{K+k}$ as a subset of a compact set has compact closure $\bar{\mathcal M}$. 

If $\varphi\in \partial\mathcal 
M$, a point on the boundary of $\mathcal M$, then there exists some $\varphi'\in\mathcal M$ with $\varphi\in \prod_{t\in T^{(0)}}\varphi'(t)U$. If $\varphi'\in C_{S,c}$, then $\varphi\in U_{S,c}$. Since by definition of the connecting vector we have $\tilde \hlgy_{q-1}(\VR{k_{q-1}+1}G)$ vanishing in $\tilde \hlgy_{q-1}(\VR{k_q}G)$, each $\varphi'\in \mathcal M$ is contained in some $C_{S,c}$ and by Lemma~\ref{lem:connectedshape} we can assume $S$ to be of the form described in the above list. Thus, $U_{S,c}$ over all $S,c$ are an open cover of $\bar{\mathcal M}$. 

Since $\bar {\mathcal M}$ was shown to be compact, there exists $S_1,c_1,\ldots, S_n,c_n$ with
\[
 U_{S_1,c_1}\cup\cdots \cup U_{S_n,c_n}\supseteq\bar {\mathcal M}.
\]
So, if $\sigma\in (\{1_G\}\times G^q)\cap \Delta^q_k$, then there exists $i\in\{1,\ldots,n\}$ with $\mu_{q-1}\circ\partial_q(\sigma)\in U_{S_i,c_i}$. Define $\mu_q(\sigma)$ to be the chain of shape $S_i$ with vertices
\begin{align*}
 \mu_q(\sigma)^{(0)}:S_i^{(0)}&\to G\\
 s&\mapsto \begin{cases}
            \mu_{q-1}\circ\partial_q(\sigma)^{(0)}(s) & s\in T^{(0)}\\
            c_i(s) & s\in R_i^{(0)}.
           \end{cases}
\end{align*}
Since $(\{1_G\}\times G^q)\cap \Delta^q_k$ is a $\ZZ G$-basis for $\chains q k G$, we have defined $\mu_q:{\chains q k G}^{(q)}\to \chains q {k_q+1} G$ as a $\ZZ G$-chain endomorphism that is finitely modeled. If $m=q$, then $K_0:=k_m+1$ is the required constant.
  
Now we show that condition 3 implies condition 1. Suppose that we have a chain endomorphism $\mu_*$ with the required properties. We now show that $G$ is of type $\Clg m$. Suppose that we already showed that $G$ is of type $\Clg {m-1}$. Since $\mu_*$ extends the identity on $\ZZ$ and is finitely modeled there exist by Lemma~\ref{lem:homotopy_homology_locallycompact} a number $l\ge 0$ and a chain homotopy $\eta_*:{\chains * k G}^{(m)} \to {\chains {*+1} l G}^{(m+1)}$ joining $\id$ to $\mu$. If $z\in \chains {m-1} k G$ is a cycle then since $(\ZZ[G^{q+1}],\partial_q)$ is acyclic there exists a chain $c\in \ZZ[G^{m+1}]$ with $\partial_m c=z$. Then the chain $\mu_m(c)+\eta_{m-1}(z)$ lies in $\chain_m(\VR {\max(l,K_0)} G)$ and
\begin{align*}
 \partial_m(\mu_m(c)+\eta_{m-1}(z))
 &=\mu_{m-1}\circ \partial_m(c)+\partial_m\circ\eta_{m-1}(z)\\
 &=\mu_{m-1}(z)+(z-\mu_{m-1}(z))\\
 &=z.
\end{align*}
 This shows that $z$ is a boundary in $\chain_*(\VR {\max(l,K_0)} G)$. Thus, $\homology {m-1} k G$ vanishes in $\homology {m-1} {\max(l,K_0)} G$. Thus, $G$ is of type $\Clg m$.
 \end{proof}
 
\begin{thm}
\label{thm:sigmacrit_lc_homol}
 Let $m\in \NN$. If $G$ is a group of type $\Clg m$ and $\chi:G\to \RR$ a non-zero character then the following are equivalent:
 \begin{enumerate}
  \item $\chi\in \TopS^m(G,\ZZ)$;
  \item there is $(k_0,\ldots,k_m)\in \NN^{m+1}$ such that $\tilde 
\hlgy_q(\VR{k_q+1}{G_\chi})$ vanishes in $\tilde \hlgy_q(\VR{k_{q+1}}{G_\chi})$ 
for every $q=0,\ldots,m-1$;
  \item for every $k\ge 0$ large enough there is a finitely modeled chain endomorphism 
  \[
  \varphi_*:{\chains * k G}^{(m)}\to {\chains * k G}^{(m)}
  \]
  of $\ZZ G$-complexes extending the identity on $\ZZ$ and $K> 0$ such that for every $q=0,\ldots,m$ and $c\in \chains q k G$ we have
  \[
  v(\varphi_q(c))-v(c)\ge K.
  \]
 \end{enumerate}
\end{thm}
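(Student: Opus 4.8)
I would prove the cycle $(1)\Rightarrow(2)\Rightarrow(3)\Rightarrow(1)$; the remaining implications are then free.

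\emph{$(1)\Leftrightarrow(2)$.} Since $G$ is of type $\Clg m$, Corollary~\ref{cor:supermetric}(4) identifies $\chi\in\TopS^m(G;\ZZ)$ with $(G_\chi,d)$ being of type $\Flg m$, which by Definition~\ref{dfn:finiteprops-metric2} means that $\tilde\hlgy_q(\VR r{G_\chi})_r$ is essentially trivial for $0\le q\le m-1$. A homological connecting vector for $G_\chi$ is exactly a coherent choice of witnesses for these essential trivialities: given one, essential triviality is immediate; conversely one builds a vector greedily, taking $k_0$ arbitrary and, having chosen $k_0\le\dots\le k_q$, using essential triviality of $\tilde\hlgy_q$ to pick $k_{q+1}\ge k_q+1$ with $\tilde\hlgy_q(\VR{k_q+1}{G_\chi})\to\tilde\hlgy_q(\VR{k_{q+1}}{G_\chi})$ the zero map.

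\emph{$(2)\Rightarrow(3)$.} This is the substantial direction. The plan is to rerun the inductive construction from the proof of Theorem~\ref{thm:mu} ($(2)\Rightarrow(3)$ there), but performed relative to $G_\chi$ and with each cycle to be filled first pushed into $G_\chi$ by a power of a fixed $t\in G$ with $\chi(t)>0$ (this is where $\chi\neq0$ is used). Fix a connecting vector $(k_0\le\dots\le k_m)$ for $G_\chi$, an integer $N>m$, and $k\ge k_m+1$, and set $\varphi_0(1_G):=t^N$. I would construct $\ZZ G$-linear maps $\varphi_q$ maintaining: $\im\varphi_q\subseteq{\chains q{k_q+1}G}\subseteq{\chains q k G}$; $\varphi_*$ finitely modeled up to degree $q$; and $v(\varphi_q(\sigma))\ge v(\sigma)+(N-q)\chi(t)$ for every basis simplex $\sigma$. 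At step $q$, $z_\sigma:=\varphi_{q-1}\partial_q(\sigma)$ is a cycle of index $\le k_{q-1}+1$ which, by finite modeling and the first conclusion of Lemma~\ref{lem:homotopy_homology_locallycompact}, has one of finitely many shapes and vertices of word length bounded uniformly in $\sigma$; hence its connected components $c_j$ have bounded $\chi$-values, so the integers $n_j:=\lfloor v(c_j)/\chi(t)\rfloor$ are bounded. Then $t^{-n_j}c_j$ is a cycle in ${\chains{q-1}{k_{q-1}+1}{G_\chi}}$ with bounded-word-length vertices, hence bounds in ${\chains q{k_q}{G_\chi}}$ by the connecting vector; feeding the resulting precompact family of translated components, component by component, into the open-cover/compactness argument of Theorem~\ref{thm:mu} produces finitely many shapes $S^q_i$ (made connected, nondegenerate and centric via Lemma~\ref{lem:connectedshape}) and fillings $\bar d_j$ with $\partial_q\bar d_j=t^{-n_j}c_j$ and interior vertices in a finite subset of $G_\chi$; one sets $\varphi_q(\sigma):=\sum_j t^{n_j}\bar d_j$. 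The case $q=1$ is done as in Theorem~\ref{thm:mu}: the $0$-cycle $t^{-n}z_\sigma$ lies in $G_\chi$, and one fills it by a $k_1$-path inside $G_\chi$, again finitely many shapes by compactness. Since $\bar d_j\subseteq G_\chi$, every vertex of $\varphi_q(\sigma)$ has $\chi$-value at least $v(\sigma)+(N-q)\chi(t)$ — vertices of $z_\sigma$ have $\chi\ge v(z_\sigma)$, interior vertices of $t^{n_j}\bar d_j$ have $\chi\ge n_j\chi(t)$ — which closes the induction; and $K:=(N-m)\chi(t)>0$ witnesses $(3)$ for every $k\ge k_m+1$. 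The crux, and the step I expect to cost the most care, is precisely this tension: ``finitely modeled'' forces a compactness argument, which a priori is wrecked by pushing cycles into the half-group $G_\chi$ along unbounded translations; the word-length bound of Lemma~\ref{lem:homotopy_homology_locallycompact} keeps the $n_j$ bounded, at the price of losing only $\chi(t)$ of valuation gain per dimension, which is absorbed by taking $N>m$.

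\emph{$(3)\Rightarrow(1)$.} By Corollary~\ref{cor:supermetric}(4) it suffices to show $(G_\chi,d)$ is of type $\Flg m$, i.e.\ that for each $r$ and $q\le m-1$ the map $\tilde\hlgy_q(\VR r{G_\chi})\to\tilde\hlgy_q(\VR{r'}{G_\chi})$ vanishes for suitable $r'$. Fix $r$. As $G$ is of type $\Clg m$, Corollary~\ref{cor:supermetric}(1) gives $\rho\ge r$, depending only on $r$, so that every $q$-cycle in ${\chains q r G}$ bounds in ${\chains{q+1}\rho G}$. Apply $(3)$ at index $\rho$: there are a finitely modeled $\ZZ G$-endomorphism $\varphi_*$ of ${\chains * \rho G}^{(m)}$ with $v(\varphi_q(\cdot))-v(\cdot)\ge K>0$ and, via Lemma~\ref{lem:homotopy_homology_locallycompact}, a chain homotopy $\eta_*$ from $\varphi_*$ to the inclusion with $\im\eta_*\subseteq{\chains{*+1}L G}^{(m+1)}$. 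Because $\varphi_*$ raises $v$ by $K\ge0$, and because by the vertex estimate of that lemma the vertices of $\eta_{q'}(w)$ lie among those of $w$ and of $\varphi$ applied to the faces of $w$, both $\varphi_*$ and $\eta_*$ preserve $G_\chi$: $\varphi_*$ restricts to an endomorphism of ${\chains * \rho{G_\chi}}^{(m)}$ and $\eta_*$ to a map ${\chains * \rho{G_\chi}}^{(m)}\to{\chains{*+1}L{G_\chi}}^{(m+1)}$. Now given a $q$-cycle $z\in{\chains q r{G_\chi}}$ with $q\le m-1$, choose $c\in{\chains{q+1}\rho G}$ with $\partial c=z$, put $z_j:=\varphi^j(z)\in{\chains q\rho{G_\chi}}$ (so $v(z_j)\ge jK\ge0$), and iterate the identities $z_j-z_{j+1}=\partial\eta(z_j)$ and $\partial\varphi^j(c)=z_j$ to get
\[
z=\partial\Bigl(\varphi^n(c)+\sum_{j=0}^{n-1}\eta(z_j)\Bigr).
\]
For $n$ large, $v(\varphi^n(c))\ge v(c)+nK\ge0$, so $\varphi^n(c)\in{\chains{q+1}\rho{G_\chi}}$, while each $\eta(z_j)\in{\chains{q+1}L{G_\chi}}$; hence $z$ bounds in ${\chains{q+1}{\max(\rho,L)}{G_\chi}}$, with $\max(\rho,L)$ depending only on $r$. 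This gives the required vanishing, so $(G_\chi,d)$ is of type $\Flg m$, i.e.\ $\chi\in\TopS^m(G;\ZZ)$. The point making this work (in contrast with naively filling $z$ in the acyclic bar complex) is that type $\Clg m$ of $G$ supplies a filling $c$ of \emph{uniformly} bounded index, so that $\varphi$ and $\eta$ are applied only in bounded degrees and indices throughout the telescope.
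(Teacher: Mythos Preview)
Your proposal is correct, and for $(1)\Leftrightarrow(2)$ and $(3)\Rightarrow(1)$ it matches the paper's argument closely; your telescope construction for $(3)\Rightarrow(1)$ is the paper's, except that the paper frames it as an induction on $m$ (proving only the $(m{-}1)$-vanishing and invoking $\chi\in\TopS^{m-1}(G;\ZZ)$) while you handle all $q\le m-1$ simultaneously.

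For $(2)\Rightarrow(3)$ you take a genuinely different route. You translate each connected component of $z_\sigma=\varphi_{q-1}\partial_q\sigma$ into $G_\chi$ by a bounded power of a fixed $t$ and then invoke the compactness argument of Theorem~\ref{thm:mu} verbatim, with the $G_\chi$-connecting vector supplying the fillings; after translating back, the boundedness of the $n_j$ (from the word-length estimate of Lemma~\ref{lem:homotopy_homology_locallycompact}) keeps the interior vertices $G$-finite and the shapes finite, and the valuation drops by at most $\chi(t)$ per dimension. The paper instead never translates: it augments the definition of $C_{S,c}$ by an additional $\chi$-condition --- for every interior vertex $r$ there is a boundary vertex $t$ with $\chi(c(r))\ge\chi(\mu(t))$ --- and runs the compactness argument with these enriched cover sets $U_{S,c}$, losing at most $K=\sup_{U}|\chi|$ per dimension from the $U$-fattening. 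Your approach is the more modular one (it literally reuses the machinery of Theorem~\ref{thm:mu}) at the price of tracking the translations and the slightly larger finite set $\{t^n c_i(r)\}$ of interior vertices; the paper's is self-contained at the price of a bespoke $C_{S,c}$. In either case one starts $\varphi_0$ high enough in $\chi$-value to absorb the $m$ per-dimension losses.
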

\begin{proof}
 It is obvious that condition 1 implies condition 2.

We now prove that condition 2 implies condition 3. Suppose condition 2, that there exists $(k_0,\ldots,k_m)\in\NN^{m+1}$ such that $\tilde \hlgy_q(\VR{k_q+1}{G_\chi})$ vanishes in $\tilde \hlgy_q(\VR{k_{q+1}}{G_\chi})$. Suppose $\mathcal X$ is both a compact neighborhood of $1_G$ and a compact generating set for $G$ and $G$ is equipped with the word length metric induced by $\mathcal X$. Choose an open set $U$ with $1_G\in U\subseteq \mathcal X$. Since $U$ is relatively compact and $\chi$ is continuous
\[
 K:=\sup\{|\chi(u)|\mid u\in U\}
\]
is finite. Then there exists $\tilde t\in G$ with $\chi(\tilde t)\ge (m+1)K$. 

We are going to construct $\varphi_q$ inductively for every $q=0,\ldots,m$ with $v(\varphi_q(\tau))-v(\tau)\ge (m+1-q)K$ for every $\tau\in\Delta^q_k$. If $q=0$, then $\varphi_0:\ZZ[G]\to \ZZ[G]$ is defined to map $1_G\mapsto t$. Then
\[
 v(\varphi_0(g))-v(g)=\chi(gt)-\chi(g)=\chi(t)\ge (m+1)K.
\]

If $\varphi_0,\ldots,\varphi_{q-1}$ have been constructed, then by the assumption on $\varphi_{q-1}$ there exist shapes $T_1^q,\ldots,T_n^q$ such that for every $\tau\in\Delta^q_k$ there exists some $i\in \{1,\ldots,n\}$ so that $\varphi_{q-1}\circ\partial_q(\tau)$ has shape $T_i^q$. Fix one such shape $T:=T_i^q$.

Suppose that we are given a pair $(S,c)$ where 
\begin{itemize}
\item $S$ is a shape with $\partial_q S=T$;
\item denote the inner vertices of $S$ by $R^{(0)}:=S^{(0)}\setminus T^{(0)}$, then $c$ is a mapping $R^{(0)}\to G$ with 
\[
d(c(r_1),c(r_2))\le k_q
\]
for every $(r_1,r_2)\in (R^{(0)}\times R^{(0)})\cap S^{(1)}$.
\item if $q=1$ then $S$ is non-degenerate, centric and connected;
\item if $q\ge 2$ and $T=T_1+\cdots+T_n$ the sum of its connected components then there exist nondegenerate, centric and connected shapes $S_1,\ldots,S_n$ such that $S=S_1+\cdots+S_n$ and $\partial_q S_i=T_i$ for every $i=1,\ldots,n$.
\end{itemize}
We denote by $C_{S,c}$ the set of chains $(T,\mu)$ having shape $T$ with vertices $\mu:T^{(0)}\to G$ such that
 \begin{enumerate}
 \item for every $(r,t)\in S^{(1)}\cap(R^{(0)}\times T^{(0)})$:
\[
 d(c(r),\mu(t))\le k_q;
\]
\item for every $r\in R^{(0)}$ there exists $t\in T^{(0)}$ with
\[
 \chi(c(r))\ge \chi(\mu(t)).
\]
\end{enumerate}
Then $C_{S,c}$ is the intersection of two sets $C_1,C_2$ defined by
 \[
  C_1:=\prod_{t\in T^{(0)}}\bigcap_{\substack{r\in R^{(0)},\\ (r,t)\in S^{(1)}}}c(r)\mathcal X^{k_q}
 \]
and 
\[
 C_2:=\bigcap_{r\in R^{(0)}}\bigcup_{t\in T^{(0)}}\left(\prod_{T^{(0)}\ni s\not=t}G\right)\times \chi^{-1}(-\infty,\chi(c(r))]
\]
 Then we define the open set
 \begin{align*}
 U_{S,c}
&:=\{(g_th_t)_t\in \prod_{t\in T^{(0)}}G\mid (g_t)_t\in C_{S,c};(h_t)_t\in \prod_{t\in T^{(0)}}U\}\\
 &\subseteq \bigcap_{i=1,2}\{(g_th_t)_t\in \prod_{t\in T^{(0)}}G\mid (g_t)_t\in C_i;(h_t)_t\in \prod_{t\in T^{(0)}}U\}\\
 &\subseteq \left(\prod_{t\in T^{(0)}}\bigcap_{\substack{r\in R^{(0)},\\ (r,t)\in S^{(1)}}}c(r)\mathcal X^{k_q+1}\right)\\
 &\cap\left(\bigcap_{r\in R^{(0)}}\bigcup_{t\in T^{(0)}}\left(\prod_{T^{(0)}\ni s\not=t}G\right)\times \chi^{-1}(-\infty,\chi(c(r))-K]\right).
\end{align*}
Define
\[
 \mathcal M:=\{c\in\ZZ[G^q]\mid c=\varphi_{q-1}\circ\partial_q(\sigma)\mbox{ of shape }T\mbox{ for some }\sigma\in (\{1_G\}\times G^q)\cap\Delta^q_k\}.
\]
We show that $\mathcal M$ has compact closure in the set of chains of shape $T$ 
topologized by $\prod_{t\in T^{(0)}}G$. Since $\varphi_0,\ldots,\varphi_{q-1}$ define a finitely modeled chain endomorphism on ${\chains * k G}^{(q-1)}$, we can apply 
Lemma~\ref{lem:homotopy_homology_locallycompact}. Thus, there exists $L\ge 0$ such that for every  $\tau\in 
(\{1_G\}\times G^{q-1})\cap \Delta^{q-1}_k$ and $x\in \varphi_{q-1}(\tau)^{(0)}$ we have $\ell(x)\le L$. Then for every $\sigma\in (\{1_G\}\times G^q)\cap\Delta^q_k$ and $y\in \varphi_{q-1}\circ\partial_q(\sigma)^{(0)}$ we have $\ell(y)\le L+k$. So $\mathcal M\subseteq \prod \mathcal X^{L+k}$ as a subset of a compact set has compact closure $\bar{\mathcal M}$. 

If $\mu\in \partial\mathcal M$, a point on the boundary of $\mathcal M$, then there exists some $\mu'\in \mathcal M$ with $\mu\in \prod_{t\in T^{(0)}}\mu'(t)U$. If $\mu'\in C_{S,c}$, then $\mu\in C_{S,c}U=U_{S,c}$. Since by condition 2 we have that $\tilde \hlgy_{q-1}(\VR{k_{q-1}+1}{G_{\chi}})$ vanishes in $\tilde \hlgy_{q-1}(\VR{k_q}{G_{\chi}})$ each $\mu'\in M$ is contained in some $C_{S,c}$ and by Lemma~\ref{lem:connectedshape} we can assume $S$ to be of the form described in the above list. Thus, $U_{S,c}$ over all $S,c$ are an open cover of $\bar{\mathcal M}$. Since $\bar {\mathcal M}$ was shown to be compact, there exist $S_1,c_1,\ldots, S_n,c_n$ with
\[
 U_{S_1,c_1}\cup\cdots \cup U_{S_n,c_n}\supseteq \bar {\mathcal M}.
\]
So if $\sigma\in (\{1_G\}\times G^q)\cap \Delta^q_k$, then there exists $i\in\{1,\ldots,n\}$ with $\varphi_{q-1}\circ\partial_q(\sigma)\in U_{S_i,c_i}$. Define $\varphi_q(\sigma)$ to be the chain of shape $S_i$ with vertices
\begin{align*}
 \varphi_q(\sigma)^{(0)}:S_i^{(0)}&\to G\\
 s&\mapsto \begin{cases}
            \varphi_{q-1}\circ\partial_q(\sigma)^{(0)}(s) & s\in T^{(0)}\\
            c_i(s) & s\in R_i^{(0)}.
           \end{cases}
\end{align*}
Then
\begin{align*}
 v(\varphi_q(\sigma))-v(\sigma)
 &\ge v(\varphi_{q-1}\circ \partial_q(\sigma))-K-v(\sigma)\\
 &=v(\varphi_{q-1}\circ\partial_q(\sigma))-v(\partial_q (\sigma))-K\\
 &\ge (m+1-(q-1))K-K\\
 &=(m+1-q)K.
\end{align*}
Since $(\{1_G\}\times G^q)\cap \Delta^q_k$ is a $\ZZ G$-basis for $\chains q k G$, we have defined 
\[
\varphi_q:\chains q k G\to \chains q {k_q+1} G
\]
as a $\ZZ G$-chain endomorphism on the $q$-skeleton that is finitely modeled and raises valuation. In this way, we have constructed the $\varphi$ of condition 3.

We now show that condition 3 implies condition 1: Suppose condition 3, there exists a chain endomorphism $\varphi^{\tilde K}$ of $\chains * {\tilde K} G$ (where $\tilde K$ is a choice of $k,l_1,l_2$ to be defined later, where we assume that $\varphi^{l_2},\varphi^{l_1},\varphi^k$ match when restricted and denote all of them by $\varphi$) that raises the $\chi$-value and suppose we did already show that $\chi\in \TopS^{m-1}(G;\ZZ)$. 

Let $k\ge 0$ be a large enough index. Since $G$ is of type $\Clg m$ there exists $l_1\ge k$ such that $\tilde \hlgy_{m-1}(\VR k G)$ vanishes in $\tilde \hlgy_{m-1}(\VR{l_1}G)$. By Lemma~\ref{lem:homotopy_homology_locallycompact} there exists a chain homotopy $\eta:\chains {m-1} k G\to \chains m {l_2} G$ joining $\varphi$ to the identity with 
    \[
    \eta_m(\tau)^{(0)}\subseteq \bigcup_{q=0}^m\bigcup_{\substack{\sigma\le \tau,\\ [\tau:\sigma]=m-q}} (\varphi_q(\sigma))^{(0)}\cup \tau^{(0)}.
   \]
   for every $\tau\in \Delta^m_k$. Since $\varphi$ raises $\chi$-value and $\id$ does not lower $\chi$-value, we obtain
\[
\inf_{\sigma=(1_G,x_1,\ldots,x_{m-1})\in\Delta^{m-1}_k}(v(\eta_{m-1}(\sigma))-v(\sigma))\ge 0.
\]
 Then we define
 \[
  L:=\inf_{\sigma=(1_G\times G^m)\cap \Delta^m_{l_1}}(v(\varphi(\sigma))-v(\sigma)>0.
 \]
This value exists by the assumption on $\varphi_m$. We now show that $\tilde \hlgy_{m-1}(\VR k {G_\chi})$ vanishes in $\tilde \hlgy_{m-1}(\VR{\max(l_1,l_2)}{G_\chi})$. To that end, let $z\in \chains {m-1} k {G_{\chi}}$ be a cycle. Then $z-\varphi_{m-1}(z)=\partial_m\eta_{m-1}(z)$. Define
\[
 c_0:=\eta_{m-1}(z),\quad c_i:=\varphi^{\circ i}_m(c_0),\quad z_i:=\varphi^{\circ i}_{m-1}(z).
\]
Then
\[
 \partial_m c_i=\varphi^{\circ i}_{m-1}\circ \partial_m c_0=\varphi^{\circ i}_{m-1}(z-\varphi_{m-1}(z))=z_i-z_{i+1}.
\]
 Now there exists $c\in \chains m {l_1} G$ with $\partial_m c=z$. Then $(n+1)L\ge -v(c)$ for some $n\in \NN$. Define 
\[
 \tilde c:=\sum_{i=0}^n c_i+\varphi_m^{\circ n+1}(c).
\]
Then
\[
 \partial_m\tilde c=\sum_{i=0}^n\partial_m c_i+\varphi_{m-1}^{\circ n+1}(\partial_m c)=\sum_{i=0}^n(z_i-z_{i+1})+z_{n+1}=z
\]
and 
\begin{align*}
 v(\tilde c)
 &\ge \min_i(v(c_i),\varphi^{\circ n+1}_m(c))\\
 &\ge \min(0,(n+1)L+v(c))\\
 &\ge \min(0,0)\\
 &=0.
\end{align*}
Thus, $\tilde \hlgy_{m-1}(\VR k{G_\chi})$ vanishes in $\tilde \hlgy_{m-1}(\VR{\max(l_1,l_2)}{G_\chi})$. In this way, we showed $\chi\in \TopS^m(G,\ZZ)$. That is condition 1.
\end{proof}

\begin{thm}
 \label{thm:crit2}
 Let $m\in\NN$, let $G$ be a locally compact Hausdorff group with homological connecting vector $(k_0,\ldots,k_m)$ and set $k:=k_m+1$. If $\chi:G\to \RR$ is a nonzero character then the following are equivalent:
 \begin{enumerate}
  \item $\chi\in \TopS^m(G;\ZZ)$;
  \item there exist $K>0$ and a finitely modeled chain endomorphism of $\ZZ G$-complexes $\varphi_*:\chains * k G\to \chains * k G$ extending the identity on $\ZZ$ such that for every $q=0,\ldots,m$ and $c\in \chains q k G$ we have
  \[
   v(\varphi_q(c))-v(c)\ge K.
  \]
 \end{enumerate}
\end{thm}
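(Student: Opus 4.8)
The plan is to derive Theorem~\ref{thm:crit2} from the two criteria already in hand, Theorem~\ref{thm:mu} and Theorem~\ref{thm:sigmacrit_lc_homol}. Since $G$ admits a homological connecting vector it is of type $\Clg m$ by Theorem~\ref{thm:mu}, so Theorem~\ref{thm:sigmacrit_lc_homol} is available; what is new is only that the auxiliary index may be taken to be the specific $k=k_m+1$ produced by Theorem~\ref{thm:mu}. Two observations will be used throughout. First, if $\nu_*$ is any finitely modeled $\ZZ G$-chain endomorphism extending the identity on $\ZZ$, then by Lemma~\ref{lem:homotopy_homology_locallycompact} the vertices of $\nu_q(\sigma)$, for $\sigma$ a basis simplex of diameter $\le D$, lie in a ball whose radius is bounded in terms of $D$ and the finitely many shapes of $\nu_*$; since $\chi$ is a homomorphism, this yields $v(\nu_q(c))\ge v(c)-M_D$ for every chain $c$ all of whose simplices have diameter $\le D$, with $M_D$ not depending on the behaviour of $\nu_*$ on larger simplices. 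In particular the chain endomorphisms $\mu_*\colon{\chains * \ell G}^{(m)}\to{\chains * k G}^{(m)}$ furnished by Theorem~\ref{thm:mu} lose only a bounded amount of $v$ on chains of bounded diameter. Second, a finite composition of finitely modeled chain endomorphisms can be replaced by a finitely modeled one: composition need not preserve connectedness of shapes (see the Remark following the definition), but by Lemma~\ref{lem:connectedshape} each output chain may be replaced by a connected, respectively centric, chain with the same boundary, which alters neither the chain-map equations nor decreases $v$ (discarding loops only raises $v$) and keeps the relevant vertex sets $G$-finite.

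For (1)$\Rightarrow$(2) I would argue as follows. Assuming $\chi\in\TopS^m(G;\ZZ)$, Theorem~\ref{thm:sigmacrit_lc_homol} supplies, at some large index $\ell\ge k$, a finitely modeled $\ZZ G$-chain endomorphism $\psi_*\colon{\chains * \ell G}^{(m)}\to{\chains * \ell G}^{(m)}$ extending the identity on $\ZZ$ with $v(\psi_q(c))-v(c)\ge K'>0$. Theorem~\ref{thm:mu}, applied with the given connecting vector, supplies a finitely modeled $\mu_*\colon{\chains * \ell G}^{(m)}\to{\chains * k G}^{(m)}$ extending the identity on $\ZZ$; by the first observation it loses at most a constant $M$ of $v$ on the diameter-$\le\ell$ chains in the image of $\psi_*^{\circ n}\circ\iota_*$, where $\iota_*$ is the inclusion ${\chains * k G}^{(m)}\hookrightarrow{\chains * \ell G}^{(m)}$. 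Then $\varphi_*:=\mu_*\circ\psi_*^{\circ n}\circ\iota_*$ with $n>M/K'$, cleaned up as in the second observation, is a finitely modeled $\ZZ G$-chain endomorphism of ${\chains * k G}^{(m)}$ extending the identity on $\ZZ$ with $v(\varphi_q(c))-v(c)\ge nK'-M>0$, giving~(2).

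For (2)$\Rightarrow$(1) I would induct on $m$: restricting the given $\varphi_*$ to the $(m-1)$-skeleton and to the index $k_{m-1}+1\le k$ and applying the theorem for $m-1$ yields $\chi\in\TopS^{m-1}(G;\ZZ)$, so it remains to show $\tilde\hlgy_{m-1}(\VR r{G_\chi})_r$ essentially trivial, and since it suffices to check this on the cofinal subsystem $r\ge k$ I fix such an $r$. For this $r$ I would run the argument of Theorem~\ref{thm:sigmacrit_lc_homol}, (3)$\Rightarrow$(1), with base index $r$; that argument requires a finitely modeled, valuation-raising chain endomorphism at the index $\tilde K=\max(r,l_1,l_2)$ arising there ($l_1$ bounding an $(m-1)$-homology vanishing over $G$, $l_2$ coming from a homotopy of Lemma~\ref{lem:homotopy_homology_locallycompact}), and I would provide it as $\varphi^{(\tilde K)}_*:=\iota_*\circ\varphi_*^{\circ n}\circ\mu^{(\tilde K)}_*$ with $\mu^{(\tilde K)}_*\colon{\chains * {\tilde K} G}^{(m)}\to{\chains * k G}^{(m)}$ from Theorem~\ref{thm:mu}. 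The point is that this $\varphi^{(\tilde K)}_*$ is only ever applied to chains of diameter bounded independently of $\tilde K$ (the given $(m-1)$-cycle, the homotopy's output, and a $G$-bounding chain), on which $\mu^{(\tilde K)}_*$ loses only a bounded amount of $v$ by the first observation; hence a single $n$ makes it raise $v$ on all these chains, there is no circular dependence in the choice of $\tilde K$, and using the one map $\varphi^{(\tilde K)}_*$ uniformly makes the identities $\partial c_i=z_i-z_{i+1}$ hold with no compatibility assumption. The argument then shows $\tilde\hlgy_{m-1}(\VR r{G_\chi})$ vanishes in $\tilde\hlgy_{m-1}(\VR s{G_\chi})$ for some $s$, which is what is needed.

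I expect the main obstacle to be the second observation above — preserving finite modeledness under the compositions with the $\mu_*$'s, which forces a cleanup through Lemma~\ref{lem:connectedshape}; a subsidiary nuisance is the index bookkeeping in (2)$\Rightarrow$(1), which the remark that $\varphi^{(\tilde K)}_*$ is fed only boundedly-sized chains is designed to defuse.
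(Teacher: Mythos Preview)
Your overall strategy is the paper's: manufacture the needed chain endomorphism at the target index by sandwiching iterates of a valuation-raising map between an inclusion and a $\mu_*$ from Theorem~\ref{thm:mu}. For (1)$\Rightarrow$(2) your argument is essentially the paper's case $k<\ell$; the paper also records the trivial case $k\ge\ell$, where one simply postcomposes with the inclusion $\iota_{\ell k}$.

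For (2)$\Rightarrow$(1) you work harder than necessary. The paper neither inducts on $m$ nor reopens the proof of Theorem~\ref{thm:sigmacrit_lc_homol}: for each $n\ge k$ it takes $\mu_*\colon{\chains * n G}^{(m)}\to{\chains * k G}^{(m)}$ from Theorem~\ref{thm:mu}, notes that $\mu_*$ drops $v$ by at most a constant $|L|$ (your first observation), chooses $i$ with $iK\ge -L$, and observes that $\varphi_*^{\circ i+1}\circ\mu_*$ is a self-map of ${\chains * n G}^{(m)}$ (image in ${\chains * k G}^{(m)}\subseteq{\chains * n G}^{(m)}$) raising $v$ by at least $K$. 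That is exactly condition~(3) of Theorem~\ref{thm:sigmacrit_lc_homol} for all $n\ge k$, and the paper simply invokes that theorem. Your circular-looking dependence on $\tilde K$ never arises, because the black-box application swallows the choice of $l_1,l_2$. Your induction step also needs a tweak: restricting $\varphi_*$ to the $(m-1)$-skeleton gives a self-map at index $k=k_m+1$, not $k_{m-1}+1$; to feed this to the $(m-1)$-statement you would have to use the connecting vector $(k_0,\ldots,k_{m-2},k_m)$, which is legitimate since $k_{m-1}\le k_m$.

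You are right to flag that the composites $\mu_*\circ\psi_*^{\circ n}\circ\iota_*$ and $\varphi_*^{\circ i+1}\circ\mu_*$ must again be finitely modeled; the paper simply asserts the composite is of the required form and does not spell out the cleanup through Lemma~\ref{lem:connectedshape} that you identify.
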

\begin{proof}
 We first show that condition 1 implies condition 2. Suppose condition 1 that $\chi\in \TopS^m(G;\ZZ)$. Then Theorem~\ref{thm:sigmacrit_lc_homol} implies there exists a number $l\ge 0$ and for every $n\ge l$ a finitely modeled chain endomorphism $\varphi_{n,*}:{\chains * n G}^{(m)}\to {\chains * l G}^{(m)}$ extending the identity on $\ZZ$ and raising valuation by a number $K$. 
 
 If $k\ge l$ denote by $\iota_{lk}$ the inclusion $\chains * l G \subseteq \chains * k G$. Then $\iota_{lk}\circ \varphi_{k,*}$ is the desired finitely modeled chain endomorphism. 
 
 If conversely $k<l$, then since $(k_0,\ldots,k_m)$ is a homological connecting vector for $G$, Theorem~\ref{thm:mu} implies for every $n\ge k$ there exists a finitely modeled chain endomorphism $\mu_{n,*}:{\chains * n G}^{(m)}\to {\chains * k G}^{(m)}$ extending the identity on $\ZZ$. Then
\begin{align*}
 \inf_{\substack{c\in\chains q l G\\q=0,\ldots,m}} (v(\mu_q(c))-v(c))
 &\ge \min_{\substack{\sigma\in \Delta^q_l\cap (1_G\times G^q)\\q=0,\ldots,m}}(v(\mu_q(\sigma))-v(\sigma))\\
 &=:L\in \mathbb R
\end{align*}
Then $i\cdot K\ge -L$ for some $i\in \mathbb N$. Then denote by $\iota_{kl}$ the inclusion $\chains * k G \subseteq \chains * l G$. Then $\mu_{l,*}\circ \varphi_{l,*}^{\circ i+1}\circ \iota_{kl}$ is the desired finitely modeled chain endomorphism:
\begin{align*}
 v(\mu_q\circ \varphi_{l,q}^{\circ i+1}\circ \iota_{kl}(c))
 &\ge L+v(\varphi_{l,q}^{\circ i+1}\circ \iota_{kl}(c))\\
 &\ge L+(i+1)K+v(\iota_{kl}(c))\\
 &\ge K+v(c).
\end{align*}

We now assume condition 2 and show condition 1. Let $n\ge k$ be a number. Since $(k_0,\ldots,k_m)$ is a homological connecting vector for $G$, there exists a finitely modeled chain endomorphism $\mu_*:{\chains * n G}^{(m)}\to {\chains * k G}^{(m)}$ extending the identity on $\ZZ$. Condition 2 also provides us with a finitely modeled chain endomorphism $\varphi_*:{\chains * k G}^{(m)}\to {\chains * k G}^{(m)}$ extending the identity on $\ZZ$ with $v(\varphi(c))-v(c)\ge K$ for every $c\in \chains q k G$ and $q=0,\ldots,m$. Then, as before, there exists some $L\in \RR$ such that for every $q=0,\ldots,m$ and $x=(1,g_1,\ldots,g_q)\in\Delta^q_n$ we have
 \[
  v(\mu(x))-v(x)\ge L
 \]
Then $i\cdot K\ge -L$ for some $i\in \NN$. 

Then $\varphi_*^{\circ i+1}\circ \mu_*:{\chains * n G}^{(m)}\to {\chains * k G}^{(m)}$ is a chain endomorphism extending the identity on $\ZZ$ with
\[
 v(\varphi_q^{\circ i+1}\circ \mu_q(c))\ge (i+1)K+v(\mu_q(c))\ge (i+1)K+L+v(c)\ge K+v(c)
\]
 By Theorem~\ref{thm:sigmacrit_lc_homol} this proves that $\chi\in \TopS^m(G;\ZZ)$.
\end{proof}

\section{Stability of the Sigma-invariants}
\label{sec:open}
This section proves Theorem~\ref{thma:open}.

Similarly to \cite{Hartmann2024c}, we denote by $\TopHom(G,\RR)$ the set of continuous group homomorphisms $G\to \RR$ and endow it with the compact-open topology. Namely, a sub-base for open sets are the sets of the form
\[
 \mathcal U(K,V):=\{\chi:G\to \RR\mid \chi(K)\subseteq V\}
\]
for every compact set $K$ in $G$ and open set $V$ in $\RR$. If $\mathcal X\subseteq G$ is a compact generating set for $G$, we define a mapping 
\begin{align*}
 u:\TopHom(G,\RR)&\to \RR^{|\mathcal X|}\\
 \chi&\mapsto (\chi(x))_{x\in \mathcal X}
\end{align*}
where $\RR^{|\mathcal X|}$, as a (possibly) infinite product of copies of $\RR$, is endowed with the product topology. We also denote by $w_{\mathcal X}:\F(\mathcal X)\to \bigoplus_{\mathcal X}\ZZ$ the abelianization of the free group $\F(\mathcal X)$ on $\mathcal X$. And if $t:G\to \F(\mathcal X)$ is a transversal of the usual projection, we define $w:=w_{\mathcal X}\circ t$. Then 
\[
 \chi(g)=\sum_{x_i\in t(g)}\chi(x_i)=\sum_{w(g)(x)\not=0}w(g)(x)\chi(x)=:\langle w(g),u(\chi)\rangle
\]
does not depend on the representation $t$ or the choice of generating set $\mathcal X$. A vector $y\in \RR^{|\mathcal X|}$ is in the image of $u$ if for every word $r$ in the defining relations 
\[
 \langle w_{\mathcal X}(r),y\rangle=0.
\]
So $u(\TopHom(G,\RR))$ is a closed linear subspace of $\RR^{|\mathcal X|}$. We show that $\Sigma^m(G,\ZZ)$ is a cone over an open subset of $\TopHom(G,\RR)\setminus\{0\}$. If $c\in \ZZ[G^{q+1}]$ denote by $c^{(0)}$ the vertices in $G$ appearing in the chain. Let $\varphi_q:{\chains q k G}^{(m)}\to {\chains q k G}^{(m)}$ be a finitely modeled chain endomorphism extending the identity on $\ZZ$ (one obtains such a chain endomorphism in the proof of Theorem~\ref{thm:crit2}). Then define a mapping 
 \begin{align*}
 u_\varphi:\TopHom(G,\RR)&\to \RR\\
 \chi&\mapsto \min_{\substack{\bar x\in(\{1_G\}\times G^q)\cap 
\Delta^q_k,\\q=0,\ldots,m}}\left(\min_{g\in \varphi_q(\bar x)^{(0)}}\langle 
w(g),u(\chi)\rangle-\min_{g\in \bar x^{(0)}}\langle w(g),u(\chi)\rangle\right).
\end{align*}

\begin{lem}
\label{lem:uvarphi_continuous}
 If $\TopHom(G,\RR)$ is equipped with the compact-open topology, then $u_\varphi$ is continuous.
\end{lem}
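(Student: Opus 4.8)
The plan is to show $u_\varphi$ is simultaneously upper and lower semicontinuous. For $q\in\{0,\dots,m\}$ and $\bar x\in(\{1_G\}\times G^q)\cap\Delta^q_k$ set
\[
 f_{q,\bar x}(\chi):=\min_{g\in\varphi_q(\bar x)^{(0)}}\langle w(g),u(\chi)\rangle-\min_{g\in\bar x^{(0)}}\langle w(g),u(\chi)\rangle,
\]
so that $u_\varphi=\inf_{q,\bar x}f_{q,\bar x}$. Since $\langle w(g),u(\chi)\rangle=\chi(g)$ and the evaluation $\chi\mapsto\chi(g)$ at the compact singleton $\{g\}$ is continuous for the compact-open topology, each $f_{q,\bar x}$ is a difference of two minima over finite sets of continuous functions, hence continuous; consequently $u_\varphi$, being an infimum of continuous functions, is automatically upper semicontinuous. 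The genuine issue is lower semicontinuity, and the only obstacle is that the infimum ranges over the infinite set of simplices $\bar x$.

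To get around this I would confine all relevant vertices to one compact set. If $\bar x=(1_G,g_1,\dots,g_q)\in\Delta^q_k$ then $g_i\in\mathcal X^k$, so $\bar x^{(0)}\subseteq\mathcal X^k$; and since $\varphi_*$ is a finitely modeled $\ZZ G$-chain endomorphism extending the identity on $\ZZ$, the first bullet of Lemma~\ref{lem:homotopy_homology_locallycompact} (applied with $l=k$) furnishes $K\ge 0$ with $\ell(h)\le K$ for every $h\in\varphi_q(\bar x)^{(0)}$, all $q\le m$ and all $\bar x\in(\{1_G\}\times G^q)\cap\Delta^q_k$. Hence $C:=\mathcal X^{\max(k,K)}$ is a compact subset of $G$ containing every vertex occurring in any $\bar x^{(0)}$ or $\varphi_q(\bar x)^{(0)}$ that appears in the formula for $u_\varphi$. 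Because $C$ is compact and $\chi_0$ is continuous, $\min_C\chi_0$ and $\max_C\chi_0$ are finite, so $f_{q,\bar x}(\chi_0)\ge\min_C\chi_0-\max_C\chi_0$ and in particular $u_\varphi(\chi_0)\in\RR$, so the map is well defined.

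Finally I would upgrade pointwise continuity of the evaluations to uniform control on $C$: the restriction map $\TopHom(G,\RR)\to\Map(C,\RR)$, $\chi\mapsto\chi|_C$, is continuous for the compact-open topologies, and for the compact space $C$ the compact-open topology on $\Map(C,\RR)$ is the topology of uniform convergence. Thus for a fixed $\chi_0$ and $\epsilon>0$ the set $W:=\{\chi:\sup_{g\in C}|\chi(g)-\chi_0(g)|<\epsilon/2\}$ is an open neighborhood of $\chi_0$, and for $\chi\in W$ and every $q,\bar x$ one gets $\min_{g\in\varphi_q(\bar x)^{(0)}}\chi(g)\ge\min_{g\in\varphi_q(\bar x)^{(0)}}\chi_0(g)-\epsilon/2$ together with $\min_{g\in\bar x^{(0)}}\chi(g)\le\min_{g\in\bar x^{(0)}}\chi_0(g)+\epsilon/2$, whence $f_{q,\bar x}(\chi)\ge f_{q,\bar x}(\chi_0)-\epsilon\ge u_\varphi(\chi_0)-\epsilon$. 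Passing to the infimum over $q,\bar x$ yields $u_\varphi(\chi)\ge u_\varphi(\chi_0)-\epsilon$, so $u_\varphi$ is lower semicontinuous, and together with the upper semicontinuity from the first paragraph it is continuous. The only non-formal ingredient is the extraction of the compact set $C$ from Lemma~\ref{lem:homotopy_homology_locallycompact}; this is precisely where the hypothesis that $\varphi$ be finitely modeled is used, and once $C$ is pinned down, uniform convergence on $C$ does everything else — so I expect that extraction (really, recognizing that the relevant lemma applies) to be the only point requiring care.
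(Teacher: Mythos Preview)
Your proof is correct and takes a cleaner route than the paper's. The paper proceeds by an explicit recursion on $q$: it writes $u_\varphi=\min(u_\varphi^0,\dots,u_\varphi^m)$, and for each $q$ separates the contribution of the ``interior'' vertices of $\varphi_q(\bar x)$ (which, by the last clause of the finitely-modeled definition, assume only finitely many values in $G$ as $\bar x$ ranges over the basis simplices) from the boundary contribution handled at level $q-1$; the infinite family of $\bar x$ is then controlled by a hands-on covering argument on the compact closure of the vertex sets $C_i$. Your argument bypasses this decomposition entirely by extracting a single compact $C\subseteq G$ containing every vertex in sight via Lemma~\ref{lem:homotopy_homology_locallycompact} and then invoking the standard fact that, on a compact domain, the compact-open topology is the topology of uniform convergence. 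This is both shorter and more transparent about where the finitely-modeled hypothesis enters (namely, only through that lemma). One minor remark: the uniform estimate in your third paragraph already yields $|f_{q,\bar x}(\chi)-f_{q,\bar x}(\chi_0)|\le\epsilon$ for all $q,\bar x$ simultaneously, hence $|u_\varphi(\chi)-u_\varphi(\chi_0)|\le\epsilon$ directly; so the upper/lower semicontinuity split of the first paragraph is not actually needed.
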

\begin{proof}
 If $q\in\{0,\ldots,m\}$, we define $u_\varphi^q(\chi):=\min_{\bar x\in (1_G\times G^q)\cap \Delta^q_k}(v(\varphi_q(\bar x))-v(\bar x))$. Then
 \[
  u_\varphi=\min(u_\varphi^0,\ldots,u_\varphi^m).
 \]
So, it remains to show that each $u_\varphi^q$ is continuous. If $c$ is a $q$-chain, define $\Int c^{(0)}$ to be the set of $G$-values of vertices in $c^{(0)}\setminus \partial c^{(0)}$. Then
 \begin{align*}
  u_\varphi^q(\chi)
  &=\min_{\bar x\in (1_G\times G^q)\cap \Delta^q_k}(\min\left(\min_{g\in \Int\varphi^{(0)}(\bar x)}\chi(g),\min_{g\in \partial\varphi(\bar x)^{(0)}}\chi(g)\right)-v(\bar x))\\
  &=\min(\min_{\bar x}\left(\min_{g\in \Int\varphi^{(0)}(\bar x)}\chi(g)-v(\bar x)\right),\min_{\bar x}\left(\min_{g\in \partial\varphi(\bar x)^{(0)}}\chi(g)-v(\bar x)\right))\\
  &=\min(\min_{\bar x}\left(\min_{g\in \Int\varphi^{(0)}(\bar x)}\chi(g)-v(\bar 
x)\right),u^{q-1}_\varphi(\chi)).
 \end{align*}
Denote $t:=\varphi_0(1_G)$. Note that $u_\varphi^0(\chi)=\chi(t)$ as a function of $\chi$ is continuous: If $\chi\in \TopHom(G,\RR)$ and $\varepsilon>0$ then $U(\{t\},(\chi(t)-\varepsilon,\chi(t)+\varepsilon))$ maps to $(\chi(t)-\varepsilon,\chi(t)+\varepsilon)$. It remains to show
\[
 \Int u^q_\varphi(\chi):=\min_{\bar x}(\min_{g\in \Int\varphi^{(0)}(\bar x)}\chi(g)-v(\bar x))
\]
is continuous as a function of $\chi$. Now $(1_G\times G^q)\cap \Delta^q_k$ decomposes as a finite union $\mathcal C_1\cup\cdots\cup \mathcal C_n$, such that for each $i\{1,\ldots,n\}$ the set $D_i:=\Int \varphi(\bar x)^{(0)}$ is constant as $\bar x$ varies in $\mathcal C_i$. Define $C_i:=\{g\in {\bar x}^{(0)}\mid \bar x\in \mathcal C_i\}$. 

If $\chi\in \TopHom(G,\RR)$ and $\varepsilon>0$, then $V:=(\Int u_\varphi^q(\chi)-\varepsilon,\Int u_\varphi^q(\chi)+\varepsilon)$ is an open set in $\RR$. We provide an open $U$ containing $\chi$ that $u_\varphi^q(\chi)$ maps to $V$. 

Note that $D_i$ contains finitely many points. The same cannot be said about $C_i$, however, $C_i$ is relatively compact and therefore the cover $(\chi^{-1}(-\tfrac \varepsilon 8+r,r+\tfrac \varepsilon 8))_{r\in\RR}$ of $\bar C_i$ has a finite subcover.
\[
\chi^{-1}(-\tfrac \varepsilon 8+r_{i1},r_{i1}+\tfrac \varepsilon 8),\ldots, \chi^{-1}(-\tfrac \varepsilon 8+r_{ik},r_{ik}+\tfrac \varepsilon 8).
\]
For each $j\in \{1,\ldots,k\}$, we fix an element $x_{ij}$ in the fiber $\chi^{-1}(r_{ij})$. Then $C_i'$ is the collection of the $x_{ij}$, a finite set of points.

Then define
\begin{align*}
 U:=&\bigcap_{i=1}^n\bigcap_{x\in C_i'} \mathcal U(\bar C_i\cap\chi^{-1}[-\tfrac\varepsilon 8+\chi(x),\chi(x)+\tfrac\varepsilon 8],(\chi(x)-\tfrac\varepsilon 4,\chi(x)+\tfrac \varepsilon 4))\\
 &\cap \bigcap_{i=1}^n\bigcap_{y\in D_i} \mathcal U(\{y\},(\chi(y)-\tfrac \varepsilon 2,\chi(y)+\tfrac \varepsilon 2)).
\end{align*}
Define 
\[
 d_i:=\min_{g\in D_i} \chi(g)\quad\mbox{and}\quad c_i:=\inf_{g\in C_i}\chi(g).
\]

If $\chi'\in U$ and $x\in C_i$ then there exists some $x_{ij}\in C_i'$ such that $|\chi(x)-\chi(x_{ij})|\le \tfrac \varepsilon 8$ then
\[
 |\chi(x)-\chi'(x)|\le |\chi(x)-\chi(x_{ij})|+|\chi(x_{ij})-\chi'(x)|<\tfrac \varepsilon 8+\tfrac \varepsilon 4<\tfrac \varepsilon 2.
\]
This can be used in the following computation:
\begin{align*}
  \Int u^q_\varphi(\chi')
  &=\min_i(\min_{g\in D_i}\chi'(g)-\min_{g\in C_i}\chi'(g))\\
  &\in\min\{(d_i-\tfrac\varepsilon 2,d_i+\tfrac \varepsilon 2)-(c_i-\tfrac \varepsilon 2,c_i+\tfrac \varepsilon 2)\mid i=1,\ldots,n\}\\
  &=\min \{(d_i-c_i-\varepsilon,d_i-c_i+\varepsilon)\mid i=1,\ldots,n\}\\
  &=(\Int u_\varphi^q(\chi)-\varepsilon,\Int u_\varphi^q(\chi)+\varepsilon)\\
  &=V.\qedhere
 \end{align*}

\end{proof}

By Lemma~\ref{lem:uvarphi_continuous} the set $U(\varphi):=u_\varphi^{-1}(0,\infty)$ is open in $\TopHom(G,\RR)$.

\begin{prop}
 Let $G$ be a group with homological connecting vector $(k_0,\ldots,k_m)$. If 
for $k:=k_m+1$ there is a finitely modeled $\ZZ G$-chain 
endomorphism $\varphi_q:\chains q k G\to \chains q k G$ extending the identity 
on $\ZZ$, then $U(\varphi)\subseteq \Sigma^m(G,\ZZ)$.
\end{prop}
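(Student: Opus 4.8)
The plan is to reduce the inclusion to Theorem~\ref{thm:crit2}. Fix a character $\chi\in U(\varphi)$, so that $K:=u_\varphi(\chi)>0$, and first record two preliminary facts. On the one hand $\chi\neq 0$: evaluating $u_\varphi$ at the zero character makes $\langle w(g),u(\chi)\rangle$ vanish for every $g$, hence both inner minima in the definition of $u_\varphi$ are $0$ and $u_\varphi(0)\le 0$, so $0\notin U(\varphi)$. On the other hand, since $G$ carries the homological connecting vector $(k_0,\ldots,k_m)$, Theorem~\ref{thm:mu} shows $G$ is of type $\Clg m$, so Theorem~\ref{thm:crit2} applies to $G$ with this $k=k_m+1$. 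It therefore suffices to check that the pair $(\varphi_*,K)$ satisfies condition~2 of Theorem~\ref{thm:crit2} for $\chi$, i.e.\ that $v(\varphi_q(c))-v(c)\ge K$ for every $q\in\{0,\ldots,m\}$ and every $c\in\chains q k G$.

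The next step is to translate the definition of $u_\varphi$ into valuation language. For $g\in G$ one has $\langle w(g),u(\chi)\rangle=\chi(g)$, and for any chain $c$ the set $c^{(0)}$ is precisely the set of vertices occurring in the simplices of $\supp c$, so $\min_{g\in c^{(0)}}\chi(g)=v(c)$. Consequently
\[
K=u_\varphi(\chi)=\min_{\substack{\bar x\in(\{1_G\}\times G^q)\cap\Delta^q_k\\ q=0,\ldots,m}}\bigl(v(\varphi_q(\bar x))-v(\bar x)\bigr),
\]
so in particular $v(\varphi_q(\bar x))-v(\bar x)\ge K$ for every $q\le m$ and every basis simplex $\bar x\in(\{1_G\}\times G^q)\cap\Delta^q_k$.

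Finally I would propagate this bound from the $\ZZ G$-basis to all chains, exactly as in the ``condition~2 $\Rightarrow$ condition~1'' step of Theorem~\ref{thm:crit2}. Writing $c\in\chains q k G$ in reduced form $c=\sum_i n_i g_i\bar x_i$ with $n_i\ne 0$, $g_i\in G$ and $\bar x_i\in(\{1_G\}\times G^q)\cap\Delta^q_k$, equivariance of $\varphi_q$ together with the valuation axiom $v(h\cdot{-})=\chi(h)+v({-})$ gives $v(\varphi_q(g_i\bar x_i))=\chi(g_i)+v(\varphi_q(\bar x_i))\ge\chi(g_i)+v(\bar x_i)+K=v(g_i\bar x_i)+K$ for each $i$, whence
\[
v(\varphi_q(c))=v\Bigl(\sum_i n_i\varphi_q(g_i\bar x_i)\Bigr)\ge\min_i v(\varphi_q(g_i\bar x_i))\ge\min_i v(g_i\bar x_i)+K=v(c)+K.
\]
Theorem~\ref{thm:crit2} then yields $\chi\in\TopS^m(G;\ZZ)$, proving $U(\varphi)\subseteq\TopS^m(G;\ZZ)$. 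The whole argument is a matter of unpacking definitions; the only place asking for a moment's care is the identification $\min_{g\in c^{(0)}}\chi(g)=v(c)$, i.e.\ that $c^{(0)}$ is the vertex set of $\supp c$, so that the ``minimum over vertices'' built into $u_\varphi$ and the one built into the valuation $v$ coincide. I do not foresee any real obstacle.
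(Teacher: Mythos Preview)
Your proof is correct. The approach is essentially the same as the paper's, but packaged more efficiently: you invoke Theorem~\ref{thm:crit2} directly (which is tailored to the single index $k=k_m+1$), whereas the paper appeals to Theorem~\ref{thm:sigmacrit_lc_homol} (whose condition~3 demands a raising endomorphism for \emph{every} sufficiently large index). To meet that stronger hypothesis, the paper must first pull an arbitrary $l\ge k$ back to $k$ via a finitely modeled $\mu_*\colon\chains * l G^{(m)}\to\chains * k G^{(m)}$ supplied by Theorem~\ref{thm:mu}, and then iterate $\varphi$ enough times to overcome the valuation drop of $\mu$. This is exactly the ``$(2)\Rightarrow(1)$'' step already encapsulated in the proof of Theorem~\ref{thm:crit2}, so your route avoids rederiving it. Your observation that $0\notin U(\varphi)$ (needed to apply Theorem~\ref{thm:crit2}, which assumes $\chi\ne 0$) is a detail the paper leaves implicit.
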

\begin{proof}
 Suppose $\chi\in U(\varphi)$. Then
 \begin{align*}
 0
 &<\min_{\substack{\bar x=(1_G,g_1,\ldots,g_q)\in \Delta^q_k,\\ q=0,\ldots,m}}\left(\min_{g\in \varphi_q(\bar x)^{(0)}}\langle w(g),v(\chi)\rangle-\min_{g\in \bar x^{(0)}}\langle w(g),v(\chi)\rangle\right)\\
 &=\min_{\substack{\bar x=(1_G,g_1,\ldots,g_q)\in \Delta^q_k,\\ q=0,\ldots,m}}\left(\min_{g\in \varphi_q(\bar x)^{(0)}}\chi(g)-\min_{g\in \bar x^{(0)}}\chi(g)\right)\\
 &=\min_{\substack{\bar x=(1_G,g_1,\ldots,g_q)\in 
\Delta^q_k,\\ q=0,\ldots,m}}\left(v(\varphi_q(\bar x))-v(\bar x)\right)=:L.
 \end{align*}
 Let $l\ge k$ be a number. Since $(k_0,\ldots,k_m)$ is a homological connecting vector for $G$ there exists a finitely modeled chain endomorphism $\mu_*:{\chains * l G}^{(m)}\to 
{\chains * k G}^{(m)}$ extending the identity on $\ZZ$. Then for some $K\in \RR$ for every $x=(1,g_1,\ldots,g_q)\in\Delta^q_l,q=0,\ldots,m$ we have
 \[
  v(\mu(x))-v(x)\ge K
 \]
 as a consequence of Lemma~\ref{lem:homotopy_homology_locallycompact}. Then $nL\ge -K$ for some $n\in \NN$. Then $\varphi_q^{\circ n+1}\circ \mu_q:\chains q l G\to \chains q k G$ is a chain endomorphism extending the identity on $\ZZ$ with
\[
 v(\varphi_q^{\circ n+1}\circ \mu_q(c))\ge (n+1)L+v(\mu_q(c))\ge (n+1)L+K+v(c)\ge L+v(c)
\]
and $\mbox{im }\varphi_q^{\circ n+1}\circ \mu_q \subseteq \chains q k G$ for 
every $q=0,\ldots,m$. By Theorem~\ref{thm:sigmacrit_lc_homol} we obtain that the 
character $\chi\in \TopS^m(G;\ZZ)$.
\end{proof}

\begin{thm}
\label{thm:open}
 The subset $\TopS^m(G,\ZZ)$ is a cone over an open set in $\TopHom(G,\RR)$ provided $\Sigma^m\not=\emptyset$.
\end{thm}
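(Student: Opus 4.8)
The plan is to assemble the statement from the three results immediately preceding it in this section. The assertion has two parts: that $\TopS^m(G;\ZZ)$ is closed under multiplication by positive scalars (a cone), and that $\TopS^m(G;\ZZ)\setminus\{0\}$ is open in $\TopHom(G,\RR)$; together with $0\in\TopS^m(G;\ZZ)$ these amount to ``a cone over an open subset of $\TopHom(G,\RR)$'' (equivalently, over an open subset of the sphere of characters). I would start with the preliminaries: since $\Sigma^m=\TopS^m(G;\ZZ)\neq\emptyset$, Proposition~\ref{prop:veryeasy} shows $G$ is of type $\Clg m$, so $0\in\TopS^m(G;\ZZ)$, and Theorem~\ref{thm:mu} furnishes a homological connecting vector $(k_0,\ldots,k_m)$; I would fix one such vector and set $k:=k_m+1$. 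The cone property is then immediate: for $\lambda>0$ one has $G_{\lambda\chi}=G_\chi$ since $\lambda\chi(g)\ge 0\iff\chi(g)\ge 0$, and the conditions of Definition~\ref{dfn:main} defining membership in $\TopS^m(G;\ZZ)$ refer to $\chi$ only through the subset $G_\chi$ of $G$; hence $\chi\in\TopS^m(G;\ZZ)$ if and only if $\lambda\chi\in\TopS^m(G;\ZZ)$.

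For openness I would fix an arbitrary nonzero $\chi\in\TopS^m(G;\ZZ)$ and exhibit an open neighbourhood of it inside $\TopS^m(G;\ZZ)\setminus\{0\}$. By Theorem~\ref{thm:crit2} (applied with this $k$) there are $K>0$ and a finitely modeled $\ZZ G$-chain endomorphism $\varphi_*$ of $\chains * k G$ extending the identity on $\ZZ$ with $v(\varphi_q(c))-v(c)\ge K$ for all $c\in\chains q k G$ and $q=0,\ldots,m$. Specialising to the basis simplices $\bar x\in(\{1_G\}\times G^q)\cap\Delta^q_k$ and reading off the definition of $u_\varphi$ yields $u_\varphi(\chi)\ge K>0$, so $\chi\in U(\varphi)=u_\varphi^{-1}((0,\infty))$. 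By Lemma~\ref{lem:uvarphi_continuous} the map $u_\varphi$ is continuous, so $U(\varphi)$ is open in $\TopHom(G,\RR)$; it avoids $0$ because $u_\varphi(0)=0$; and the Proposition immediately preceding this theorem gives $U(\varphi)\subseteq\TopS^m(G;\ZZ)$. Thus $U(\varphi)$ is an open neighbourhood of $\chi$ contained in $\TopS^m(G;\ZZ)\setminus\{0\}$. As $\chi$ was arbitrary, $\TopS^m(G;\ZZ)\setminus\{0\}$ is open, which together with the cone property is exactly the claim.

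I do not anticipate a genuine obstacle: all the analytic content — the compactness arguments that produce the finitely modeled $\varphi$, and the continuity of $u_\varphi$ — is already packaged into Theorem~\ref{thm:crit2}, the preceding Proposition, and Lemma~\ref{lem:uvarphi_continuous}. The only care required is bookkeeping: invoking Theorem~\ref{thm:crit2} and the preceding Proposition with one and the same homological connecting vector, so that the value $k=k_m+1$ matches on both sides; checking that the valuation bound over all chains restricts correctly to the basis simplices appearing in the definition of $u_\varphi$; and noting that $0\notin U(\varphi)$, so that what one obtains is openness of $\TopS^m(G;\ZZ)\setminus\{0\}$ rather than of $\TopS^m(G;\ZZ)$ itself (the degenerate case $\TopS^m(G;\ZZ)=\{0\}$ being trivially a cone over the empty open set).
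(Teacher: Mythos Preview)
Your proposal is correct and follows essentially the same route as the paper's proof: obtain a witnessing $\varphi$ for each nonzero $\chi\in\TopS^m(G;\ZZ)$ via Theorem~\ref{thm:crit2}, use Lemma~\ref{lem:uvarphi_continuous} and the preceding Proposition to conclude that $U(\varphi)$ is an open neighbourhood of $\chi$ inside $\TopS^m(G;\ZZ)$, and handle the cone property via $G_{\lambda\chi}=G_\chi$. The paper's version is terser and leaves the cross-references implicit, but the logical skeleton is identical.
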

\begin{proof}
Note that $\{0\}$ is the cone over an empty set, and thus the case $\Sigma^m=\{0\}$ is valid. 

 If $\chi\not=0$ is a character on $G$ that belongs to $\Sigma^m$ then there exists some chain endomorphism $\varphi$ that is a witness for $\chi$ belonging to $\Sigma^m$. Then $U(\varphi)$ is an open neighborhood of $\chi$ in $\Sigma^m$. So $\Sigma^m\setminus\{0\}$ is open. Since for every $\lambda>0$ we have $\lambda\chi\in\Sigma^m$ if and only if $\chi\in\Sigma^m$ and also $0\in \Sigma^m$ if $\Sigma^m\not=\emptyset$ the set $\Sigma^m$ is a cone.
\end{proof}

 \section{Group extensions by kernels of type {$\Clg m$}}
This section proves Theorem~\ref{thma:ge1}.

\begin{rem}
\label{rem:kernel+cokernel}
 The category of locally compact Hausdorff groups and continuous group homomorphisms with closed image admits kernels and cokernels. If $\alpha:G\to H$ is a continuous group homomorphism with closed image between locally compact Hausdorff groups, then the kernel of $\alpha$ is the kernel of $\alpha$ as a group homomorphism, such that $\ker \alpha\to G$ is a closed embedding (Since $H$ is Hausdorff, the point $1_H$ is closed and since $\alpha$ is continous, $\ker\alpha=\alpha^{-1}(1_H)$ is closed). The cokernel of $\alpha$ is the cokernel of $\alpha$ as a group homomorphism, such that $H\to \mbox{coker } \alpha$ is a quotient map. Since $\im \alpha$ is closed the space $\coker\alpha$ is again locally compact Hausdorff.
\end{rem}

We now study a short exact sequence of topological groups $1\to N\to G\to Q\to 1$. That means $N$ is the kernel of $G\to Q$ and $Q$ is the cokernel of $N\to G$. Let $\chi:G\to \RR$ be a character that vanishes on $N$. Denote by $\pi:G\to G/N=:Q$ the projection to the factor group. Since $\chi$ vanishes on $N$ there is a character $\bar \chi$ on $Q$ with $\bar\chi\circ\pi=\chi$.

\begin{rem}
\label{rem:closed_subgroup}
 If $N$ is a closed subgroup of $G$ and $E\subseteq G\times G$ an entourage, then $\{x^{-1}y\mid (x,y)\in E\}$ is contained in a compact set $C\subseteq G$. Then $\{x^{-1}y\mid (x,y)\in E\cap N\times N\}$ is contained in the set $C\cap N$, which as a closed subset of a compact set $C$ is compact itself. Thus, $E\cap N\times N$ is an entourage in $N$. If $G$ is locally compact and Hausdorff, so is $N$. This means that the subspace metric from $G$ on $N$ and the metric assigned to a compact generating set of $N$, if it exists, induce the same coarse structure on $N$.
\end{rem}

\begin{lem}
\label{lem:coarse=easy}
 If $k\ge 0$ and $m\in \NN$, then 
 \[
 \Delta_k[N]:=\{x\in G\mid d(x,y)\le k,\mbox{ for some }y\in N\}
\]
equipped with the subspace metric from $G$ is of type $\Flg m$, if and only if $N$ is of type $\Clg m$.
\end{lem}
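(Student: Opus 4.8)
The plan is to identify $\Delta_k[N]$, up to coarse equivalence, with $N$ carrying its intrinsic group coarse structure, and then to read off the statement from Corollary~\ref{cor:supermetric} together with the fact that type $\Flg m$ is a coarse invariant.

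The first step is to observe that the inclusion $\iota\colon N\hookrightarrow\Delta_k[N]$ is an isomorphism in the coarse category. Indeed, $\iota$ is an isometric embedding (both sides carry the subspace metric from $(G,d)$), and by the definition $\Delta_k[N]=\{x\in G\mid d(x,N)\le k\}$ its image is $k$-coarsely dense, so $\iota$ is coarsely surjective; an isometric, coarsely surjective map is a coarse equivalence. In particular $\Delta_k[N]$ is of type $\Flg m$ if and only if $(N,d|_N)$ is.

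The second step is to recognise which coarse structure $(N,d|_N)$ carries. By Remark~\ref{rem:closed_subgroup}, the restriction of any $G$-entourage to $N\times N$ is contained in some $\Delta_D$ with $D=C\cap N$ compact in $N$, while conversely, for $D\subseteq N$ compact, $\Delta_D$ is the restriction of the $G$-entourage $\Delta_D$; hence the subspace coarse structure on $N$ coincides with the intrinsic group coarse structure, the one with base $(\Delta_D)_{D\in\mathcal C(N)}$. Now split into cases. If $N$ is compactly generated, this group coarse structure is induced by a word metric $d_N$ from a compact generating set, so by coarse invariance of type $\Flg m$ (recorded after Theorem~\ref{thm:alonso}) together with Corollary~\ref{cor:supermetric}(1) we get that $(N,d|_N)$ is of type $\Flg m$ iff $(N,d_N)$ is of type $\Flg m$ iff $N$ is of type $\Clg m$. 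If $N$ is not compactly generated, then the group coarse structure is not coarsely connected, hence neither is $(N,d|_N)$, hence neither is $\Delta_k[N]$; thus $\Delta_k[N]$ fails type $\Flg 1$, and likewise $N$ fails type $\Clg 1$, so both sides of the claimed equivalence are false. In either case the two conditions agree.

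The step I expect to require the most care is the second one: making precise, via Remark~\ref{rem:closed_subgroup}, that the metric $\Delta_k[N]$ inherits from $G$ induces on $N$ exactly its intrinsic coarse structure, and checking that the invariance of type $\Flg m$ under coarse equivalence applies also at the $\Flg 1$/coarse-connectedness layer, which is what dispatches the non-compactly-generated case. The remaining points — coarse density of $N$ in $\Delta_k[N]$ and the appeal to Corollary~\ref{cor:supermetric} — are routine.
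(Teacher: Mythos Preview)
Your proposal is correct and follows essentially the same route as the paper: establish that $N\hookrightarrow\Delta_k[N]$ is a coarse equivalence, then invoke Remark~\ref{rem:closed_subgroup} to identify the subspace coarse structure on $N$ with its intrinsic one, and finally use Corollary~\ref{cor:supermetric}(1) together with coarse invariance of type $\Flg m$. The only cosmetic differences are that the paper writes down an explicit coarse inverse $\rho\colon\Delta_k[N]\to N$ rather than citing the general fact about isometric coarsely surjective maps, and that you spell out the non--compactly-generated case separately whereas the paper absorbs it into the one-line appeal to Remark~\ref{rem:closed_subgroup}.
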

\begin{proof}
 By Remark~\ref{rem:kernel+cokernel} the kernel $N$ appears as a closed subgroup of $G$. And by Remark~\ref{rem:closed_subgroup} the subgroup $N$ is of type $\Clg m$ if and only if the subspace $(N,d)$ of $(G,d)$ is of type $\Flg m$. 
 
 Denote by $\iota:N\to \Delta_k[N]$ the inclusion and by $\rho:\Delta_k[N]\to N$ the map sending each $x\in \Delta_k[N]$ to a point $y\in N$ within $k$-distance of $x$. If $x_1,x_2\in \Delta_k[N]$ have $d(x_1,x_2)\le l$ then 
 \[
  d(\rho(x_1),\rho(x_2))\le k+d(x_1,x_2)+\le 2k+l.
 \]
So $\rho$ defines a coarsely Lipschitz map. Obviously $\iota$ is also coarsely Lipschitz. Then $\rho\circ \iota$ is $k$-close to the identity on $N$ and $\iota\circ\rho$ is $k$-close to the identity on $\Delta_k[N]$. As a result, we find that $\iota$ and $\rho$ are inverses in the coarse category. This means that $N$ and $\Delta_k[N]$ both with the subspace metric of $G$ are coarsely isomorphic. That implies $(N,d)$ is of type $\Flg m$ if and only if $(\Delta_k[N],d)$ is of type $\Flg m$.
\end{proof}

\begin{thm}
\label{thm:ge1}
 If $1\to N\to G\to Q\to 1$ is a short exact sequence of locally compact groups then, for every $m\ge 1$: 
 \begin{enumerate}
  \item if $N$ is of type $\Clg m$ and $\bar\chi\in \TopS^m(Q;\ZZ)$, then $\chi\in \TopS^m(G;\ZZ)$;
  \item if $N$ is of type $\Clg {m-1}$ and $\chi\in \TopS^m(G;\ZZ)$, then $\bar\chi\in \TopS^m(Q;\ZZ)$.
 \end{enumerate}
\end{thm}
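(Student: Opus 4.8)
The plan is to recast both items as statements in coarse geometry and prove them by a coarse-fibration argument. Note first that by Proposition~\ref{prop:veryeasy} the hypothesis $\TopS^m(Q;\ZZ)\neq\emptyset$ in item~1 (resp. $\TopS^m(G;\ZZ)\neq\emptyset$ in item~2) forces $Q$ (resp. $G$) to be of type $\Clg m$; moreover the case $\chi=0$ of item~1 (so $\bar\chi=0$) is precisely the ``in particular'' assertion that $N$ and $Q$ of type $\Clg m$ imply $G$ of type $\Clg m$, and similarly for item~2. So I would prove the two items including $\chi=0$, and then, by Corollary~\ref{cor:supermetric}, once $G$ (item~1) resp. $Q$ (item~2) is of type $\Clg m$ the assertions ``$\chi\in\TopS^m(G;\ZZ)$'' and ``$\bar\chi\in\TopS^m(Q;\ZZ)$'' become ``$(G_\chi,d)$ is of type $\Flg m$'' and ``$(Q_{\bar\chi},d)$ is of type $\Flg m$''. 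Fix a symmetric compact generating set $\mathcal X\ni 1_G$ of $G$ and metrize $Q$ via $\pi(\mathcal X)$; then $\pi\colon G\to Q$ is $1$-Lipschitz, $G_\chi=\pi^{-1}(Q_{\bar\chi})$, and each fibre of $\pi|_{G_\chi}\colon G_\chi\twoheadrightarrow Q_{\bar\chi}$ is a full coset $gN$, isometric to $(N,d_G|_N)$ by left translation, which by Remark~\ref{rem:closed_subgroup} and Lemma~\ref{lem:coarse=easy} is of type $\Flg j$ exactly when $N$ is of type $\Clg j$. The targets are thus: $(G_\chi,d)$ is of type $\Flg m$ when the base $Q_{\bar\chi}$ is of type $\Flg m$ and the fibre $N$ is of type $\Clg m$ (item~1); and $(Q_{\bar\chi},d)$ is of type $\Flg m$ when the total space $G_\chi$ is of type $\Flg m$ and $N$ is of type $\Clg{m-1}$ (item~2). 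The case $m=1$ is immediate: $\TopS^1$ is coarse connectedness (Lemma~\ref{lem:sigma1}); a coarse path downstairs lifts edge-by-edge to one upstairs ending in the prescribed fibres, the endpoint defect lies in a single coset and is bridged with uniformly bounded steps because $N$ is coarsely connected, and $\pi$ carries coarse paths to coarse paths in the other direction. So from now on $m\ge 2$.

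The crux is a coarse lifting statement for the chain map $\pi_\#$ that $\pi$ induces on Vietoris--Rips chains, where $j:=m$ for item~1 and $j:=m-1$ for item~2 and $N$ is assumed of type $\Clg j$: (a) for every $k$ there is $k'$ such that every chain of degree $\le j$ in $\chain_*(\VR k{Q_{\bar\chi}})$ lifts to a chain in $\chain_*(\VR{k'}{G_\chi})$ whose $\pi_\#$-image equals it up to a boundary of controlled radius, a lift of a cycle being moreover adjustable to a cycle; (b) for every $k$ there is $K$ such that every ``vertical'' cycle of degree $\le j-1$ in $\chain_*(\VR k{G_\chi})$ --- one in the kernel of $\pi_\#$, hence supported on finitely many cosets joined by short edges --- bounds in $\chain_*(\VR K{G_\chi})$. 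These I would prove together by induction on degree, simplex by simplex, imitating the proofs of Theorems~\ref{thm:mu} and~\ref{thm:sigmacrit_lc_homol}: Theorem~\ref{thm:mu} supplies a finitely modeled $\ZZ N$-chain endomorphism $\mu^N_*$ of $\chain_*(\VR kN)^{(j)}$ and, via Lemma~\ref{lem:homotopy_homology_locallycompact}, a homotopy $\eta^N_*$ to the identity landing in a bounded $\VR LN$; transporting $(\mu^N_*,\eta^N_*)$ along the coset isometries and packaging the resulting coset-fillers into a finite family of shapes by the same compactness argument as in Section~\ref{sec:criteria} produces, in each fixed degree, all needed fillings with a single uniform radius bound. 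The lift obtained this way is not $\ZZ G$-equivariant --- at best equivariant along $N$ --- which is exactly the non-equivariant chain map foreshadowed in the introduction, and carrying this degreewise construction through while keeping every auxiliary chain inside a Vietoris--Rips complex of controlled radius is, I expect, the main obstacle of the proof.

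Granting (a) and (b), the two items close quickly. For item~1 ($j=m$): given a cycle $z\in\chain_q(\VR k{G_\chi})$ with $q\le m-1$, fill the cycle $\pi_\#(z)$ in $Q_{\bar\chi}$ (of type $\Flg m$) by a chain $w'$ of degree $q+1\le m$, lift $w'$ to some $w$ in $G_\chi$ by (a); then $z-\partial w$ is a vertical cycle of degree $q\le m-1$, which bounds some $v$ by (b), so $z=\partial(w+v)$ is a boundary in $\chain_{q+1}(\VR l{G_\chi})$ with $l$ depending only on $k$, i.e. $G_\chi$ is of type $\Flg m$. For item~2 ($j=m-1$): given a cycle $z'\in\chain_q(\VR k{Q_{\bar\chi}})$ with $q\le m-1$, lift it by (a) to a cycle $\widetilde z$ in $\VR{k'}{G_\chi}$ with $\pi_\#(\widetilde z)=z'+\partial e'$; as $G_\chi$ is of type $\Flg m$, $\widetilde z=\partial u$ for some $u\in\chain_{q+1}(\VR l{G_\chi})$ with $l$ depending only on $k$, and then $\partial(\pi_\#(u)-e')=z'$ exhibits $z'$ as a boundary in $\chain_{q+1}(\VR{l'}{Q_{\bar\chi}})$ with $l'$ depending only on $k$, i.e. $Q_{\bar\chi}$ is of type $\Flg m$. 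When $\chi=0$ these are the ``in particular'' statements, and in general Corollary~\ref{cor:supermetric} converts them back to $\chi\in\TopS^m(G;\ZZ)$ and $\bar\chi\in\TopS^m(Q;\ZZ)$.
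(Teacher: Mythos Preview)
Your strategy matches the paper's: reduce both items to showing $(G_\chi,d)$ resp.\ $(Q_{\bar\chi},d)$ is of type $\Flg m$, and handle the vertical direction using the $\Clg$-property of $N$. The paper's organization is cleaner than your (a)/(b) split, however. Rather than a lift-then-fill scheme, the paper builds by induction on degree a single $\ZZ$-linear \emph{chain map} $\varphi_*\colon\chain_*(\VR k{Q_{\bar\chi}})^{(j)}\to\chain_*(\VR n{G_\chi})^{(j)}$ (via a set-theoretic transversal $t\colon Q\to G$, filling the cycle $\tilde x_0^{-1}\varphi_{q-1}\partial_q\sigma$ inside a translate of $\Delta_{qk}[N]$), together with chain homotopies $\lambda\colon\varphi\pi\simeq\id_{G_\chi}$ and $\mu\colon\pi\varphi\simeq\id_{Q_{\bar\chi}}$, all landing in a controlled Vietoris--Rips radius. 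The maps $\varphi$ and $\lambda$ use the $\Clg$-property of $N$ via Lemma~\ref{lem:coarse=easy}, while $\mu$ needs only the contractibility of the free simplicial set on the bounded set $\Delta_{qk}[1_Q]$. Item~1 is then the one-line check $\partial(\varphi(c)-\lambda(z))=z$ where $\partial c=\pi(z)$; item~2 is $\partial(\pi(c)+\mu(z'))=z'$ where $\partial c=\varphi(z')$, the latter using that $\varphi(z')$ is \emph{automatically} a cycle because $\varphi$ commutes with $\partial$.

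This last point is where your formulation wobbles. Your (a) is not a chain map, so in item~2 you must adjust the naive lift $\tilde z_0$ to a cycle by subtracting a (b)-filler $v$ of $\partial\tilde z_0$; but then $\pi_\#(\tilde z_0-v)=z'-\pi_\#(v)$, and $\pi_\#(v)$ is a priori only a cycle, not the boundary $\partial e'$ you assert. This is repairable (if $v$ is built as $-\lambda(\partial\tilde z_0)$ its vertices lie near a single coset, so $\pi_\#(v)$ sits in a contractible free simplicial set and hence is a boundary), but the chain-map route avoids the detour entirely. A second simplification: the compactness/finitely-modeled machinery of Section~\ref{sec:criteria} you invoke is not needed here. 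The uniform radius bounds come for free, because every filling in the inductive construction takes place in a left translate of the fixed coarse space $\Delta_{qk}[N]$, and one application of Lemma~\ref{lem:coarse=easy} gives a single $n_q$ (resp.\ $m_q$) that works for all simplices simultaneously.
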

\begin{proof}
Suppose $\mathcal X$ is a compact symmetric generating set for $G$ containing the identity. Then $\pi(\mathcal X)$ is a compact generating set for $Q$. The normal subgroup $N$ is endowed with the subspace metric from $G$.

Firstly, we show claim 1. Suppose $N$ is of type $\Clg m$ and $\bar\chi\in \TopS^m(Q;\ZZ)$.
The outline of this part of the proof is as follows: First, we construct for some $n\ge 0$ for every $k\ge 0$ a chain homomorphism of $\ZZ$-modules $\varphi_*:{\chains * k {Q_{\bar \chi}}}^{(m)}\to {\chains * n {G_\chi}}^{(m)}$ using that $N$ is of type $\Clg m$. Then we construct a chain homotopy that joins $\varphi\circ\pi:{\chains * k {G_\chi}}^{(m)}\to{\chains * n {G_\chi}}^{(m)}$ to the identity on ${\chains * k {G_\chi}}^{(m)}$ also by just using that $N$ is of type $\Clg m$. Then we use $\bar\chi\in \TopS^m(Q;\ZZ)$ to prove that $\varphi\circ \pi$ sends cycles to boundaries. 

  We construct a sequence of numbers $n_0\le n_1\le n_2\le\cdots\le n_m$ and for every $k\ge 0$  a chain homomorphism $\varphi_*:{\chains * k {Q_{\bar\chi}}}^{(m)}\to \ZZ[{G_\chi}^{*+1}]^{(m)}$ extending the identity on $\ZZ$ with $\im \varphi_q\subseteq \chains q {n_q} {G_\chi}$ for $q=0,\ldots,m$. Suppose $t:Q\to G$ is a transversal to $\pi$ (by which we mean a set-theoretic section). As a shorthand, we write $t(x)=:\tilde x$ for each $x\in Q_{\bar \chi}$. And as a shorthand for ${\tilde x_0}^{-1}\varphi_q(x_0,\ldots,x_q)$, we write $w_{0\cdots q}$. If $q=0$ the map $\varphi_0$ is defined to send $x$ to $\tilde x$. Obviously $w_0=1_G$. We now construct the chain map for $q=1$. If $(x_0,x_1)\in \chains 1 k {Q_{\bar \chi}}$, then
 \[
  {\tilde x_0}^{-1}\tilde x_1\in \mathcal X^kN=N\mathcal X^k=\Delta_k[N].
 \]
Since $N$ is of type $\Clg 1$ and Lemma~\ref{lem:coarse=easy}, we have $\homology 0 {n_1} {\Delta_k[N]}=\ZZ$ for some $n_1$. Thus, there exists $w_{01}\in \chains 1 {n_1} {\Delta_k[N]}$ with $\diff 1 w_{01}={\tilde x_0}^{-1}\tilde x_1-1$. Define $\varphi_1(x_0,x_1):=\tilde x_0w_{01}$. Then obviously 
\[
 \partial_1\circ \varphi_1(x_0,x_1)=\tilde x_0\partial_1 w_{01}=\tilde x_0({\tilde x_0}^{-1}\tilde x_1-1=\tilde x_1-\tilde x_0=\varphi_0\circ \partial_1(x_0,x_1).
\]
For $q>1$, suppose that $\varphi_0,\ldots,\varphi_{q-1}$ have been constructed. Part of the induction hypothesis is that 
\[
 w_{0\cdots \hat i \cdots q}\in \chains {q-1} {n_{q-1}}{\Delta_{(q-1)k}[N]}
\]
which is true for $q-1=1$. Let $(x_0,\ldots,x_q)\in \chains q k {Q_{\bar \chi}}$ be a simplex. Then
\[
 {\tilde x_0}^{-1}\varphi_{q-1}\circ \diff q(x_0,\ldots,x_q)={\tilde x_0}^{-1}(\tilde x_0w_{0\cdots 
q-1}-\tilde x_0w_{0\cdots q-2,q}+\cdots\pm \tilde x_1 w_{1\cdots q}).
\]
From the induction hypothesis, we know that $w_{1\cdots q}\in \chains {q-1}{n_{q-1}}{\Delta_{(q-1)k}[N]}$. If $y$ is a vertex of the chain $w_{1\cdots q}$, then
\[
 \tilde x_0^{-1}\tilde x_1y\in \mathcal X^k NN\mathcal X^{(q-1)k}=N\mathcal X^{qk}=\Delta_{qk}[N].
\]
Thus, 
\[
{\tilde x_0}^{-1}(\tilde x_0w_{0\cdots q-1}-\tilde x_0w_{0\cdots q-2,q}+\cdots\pm \tilde x_1 w_{1\cdots 
q})\in \chains {q-1} {n_{q-1}} {\Delta_{qk}[N]}.
\]
Since $N$ is of type $\Clg q$ and by Lemma~\ref{lem:coarse=easy} there exists some $n_q\ge n_{q-1}$ such that $\homology {q-1}{n_{q-1}}{\Delta_{qk}[n]}$ vanishes in $\homology {q-1}{n_q}{\Delta_{qk}[n]}$. Then there is $w_{0\cdots q}\in \chains q {n_q}{\Delta_{qk}[N]}$ with 
\[
 \diff q w_{0\cdots q}={\tilde x_0}^{-1}(\tilde x_0w_{0\cdots q-1}-\tilde x_0w_{0\cdots 
q-2,q}+\cdots\pm \tilde x_1 w_{1\cdots q}).
\]
Define $\varphi_q(x_0,\ldots, x_q):=\tilde x_0w_{0\cdots q}$. Then obviously 
\[
 \partial_q\circ \varphi_q(x_0,\ldots, x_q)=\tilde x_0 \partial_q w_{0\cdots q}=\tilde x_0w_{0\cdots q-1}-\tilde x_0w_{0\cdots 
q-2,q}+\cdots\pm \tilde x_1 w_{1\cdots q}=\varphi_{q-1}\circ\partial_q(x_0,\ldots,x_q).
\]

Now we construct $m_0\le m_1\le \cdots\le m_{m-1}$ and for every $k\ge 0$ a chain homotopy of $\ZZ$-modules $\lambda_*:{\chains q k {G_\chi}}^{(m-1)}\to {\ZZ[G_\chi^{*+1}]}^{(m)}$ joining $\varphi\circ\pi_*$ to $\id$ with $\im \lambda_q\subseteq \chains {q+1}{m_q} {G_\chi}$ for each $q=0,\ldots,m-1$. As a shorthand for $y_0^{-1}\lambda_q(y_0,\ldots,y_q)$, we write $v_{0\cdots q}$. Define $\lambda_{-1}=0$. For $q>-1$, suppose that $\lambda_{-1},\ldots,\lambda_{q-1}$ have been constructed. Part of the induction hypothesis is that 
\[
 v_{0\cdots q-1}\in \chains q {m_{q-1}} {\Delta_{k(q-1)}[N]}.
\]
This is of course true for $q-1=-1$. If $(y_0,\ldots,y_q)\in \chains q k {G_\chi}$, then
\begin{align}
\label{eq:ge101}
 \varphi\circ \pi_*(y_0,\ldots,y_q)\in t\circ\pi(y_0)\cdot \chains q {n_q} 
{\Delta_{qk}[N]}=y_0\chains q {n_q} {\Delta_{qk}[N]},
\end{align}
since $y_0^{-1}\cdot t\circ\pi(y_0)\in N$. Also,
\begin{align}
\label{eq:ge102}
 \id(y_0,\ldots,y_q)\in y_0\chains q k {\Delta_k[1_G]},
\end{align}
where $\Delta_k[1_G]$ denotes (as the notation suggests) the $k$-ball around $1_G$. By induction hypothesis 
\begin{align*}
y_0^{-1}\lambda_{q-1}(y_1,\ldots,y_q)
&=y_0^{-1}y_1v_{1\cdots q}\\
&\in y_0^{-1}y_1 \chains q {m_{q-1}} {\Delta_{k(q-1)}[N]}\\
&\subseteq \chains q {m_{q-1}}{\Delta_{kq}[N]}.
\end{align*}
Thus,
\begin{align}
\label{eq:ge103}
\begin{split}
 y_0^{-1}\cdot \lambda_{q-1}\circ\diff q(y_0,\ldots,y_q)
 &=y_0^{-1}(y_0v_{0\cdots q-1}-y_0v_{0\cdots q-2,q}+\cdots \pm y_1 v_{1\cdots q})\\
 &\in \chains q {m_{q-1}}{\Delta_{kq}[N]}
 \end{split}
\end{align}
Suppose without loss of generality $m_{q-1}\ge n_q$, then equations~\ref{eq:ge101},\ref{eq:ge102},\ref{eq:ge103} combine to
\[
 y_0^{-1}(\varphi\circ\pi-\id-\lambda_{q-1}\circ\partial_q)(y_0,\ldots,y_q)\in \chains 
q {m_{q-1}} {\Delta_{qk}[N]}.
\]
Now $\partial_q\circ (\varphi\circ\pi-\id-\lambda_{q-1}\circ\partial_q)=0$ by basic calculation. Since $N$ is of type $\Clg q$ and Lemma~\ref{lem:coarse=easy}, there exists some $m_q\ge m_{q-1}$ such that $\homology q {m_{q-1}}{\Delta_{qk}[N]}$ vanishes in $\homology q {m_q} {\Delta_{qk}[N]}$. Then there exists some $v_{0\cdots q}\in \chains {q+1} {m_q} {\Delta_{qk}[N]}$ with
\[
 \diff {q+1} v_{0\cdots 
q}=y_0^{-1}(\varphi\circ\pi-\id-\lambda_{q-1}\circ\partial_q)(y_0,\ldots,y_q).
\]
Define $\lambda_q(y_0,\ldots,y_q):=y_0 v_{0\cdots q}$. This way
\begin{align*}
 (\partial_{q+1}\circ \lambda_q+\lambda_{q-1}\circ\partial_q)(y_0,\ldots,y_q)
 =&y_0 \partial_{q+1} v_{0\cdots q}+\lambda_{q-1}\circ \partial_q(y_0,\ldots,y_q)\\
 =&y_0\cdot y_0^{-1}(\varphi\circ\pi-\id-\lambda_{q-1}\circ\partial_q)(y_0,\ldots,y_q)\\
 &+\lambda_{q-1}\circ \partial_q(y_0,\ldots,y_q)\\
 =&(\varphi\circ\pi=\id)(y_0,\ldots,y_q).
\end{align*}
So we constructed a chain homotopy joining $\varphi\circ \pi$ to the identity.

If $z\in \chains {m-1} k {G_\chi}$ is a cycle, then $\pi(z)\in \chains {m-1} k {Q_{\bar \chi}}$ is also a cycle. Since $\homology {m-1} k {Q_{\bar \chi}}$ is essentially trivial, there exists $c\in \chains m l {Q_{\bar \chi}}$ with $\diff m c =\pi(z)$. Then
\[
 \diff m (\varphi_m(c)-\lambda_{m-1}(z))=\partial_m\circ \varphi_m(c)-\partial_m\circ\lambda_{m-1}(z)=\varphi\circ\pi(z)-(\varphi\circ\pi(z)-z)=z.
\]
Thus, $\homology q k {G_\chi}$ is trivial as an ind-object. Thus, $\chi\in \TopS(G;\ZZ)$. 

Now suppose $N$ is of type $\Clg {m-1}$ and $\chi\in \TopS^m(G;\ZZ)$. The outline of this part of the proof is as follows. First, we reuse the chain homomorphism $\varphi$ constructed in the first part of the proof, this time for $m-1$ instead of $m$ since we only require that $N$ is of type $\Clg{m-1}$. Then we show that $\pi\circ\varphi$ is homotopic to the identity. And then we use $\chi\in \TopS^m(G;\ZZ)$ to show that $\pi\circ \varphi$ sends cycles to boundaries.

The first part of the proof provides us with $n_0\le n_1\le\cdots \le n_{m-1}$ and for every $k\ge n_m$  a chain homomorphism 
\[
\varphi_*:{\chains * k {Q_{\bar \chi}}}^{(m-1)}\to {\chains * k {G_\chi}}^{(m-1)}
\]
extending the identity on $\ZZ$ with $\im \varphi\subseteq \chains q {n_q} {G_\chi}$ for $q=0,\ldots,m-1$.

We will define $l_0\le\cdots\le l_{m-1}$ and construct a chain homotopy $\mu_*:{\chains * k {Q_{\bar\chi}}}^{(m-1)}\to {\chains{*+1} k {Q_{\bar\chi}}}^{(m)}$ that joins $\pi\circ \varphi$ to $\id$ with $\im \mu_q\subseteq \chains {q+1}{l_q}{Q_{\bar \chi}}$ for $q=-1,\ldots,m-1$. As a shorthand for $x_0^{-1}\mu_q(x_0,\ldots,x_q)$, we write $u_{0\cdots q}$. We define $\mu_{-1}:=0$. Suppose $\mu_{-1},\ldots,\mu_{q-1}$ have been constructed. Part of the induction hypothesis is that $u_{0\cdots q-1}\in \chains q {l_{q-1}} {\Delta_{k(q-1)}[1_G]}$. Let $(x_0,\ldots,x_q)\in \chains q k {Q_{\bar\chi}}$ be a simplex. Then
\begin{align}
\label{eq:ge121}
 \id(x_0,\ldots,x_q)\in x_0\chains q k {\Delta_k[1_Q]}
\end{align}
and
\begin{align}
\label{eq:ge122}
\begin{split}
 \pi\circ \varphi(x_0,\ldots,x_q)\in x_0\chains q {n_q} 
{\pi(\Delta_{kq}[N])}=x_0 \chains q {n_q} {\Delta_{kq}[1_Q]}.
\end{split}
\end{align}
By induction hypothesis
\begin{align}
\label{eq:ge123}
\begin{split}
 x_0^{-1}\mu_{q-1}(x_1,\ldots,x_q)
 &=x_0^{-1}x_1u_{1\cdots q}\\
 &\in x_0^{-1}x_1\chains q {l_{q-1}}{\Delta_{k(q-1)}[1_Q]}\\
 &\subseteq\chains q {l_{q-1}}{\Delta_{kq}[1_Q]}.
 \end{split}
\end{align}
Without loss of generality, we assume $l_{q-1}\ge \max(2qk,n_q)$. Then equation~\ref{eq:ge121}, equation~\ref{eq:ge122} and equation~\ref{eq:ge123} combine to
\[
 x_0^{-1}(\id-\pi\circ \varphi-\mu_{q-1}\circ\diff q)(x_0,\ldots,x_q)\in 
\chains q {l_{q-1}} {\Delta_{qk}[1_Q]}=\chain_q(\E\Delta_{qk}[1_Q]).
\]
(In this paper $\E \cdot$ denotes the free simplicial set, the homology of which is trivial.) By basic computations $\diff q \circ (\id-\pi\circ \varphi-\mu_{q-1}\circ\diff q)=0$. Then there is $u_{0\cdots q}\in \chain_{q+1}(\E\Delta_{qk}[1_Q])$ with 
\[
 \diff {q+1} u_{0\cdots q}=x_0^{-1}(\id-\pi\circ \varphi-\mu_{q-1}\circ\diff 
q)(x_0\ldots,x_q).
\]
Define $\mu_q(x_0,\ldots,x_q):=x_0u_{0\cdots q}$ and $l_q:=2qk$.

Suppose we did already show that $\bar\chi\in \TopS^{m-1}(Q;\ZZ)$. Let $z\in \chains {m-1} k {Q_{\bar\chi}}$ be a cycle. Then $\varphi_{m-1}(z)\in \chains {m-1} {n_{m-1}} {G_\chi}$ is also a cycle. Since $\chi\in\TopS^m(G;\ZZ)$ there exists some $l\ge n_{m-1}$ such that $\homology {m-1} {n_{m-1}} {G_\chi}$ vanishes in $\homology {m-1} l {G_\chi}$. So there exists a chain $c\in \chains m l {G_\chi}$ with $\diff m c =\varphi_{m-1}(z)$. Then
\[
\diff m(\pi(c)+\mu_{m-1}(z))= \partial_m \pi(c)+\diff m\circ 
\mu_{m-1}(z)=\pi\circ \varphi(z)+z-\pi\circ \varphi(z)=z.
\]
In this way, $\homology {m-1} k {Q_{\bar\chi}}$ vanishes in $\homology {m-1} {\max(l,l_{m-1})}{Q_{\bar\chi}}$. Thus, $\bar\chi\in \TopS^m(Q;\ZZ)$. 
\end{proof}

 \section{Group extensions of abelian quotients}
This section proves Theorem~\ref{thma:ge2}.

 The setting of this section is a short exact sequence of locally compact Hausdorff groups
\[
 1\to N\to G\to Q\xrightarrow{\pi} 0
\]
where $Q$ is abelian. We define 
 \[
  S(G,N):=\{\chi\in \TopHom(G,\RR)\mid \chi|_N=0\}.
 \]
 Since $Q$ is an abelian, locally compact, compactly generated group, it is of the form $Q=\RR^l\times \ZZ^n\times Q_t$ where $Q_t$ is compact \cite[Theorem~24]{Morris1977}. First, we reduce to the case where $Q$ does not have a compact factor. The group $\pi^{-1}(Q_t)=:N_t$ contains $N$ as a cocompact subgroup, and $Q_1:=G/N_t=\RR^l\times \ZZ^n$ is of the required form. Since $N$ and $N_t$ have the same geometry, one of them being of type $\Clg m$ implies that the other is of type $\Clg m$ as well. So, from now on, we assume $Q=\RR^l\times \ZZ^n$. 
 
 Denote by $\langle\cdot,\cdot\rangle$ the standard scalar product in $\RR^{l+n}$ and by $\Vert\cdot\Vert$ the standard norm on $\RR^{l+n}$. There is a canonical mapping
 \begin{align*}
  G\setminus N&\mapsto S(G,N)\\
  g&\mapsto \chi_g:h\mapsto -\frac{\langle \pi(h),\pi(g)\rangle}{\Vert \pi(g)\Vert },
 \end{align*}
and $v_g$ denotes the valuation on $\chains q k G$ extending $\chi_g$. The norm $\Vert g\Vert:=\Vert\pi(g)\Vert$ is actually not a norm in the strict sense, but we can consider it as the distance of $g$ to $N$. We can make this precise:

\begin{lem}
\label{lem:compare}
 In the above setting
 \begin{enumerate}
  \item $\ell(\pi(g))=d(g,N)$, where $l$ is with respect to $\pi(\mathcal X)$ and $d$ is with respect to $\mathcal X$, which is a compact neighborhood and generating set for $G$;
  \item there is $K\ge 0$, such that 
  \[
   \Vert g\Vert\le K\cdot \ell(\pi(g))\, \forall g\in G;
  \]
 \item there is $\varepsilon >0$, such that 
 \[
  \ell(\pi(g))\le \tfrac {\Vert g\Vert}\varepsilon+1\, \forall g\in G;
 \]
 \item If $A\ge 0$ then 
 \[
  N^+:=\{g\in G\mid \Vert g\Vert \le A\}
 \]
has the same coarse type as $N$.
 \end{enumerate}
\end{lem}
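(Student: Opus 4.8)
The plan is to treat the four claims in the stated order; claims 2 and 3 are the two halves of the (standard) quasi-isometry between the word metric on $Q$ induced by $\pi(\mathcal X)$ and the Euclidean metric $\Vert\cdot\Vert$, claim 1 is an exact identity coming from the interaction of word length with the quotient map, and claim 4 is a formal consequence of claims 1 and 3 together with (the proof of) Lemma~\ref{lem:coarse=easy}. I assume, as is harmless and consistent with the rest of the paper, that $\mathcal X=\mathcal X^{-1}$ and $1_G\in\mathcal X$, so that $\pi(\mathcal X)$ is a symmetric compact generating set of $Q$ containing $0$.

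For claim 1 the key observation is the general identity $\ell(q)=\inf\{\ell(h)\mid h\in G,\ \pi(h)=q\}$ for $q\in Q$: if $\ell(h)=k$ then $h\in\mathcal X^k$, so $q=\pi(h)\in\pi(\mathcal X)^k$; conversely $\ell(q)=k$ means $q\in\pi(\mathcal X)^k=\pi(\mathcal X^k)$, so $q$ has a preimage in $\mathcal X^k$. Since $d$ is left invariant, $d(g,N)=\inf_{n\in N}\ell(g^{-1}n)$, and the set $\{g^{-1}n\mid n\in N\}$ is the left coset $g^{-1}N=\pi^{-1}(\pi(g)^{-1})$; applying the identity with $q=\pi(g)^{-1}$ and using $\ell(\pi(g)^{-1})=\ell(\pi(g))$ (symmetry of $\pi(\mathcal X)$) gives $d(g,N)=\ell(\pi(g))$.

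Claim 2 is immediate: if $\ell(\pi(g))=k$ then $\pi(g)$ is a product of $k$ elements of $\pi(\mathcal X)$, so $\Vert g\Vert=\Vert\pi(g)\Vert\le k\cdot K$ with $K:=\sup_{s\in\pi(\mathcal X)}\Vert s\Vert<\infty$, the supremum being finite since $\pi(\mathcal X)$ is compact and $\Vert\cdot\Vert$ continuous. For claim 3 I would use that $\pi(\mathcal X)$ is a compact \emph{neighbourhood} of $0$ in $Q=\RR^l\times\ZZ^n$ (the quotient map $\pi$ is open, so $\pi$ of the interior of $\mathcal X$ is open) as well as a generating set. Neighbourhood-ness yields $\delta>0$ with $\{x\in\RR^l\mid\Vert x\Vert\le\delta\}\times\{0\}\subseteq\pi(\mathcal X)$, so, writing $\pi(g)=(x,v)$, one reaches $(x,0)$ in at most $\Vert x\Vert/\delta+1$ steps; generation yields $N_0$ with all standard basis vectors of the $\ZZ^n$-factor (and their negatives, by symmetry) in $\pi(\mathcal X)^{N_0}$, so one reaches $(0,v)$ in at most $N_0\sum_j|v_j|\le N_0\sqrt n\,\Vert v\Vert$ steps. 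Adding and using $\Vert x\Vert,\Vert v\Vert\le\Vert g\Vert$ gives $\ell(\pi(g))\le(N_0\sqrt n+1/\delta)\Vert g\Vert+1$, so one takes $\varepsilon:=(N_0\sqrt n+1/\delta)^{-1}$.

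For claim 4: by claim 3, $N^+\subseteq\{g\mid\ell(\pi(g))\le A/\varepsilon+1\}$, which by claim 1 equals $\Delta_r[N]$ for $r:=\lfloor A/\varepsilon+1\rfloor$, while $N\subseteq N^+$ trivially since $\Vert n\Vert=0$ for $n\in N$. Thus $N\subseteq N^+\subseteq\Delta_r[N]$, and the proof of Lemma~\ref{lem:coarse=easy} exhibits the inclusion $\iota_2\circ\iota_1\colon N\hookrightarrow\Delta_r[N]$ (where $\iota_1\colon N\hookrightarrow N^+$ and $\iota_2\colon N^+\hookrightarrow\Delta_r[N]$) as a coarse equivalence, with coarse inverse $\rho$. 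All three inclusions are isometric embeddings for the subspace metrics from $G$, so from $\iota_2\circ\iota_1\circ\rho\sim\id$ one cancels $\iota_2$ to get $\iota_1\circ(\rho\circ\iota_2)\sim\id_{N^+}$, while $(\rho\circ\iota_2)\circ\iota_1=\rho\circ(\iota_2\circ\iota_1)\sim\id_N$; hence $\rho\circ\iota_2\colon N^+\to N$ is a coarse inverse to $N\hookrightarrow N^+$, and $N^+$ is coarsely isomorphic to $N$. The only step with genuine content is the estimate in claim 3; claims 1, 2 and 4 are bookkeeping, claim 4 requiring only mild care with cancelling isometric embeddings in the coarse category.
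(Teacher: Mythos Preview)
Your proof is correct and follows essentially the same route as the paper. Claims 1--3 are argued in the same way (with you adding the useful remark that $\pi$ is open to justify that $\pi(\mathcal X)$ is a neighbourhood of $0$), and for claim 4 both you and the paper use the sandwich $N\subseteq N^+\subseteq\Delta_r[N]$ together with Lemma~\ref{lem:coarse=easy}; the paper simply asserts that the intermediate space inherits the coarse type, whereas you spell out the cancellation (which is valid because the inclusions are isometric).
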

\begin{proof}
We first show claim 1. We have $l(\pi(g))\le n$ if $g$ can be written as $g=x_1\cdots x_n d$ where $d\in N$ and $x_1,\ldots,x_n\in \mathcal X$. Now $N$ is normal, so $g=d'x_1\cdots x_n$ with $d'\in N$. So $d(g,N)\le n$. The reverse implication also holds.
  
  Now we show claim 2. Since $\pi(\mathcal X)$ is a compact neighborhood of $0_Q$ there exists $K\ge 0$ with 
  \[
   x\in \pi(\mathcal X)\implies \Vert x\Vert \le K
  \]
If $q\in Q$ with $l(q)=n$, then we can write $q=x_1+\cdots +x_n$ where $x_i\in \pi(\mathcal X)$. Then 

\[
 \Vert q\Vert =\Vert x_1+\cdots +x_n\Vert\le \Vert x_1\Vert+\cdots +\Vert x_n\Vert \le K\cdot l(q).
\]

Now we show claim 3. If $q\in Q=\RR^l\times \ZZ^n$, then we write $q=q_1+q_2$ with $q_1\in \RR^l\times \{0\}$ and $q_2\in \{0\}\times \ZZ^n$. 

We first look at the real factor: Since $\pi(\mathcal X)\cap (\RR^l\times 0)$ is a compact neighborhood of $0_{\RR^l}$ there exists $\varepsilon'>0$ with 
\[
 \Vert y\Vert \le \varepsilon' \implies y\in \pi(\mathcal X)\cap (\RR^l\times 0).
\]
Note that $\mathcal Y_1:=\{y\in Q\mid \Vert y\Vert \le \varepsilon'\}$ also forms a generating set for $\RR^l$, the word length of which we denote by $l_1$. We have
\[
 l(q_1)\le l_1(q_1)\le\tfrac{\Vert q_1\Vert}{\varepsilon'}+1. 
\]
To see the last inequality, we can draw a line through $0$ and $q_1$ and align points $y_i$ on the line, each at distance $i\cdot \varepsilon'$ from $0$.

Now we look at the integral factor: Define $\mathcal Y_2:=\{(0,\ldots,0,1,0,\ldots,0)\in \ZZ^n\}\cup\{(0,\ldots,0,-1,0,\ldots,0)\in\ZZ^n\}$. The word length according to $\mathcal Y_2$ is denoted by $l_2$. Then there exists some $K>0$ with $l(y)\le K\cdot l_2(y)$ for every $y\in Q$. Now $l_2$ is the $\ell_1$-norm on $\ZZ^n$, so we have
\[
 l(q_2)\le K\cdot l_2(q_2)\le K\sqrt n \Vert q_2\Vert.
\]

Then 
\begin{align*}
 l(q)
 \le& l(q_1)+l(q_2)\\
 \le& \tfrac{\Vert q_1\Vert}{\varepsilon'}+1+\Vert q_2\Vert\cdot K\sqrt n\\
 \le & \max(\tfrac 1 {\varepsilon'},K\sqrt n)(\Vert q_1\Vert+\Vert q_2\Vert)+1\\
 \le & 2 \max(\tfrac 1 {\varepsilon'},K\sqrt n)\Vert q\Vert+1.
\end{align*}
We now set $\varepsilon:=\frac 1 { 2 \max(\tfrac 1 {\varepsilon'},K\sqrt n)}$.

Now we show claim 4. Since 
\[
 N\subseteq N^+\subseteq \{g\in G\mid l(\pi(g))\le \tfrac A \varepsilon +1\}=\Delta_{\tfrac A \varepsilon +1}[N]
\]
and $N$ and $\Delta_{\tfrac A \varepsilon +1}[N]$ have the same coarse type (via inclusion), that is, Lemma~\ref{lem:coarse=easy}, the intermediate space $N^+$ has the same coarse type as well.
\end{proof}

The mapping $\Vert \cdot \Vert$ on $G$ can be extended to a norm on chains (which is also not a norm in the strict sense): If $c\in \chains q k G$, then
\[
 \Vert c\Vert:=\begin{cases}
         -\infty & c=0\\
         \max\{\Vert g\Vert \mid g\in c^{(0)}\} &c\not=0.
        \end{cases}
\]
  \begin{thm}
  \label{thm:ge2}
  Let $N\trianglelefteq G$ be a closed normal subgroup with abelian factor group $\pi:G\to Q=G/N$. Then $N$ is of type $\Clg m$ if $S(G,N)\subseteq \TopS^m(G;\ZZ)$.
  \end{thm}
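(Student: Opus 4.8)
The plan is to induct on $m$ and, in the inductive step, to fill cycles of $\VR k N$ by chains that stay a uniformly bounded distance from $N$, using the $\Sigma$-chain-endomorphisms supplied by the hypothesis to sweep an arbitrary filling back towards $N$.

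\emph{Reductions and set-up.} As in the text we may assume $Q=\RR^l\times\ZZ^n$; if $l=n=0$ then $N=G$ and there is nothing to prove, so $S(G,N)\neq\emptyset$, hence $\TopS^m(G;\ZZ)\neq\emptyset$ and $G$ is of type $\Clg m$ by Proposition~\ref{prop:veryeasy} (in particular $G$ is compactly generated, hence metrizable). By Lemma~\ref{lem:compare}(4), for each fixed $A\ge 0$ the set $N^+:=\{g\in G\mid\Vert g\Vert\le A\}$ has the same coarse type as $N$, so by Lemma~\ref{lem:compare}(4) together with Corollary~\ref{cor:supermetric}(1) it is enough to produce a value of $A$ for which $(N^+,d)$ is of type $\Flg m$. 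I would induct on $m\ge 1$: assume $N$, equivalently $N^+$, is already of type $\Clg{m-1}$ (vacuous for $m=1$), fix a sufficiently large $k$ and an arbitrary cycle $z\in\chains{m-1}{k}{N}$ (so $\Vert z\Vert=0$), and aim to bound $z$ inside $\chains{m}{k'}{N^+}$ for a $k'$ not depending on $z$; by arbitrariness of $k$ this gives essential triviality of $\tilde\hlgy_{m-1}(\VR k {N^+})_k$.

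\emph{A finite toolbox.} For every $g\in G\setminus N$ the character $\chi_g$ lies in $S(G,N)\subseteq\TopS^m(G;\ZZ)$, so Theorem~\ref{thm:sigmacrit_lc_homol} provides a finitely modeled $\ZZ G$-chain endomorphism of ${\chains * k G}^{(m)}$ raising the valuation $v_{\chi_g}$, and by Theorem~\ref{thm:open} (openness of the sets $U(\varphi)$) the same endomorphism raises $\chi$-value for all $\chi$ in a cap of $S(G,N)$ around $\chi_g$. As $S(G,N)$ is the compact sphere $S^{l+n-1}$, finitely many such caps cover it, and after relabelling I obtain finitely modeled chain endomorphisms $\varphi^{(1)},\dots,\varphi^{(r)}$ together with directions $u_1,\dots,u_r\in\RR^{l+n}$ that positively span with a uniform margin $\delta>0$ (every unit vector $v$ satisfies $\langle v,u_i\rangle\ge\delta$ for some $i$), each $\varphi^{(i)}$ raising $v_{\chi_{u_i}}$ by at least a uniform $\epsilon>0$. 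By Lemma~\ref{lem:homotopy_homology_locallycompact} each $\varphi^{(i)}$ comes with a chain homotopy $\eta^{(i)}$ to the inclusion whose image has bounded Vietoris--Rips radius and whose vertex sets are $G$-translates of a fixed finite set; from this I get uniform constants $C'$ (bounding the $\Vert\cdot\Vert$-displacement of a vertex under any $\varphi^{(i)}$) and $A_1$ (bounding $\Vert\eta^{(i)}(z)\Vert$, using $\Vert z\Vert=0$). Finally, exactly as in the proof of Theorem~\ref{thm:crit2}, I would use the collapse maps of Theorem~\ref{thm:mu} to trade each $\varphi^{(i)}$ for a power composed with a collapse, obtaining endomorphisms $\Psi^{(i)}$ that preserve the radius $k$, still raise $v_{\chi_{u_i}}$ by a uniform positive amount, displace $\Vert\cdot\Vert$ by a uniform amount, and again admit chain homotopies to the identity of bounded radius.

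\emph{The sweep.} Since $G$ is of type $\Clg m$, the cycle $z$ bounds in $\chains{m}{l}{G}$ for some $l$, and (as in the proof of Theorem~\ref{thm:mu}) we may arrange a filling $c$ of radius $k$; observe that $v_{\chi_u}(x)=-\max_{g\in x^{(0)}}\langle\pi(g),u\rangle$, so the margin condition gives $\Vert c\Vert\le-\tfrac1\delta\min_i v_{\chi_{u_i}}(c)$. Passing from $c$ to $c':=\Psi^{(i)}(c)-\eta^{(i)}_{*}(z)$, where $\eta^{(i)}_{*}$ is the homotopy for $\Psi^{(i)}$, again yields a filling of $z$, with $v_{\chi_{u_i}}(c')\ge\min\bigl(v_{\chi_{u_i}}(c)+\epsilon,\,-A_1\bigr)$, with every other $v_{\chi_{u_j}}$ decreased by at most $C'$, and with $\Vert c'\Vert$ increased by at most $C'$. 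Iterating the $\Psi^{(i)}$ in a suitable order and for suitable numbers of steps, I would drive all $v_{\chi_{u_i}}(c)$ above a fixed threshold $-A$; the margin bound then forces $\Vert c\Vert\le A/\delta$, so, choosing $N^+$ with respect to the radius $A/\delta$, the resulting filling lies in $\chains{m}{k'}{N^+}$ for a $k'$ depending only on the toolbox. Hence $\tilde\hlgy_{m-1}(\VR k{N^+})$ vanishes in $\tilde\hlgy_{m-1}(\VR{k'}{N^+})$; together with the inductive hypothesis, Lemma~\ref{lem:compare}(4) and Corollary~\ref{cor:supermetric}(1), this shows $N$ is of type $\Clg m$.

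\emph{Where the difficulty is.} The crux is the termination of the sweep: raising $v_{\chi_{u_i}}$ lowers each of the finitely many other valuations $v_{\chi_{u_j}}$ and enlarges $\Vert c\Vert$, so a naive round-robin need not decrease any single potential, and one must rule out an infinite regress among the directions. I expect this to require a genuinely careful argument — a double induction, or a layered sweep that clears the support of $c$ shell by shell from the outside inward, paralleling the classical ($G/N$ abelian) case — and this is where the bulk of the work lies; the purpose of Lemma~\ref{lem:homotopy_homology_locallycompact} and Theorem~\ref{thm:mu} here is precisely to keep the constants $\epsilon$, $C'$, $A_1$ and all Vietoris--Rips radii appearing in the sweep uniformly bounded, so that only the finiteness of the process remains to be secured.
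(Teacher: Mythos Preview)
Your overall architecture --- induction on $m$, compactness of the character sphere to extract a finite toolbox of chain endomorphisms with homotopies, then a sweep pushing an arbitrary filling $c$ of $z$ back towards $N$ --- matches the paper exactly. The gap is precisely where you locate it, and the paper resolves it by a mechanism genuinely different from your global one.

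You apply $\Psi^{(i)}$ to \emph{all} of $c$ and track the finitely many valuations $v_{\chi_{u_i}}$; this creates the competition you describe, and there is no obvious potential that decreases. The paper instead works \emph{locally at the outermost shell}: pick a vertex $g\in c^{(0)}$ realising $\Vert c\Vert=a$, split $c=c'+c''$ where $c'$ collects only those simplices meeting the fibre $gN$, choose $i$ with $\chi_g\in U(\varphi_i)$ (the character is adapted to $g$, not drawn from a fixed finite list), and replace $c$ by $\bar c=c+\partial\eta_i(c')=\varphi_i(c')-\eta_i\partial(c')+c''$. One checks $v_g(\bar c)>-a$, so the fibre $gN$ is eliminated from $\bar c^{(0)}$. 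The termination estimate is then purely geometric: every \emph{new} vertex $h\in\bar c^{(0)}\setminus c^{(0)}$ satisfies both $\Vert\pi(h)-\pi(g)\Vert\le K(s+l)$ (it arises from $\eta_i$ applied to a simplex containing some $gd$) and $\chi_g(h)\ge -a+r$, and Pythagoras in $Q\cong\RR^{l+n}$ forces
\[
\Vert h\Vert^2\le (a-r)^2+K^2(s+l)^2-r^2<a^2-1
\]
once $a>A:=\tfrac{K^2(s+l)^2+1}{2r}$. Each step thus removes one $N$-fibre at norm $a$ while introducing only vertices of norm below $\sqrt{a^2-1}$; after finitely many steps $\Vert c\Vert$ drops strictly, and after at most $\Vert c\Vert^2$ rounds the filling lands in $N^+$.

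So your intuition ``shell by shell from the outside inward'' is exactly the right fix; what makes it go through is (i) applying the endomorphism only to the local piece $c'$, (ii) using the vertex-adapted character $\chi_g$ rather than a fixed finite family of directions, and (iii) the Pythagorean bound combining the metric displacement of $\eta_i$ with the uniform $r$-gain in $v_g$. Your global scheme, as you acknowledge, does not supply a decreasing potential, and I do not see how to salvage it without reverting to this local argument.
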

  \begin{proof}
 The structure of the proof is very similar to one direction of \cite[Theorem~5.1]{Bieri1988}.
 
Suppose that we have already shown that $N$ is of type $\Clg {m-1}$. Let $k\ge 0$ be a number and suppose that $\homology {m-1} k G$ vanishes in $\homology {m-1} l G$ for some $l\ge k$. 

 If $\sim$ denotes the relation in $\TopHom(G,\RR)$ such that $\chi\sim \psi$ if $\chi=r\psi$ for some $r>0$ then $(S(G,N)\setminus\{0\})/{\sim}\cong S^{l+n-1}$ is a compact subset of $(\TopS^m(G,\ZZ)\setminus{(0)})/{\sim}$. Each $\chi\in S(G,N)\setminus \{0\}$, since it belongs to $\TopS^m$, is assigned a finitely modeled chain endomorphism $\varphi_\chi:{\chains * l G}^{(m)}\to {\chains * l G}^{(m)}$ extending the identity on $\ZZ$, which witnesses, that $\chi\in \TopS^m$ by applying Theorem~\ref{thm:crit2}. Then $U(\varphi_\chi)=u_{\varphi_\chi}^{-1}(0,\infty)$ is as we recall from Section~\ref{sec:open} an open subset contained in $\TopS^m$. Then $(U(\varphi_\chi)/{\sim})_{\chi\in S(G,N)\setminus \{0\}}$ forms an open cover of $(S(G,N)\setminus \{0\})/{\sim}$ which, since $(S(G,N)\setminus \{0\})/{\sim}$ is compact, has a finite subcover. 
\[
 U(\varphi_1)\cup \cdots \cup U(\varphi_n)\supseteq S(G,N)\setminus \{0\}.
\]

If $\chi\in S(G,N)\setminus \{0\}$, then there is some $i\in \{1,\ldots,n\}$, with $\chi\in U(\varphi_i)=u_{\varphi_i}^{-1}(0,\infty)$, so $u_{\varphi_i}(\chi)>0$. This way
\[
 \rho(\chi):=\max_{i=1,...,n}u_{\varphi_i}(\chi)>0.
\]
is a continuous function with positive real values. Now we identify $(S(G,N)\setminus\{0\})/{\sim}$ with the sphere $S^{l+n-1}$, which is a collection of representatives, each of norm $1$. Since this is a compact space, the infimum
\[
 r:=\inf\{\rho(\chi)|\chi\in S(G,N)\}>0
\]
is attained as a minimum.

Suppose $\eta_i$ is the homotopy of Lemma~\ref{lem:homotopy_homology_locallycompact} joining $\varphi_i$ to the identity. Then there exists $s_i\ge 0$ with $l(x)\le s_i$ for every $x\in \eta_i(\sigma)^{(0)}$ for every $\sigma=(1_G,g_1,\ldots,g_q)\in \Delta^q_l$. This holds for all $i=1,\ldots,n$, so we have
\begin{align}
\label{eq:s}
 l(x)\le \max_i(s_i)=:s.
\end{align}
Define $A:=\frac{K^2(s+l)^2+1}{2r}$ and $N^+:=\{g\in G\mid \Vert g \Vert\le A\}$. Let $z\in\chains{m-1}k{N^+}$ be a cycle. Then since $G$ is of type $\Clg m$ there exists $c\in \chains m l G$ with $\diff m c=z$. We now give a short outline of the rest of the proof. If $\Vert c\Vert>A$ we want to replace the chain $c$ by a chain $\bar c$ of smaller norm with the same boundary. For that, we consider the following procedure: We choose $g\in c^{(0)}$ where the maximum $\Vert c\Vert =:a$ is attained. Then we modify $c$ so as to remove this element from the support at the expense of introducing new elements $h$ with $\Vert h\Vert <\sqrt{a^2-1}$. This reduces the number of elements with the maximum norm $a$ in $c^{(0)}$ by one. Since $c^{(0)}$ is finite, repeating this procedure eventually yields a chain $\bar c$ with smaller norm $\Vert\bar c\Vert<a$, where the new vertices have norm $<\sqrt{a^2-1}$. We only need to reduce the norm at most $\Vert c\Vert^2$ times to obtain a chain $\bar c$ with $\Vert\bar c\Vert\le A$.

So, if $g\in c^{(0)}$ with $\Vert c\Vert =\Vert g\Vert=a$ we write $c=c'+c''$ where $c'$ collects all simplices $\sigma$ in $c$ with $gd\in \sigma^{(0)}$ for some $d\in N$. There exists some $i\in\{1,\ldots,n\}$ with $\chi_g\in U(\varphi_i)$. 

Define 
\begin{align*}
 \bar c
 :=&c+\diff {m+1} \eta_i(c')\\
 =&c-\eta_i\diff m(c')+\varphi_i(c')-c'\\
 =&\varphi_i(c')-\eta_i\diff m(c')+c''.
\end{align*}
This chain has the same boundary as $c$:
\[
\partial_m \bar 
c=\partial_m(c+\partial_{m+1}\eta_i(c'))=\partial_mc+\partial_m\circ\partial_{m+1
}\circ\eta_i(c')=\partial_m c.
\]
Then the character
\[
 \chi_g:h\mapsto -\frac{\langle \pi(h),\pi(g)\rangle}{\Vert\pi(g)\Vert}
\]
takes its minimum value in $c^{(0)}$ precisely at the elements of the form $gd$, $d\in N$ and 
\[
 \chi_g(gd)=-\frac{\langle \pi(gd),\pi(g)\rangle}{\Vert\pi(g)\Vert}=-\Vert\pi(g)\Vert=-a.
\]
Since $c''^{(0)}\subseteq c^{(0)}$ does not contain such elements, we obtain the following. 
\begin{align}
 \label{eq:vg3}
 v_g(c'')>-a.
\end{align}
 We have $v_g(\varphi_i(f))-v_g(f)\ge r$ for every $f\in\chains m l G$. So,
\begin{align}
\label{eq:vg1}
\begin{split}
 v_g(\varphi_i(c'))
 &\ge v_g(c')+r\\
 &\ge -a+r\\
 &>-a.
 \end{split}
\end{align}
 Since $\Vert\pi(z)\Vert<a$, we obtain for every $x\in z^{(0)}$:
\[
 |\chi_g(x)|=\frac{|\langle \pi(x),\pi(g)\rangle|}{\Vert \pi(g)\Vert}\le \Vert \pi(x)\Vert<a
\]
by the Cauchy--Schwarz inequality. So $v_g(z)>-a$. The Lemma~\ref{lem:homotopy_homology_locallycompact} implies that $(\eta_i(f))^{(0)}\subseteq f^{(0)}\cup (\varphi_i(f))^{(0)}$ for every $f\in \chains {m-1} l G$. Thus, 
\begin{align}
 \label{eq:vg2}
 \begin{split}
 v_g(\eta_i\circ\diff m(c'))
 &\ge \min(v_g(\diff m c'),v_g(\varphi_i(\diff m c')))\\
 &= \min(v_g(z-\diff m c''),v_g(\varphi_i(\diff m c')))\\
 &\ge\min(v_g(z),v_g(\diff m c''),v_g(\varphi_i(\diff m c'))\\
 &>-a.
 \end{split}
\end{align}
So, inequalities~\ref{eq:vg1},\ref{eq:vg2},\ref{eq:vg3} combine to the inequality
\begin{align*}
 v_g(\bar c)
 &=v_g(\varphi_i(c')-\eta_i\diff m(c')+c'')\\
 &\ge \min(v_g(\varphi_i(c')),v_g(\eta_i\diff m(c')),v_g(c''))\\
 &>-a.
\end{align*}
If $gd$ with $d\in N$ was a vertex in $\bar c$, then $v_g(\bar c)\le \chi_g(gd)=-a$, which contradicts $v_g(\bar c)>-a$. So we have shown that $dg\not\in \bar c^{(0)}$ for every $d\in N$.

\begin{figure}[t!]
	\centering
	\def \svgwidth{0.7\linewidth}
\begingroup%
  \makeatletter%
  \providecommand\color[2][]{%
    \errmessage{(Inkscape) Color is used for the text in Inkscape, but the package 'color.sty' is not loaded}%
    \renewcommand\color[2][]{}%
  }%
  \providecommand\transparent[1]{%
    \errmessage{(Inkscape) Transparency is used (non-zero) for the text in Inkscape, but the package 'transparent.sty' is not loaded}%
    \renewcommand\transparent[1]{}%
  }%
  \providecommand\rotatebox[2]{#2}%
  \newcommand*\fsize{\dimexpr\f@size pt\relax}%
  \newcommand*\lineheight[1]{\fontsize{\fsize}{#1\fsize}\selectfont}%
  \ifx\svgwidth\undefined%
    \setlength{\unitlength}{361.37365867bp}%
    \ifx\svgscale\undefined%
      \relax%
    \else%
      \setlength{\unitlength}{\unitlength * \real{\svgscale}}%
    \fi%
  \else%
    \setlength{\unitlength}{\svgwidth}%
  \fi%
  \global\let\svgwidth\undefined%
  \global\let\svgscale\undefined%
  \makeatother%
  \begin{picture}(1,0.63669053)%
    \lineheight{1}%
    \setlength\tabcolsep{0pt}%
    \put(0,0){\includegraphics[width=\unitlength,page=1]{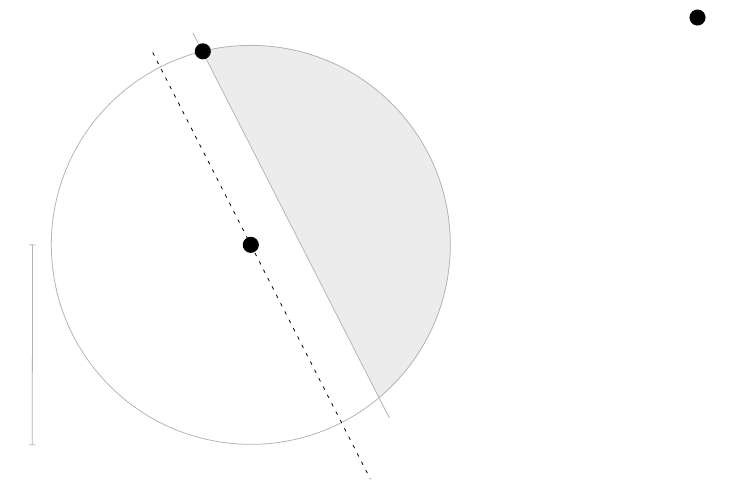}}%
    \put(-0.0020673,0.17105397){\color[rgb]{0.50196078,0.50196078,0.50196078}\makebox(0,0)[lt]{\lineheight{1.25}\smash{\begin{tabular}[t]{l}$K(s+l)$\end{tabular}}}}%
    \put(0,0){\includegraphics[width=\unitlength,page=2]{pic3.pdf}}%
    \put(0.27423893,0.30330142){\color[rgb]{0,0,0}\makebox(0,0)[lt]{\lineheight{1.25}\smash{\begin{tabular}[t]{l}$g$\end{tabular}}}}%
    \put(0.31367097,0.54134461){\color[rgb]{0,0,0}\makebox(0,0)[lt]{\lineheight{1.25}\smash{\begin{tabular}[t]{l}$h$\end{tabular}}}}%
    \put(0.70619808,0.42301641){\color[rgb]{0,0,0}\makebox(0,0)[lt]{\lineheight{1.25}\smash{\begin{tabular}[t]{l}$a$\end{tabular}}}}%
    \put(0.40990033,0.24450525){\color[rgb]{0,0,0}\makebox(0,0)[lt]{\lineheight{1.25}\smash{\begin{tabular}[t]{l}$r$\end{tabular}}}}%
    \put(0.95453143,0.6057843){\color[rgb]{0,0,0}\makebox(0,0)[lt]{\lineheight{1.25}\smash{\begin{tabular}[t]{l}$1_G$\end{tabular}}}}%
    \put(0,0){\includegraphics[width=\unitlength,page=3]{pic3.pdf}}%
  \end{picture}%
\endgroup%

	\caption{bound on $\Vert h\Vert$ derived from the other parameters}
	\label{fig:pic2}
\end{figure}

Now we show that the new vertices, the ones contained in $\bar c$ but not in $c$, all have norm $<\sqrt{a^2-1}$. If $h\in G$ is a new vertex, then $h\in (\diff {m+1} \eta_i(c'))^{(0)}\subseteq (\eta_i(c'))^{(0)}$. Then $g^{-1}h\in g^{-1}(\eta_i(c'))^{(0)}=(\eta_i(g^{-1}c'))^{(0)}$. If $\sigma=(x_0,\ldots,g,\ldots,x_m)\in c'$, then $g^{-1}\sigma=g^{-1}x_0(1_G,\ldots,x_0^{-1}g,\ldots,x_0^{-1}x_m)$. Then inequality~\ref{eq:s} implies $l(x)\le s$ for $x\in (\eta_i(1_G,\ldots,x_0^{-1}g,\ldots,x_0^{-1}x_m))^{(0)}$. So $l(\pi(g^{-1}h))\le l(g^{-1}h)\le l(g^{-1}x_0)+l(x)\le l +s$. This implies
\begin{align}
\label{eq:norm1}
 \Vert\pi(g)-\pi(h)\Vert=\Vert \pi(g^{-1}h)\Vert\le K\cdot l(\pi(g^{-1}h))\le K\cdot(s+l).
\end{align}
Also $(\diff m \eta_i(c'))^{(0)}\subseteq (\eta_i(c'))^{(0)}\subseteq c'^{(0)}\cup(\varphi(c'))^{(0)}$, so $h\in(\varphi(c'))^{(0)}$, which implies
\begin{align}
 \label{eq:norm2}
  v_g(h)\ge-a+r.
 \end{align}
 Inequalities~\ref{eq:norm1},\ref{eq:norm2} imply that $h$ is in the marked region in Figure~\ref{fig:pic2}. Now Pythagoras' theorem implies
\[
 \Vert h\Vert\le ((a-r)^2+K^2(s+l)^2-r^2)^{1/2}=(a^2-2ar+K^2(s+l)^2)^{1/2}<\sqrt{a^2-1}
\]
by the assumption that $a>A=\frac{K^2(s+l)^2+1}{2r}$.

In this way, we have shown that we can replace $c$ by $\bar c$ of a smaller norm, as long as $\Vert c\Vert > A$. So after finitely many steps we reach $\bar{\bar c}\in \chains m {\max(l,2s)} {N^+}$. So $\homology {m-1} k {N^+}$ vanishes in $\homology {m-1} {\max(2s,l)}{N^+}$. Then Lemma~\ref{lem:compare} implies that $N$ is also of type $\Clg m$.
  \end{proof}

\end{spacing}
 
\printbibliography

\textsc{Kai-Uwe Bux}, Fakultät für Mathematik, Universität Bielefeld, Postfach 100131, Universitätsstraße 25, D-33501 Bielefeld, Germany\\
\textit{E-mail: }
\texttt{\href{mailto:bux@math.uni-bielefeld.de}{bux@math.uni-bielefeld.de}}\medskip

\textsc{Elisa Hartmann}, Fakultät für Mathematik, Universität Bielefeld, Postfach 100131, Universitätsstraße 25, D-33501 Bielefeld, Germany\\
\textit{E-mail: }
\texttt{\href{mailto:ehartmann@math.uni-bielefeld.de}{ehartmann@math.uni-bielefeld.de}}\medskip

\textsc{José Pedro Quintanilha}, Institut für Mathematik IMa, Im Neuenheimer Feld 205, 69120 Heidelberg, Germany\\
\textit{E-mail: }
\texttt{\href{mailto:jquintanilha@mathi.uni-heidelbeg.de}{jquintanilha@mathi.uni-heidelbeg.de}}

\end{document}